\font\te=eufm10
\newfont{\vlte}{eufm10 at 22pt}
\newfont{\lte}{eufm10 at 18pt}
\newfont{\smb}{msbm6}
\newfont{\mmb}{msbm8}
\newfont{\tmb}{msbm10}
\newfont{\lmb}{msbm10 at 18pt}
\newcommand{\ord}{\mbox{ord}}
\newcommand{\Gal}{\mbox{Gal}}
\newcommand{\mte}[1]{\mbox{\te {#1}}}
\newcommand{\be}{\begin{enumerate}}
\newcommand{\ee}{\end{enumerate}}
\newcommand{\su}{\subsection*}
\newcommand{\id}{\mbox{id}}
\newcommand{\rank}{\mbox{rank}}
\chardef\secsym=129
\newcommand{\calA}{{\mathcal A}}
\newcommand{\calB}{{\mathcal B}}
\newcommand{\calC}{{\mathcal C}}
\newcommand{\calD}{{\mathcal D}}
\newcommand{\calE}{{\mathcal E}}
\newcommand{\calL}{{\mathcal L}}
\newcommand{\calM}{{\mathcal M}}
\newcommand{\calN}{{\mathcal N}}
\newcommand{\calP}{{\mathcal P}}
\newcommand{\calR}{{\mathcal R}}
\newcommand{\calS}{{\mathcal S}}
\newcommand{\calT}{{\mathcal T}}
\newcommand{\calU}{{\mathcal U}}
\newcommand{\calV}{{\mathcal V}}
\newcommand{\calW}{{\mathcal W}}
\newcommand{\calZ}{{\mathcal Z}}
\newcommand{\C}{{\mathbb C}}
\newcommand{\F}{{\mathbb F}}
\newcommand{\Q}{{\mathbb Q}}
\newcommand{\R}{{\mathbb R}}
\newcommand{\Z}{{\mathbb Z}}
\newcommand{\pp}{{\mathfrak p}}
\newcommand{\qq}{{\mathfrak q}}
\newcommand{\ttt}{{\mathfrak t}}
\newcommand{\dd}{{\mathfrak d}}
\newcommand{\nn}{{\mathfrak n}}
\chardef\sha=88
\newtheorem{theorem}{Theorem}[section]
\newtheorem{lemma}[theorem]{Lemma}
\newtheorem{corollary}[theorem]{Corollary}
\newtheorem{proposition}[theorem]{Proposition}
\theoremstyle{definition}
\newtheorem{definition}[theorem]{Definition}
\theoremstyle{remark}
\newtheorem{remark}[theorem]{Remark}
\newtheorem{notation}[theorem]{Notation}
\newtheorem{notationassumption}[theorem]{Notation and Assumptions}
\newcommand{\norm}{{\mathbf N}}
\newcommand{\OO}{{\mathcal O}}
\begin{document}

\bibliographystyle{plain}%
  \title{Rings of Algebraic Numbers in Infinite Extensions of $\Q$ and Elliptic Curves Retaining Their Rank.}

\author{Alexandra Shlapentokh}
\thanks{The research for this paper has been partially supported by NSF grant DMS-0354907 and ECU
Faculty Senate Summer Research Grant.}
\address{Department of Mathematics \\ East Carolina University \\ Greenville, NC 27858}
\email{shlapentokha@ecu.edu }
\urladdr{www.personal.ecu.edu/shlapentokha}
\subjclass[2000]{Primary 11U05; Secondary 11G05}
\keywords{Hilbert's Tenth Problem, elliptic curve, Diophantine definition}

\begin{abstract}%
We show that elliptic curves whose Mordell-Weil groups are finitely generated over some infinite extensions of $\Q$,
can be used to show the Diophantine undecidability of the rings of integers and bigger rings contained in some infinite
extensions of rational numbers.
\end{abstract}%
\maketitle%

\section{Introduction}
The interest in the questions of existential definability and decidability over rings goes back to a question that
was posed by Hilbert: given an arbitrary polynomial equation in several variables over $\Z$, is there a uniform
algorithm to determine whether such an equation has solutions in $\Z$? This question, otherwise known as Hilbert's
10th problem, has been answered negatively in the work of M. Davis, H. Putnam, J. Robinson and Yu. Matijasevich.
(See \cite{Da1}, \cite{Da2} and \cite{Mate}.) Since the time when this result was obtained, similar questions have
been raised for other fields and rings. In other words, let $R$ be a recursive ring. Then, given an arbitrary
polynomial equation in several variables over $R$, is there a uniform algorithm to determine whether such an
equation has solutions in $R$? One way to resolve the question of Diophantine decidability negatively over a ring
of characteristic 0 is to construct a Diophantine definition of $\Z$ over such a ring. This notion is defined
below.

\begin{definition}
Let $R$ be a ring and let $A \subset R$.  Then we say that $A$ has a Diophantine definition over
$R$ if there exists a polynomial $f(t,x_1,\ldots,x_n) \in R[t,x_1,\ldots,x_n]$ such that for any $t
\in R$,
\[
\exists x_1,\ldots,x_n \in R, f(t,x_1,...,x_n) = 0 \Longleftrightarrow t \in A.
\]

If the quotient field of $R$ is not algebraically closed,   we can allow
a Diophantine definition to consist of several polynomials without changing the nature of the
relation. (See \cite{Da2} for more details.)
\end{definition}

The usefulness  of Diophantine definitions stems from the following easy lemma.
\begin{lemma}
Let $R_1 \subset R_2$ be two recursive rings such that the quotient field of $R_2$ is not
algebraically closed. Assume that  Hilbert's Tenth Problem (abbreviated as ``HTP''in the future)   is
undecidable over $R_1$, and $R_1$ has a Diophantine definition over $R_2$. Then HTP is
undecidable over $R_2$.%
\end{lemma}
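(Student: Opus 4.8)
The plan is to argue by contradiction: assume that HTP is decidable over $R_2$ and deduce that it is decidable over $R_1$, contradicting the hypothesis. So suppose we have at our disposal a decision procedure for the solvability in $R_2$ of polynomial equations with coefficients in $R_2$, and let $g(x_1,\dots,x_m)=0$ be an arbitrary polynomial equation with coefficients in $R_1$; we must decide whether it has a solution in $R_1$.

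First I would invoke the Diophantine definition: let $f(t,y_1,\dots,y_n)\in R_2[t,y_1,\dots,y_n]$ witness that $R_1$ has a Diophantine definition over $R_2$, so that for every $t\in R_2$ one has $t\in R_1$ if and only if $\exists y_1,\dots,y_n\in R_2$ with $f(t,y_1,\dots,y_n)=0$. (Since the quotient field of $R_2$ is not algebraically closed, by the remark following the definition we may take $f$ to be a single polynomial even if the original definition consisted of several, so this costs nothing.) The key observation is then the equivalence
\[
\exists x_1,\dots,x_m\in R_1:\ g(\vec x)=0
\quad\Longleftrightarrow\quad
\exists x_1,\dots,x_m,\ \vec y^{(1)},\dots,\vec y^{(m)}\in R_2:\ \Big(g(\vec x)=0\ \wedge\ \bigwedge_{i=1}^{m} f(x_i,\vec y^{(i)})=0\Big).
\]
Indeed, a solution $\vec x\in R_1$ on the left lifts to the right because each $x_i\in R_1\subset R_2$ and the Diophantine definition supplies the tuples $\vec y^{(i)}$; conversely, in any $R_2$-solution of the right-hand system the conditions $f(x_i,\vec y^{(i)})=0$ force each $x_i\in R_1$, so $\vec x$ is an $R_1$-solution of $g=0$.

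Next I would collapse the finite system on the right into a single polynomial equation over $R_2$: because the quotient field of $R_2$ is not algebraically closed, a finite conjunction of polynomial equations over $R_2$ is equivalent, over $R_2$, to a single polynomial equation over $R_2$ (see \cite{Da2}), and this combined polynomial can be written down effectively from $g$ and the fixed polynomial $f$, using that $R_1$, $R_2$, and the inclusion $R_1\subseteq R_2$ are recursive, so that the coefficients of $g$ (which lie in $R_1$) are available as elements of $R_2$. Feeding this single equation into the assumed decision procedure for HTP over $R_2$ tells us whether the right-hand side holds, hence — by the displayed equivalence — whether $g=0$ has a solution in $R_1$. This yields a decision procedure for HTP over $R_1$, the desired contradiction.

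As for difficulty, there is essentially no hard step here: the substance is the simple reduction above. The only points requiring care are bookkeeping ones, namely verifying that every passage in the reduction is effective (so that what we produce is genuinely an algorithm) and recalling the standard fact that non-algebraic-closedness of the fraction field permits replacing a system of equations by a single equation. I would state these explicitly but not belabor them.
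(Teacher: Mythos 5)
Your proof is correct and is exactly the standard reduction the paper has in mind; the paper states this as an "easy lemma" and omits the proof entirely, relying on precisely the argument you give (substitute the Diophantine definition of $R_1$ for each variable, collapse the resulting finite system into one equation using that the quotient field of $R_2$ is not algebraically closed, and invoke the assumed decision procedure for $R_2$). Nothing is missing.
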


Using norm equations, Diophantine definitions have been obtained for $\Z$ over the rings of algebraic integers of
some number fields. Jan Denef has constructed a Diophantine definition of $\Z$ for the finite degree totally real
extensions of $\Q$. Jan Denef and Leonard Lipshitz extended Denef's results to all the extensions of degree 2 of
the finite degree totally real fields. Thanases Pheidas and the author of this paper have independently
constructed Diophantine definitions of $\Z$ for number fields with exactly one pair of non-real conjugate
embeddings. Finally Harold N. Shapiro and the author of this paper showed that the subfields of all the fields
mentioned above ``inherited'' the Diophantine definitions of $\Z$. (These subfields include all the abelian
extensions.) The proofs of the results listed above can be found in \cite{Den1},
\cite{Den3}, \cite{Den2}, \cite{Ph1}, \cite{Sha-Sh}, and \cite{Sh2}.\\ %

The author modified the norm method to obtain Diophantine definitions of $\Z$ for ``large'' subrings of totally real
number fields (not equal to $\Q$) and their extensions of degree 2.  (See \cite{Sh36},  \cite{Sh1}, \cite{Sh6},
and \cite{Sh3}.)  Further, again using norm equations, the author also showed that  in some totally real infinite
algebraic extensions of $\Q$ and extensions of degree 2 of such fields one can give  a Diophantine definition of
$\Z$ over the integral closures of ``small'' and ``large'' rings, though not over the rings of algebraic integers.
 (The terms ``large'' and ``small'' rings will be explained below.)   Unfortunately, the norm method, at least in
its present form, suffers from two serious limitations when used over infinite extensions: an ``infinite part'' of
the extension has to have a non-splitting prime (effectively requiring working over an infinite cyclic extension),
and one cannot use the method over the rings of algebraic integers.   At the same time, though,  one can describe
rather easily at least one big class of fields to which the method applies, e. g. all Abelian extensions with
finitely many ramified rational primes. \\%

Another method of constructing Diophantine definitions uses elliptic curves. The idea of using elliptic curves for this
purpose is due to Denef in \cite{Den2} where he showed that the following proposition held.
\begin{proposition}%
Let $K_{\infty}$ be a totally real algebraic possibly infinite extension of $\Q$.  If there exists an elliptic
curve ${\tt E}$ over $\Q$ such that $[{\tt E}(K):{\tt E}(\Q)] < \infty$, then $\Z$ has a Diophantine
definition over $O_K$.
\end{proposition}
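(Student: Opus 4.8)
The plan is to follow Denef's elliptic-curve method. Fix a Weierstrass model $y^2=x^3+ax+b$ for ${\tt E}$ with $a,b\in\Z$ and nonzero discriminant. The hypothesis $[{\tt E}(K):{\tt E}(\Q)]<\infty$ forces ${\tt E}(K)$ to be finitely generated of the same rank as ${\tt E}(\Q)$, and that common rank must be positive, since otherwise ${\tt E}(K)$ is finite and the hypothesis is empty of content about $O_K$. Choosing $M\ge 1$ divisible both by $[{\tt E}(K):{\tt E}(\Q)]$ and by the exponent of ${\tt E}(K)_{\tors}$, the group $M\cdot{\tt E}(K)$ is then an infinite cyclic subgroup of ${\tt E}(\Q)$; let $P$ be a generator of it and, after one further change of model, assume $P$ is integral (i.e.\ $x(P),y(P)\in\Z$). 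I would first carry out the argument when $\rk{\tt E}(\Q)=\rk{\tt E}(K)=1$, where $\Gamma:=\{nP:n\in\Z\}$ equals $M\cdot{\tt E}(K)$; the higher-rank case requires a further reduction to isolate such a $\Gamma$ but introduces no new ideas.

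From the finite index I would extract the two properties of ${\tt E}$ that power the method. First, both ``$R$ lies on ${\tt E}$'' and ``$R=M R'$ for some point $R'$ on ${\tt E}$'' are cut out by polynomial equations over $O_K$ in the projective coordinates of the points, using the addition and multiplication-by-$M$ formulas; hence the set of coordinate tuples of the points of $\Gamma=M\cdot{\tt E}(K)$ is Diophantine over $O_K$. Second, if $MQ=nP$ with $n\in\Z$ then $x(Q)$ is a root of $\phi_M(X)-x(nP)\,\psi_M(X)^2=0$, a polynomial over $\Q$ of degree at most $M^2$; so the coordinates of every such $Q$ lie in an extension of $\Q$ of degree bounded independently of $Q$ and $n$ — a key point, since $K$ itself may be infinite over $\Q$. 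What remains is to recover, by a Diophantine condition over $O_K$, the integer $n$ attached to a point $nP\in\Gamma$, and thereby to force a given $t\in O_K$ to be such an $n$.

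This recovery is the crux, and the step I expect to be the main obstacle. It rests on the arithmetic of the multiples of $P$: writing $x(nP)$ in lowest terms with square denominator $d_n^2$, the integer sequence $(d_n)$ is an elliptic divisibility sequence, so $d_k\mid d_n$ whenever $k\mid n$; and for an auxiliary fixed multiple $P_0=n_0P$ chosen so that $P_0$ reduces to the identity modulo every rational prime of bad reduction and every small prime, the formal group of ${\tt E}$ provides congruences of the shape ``$t\equiv u(R,P_0)$ modulo $D^2$,'' where $D$ is the rational-integer denominator attached to $z(P_0)=-x(P_0)/y(P_0)$, the quantity $u(R,P_0)$ is an explicit rational function of the coordinates of $R$ and $P_0$ that reproduces the index of $R$ up to a controlled error, and $D\to\infty$ as $n_0\to\infty$; crucially, such a congruence is a divisibility relation among elements of $O_K$, hence Diophantine. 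To pass from ``$t$ is congruent to a rational integer modulo arbitrarily large $D$'' to ``$t$ is a rational integer'' one uses that $K$ is totally real: over a totally real field the predicate ``$a$ is totally positive'' agrees, up to a bounded number of squares, with ``$a$ is a sum of squares'' and is therefore Diophantine, so one may impose in addition the archimedean constraint that $t-u(R,P_0)$ have absolute value less than $D$ at every (necessarily real) place. Since $R,P_0\in M\cdot{\tt E}(K)\subseteq{\tt E}(\Q)$ forces $u(R,P_0)\in\Q$, the congruence together with this bound forces $t=u(R,P_0)\in\Q$, whence $t\in\Q\cap O_K=\Z$; conversely, for $t=n\in\Z$ the pair $(nP,\,n_0P)$ with $n_0$ a large divisor of $n$ supplies a solution. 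Assembling the Weierstrass equation of ${\tt E}$, the relation ``$R\in M\cdot{\tt E}(K)$'', the elliptic-divisibility congruences, and the sum-of-squares (archimedean) conditions into a single system yields a polynomial $f(t,x_1,\dots,x_n)\in O_K[t,x_1,\dots,x_n]$ with $(\exists\, x_1,\dots,x_n\in O_K:\ f=0)\Longleftrightarrow t\in\Z$. The substantive work lies in making the congruence-and-archimedean bookkeeping precise and uniform and in disposing of the small primes and the primes of bad reduction.
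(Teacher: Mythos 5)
Your overall strategy is the right one and is essentially the method that the paper itself elaborates in Sections 3--5: multiply ${\tt E}(K)$ into ${\tt E}(\Q)$ using the finite index, cut out the resulting point set by Diophantine conditions, manufacture congruences from the denominators $\mte{d}(x([n]P))$, and use total reality to impose archimedean bounds which, combined with a norm computation, collapse a congruence to an equality. The gap is in your endgame, which as written is internally inconsistent. You assert that the congruence $t\equiv u(R,P_0)\bmod D^2$ together with the archimedean bound \emph{forces} $t=u(R,P_0)$, and simultaneously that for $t=n\in\Z$ the witness $(nP,\,n_0P)$ satisfies the system. These cannot both hold: a quantity such as $u(nP,n_0P)=x([n_0]P)/x([nn_0]P)$ is only \emph{congruent} to $n^2$ modulo the large divisor $\mte{d}(x([n_0]P))$ (this is Lemma \ref{le:Po2}); it is essentially never itself a rational integer. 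So if the forward implication really pinned $t$ to the value $u(R,P_0)$, the defined set would miss $\Z$ and your converse direction would fail. The correct conclusion of the norm argument is not $t=u$ but $t=m^2$, the integer that $u$ approximates $D^2$-adically (equivalently, in the formulation of Proposition \ref{prop:usebounds}, that $t$ agrees with each of its conjugates and hence descends): one first replaces $t\equiv u$ by $t\equiv m^2\bmod D^2$ and then bounds $|{\mathbf N}_{\Q(t)/\Q}(t-m^2)|$, a rational \emph{integer} with no denominator to control, which is strictly smaller than the power of $D$ that must divide it. Relatedly, your calibration (``modulo $D^2$'' against ``absolute value less than $D$'') leaves no room for the denominator of $u$ and the $2^{[T:\Q]}$ from the triangle inequality; this is exactly why the paper runs the modulus $pz^4$ against a size bound of order $z$ and denominators of order $z^2$ in the chain (\ref{eq:8}).

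Two smaller points. The division-polynomial degree bound is a detour here: once $R\in M\cdot{\tt E}(K)\subseteq{\tt E}(\Q)$, all relevant coordinates are already rational. And for rank greater than one you do not need to isolate a cyclic subgroup Diophantinely: in the paper's version (Lemma \ref{le:Part1.1}) the forward implication never uses that $S$ is a multiple of $Q$; only the satisfiability direction does, and there one fixed point of infinite order and its multiples suffice.
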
%
Expanding Denef's ideas, Bjorn Poonen proved the following result in \cite{Po}.
\begin{theorem}%
\label{thm:po}%
Let $M/K$ be a number field extension with an elliptic curve ${\tt E}$ defined over $K$, of rank one
over $K$, such that the rank of ${\tt E}$ over $M$ is also one.  Then $O_K$ (the ring of integers of $K$) is Diophantine over $O_M$.\\%
\end{theorem}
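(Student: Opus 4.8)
The plan is to reduce the theorem to the single assertion that $\Z$ admits a Diophantine definition over $O_M$, and then to obtain that assertion from Denef's elliptic-curve construction, adapted to the relative setting. For the reduction, suppose we have a Diophantine definition over $O_M$ of the set $\{a\in O_M:a\in\Z\}$, and fix a $\Z$-basis $\omega_1,\dots,\omega_r$ of $O_K$, where $r=[K:\Q]<\infty$; these are fixed elements of $O_K\subseteq O_M$. Then for $t\in O_M$ one has $t\in O_K$ if and only if there exist $a_1,\dots,a_r\in O_M$ with $a_i\in\Z$ for all $i$ and $t=a_1\omega_1+\dots+a_r\omega_r$. Since $M$ is not algebraically closed, this conjunction of polynomial conditions (the conditions $a_i\in\Z$ being themselves Diophantine) is a Diophantine condition over $O_M$, and it cuts out exactly $O_K$; so it suffices to produce the Diophantine definition of $\Z$.

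For the latter, put ${\tt E}$ in Weierstrass form with coefficients in $O_K$ and pick $P\in{\tt E}(K)$ of infinite order, which exists because $\rk{\tt E}(K)=1$. Since $\rk{\tt E}(M)=1$ too, $\Z P$ has finite index in ${\tt E}(M)$ modulo torsion; replacing $P$ by a suitable nonzero integer multiple of itself (still in ${\tt E}(K)$, still of infinite order), one arranges that $b\cdot{\tt E}(M)=\Z P$ for some integer $b\ge 1$. Consequently the predicate ``$Q\in\Z P$'' on points $Q\in{\tt E}(M)$ is Diophantine over $O_M$: it says that $Q=[b]Q'$ for some $Q'\in{\tt E}(M)$, and $[b]$ is given by the division polynomials, hence by genuine polynomial relations once points are encoded through the numerators and denominators of their affine coordinates. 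This is the one place where the equality $\rk{\tt E}(M)=\rk{\tt E}(K)$ is used, and it is indispensable. It remains to upgrade ``$Q\in\Z P$'' to a Diophantine description, over $O_M$, of the graph $\{(n,x(nP)):n\in\Z,\ n\ne 0\}$; projecting onto the $n$-coordinate and adjoining $0$ then yields the Diophantine definition of $\Z$. Here one uses that the denominators $d_n$ of the $x$-coordinates $x(nP)$ form an elliptic divisibility sequence: for each prime $\pp$ of good reduction, $\pp\mid d_n$ exactly when the order of $\overline P$ in ${\tt E}(\F_\pp)$ divides $n$, and the $\pp$-adic valuation of $d_n$ is controlled by the formal group. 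These facts let one pin down, by polynomial relations among numerator/denominator data, both that a point has the form $nP$ and that an accompanying integer variable equals $n$ — the divisibility $d_m\mid d_n$ mirroring $m\mid n$, and the quadratic growth $\widehat h(nP)=n^2\widehat h(P)$ controlling the size of $d_n$ and so limiting $n$ to finitely many candidates.

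The heart of the proof — and the bulk of the technical labour — is this last construction, which is essentially bookkeeping: one must discard the finitely many primes of bad reduction and the finite torsion subgroup of ${\tt E}(M)$, and must work around the possible failure of $O_M$ to be a principal ideal domain when converting divisibility of denominator \emph{ideals} into element-wise polynomial relations. Once the graph of $n\mapsto x(nP)$ has been defined over $O_M$, the two reductions above complete the proof; in particular, by the lemma above, if Hilbert's Tenth Problem is undecidable over $O_K$ then it is undecidable over $O_M$.
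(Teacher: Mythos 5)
The paper does not actually prove Theorem \ref{thm:po}; it quotes it from Poonen's paper \cite{Po}. Still, the machinery the paper imports from \cite{Po} (Lemmas \ref{le:Po1}--\ref{le:ratio}) and the way it is deployed in Sections 3 and 5 make clear what any correct proof must contain, and your proposal is missing its engine. The bridge from the point $[m][l]P$ to the integer $m^2$ \emph{as a ring element} is the divisibility $\mte{d}_U(x([lr]P)) \mid \mte{n}_U\bigl(x([lr]P)/x([mlr]P)-m^2\bigr)^2$ (Lemma \ref{le:Po2}, i.e.\ Lemma 11 of \cite{Po}): it says a ratio of $x$-coordinates is congruent to $m^2$ modulo an ideal that can be made to swallow any prescribed ideal (Lemma \ref{le:Po1}). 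Nothing in your sketch plays this role. Elliptic divisibility sequences, formal-group valuations, and the relation $\mte{d}_m \mid \mte{d}_n \Leftrightarrow m\mid n$ give you information about denominator \emph{ideals} indexed by $n$, but they never produce the integer $n$ (or $m^2$) as an element of the ring satisfying polynomial equations; asserting that ``polynomial relations pin down that an accompanying integer variable equals $n$'' is asserting the theorem. Canonical heights are not Diophantine data, and ``limiting $n$ to finitely many candidates'' has no meaning here --- $n$ is supposed to range over all of $\Z$.

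There is also a structural problem with your reduction. By routing everything through ``$\Z$ is Diophantine over $O_M$'' you have reduced the theorem to a strictly stronger statement that the known methods cannot reach for a general number field $M$. The last step of any such argument must upgrade ``$t\equiv m^2$ modulo an ideal of enormous norm'' to ``$t=m^2$'', and that requires bounding \emph{all} archimedean absolute values of $t$; this is available only over totally real fields (compare Propositions \ref{prop:realembed} and \ref{prop:usebounds} and the norm inequality (\ref{eq:8}), which is exactly where the paper uses total reality). Poonen's theorem is engineered to avoid this: since $\rank {\tt E}(M)=\rank {\tt E}(K)$, a suitable multiple $[r]{\tt E}(M)$ lands inside ${\tt E}(K)$, so every witness coordinate $x(P_1)/x(P_2)$ lies in $K$, and the congruence conditions need only force $t$ to agree with its conjugates over $K$ --- yielding $t\in O_K$, not $t\in\Z$. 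Your reduction discards precisely this feature. (The parts of your proposal that are sound: the integral-basis step recovering $O_K$ from $\Z$ \emph{if} $\Z$ were definable; the use of $[b]{\tt E}(M)$ to give a Diophantine parametrization of the multiples of a generator; and the awareness that torsion, bad primes, and the failure of $O_M$ to be a PID require bookkeeping.)
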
%
 Cornelissen, Pheidas and Zahidi weakened somewhat assumptions of Poonen's theorem. Instead of requiring a rank 1
curve retaining its rank in the extension, they require existence of a rank 1 elliptic curve over the bigger field
and an abelian variety over the smaller field retaining its rank in the extension (see \cite{CPZ}).  Further,
Poonen and the author have independently shown that the conditions of Theorem \ref{thm:po} can be
weakened to remove the assumption that rank is one and require only that the rank in the extension
is positive and is the same as the  rank below (see \cite{Sh33} and \cite{Po3}).  In \cite{Sh33}
the author also showed that the elliptic curve technique can be used over large rings.

Elliptic curves of rank one have also been used to construct Diophantine models of $\Z$ (an alternative method for
showing Diophantine undecidability of a ring) over ``very large'' rings of rational and algebraic numbers, as well
as to construct infinite discrete in a $\pp$-adic and/or an archimedean topology Diophantine sets
over these rings. (See \cite{Po2} and \cite{PS}.) The interest in such sets has been motivated by a
series of conjectures by Barry Mazur  (see \cite{M1}, \cite{M2}, \cite{M3} and \cite{M4}) and their
variations (see \cite{Sh21}). \\

In this paper we explore to what extent the elliptic curve methods can be adapted for showing the Diophantine
undecidability of rings of algebraic numbers (including rings of integers) in infinite extensions.
If one uses elliptic curves instead of norm equations to construct Diophantine definitions over an
infinite extension, in principle, one does not need cyclic extensions and it is possible to work
over rings of integers.   The difficulties will lie along a different plane: the elliptic curve
method requires existence of an elliptic curve of positive rank with a finitely generated
Mordell-Weil group over an infinite extension.  While we already have plenty of examples of this
sort, the general situation is far from clear. (See \cite{M5} for more details.)

Another technical difficulty which occurs over infinite extensions is connected to defining bounds on the height of
the elements.  Using quadratic forms and divisibility we can solve the problem to large extent over totally real
fields and to some extent over extensions of degree 2 of totally real fields.

In this paper we refine our results on bounds as used in \cite{Sh17}, \cite{Sh26}, and \cite{Sh36}. We also generalize
Denef's results to any totally real infinite extension $K_{\infty}$ of $\Q$ and any of its extensions of degree 2
assuming some finite extension of $K_{\infty}$ has an elliptic curve of positive rank with a finitely generated
Mordel-Weil group. We will be able to treat rings of integers as well as ``large'' rings in these extensions.

We will also show that if there exists an elliptic curve of rank 1 with a  finitely generated Mordel-Weil group in an
infinite extension, then techniques from \cite{Po2} and \cite{PS} are adaptable for this situation to reach similar
results. Please note that for this application we will not need bound equations and thus the discussion can take place
over an arbitrary infinite algebraic extension of $\Q$.

\section{Preliminary Results, Definitions and the Statement of the Main Theorem.}%
In this section we state some technical propositions which will be used in the proofs and describe notation and
assumptions to be used in the sections below. We start with the proposition on  definability of integrality at finitely
many primes over number fields.

\begin{proposition}%
\label{prop:finmany}%
Let $K$ be a number field. Let ${\calW}_K$ be any set of primes of $K$. Let ${\calS}_K \subseteq
{\calW}_K$ be a finite set.  Let ${\calV}_K = {\calW}_K \setminus {\calS}_K$.  Then
$O_{K,\calV_K}$ has a Diophantine definition over $O_{K,\calW_K}$.  (See, for example, \cite{Sh5}.)\\%
\end{proposition}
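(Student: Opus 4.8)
The plan is to reduce the statement to the case of a single prime, and then to detect the local integrality condition at that prime by means of a norm equation together with a Hasse‑type local--global principle.

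\emph{Reduction to one prime.} Since $\calS_K$ is finite, write $\calS_K=\{\pp_1,\dots,\pp_r\}$ and note the identity of subrings of $K$
\[
O_{K,\calV_K}\;=\;\bigcap_{i=1}^{r}O_{K,\calW_K\setminus\{\pp_i\}},
\]
each $O_{K,\calW_K\setminus\{\pp_i\}}$ being contained in $O_{K,\calW_K}$. The common quotient field of all the rings involved is $K$, which is not algebraically closed, so a finite intersection of subsets of $O_{K,\calW_K}$ each having a Diophantine definition over $O_{K,\calW_K}$ again has one (conjoin the defining systems; a single equation then suffices since $K$ is not algebraically closed). Hence it suffices to treat one prime: for $\pp\in\calW_K$, show that $\{t\in O_{K,\calW_K}:\ord_\pp t\ge 0\}$, which for $t\in O_{K,\calW_K}$ coincides with $O_{K,\calW_K\setminus\{\pp\}}$, is Diophantine over $O_{K,\calW_K}$.

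\emph{Detecting $\ord_\pp t\ge 0$.} Choose a quadratic (or, more generally, cyclic) extension $L/K$ in which $\pp$ is inert, together with an element $c\in O_K$ that is a uniformizer at $\pp$ and is totally positive; $L$ and $c$ are to satisfy in addition a finite list of local conditions at the archimedean places, at the finitely many primes ramified in $L/K$, and at the finitely many zeros of $c$, all arranged by weak approximation and by exploiting the freedom in the choice of $L$. Consider the condition ``$1+ct^{2}$ is a norm from $L$'', which becomes a polynomial system over $O_K\subseteq O_{K,\calW_K}$ once $N_{L/K}$ is expanded in a power basis of $L/K$ in the coordinates of the prospective norm. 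Since $L/K$ is cyclic, the Hasse norm theorem says $1+ct^2$ is a global norm iff it is a local norm at every place of $K$. One wants to see that, after the choices above, the only place imposing a nonvacuous condition is $\pp$, and that there the condition is exactly $\ord_\pp t\ge 0$: indeed, if $\ord_\pp t\ge 0$ then $\ord_\pp(1+ct^2)=0$, a unit, hence a norm from the unramified local extension at $\pp$; while if $\ord_\pp t<0$ then $\ord_\pp(1+ct^2)=1+2\ord_\pp t$ is negative and odd, hence not a norm at the inert prime $\pp$. The archimedean places contribute nothing because $c$ is totally positive (so $1+ct^2>0$ in every real embedding), and the ramified places contribute nothing because the congruences imposed on $c$ force $1+ct^2$ to be a local square there for every $t\in O_{K,\calW_K}$.

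\emph{Main obstacle.} The real difficulty is the point I have glossed over above: a single extension $L/K$ is non‑split at infinitely many primes of $K$, so the norm equation a priori constrains $1+ct^2$ at all of those primes, not only at $\pp$, and one must ensure that every such constraint is either vacuous on $O_{K,\calW_K}$ or is already forced by the condition at $\pp$. Arranging simultaneously the extension $L/K$, the multiplicative correction (of the shape $1+ct^2$), and the auxiliary element $c$ so that all of these parasitic local conditions disappear while the condition at $\pp$ is retained is the crux of the matter; this delicate bookkeeping is carried out in \cite{Sh5}, whose conclusion we are quoting.
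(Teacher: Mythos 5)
The paper offers no internal proof of this proposition at all --- it is quoted with a pointer to \cite{Sh5} --- so the only meaningful comparison is with the strategy of that source, and your sketch is indeed the standard norm-equation method it uses. Your reduction to a single prime is correct (the intersection identity and the conjunction of Diophantine systems over a non--algebraically-closed quotient field are both fine), and the local analysis at $\pp$, at the real places, and at the ramified places is the right one. But the obstacle you flag in your last paragraph is a genuine gap, not a formality, and it threatens the direction you need most: for the definition to work, \emph{every} $t\in O_{K,\calW_K}$ with $\ord_\pp t\ge 0$ must make $1+ct^2$ a global norm, yet at any prime $\qq\ne\pp$ inert in $L/K$ with $\qq\notin\calW_K$ the element $1+ct^2$ is $\qq$-integral and may vanish to odd order, in which case it is not a local norm at $\qq$ and the Hasse criterion fails even though $\ord_\pp t\ge 0$; similarly, at inert primes of $\calW_K$ where $t$ has a pole, $\ord_\qq(1+ct^2)=\ord_\qq c+2\ord_\qq t$ is odd whenever $\ord_\qq c$ is, and since $\calW_K$ is an arbitrary (possibly cofinite) set of primes one cannot simply push the support of $c$ and the ramification of $L$ out of harm's way by weak approximation. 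Eliminating or absorbing these parasitic local conditions is precisely the content of the cited construction, and since you explicitly defer that step to \cite{Sh5} --- exactly as the paper does --- your write-up is an honest and correctly oriented outline rather than a self-contained proof; as a proof it is incomplete at the point you yourself identify.
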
%
``Infinite'' versions of this proposition are more complicated. Before stating some of them below  we
introduce new terminology.%
\begin{notationassumption}%
\label{not:infversion}%
The following terminology will be used in the rest of the paper.
\begin{itemize}%
\item Let $L$ be an algebraic, possibly infinite extension of $\Q$. Let $Z$ be a number field contained in $L$. Let
$\calC_Z$ be a finite set of primes of $Z$. Assume further that there exists a polynomial $I_{\calC_Z/L}(x,
t_1,\ldots,t_k) \in Z[x,t_1,\ldots,t_k]$  such that
\begin{equation}
\label{eq:I}
I_{\calC_Z/L}(x, t_1,\ldots,t_k) =0
\end{equation}%
has solutions in $L$ only if $u_{\calC_Z/L}x$ is integral at all the primes of $\calC_Z$, where $u_{\calC_Z/L} \in
\Z_{>0}$ is fixed and depends only on $\calC_Z$.   Assume also that if $x \in Z$ and is integral at all the
primes of $\calC_Z$, then (\ref{eq:I}) has solutions in $Z$. Then  we will call $\calC_Z$-primes \emph{$L$-boundable}.
If we can set $u_{\calC_Z/L}=1$, then we will say that \emph{integrality is definable} at primes of $\calC_Z$ over $L$.
\item   Let $L$, as above,  be an algebraic, possibly infinite extension of $\Q$.  Let $q$ be a rational prime. Then let
the \emph{degree index of $q$} (with respect to $L$)  denoted by $i_L(q)$ be defined as follows:
\[%
i_L(q) = \max\{n \in \Z_{\geq 0} : n=\ord_{q}[M:\Q], \mbox{ where } M \mbox { is a number field
contained in } L\}
\]%
\end{itemize}
\end{notationassumption}%
The following statement is taken from Section 3 of \cite{Sh36}.%
\begin{proposition}%
\label{prop:inffinmany}
 Let $L$ be an algebraic, possibly infinite extension of $\Q$. Let $Z$ be a number field
contained in $L$ such that $L$ is normal over $Z$. Let $\calC_Z$ be a finite set of primes of $Z$ such that for
every $\pp_Z \in \calC_Z$ the following conditions are satisfied.
\begin{itemize}%
\item  There exists a non-negative integer $m_f$ such that any prime  lying  above $\pp_Z$ in a number field
contained in $L$ has a relative degree $f$ over $Z$ with $\ord_qf \leq m_f$.
\item There exists a non-negative integer $m_e$ such that any prime  lying above $\pp_Z$ in a number field
contained in $L$ has a ramification degree $e$ over $Z$ with $\ord_qe \leq m_e$.%
\end{itemize}%
Then   $\calC_Z$-primes are \emph{$L$-boundable}. If we also assume that the ramification degree for all the
factors of primes in $\calC_Z$ is bounded from above, then integrality at all the primes of $\calC_Z$ is definable.
\end{proposition}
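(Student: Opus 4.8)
The plan is to reduce to one prime, build a single norm-type equation over $Z$ tailored to it, and then pass to arbitrary number subfields of $L$. Fix $\pp_Z \in \calC_Z$ lying over the rational prime $p$; the finite set $\calC_Z$ is treated prime by prime, with $u_{\calC_Z/L}$ the product of the individual constants and $I_{\calC_Z/L}$ the resulting system. The gadget for a single prime is the classical one behind Proposition \ref{prop:finmany}: one chooses a cyclic extension $E/Z$ of degree $q$ — $q$ being the rational prime appearing in the hypotheses; for $q=2$ this is the quadratic-form incarnation — in which $\pp_Z$ is inert and which splits at the finitely many other ``bad'' places, and writes down a norm equation $I_{\pp_Z/L}(x,\vec t)=0$ over $Z$ asserting that a suitable twist of $x$ by a power of a uniformizer of $\pp_Z$ lies in the image of $N_{E/Z}$. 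By the Hasse norm theorem this is a condition on local norms, and the coefficients are arranged so that over $Z$ it says precisely that $x$ is integral at $\pp_Z$, the other bad places being neutralized by $E$ together with an extra power of $p$. This already delivers the converse direction required for $L$-boundability: if $x\in Z$ is integral at every prime of $\calC_Z$, the same local--global statement over $Z$ produces a solution of the system with all unknowns in $Z$.

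The substance is the behaviour of this fixed $Z$-equation over the number fields $M$ with $Z \subseteq M \subseteq L$. Here normality of $L/Z$ enters: a solution in $L$ already lies in some finite subextension, which we enlarge to a finite Galois $M/Z$ inside $L$; then $\Gal(M/Z)$ permutes the primes of $M$ above $\pp_Z$ transitively, so they have a common ramification index $e=e(M)$ and residue degree $f=f(M)$, and it suffices to control one such $\Pp$. Upon base change from $Z_{\pp_Z}$ to $M_\Pp$ the norm condition can weaken — $E\cdot M/M$ may fail to be inert at $\Pp$, and powers of the uniformizer may become local norms — but only to an extent governed by $\ord_q f(M)$ (through the unramified part of $M_\Pp/Z_{\pp_Z}$) and $\ord_q e(M)$ (through the ramified part): solvability of $I_{\pp_Z/L}=0$ over $M$ forces $\ord_\Pp(x) \ge -c$ for an explicit $c$ depending only on $\ord_q e(M)$ and $\ord_q f(M)$. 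By hypothesis these never exceed $m_e$ and $m_f$, so there is one exponent $N=N(m_e,m_f)$ with $\ord_\Pp(x)\ge -N$ at every $\Pp\mid\pp_Z$ in every such $M$; hence $p^N x$ is integral at the primes of $L$ above $\pp_Z$, and $u_{\pp_Z/L}=p^N$ works. I expect this uniformization step to be the crux: over an infinite $L$ the degrees $e$ and $f$ themselves are unbounded, so the construction must be set up — choosing $E$ and the uniformizer powers in $I_{\pp_Z/L}$ — so that the unavoidable degradation is swallowed by a bounded power of $p$ that does not depend on $M$, and it is exactly the boundedness of the $q$-parts of $e$ and $f$ that makes this possible.

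Finally, for the sharpened conclusion assume the ramification indices of all factors of primes in $\calC_Z$ are bounded above, say by $B$. Then one inflates the power of the uniformizer of $\pp_Z$ occurring in $I_{\pp_Z/L}$ past $B$: the ramified part of $M_\Pp/Z_{\pp_Z}$ can no longer make that power a local norm, the forced inequality improves to $\ord_\Pp(x)\ge 0$, and one may take $u_{\calC_Z/L}=1$, so integrality at the primes of $\calC_Z$ is definable over $L$. The explicit equations and the handling of the auxiliary bad places are as in Section 3 of \cite{Sh36}.
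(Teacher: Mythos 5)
The paper does not actually prove this proposition; it states it as ``taken from Section 3 of \cite{Sh36}'', so your proposal can only be measured against the norm-equation method of that reference. Your overall architecture matches it: reduce to a single prime $\pp_Z$, impose a norm condition from a cyclic auxiliary extension $E/Z$ in which $\pp_Z$ is inert, and track how the local condition at a prime $\Pp$ of a number field $M\subset L$ degrades through the unramified and ramified parts of $M_\Pp/Z_{\pp_Z}$, using the hypotheses to bound that degradation uniformly in $M$. The converse direction (solvability over $Z$ for $x$ integral at $\calC_Z$) and the final remark that a uniform upper bound on ramification upgrades boundability to genuine definability are also in the right spirit.

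There is, however, one concrete gap: taking $E/Z$ cyclic of degree exactly $q$ cannot work unless $m_e=m_f=0$. The local information carried by the norm equation at $\Pp$ lives in the extension $E_{\qq}M_\Pp/M_\Pp$, whose degree is $q$ divided by the contribution of $M_\Pp/Z_{\pp_Z}$; as soon as $q$ divides $e(\Pp/\pp_Z)f(\Pp/\pp_Z)$ --- which the hypotheses explicitly permit, since they only bound $\ord_q e$ and $\ord_q f$ by $m_e$ and $m_f$ rather than forcing them to vanish --- the local extension becomes trivial, every element of $M_\Pp$ is a local norm, and solvability of your system over $M$ imposes no constraint whatsoever on $\ord_\Pp(x)$. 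Your sentence ``the norm condition can weaken \dots but only to an extent governed by $\ord_q f(M)$ and $\ord_q e(M)$'' silently assumes the condition never becomes vacuous, which is exactly what fails. The repair is to take $E/Z$ cyclic of degree $q^{m_e+m_f+1}$ (with $\pp_Z$ inert, hence local degree $q^{m_e+m_f+1}$ at $\pp_Z$), so that after losing at most $q^{m_f}$ to the residue extension and $q^{m_e}$ to ramification, the local degree at every $\Pp$ in every $M$ is still at least $q$; only then does the surviving divisibility constraint on $\ord_\Pp$ of the relevant expression yield the uniform lower bound $\ord_\Pp(x)\geq -N$ and the constant $u_{\calC_Z/L}$. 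A second, minor point: the lower bound you extract naturally scales with $e(\Pp/\pp_Z)$ rather than being an absolute $-N$, but since you clear denominators with a power of $p$ (whose $\Pp$-order also scales with $e$), this does not affect the conclusion.
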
%
Finally we want to separate out a case which occurs quite often in our discussion of infinite extensions.
\begin{corollary}%
\label{cor:deg}%
Suppose $L$ is a normal, algebraic, possibly infinite extension of some number field such that for some odd
rational prime $q$ the degree index of $q$ with respect to $L$ is finite. Then for any number field $M \subset L$,
any $M$-prime $\pp_M$ is $L$-boundable.
\end{corollary}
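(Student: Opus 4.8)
The plan is to derive Corollary~\ref{cor:deg} as a direct application of Proposition~\ref{prop:inffinmany}. Given a number field $M \subset L$ and an $M$-prime $\pp_M$, set $Z = M$ and $\calC_Z = \{\pp_M\}$; since $L$ is normal over the base number field it is in particular normal over $M$, so the hypotheses of Proposition~\ref{prop:inffinmany} are in force once we verify the two bulleted conditions on relative degrees and ramification degrees for factors of $\pp_M$ in number fields contained in $L$.

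First I would observe that if $N$ is any number field with $M \subseteq N \subseteq L$, and $\Pp$ is a prime of $N$ lying above $\pp_M$ with relative degree $f(\Pp/\pp_M) = f$ and ramification index $e(\Pp/\pp_M) = e$, then $ef \mid [N:M]$, hence $ef \mid [N:\Q]$. Therefore $\ord_q e + \ord_q f = \ord_q(ef) \leq \ord_q[N:\Q] \leq i_L(q)$, where the last inequality is the definition of the degree index of $q$ with respect to $L$. Since $i_L(q) < \infty$ by hypothesis, we may take $m_f = m_e = i_L(q)$: every factor of $\pp_M$ in every number field contained in $L$ has $\ord_q f \leq m_f$ and $\ord_q e \leq m_e$. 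This is exactly what the two bullets of Proposition~\ref{prop:inffinmany} require, so that proposition yields that $\calC_Z$-primes, i.e. $\pp_M$, are $L$-boundable.

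The only genuine subtlety — and the place I would be most careful — is the hypothesis in Proposition~\ref{prop:inffinmany} that $q$ be available to control \emph{both} $e$ and $f$ simultaneously; this is why the statement of the corollary insists that $q$ be odd, matching the oddness assumption that presumably enters the proof of Proposition~\ref{prop:inffinmany} (e.g. through a quadratic-form or norm argument that behaves badly at the prime $2$). Since the corollary simply inherits that hypothesis, there is no new obstacle here: once the divisibility bound $\ord_q(ef) \leq i_L(q)$ is in hand, the conclusion is immediate. I would close by noting that the corollary does not assert definability of integrality (only boundability), since without a uniform bound on the ramification degrees themselves — as opposed to their $q$-adic valuations — the second, stronger conclusion of Proposition~\ref{prop:inffinmany} need not apply.
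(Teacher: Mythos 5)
Your overall strategy is the right one (and the only one available): feed $\pp_M$ into Proposition~\ref{prop:inffinmany} and extract the bounds $m_e,m_f$ from the finiteness of $i_L(q)$. But the key arithmetic step is wrong as written. You assert that for $M\subseteq N\subseteq L$ and $\Pp$ a prime of $N$ above $\pp_M$ one has $ef\mid [N:M]$. This holds when $N/M$ is Galois (where $efg=[N:M]$), but fails for general $N/M$: one only has $\sum_i e_if_i=[N:M]$, and an individual local degree $e_if_i$ need not divide $[N:M]$, nor need its $q$-adic valuation be bounded by $\ord_q[N:M]$. For instance, in a non-Galois quartic field a rational prime can have a factor of residue degree $3$, so $\ord_3(ef)=1>\ord_3(4)=0$ even though $q=3$ is odd. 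Since the number fields $N\subset L$ over which Proposition~\ref{prop:inffinmany} quantifies are in no way assumed normal over $M$, the inequality $\ord_q(ef)\le i_L(q)$ does not follow from your computation. A second, related slip: $L$ normal over \emph{some} number field $K_0$ does not make $L$ normal over an arbitrary $M\subset L$ unless $M\supseteq K_0$ (an embedding fixing $M$ need not fix $K_0$), so the normality hypothesis of Proposition~\ref{prop:inffinmany} with $Z=M$ is not automatically satisfied either.

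Both problems are cured by using the normality of $L/K_0$ in a substantive way rather than just quoting it. Work over $Z=MK_0$ (so $L/Z$ is genuinely normal, and boundability of the finitely many $Z$-primes above $\pp_M$ gives what is needed for $\pp_M$). Given $N$ with $Z\subseteq N\subseteq L$, let $N^*$ be the Galois closure of $N$ over $K_0$; every $K_0$-conjugate of $N$ lands back inside $L$ precisely because $L/K_0$ is normal, so $N^*\subseteq L$ and $N^*/Z$ is Galois. For $\Pp^*$ above $\Pp$ one then has $e(\Pp/\pp_Z)f(\Pp/\pp_Z)\mid e(\Pp^*/\pp_Z)f(\Pp^*/\pp_Z)\mid [N^*:Z]\mid [N^*:\Q]$ by multiplicativity in towers and the Galois property, whence $\ord_q(ef)\le\ord_q[N^*:\Q]\le i_L(q)$, and the two bullets of Proposition~\ref{prop:inffinmany} hold with $m_e=m_f=i_L(q)$. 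Your closing remarks (that $q$ odd is inherited from the proposition, and that only boundability, not definability of integrality, is claimed) are correct.
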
%

\begin{notationassumption}
Next we introduce several additional notational conventions to be used throughout the paper.
\begin{itemize}
\item Let $N$ be any finite extension of a  number field $U$. Let $\calT_U$ (or $\calV_U$, $\calW_U$, $\calS_U$,
$\calE_U$, $\calN_U$, $\calL_U$, $\calR_U$, \ldots) be any set of primes of $U$. Then let $\calT_N$  (or
$\calV_N$, $\calW_N$, $\calS_N$, $\calE_N$, $\calN_N$, $\calL_N$, $\calR_N$, \ldots) be the set of all primes of
$N$ lying above the primes of $\calT_U$.%
\item If $N_{\infty}$ is an algebraic, possibly infinite extension of $U$, then $O_{N_{\infty},
\calT_{N_{\infty}}}$ ( or $O_{N_{\infty}, \calV_{N_{\infty}}}$, $O_{N_{\infty}, \calW_{N_{\infty}}}, \ldots$) will
denote the integral closure of $O_{U, \calT_U}$ in $N_{\infty}$ (or respectively of  $O_{U, \calV_U}$, $O_{U,
\calW_U}, \ldots$).
\item For any number field $U$ let $\calP(U)$ denote the set of all non-archimedean primes of $U$.%
\item For any field $U$ and a set $\calW_U \subset \calP(U)$, let $\overline \calW_U$ be the
closure of $\calW_U$
with respect to conjugation over $\Q$.  %
\item For any field $U$ and a set $\calW_U \subset \calP(U)$, let $\hat{\calW}_U$ be a subset of $\calW_U$
obtained from $\calW_U$ by removing a prime of highest relative degree over $\Q$ from every complete set of
$\Q$-conjugates contained in $\calW_U$.
\item We will assume that all the fields under discussion are subfields of $\C$.  Given two fields $U, T \subset
\C$ we will interpret $UT$ to mean the smallest subfield of $\C$ containing both fields.
\item For a number field $K$ we will denote its Galois closure over $\Q$ by $K^{\Gal}$.
\end{itemize}
\end{notationassumption}%
Below we state two well-known technical propositions which are also quite important for the proofs in this paper.%
\begin{proposition}%
Let $K$ be a number field. Let ${\mathcal W}_K$ be any set of primes of $K$.  Then the set of non-zero elements of
$O_{K,\calW_K}$ has a Diophantine definition over $O_{K,\calW_K}$.   (See, for example, \cite{Sh5}.)\\%
\end{proposition}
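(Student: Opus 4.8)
\medskip
\noindent\textbf{Proof proposal.}
The plan is to split according to whether $O_{K,\calW_K}$ is a field. If $\calW_K=\calP(K)$, then $O_{K,\calW_K}=K$ and for $t\in K$ one has $t\neq 0\Longleftrightarrow\exists x\,(tx=1)$; since $K$ is not algebraically closed, this single equation is a legitimate Diophantine definition. So from now on assume $\calW_K\subsetneq\calP(K)$, so that $O_{K,\calW_K}$ is a Dedekind domain with finite residue fields that is not a field.

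In this case the guiding principle is that a \emph{nonzero} $t$ generates an ideal of finite index, so the reduction modulo $(t)$ of a fixed polynomial can always be solved, whereas for $t=0$ one would need a genuine solution of a polynomial deliberately chosen to have none. Precisely, I would look for a polynomial $f_0(\bar x)$ over $O_{K,\calW_K}$ with the two properties: (i) $f_0(\bar x)=0$ has no solution in $O_{K,\calW_K}$; and (ii) for every nonzero $t$ the congruence $f_0(\bar x)\equiv 0\pmod{(t)}$ is solvable in $O_{K,\calW_K}$. Given such an $f_0$,
\[
t\neq 0 \quad\Longleftrightarrow\quad \exists\,\bar x,\,z\in O_{K,\calW_K}:\ f_0(\bar x)=tz
\]
is the desired definition: if $t=0$ the equation forces $f_0(\bar x)=0$, impossible by (i); if $t\neq 0$, choose $\bar x$ with $f_0(\bar x)\in(t)$ by (ii) and put $z=f_0(\bar x)/t$.

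When $K$ is formally real (that is, $K$ admits a real embedding) this is easy and completely uniform in $\calW_K$: take $f_0(\bar x)=x_1^2+\cdots+x_N^2+1$ for a suitable fixed $N$. Property (i) holds because a real embedding sends $x_1^2+\cdots+x_N^2+1$ to a strictly positive real number. Property (ii) holds because, $O_{K,\calW_K}/(t)$ being a finite product of rings $O_{K,\pp}/\pp^{a}$ with $\pp\notin\calW_K$, the Chinese Remainder Theorem reduces it to solving $x_1^2+\cdots+x_N^2\equiv -1$ modulo each $\pp^{a}$; this is possible once $N\geq 5$, since a quadratic form of dimension at least $5$ over the local field $K_\pp$ is isotropic, hence universal, and $-1$ is a unit (the primes above $2$ require slightly more care, or a slightly larger $N$).

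For a general number field the natural candidate is an anisotropic binary form: fix a nonsquare $d\in O_{K,\calW_K}$ and take $f_0(x,y)=x^2-dy^2-c$, where $c$ is chosen \emph{not} to be a norm from the extension $K(\sqrt d)/K$, which gives (i). For (ii), at a prime $\pp\mid t$ (necessarily $\pp\notin\calW_K$) the form $x^2-dy^2$ over $K_\pp$ represents every element of even valuation --- all of $K_\pp$ when $d$ is locally a square --- so the only obstruction to solving $x^2-dy^2\equiv c\pmod{(t)}$ comes from primes at which $c$ fails to be a local norm; one therefore arranges $K(\sqrt d)/K$ to be ramified (and inert) only at primes belonging to $\calW_K$, with $c$ a $\calW_K$-unit that is a nonnorm at one such prime, the global nonnorm property then being forced by Hilbert reciprocity. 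Since the obstructing primes now lie in $\calW_K$, none of them divides a nonzero $t\in O_{K,\calW_K}$, and (ii) follows. Making all of these local conditions on $c$ and $d$ simultaneously satisfiable is the step I expect to be the genuine obstacle: for a field that is not formally real, or when $\calW_K$ is very small --- in the extreme, $\calW_K=\emptyset$ with $K$ totally imaginary --- one cannot simply absorb the obstructing primes into $\calW_K$ as above, and a more delicate choice of auxiliary data, or a different construction altogether, is required. This is exactly what is carried out in \cite{Sh5}, whose statement we are quoting here.
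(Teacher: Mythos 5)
The paper gives no proof of this proposition --- it is quoted with a pointer to \cite{Sh5} --- so the only question is whether your argument stands on its own. Your framework is the standard and correct one: produce a polynomial $f_0$ with no zeros in $O_{K,\calW_K}$ but with zeros modulo every nonzero ideal, and define $t\neq 0$ by $\exists\,\bar x,z\,(f_0(\bar x)=tz)$. The reduction itself is airtight (the decomposition $O_{K,\calW_K}/(t)\cong\prod_{\pp\notin\calW_K}O_\pp/\pp^{a_\pp}$ is exactly why local solvability at the non-inverted primes suffices), the case $O_{K,\calW_K}=K$ is fine, and the formally real case via $x_1^2+\cdots+x_N^2+1$ is essentially correct --- except that isotropy of the form over the field $K_\pp$ only yields a $K_\pp$-point of $\sum x_i^2=-1$, not an $O_\pp$-point; you should instead argue directly that $-1$ is a sum of squares of elements of $O_\pp$ (two squares plus Hensel for odd $\pp$, a short explicit computation for dyadic $\pp$).

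The genuine gap is the one you concede at the end, but your diagnosis of where the difficulty lies is inverted. The case you single out as intractable for your method --- $\calW_K=\emptyset$ with $K$ totally imaginary --- is the easiest of all: take $f_0(x,y)=(2x-1)(3y-1)$. It has no zero in $O_K$ because $1/2,1/3\notin O_K$, and it has a zero modulo any nonzero ideal because each prime divides at most one of $2,3$, so by CRT one solves $2x\equiv 1$ or $3y\equiv 1$ prime by prime. More generally $(px-1)(qy-1)$ works whenever two distinct rational primes $p,q$ each have a $K$-factor outside $\calW_K$ (so that neither factor can vanish on the ring). The only configuration this misses, besides $\calW_K=\calP(K)$, is the semilocal one in which all non-inverted primes lie over a single rational prime; but there $\calW_K$ is cofinite, so your binary norm form goes through exactly as you sketch it: choose $d$ so that $K(\sqrt d)/K$ is inert at the finitely many non-inverted primes and at an auxiliary prime $\qq\in\calW_K$, and take $c$ a unit at the non-inverted primes with odd order at $\qq$. (Two smaller corrections: failure to be a local norm at a single place already precludes being a global norm --- Hilbert reciprocity is not what does the work there; and for property (ii) one needs the \emph{integral} norm form to represent $c$ modulo $\pp^a$, which for unramified odd $\pp$ follows from surjectivity of the norm on local units.) So the two devices you already have are complementary and together cover every $\calW_K$; as written, however, your proof is incomplete for general $K$ because you defer to the reference precisely the case that your elementary device disposes of in one line.
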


This proposition allows us to use variables which take values in $K$ while we are ``officially'' working with
variables taking values in $O_{K,{\mathcal W}_K}$.  We write these $K$-variables as ratios of variables in
$O_{K,{\mathcal W}_K}$ with the proviso that the denominator is not zero.\\%

The next proposition allows us to establish some bounds on real valuations.  It is due to Denef and can be found in
\cite{Den3} or Section 5.1 of \cite{Sh34}.
\begin{proposition}
\label{prop:realembed}%
Let $K_{\infty}$ be a totally real algebraic possibly infinite extension of $\Q$. Let $G_{\infty}$ be a finite
extension of $K_{\infty}$ generated by an element $\alpha \in K_{\infty}$. Then the set $\{x \in G_{\infty}:
\sigma(x) \geq 0\, \, \forall \sigma : G_{\infty} \longrightarrow \R\}$ is Diophantine over
$G_{\infty}$.
\end{proposition}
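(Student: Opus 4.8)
The plan is to identify the set in the statement with the set of elements of $G_\infty$ that are sums of a \emph{fixed, bounded} number of squares, which is visibly Diophantine; note that $G_\infty$ is in any case an algebraic extension of $\Q$, and that the hypotheses that $K_\infty$ be totally real and $G_\infty/K_\infty$ finite are not needed for the argument below.

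First I would reduce to a statement about sums of squares. By Artin's solution to Hilbert's seventeenth problem over an arbitrary field (equivalently, by Artin--Schreier theory), an element $x$ of a field $F$ is a sum of squares in $F$ if and only if $x$ is nonnegative under every ordering of $F$; in particular, if $F$ has no ordering then every element of $F$ is a sum of squares. When $F = G_\infty$ is algebraic over $\Q$, its orderings are in bijection with its real embeddings $\sigma : F \to \R$, since the real closure of an ordering is an archimedean real closed field (being real closed and algebraic over $\Q$), hence embeds order-preservingly into $\R$, while conversely each real embedding pulls back the order of $\R$. Consequently $\{x \in G_\infty : \sigma(x) \ge 0 \ \forall\, \sigma : G_\infty \to \R\}$ is exactly the set of sums of squares in $G_\infty$ (which is all of $G_\infty$ when $G_\infty$ has no real embedding).

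Next I would pin down a \emph{uniform} bound on the number of squares. It is classical that an element of a number field that is nonnegative at every real embedding is a sum of four squares in that field: apply the Hasse--Minkowski theorem to the five-variable form $x_1^2 + x_2^2 + x_3^2 + x_4^2 - a x_5^2$, which is isotropic at every completion (at real places because $a$ is positive there, making the form indefinite; at finite places because any form of dimension $\ge 5$ over a $p$-adic field is isotropic; at complex places trivially), hence isotropic over the field, with the isotropic vector forced to have $x_5 \ne 0$ because the field is formally real --- and this is just Lagrange's theorem when the field is $\Q$. The bound transfers to $G_\infty$ for free: if $x = s_1^2 + \dots + s_m^2$ with $s_i \in G_\infty$, then $x$ and all the $s_i$ lie in the number field $\Q(s_1,\dots,s_m) \subseteq G_\infty$, so $x$ is a sum of four squares there, and those squares lie in $G_\infty$. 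Hence the set in the statement equals $\{x \in G_\infty : \exists\, y_1,y_2,y_3,y_4 \in G_\infty, \ x = y_1^2 + y_2^2 + y_3^2 + y_4^2\}$.

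Finally, this set is cut out over $G_\infty$ by the single polynomial equation $x - (y_1^2 + y_2^2 + y_3^2 + y_4^2) = 0$ with integer coefficients, so it is Diophantine over $G_\infty$ as required (``$\Longleftarrow$'' since $\sigma(x) = \sum_i \sigma(y_i)^2 \ge 0$, and ``$\Longrightarrow$'' by the two steps above). I expect the only point really needing care to be the \emph{uniformity} of the square bound across all number fields: without it, no single polynomial could work over the infinite field $G_\infty$. Everything else is the general theory of ordered fields together with the Hasse--Minkowski input, which over totally real number fields is the substance of the version of this statement originally due to Denef.
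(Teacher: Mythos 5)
Your proof is correct and is essentially the argument the paper outsources to its citation of Denef: identify the totally nonnegative elements with the sums of four squares, note that the bound four is uniform over all number fields by Hasse--Minkowski, and conclude that the set is cut out by the single equation $x=y_1^2+y_2^2+y_3^2+y_4^2$ over the infinite extension. The one point worth tightening is your claim that the isotropic vector of $\langle 1,1,1,1,-a\rangle$ is ``forced to have $x_5\neq 0$ because the field is formally real'': the number field $\Q(s_1,\dots,s_m)$ need not be formally real (e.g.\ when $G_{\infty}$ has no real embeddings), but in that case $\langle 1,1,1,1\rangle$ is itself isotropic, hence universal, so it still represents $a$ and the four-square bound survives.
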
%

The following proposition allows us to avoid certain sets of primes in the numerators and can be derived from
Proposition 25, Section 8, Chapter I of \cite{L}.
\begin{lemma}%
\label{le:negorder}%
Let $T/K$ be a number field extension. Let $R(X)$ be the monic irreducible polynomial of an integral generator of
$T$ over $K$. Let $\calV_K$ be the set of all primes of $K$ without relative degree one factors in $T$ and not
dividing the discriminant of $R(X)$. Then for all $x \in K$ and all $\pp \in \calV_K$ we have that $\ord_{\pp}R(x)
\leq 0$.
\end{lemma}%

The next lemma will allow us to conclude that under some circumstances the set of primes we can prevent from
appearing in the numerators of the divisors of the elements is closed under conjugation over $\Q$.
\begin{lemma}%
\label{le:conjugates}%
Let $T, K, R(X)$ be as in Lemma \ref{le:negorder}. Let $T_0$ be a finite extension of $\Q$ and assume that
$T=T_0K$. Assume further that $[T_0:\Q]=[T:K] \geq 2$,  $T/\Q$ is Galois, and $[K:\Q]$ is Galois. Suppose that for
some prime $\pp_K$ of $K$ we have that for all $x \in K$ it is the case that  $\ord_{\pp_K} R(x) \leq 0$.  Then for
any $\bar{\pp}_K$- conjugate of $\pp_K$ over $\Q$, for all $x \in K$, we have that $\ord_{\bar{\pp}_K}R(x) \leq 0$
\end{lemma}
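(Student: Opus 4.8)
The plan is to exploit the hypothesis that $T/\Q$ is Galois together with the fact that $R(X)$ has coefficients in $K$ and $T=T_0K$. First I would fix $\sigma \in \Gal(T/\Q)$; since $K/\Q$ is Galois, $\sigma$ restricts to an automorphism $\sigma|_K \in \Gal(K/\Q)$, and $\bar\pp_K = \sigma(\pp_K)$ ranges over all the $\Q$-conjugates of $\pp_K$ as $\sigma$ varies. The goal is to transfer the inequality $\ord_{\pp_K}R(x)\le 0$ (valid for all $x\in K$) to $\ord_{\bar\pp_K}R(y)\le 0$ for all $y\in K$. Given $y\in K$, set $x=(\sigma|_K)^{-1}(y)\in K$; then by hypothesis $\ord_{\pp_K}R(x)\le 0$.

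The key step is to understand how $\sigma$ acts on $R$. Let $R^\sigma(X)$ denote the polynomial obtained by applying $\sigma$ to the coefficients of $R$; since those coefficients lie in $K$ and $\sigma$ maps $K$ to $K$, $R^\sigma(X)\in K[X]$. Applying $\sigma$ to the equation $R(x)=R(x)$ — more precisely, using that a valuation-preserving automorphism carries $\ord_{\pp_K}(a)$ to $\ord_{\sigma(\pp_K)}(\sigma(a))$ — we get
\[
\ord_{\bar\pp_K}R^\sigma(y) = \ord_{\bar\pp_K}\sigma(R(x)) = \ord_{\pp_K}R(x)\le 0,
\]
because $\sigma(R(x)) = R^\sigma(\sigma(x)) = R^\sigma(y)$. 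So the inequality does hold — but for $R^\sigma$ rather than $R$. The remaining point is that $R^\sigma = R$. Here is where $T=T_0K$ and $[T_0:\Q]=[T:K]$ enter: $R$ is the minimal polynomial over $K$ of an integral generator $\theta$ of $T$, and one can choose $\theta\in T_0$ (an integral generator of $T_0/\Q$ works, since $T_0K=T$ and the degrees match, so $[T:K]=[T_0:\Q]=\deg(\text{min poly of }\theta\text{ over }\Q)\ge \deg(\text{min poly of }\theta\text{ over }K)$, forcing equality and showing the $\Q$-minimal polynomial of $\theta$ equals $R$). But the minimal polynomial of $\theta$ over $\Q$ has rational coefficients, which are fixed by $\sigma$; hence $R^\sigma=R$.

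I expect the main obstacle to be the bookkeeping in the previous paragraph: one must be slightly careful that $R$, defined as ``the monic irreducible polynomial of an integral generator of $T$ over $K$,'' can indeed be taken to be the $\Q$-minimal polynomial of a generator of $T_0$, rather than some other generator whose coefficients genuinely lie in $K\setminus\Q$. The degree count $[T_0:\Q]=[T:K]\ge 2$ is exactly what makes this work, and it is the only place the full strength of that hypothesis is used; the Galois hypotheses on $T/\Q$ and $K/\Q$ are used, respectively, to make sense of $\sigma(\pp_K)$ and to ensure $\sigma|_K$ is an automorphism of $K$ (so that the substitution $x=(\sigma|_K)^{-1}(y)$ stays inside $K$). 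Everything else is the standard fact that an automorphism of a Galois extension permutes the primes compatibly with the valuation, i.e.\ $\ord_{\sigma(\pp)}(\sigma(a))=\ord_{\pp}(a)$.
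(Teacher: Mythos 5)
Your proposal is correct and follows essentially the same route as the paper: the paper's entire proof is the one-line observation that $R(X)$ can be taken to have rational integer coefficients (hence is fixed by every $\sigma\in\Gal(T/\Q)$), which is precisely the key point you identify and justify via the degree count $[T_0:\Q]=[T:K]$. Your write-up simply makes explicit the conjugation bookkeeping that the paper leaves implicit.
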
%
\begin{proof}%
The lemma follows almost immediately from the fact that $R(X)$, the monic irreducible polynomial of an integral
generator of $T$ over $K$, can be assumed to have rational integer coefficients.
\end{proof}%

We generalize this result for some classes of infinite extensions.%
\begin{lemma}%
\label{le:boundable}%
Let $T, T_0, K, R(X), \calV_K$ be as in Lemmas \ref{le:negorder} and \ref{le:conjugates}, assume
that $[T_0:\Q] >2$ and $T/K$ is Galois. Let $K_{\infty}$ be a normal possibly infinite extension of
$K$ such that for any number field $N \subset K_{\infty}$ with $K \subseteq N$, we have that
$([N:K], [T:K])=1$. (Observe that this condition implies by Proposition \ref{prop:inffinmany} that
any finite set of primes of  $K$ is boundable over $K_{\infty}$.) Let $\calW_K$ be the set of all
primes of $K$ without degree one factors in $T$. Let $\calS_K$ be any finite set of primes of $K$.
Let $\calE_K=(\calW_K\cup \calS_K) \setminus \calV_K$. Let $\mu$ - a generator of $T_0$ over $\Q$
and $T$ over $K$ be an integral unit. Let $a \in \Z_{>0}$ be an integer divisible by all the primes
in $\calE_K$. Let $M \subset K_{\infty}$ be any number field containing $K$ and the values of all
the variables below. Finally assume that the following equations hold over $K_{\infty}$.
\begin{equation}
\label{eq:int1}
\left \{
\begin{array}{c}
I_{\calE_K/K_{\infty}}(x, t_1,\ldots,t_k)=0\\
y=R(u_{\calE_K/K_{\infty}}x)
\end{array}
\right .
\end{equation}
Then for any $\pp_M \in \calW_M \cup \calS_M$ we have that $\ord_{\pp_M}y \leq 0$.
\end{lemma}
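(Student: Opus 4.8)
The plan is to classify each $\pp_M \in \calW_M \cup \calS_M$ by the prime $\pp_K$ of $K$ beneath it. By definition $\pp_K \in \calW_K \cup \calS_K$, so either $\pp_K \in \calV_K$ or $\pp_K \in \calE_K = (\calW_K \cup \calS_K)\setminus\calV_K$, and these two cases get handled by entirely different mechanisms. As a preliminary remark, $\calE_K$ is finite: a prime of $\calW_K\setminus\calV_K$ has no degree one factor in $T$ yet divides $\disc R(X)$, a nonzero rational integer (note $R(X)\in\Z[X]$, since $\mu$ is an algebraic integer and $[\Q(\mu):\Q]=[T_0:\Q]=[T:K]$ forces the minimal polynomial of $\mu$ over $K$ to coincide with its minimal polynomial over $\Q$), and there are only finitely many such, while $\calS_K$ is finite by hypothesis; so the coprimality hypothesis together with Proposition~\ref{prop:inffinmany}, as noted in the statement, does supply the polynomial $I_{\calE_K/K_\infty}$ occurring in~(\ref{eq:int1}).

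If $\pp_K\in\calV_K$, the idea is to push Lemma~\ref{le:negorder} up from $T/K$ to $TM/M$. Since $T/K$ is Galois and $[T\cap M:K]\mid\gcd([M:K],[T:K])=1$, the fields $T$ and $M$ are linearly disjoint over $K$; hence $[TM:M]=[T:K]=\deg R(X)$, so $R(X)$ stays irreducible over $M$ and is the monic irreducible polynomial of the integral generator $\mu$ of $TM/M$. I then claim $\pp_M$ lies in the set $\calV_M$ attached to $TM/M$, namely: it does not divide $\disc R(X)$ (that rational integer is prime to the rational prime beneath $\pp_K$, hence to the one beneath $\pp_M$), and it has no relative degree one factor in $TM$. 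For the latter, take any prime $\qq$ of $TM$ over $\pp_M$ and put $\qq_T=\qq\cap T$; multiplicativity of residue degree in the towers $K\subseteq T\subseteq TM$ and $K\subseteq M\subseteq TM$ gives
\[
f(\qq_T/\pp_K)\,f(\qq/\qq_T)\;=\;f(\pp_M/\pp_K)\,f(\qq/\pp_M).
\]
Because $\pp_K\in\calW_K$ and $T/K$ is Galois, $f(\qq_T/\pp_K)>1$ and $f(\qq_T/\pp_K)\mid[T:K]$, so it has a prime divisor $\ell\mid[T:K]$; and $f(\pp_M/\pp_K)$ divides $[M':K]$ for $M'$ the Galois closure of $M/K$, which lies in $K_\infty$ (as $K_\infty/K$ is normal) and hence has $[M':K]$ prime to $[T:K]$, whence $\ell\nmid f(\pp_M/\pp_K)$. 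The displayed identity then forces $\ell\mid f(\qq/\pp_M)$, so $f(\qq/\pp_M)\geq2$; as $\qq$ was arbitrary, $\pp_M$ has no relative degree one factor in $TM$, so $\pp_M\in\calV_M$. Lemma~\ref{le:negorder} applied to $TM/M$ now gives $\ord_{\pp_M}R(w)\leq0$ for all $w\in M$, and taking $w=u_{\calE_K/K_\infty}x$ (which lies in $M$) yields $\ord_{\pp_M}y\leq0$.

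If $\pp_K\in\calE_K$, Lemma~\ref{le:negorder} is genuinely unavailable — when $\pp_K\mid\disc R(X)$ the reduction of $R(X)$ modulo $\pp_K$ may have a root — and the boundability equation must carry the argument. Since~(\ref{eq:int1}) holds over $K_\infty$, the equation $I_{\calE_K/K_\infty}(x,t_1,\ldots,t_k)=0$ is solvable there, so by the defining property of $L$-boundability $u_{\calE_K/K_\infty}x$ is integral at every prime of $K_\infty$ above $\calE_K$; in particular $\ord_{\pp_M}(u_{\calE_K/K_\infty}x)\geq0$. Since $\pp_K\mid a$ we have $\pp_M\mid a$, and — this is the role of the constant $a$ in the construction — one arranges in the ambient system that $u_{\calE_K/K_\infty}x$ be a multiple of $a$; then $u_{\calE_K/K_\infty}x\equiv0\pmod{\pp_M}$, so $y=R(u_{\calE_K/K_\infty}x)\equiv R(0)\pmod{\pp_M}$. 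Finally $R(0)=(-1)^{[T:K]}\norm_{T/K}(\mu)=\pm1$, because $\mu$ is a unit, so $\norm_{T/K}(\mu)$ is a rational integer that is a unit of $O_K$; hence $\ord_{\pp_M}y=0\leq0$.

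The hard part is the $\calE_K$-case: one must squeeze out of the external boundability machinery of~\cite{Sh36} (and of the way $a$ is threaded through it) the divisibility $u_{\calE_K/K_\infty}x\equiv0$ modulo the primes above $\calE_K$, since bare integrality there yields only $\ord_{\pp_M}y\geq0$, the wrong inequality; the unit property of $\mu$ then closes the gap. The technical core of the $\calV_K$-case is the residue-degree bookkeeping above, which is exactly where ``$T/K$ Galois'' and the coprimality of $[N:K]$ with $[T:K]$ for number fields $N\subseteq K_\infty$ are both used: they guarantee that the $T$-splitting type of $\pp_K$ cannot degenerate under the coprime base change to $M$.
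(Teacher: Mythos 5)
Your proof is correct and follows essentially the same route as the paper's: the same split into primes over $\calV_K$ (handled by showing $\pp_M$ has no relative degree one factor in $TM$ and invoking Lemma \ref{le:negorder} for $TM/M$ --- your inline residue-degree computation is precisely the proof of the paper's Lemma \ref{le:nodegonefact}, which the paper simply cites) and primes over $\calE_K$ (handled by boundability, the factor $a$, and the free term $\pm 1$ of $R$ coming from $\mu$ being an integral unit). Your observation that bare integrality at $\calE_M$-primes is insufficient and that $a$ must actually enter the argument is well taken: the paper's own proof silently concludes with $R(au_{\calE_K/K_{\infty}}x)\equiv\pm 1$ rather than $R(u_{\calE_K/K_{\infty}}x)$ as written in (\ref{eq:int1}), which is the same repair you make by requiring $a$ to divide $u_{\calE_K/K_{\infty}}x$.
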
%
\begin{proof}%
Without loss of generality we can assume that $M/K$ is a Galois extension.  (If not we can take the Galois closure
of $M$ over $K$ contained in $K_{\infty}$.)  Next by Lemma \ref{le:nodegonefact} we have that no prime $\pp_M$ of
$M$ has a relative degree one factor in $MT$.  Thus for all primes $\pp_M \in \calW_M \setminus \calE_M$ we have
that $\ord_{\pp_M}y \leq 0$.  Next we proceed to the primes of $\calE_M$.  By definition of
$I_{\calE_K/K_{\infty}}(x, t_1,\ldots,t_k)$ and $u_{\calE_K/K_{\infty}}$ we have that for any $\qq_M \in \calE_M$
it is the case that $\ord_{\qq_M}u_{\calE_K/K_{\infty}}x \geq 0$.  Thus by choice of $a$ we also have that
$\ord_{\qq_M}au_{\calE_K/K_{\infty}}x >0$.  Since $\mu$ is an integral unit, the free term of $R(X)$ is $\pm1$ and
thus $R(au_{\calE_K/K_{\infty}}x) \equiv \pm1 \mod \qq_M$.
\end{proof}%

To state the main theorem of the paper we need the following definitions.

\begin{definition}%
Let $K$ be a number field and let $\calW_K \subset \calP(K)$. Let $O_{K,\calW_K}=\{x \in K:
\ord_{\pp}x\geq 0\,\, \forall \pp \not \in \calW_K\}$. Then call $O_{K,\calW_K}$ a \emph{small}
ring if $\calW_K$ is finite. If $\calW_K$ is infinite, then call the ring \emph{big} or
\emph{large}. If $K_{\infty}$ is an infinite algebraic extension of $K$, then call the integral
closure of a small subring of $K$ in $K_{\infty}$ small, and call the integral closure of a big
subring of $K$ in $K_{\infty}$ big.
\end{definition}%

\begin{remark}%
Note that if $\calW_K=\emptyset$, then $O_{K,\calW_K}=O_K$ is the ring of integers of $K$, and if
$\calW_K=\calP(K)$, then  $O_{K,\calW_K}=K$.  The small rings are also known as ``rings of
$\calS$-integers''.  Observe  that the integral closure of a small ring in a finite extension is
also small, and similarly, the integral closure of a big ring in a finite extension is also big.
\end{remark}%

\begin{definition}%
Let $K_{\infty}$ be an algebraic possibly infinite extension of $\Q$.  Let $R$ be a small subring of $K_{\infty}$.
Suppose now that there exists a number field $K$ contained in $K_{\infty}$ and finite set of primes $\calS_K$ of
$K$ such that $R=O_{K_{\infty},\calS_{K_{\infty}}}$ \emph{and} all the primes of $\calS_K$ are either
boundable in $K_{\infty}$ or integrality at all the primes of $\calS_K$ is definable over $K_{\infty}$, then we
will say that the set of primes occurring in the denominators of the divisors of elements of $R$ is boundable or
that integrality is definable at the primes occurring in the denominators of divisors of the elements of $R$.
\end{definition}%

We are now ready to state the main theorems of our paper.
\su{Main Theorem A}%
Let $K_{\infty}$ be a totally real possibly infinite algebraic extension of $\Q$. Let $U_{\infty}$
be a finite extension of $K_{\infty}$ such that there exists an elliptic curve ${\tt E}$ defined
over $U_{\infty}$ with ${\tt E}(U_{\infty})$ finitely generated and of a positive rank. Then $\Z$ is
existentially definable and HTP is unsolvable over the ring of integers of $K_{\infty}$. (See
Theorem \ref{thm:mainint}.)%

\su{Main Theorem B}%
Let $K_{\infty}, U_{\infty}$ and ${\tt E}$ be as above. Let $G_{\infty}$ be an extension of degree
two of $K_{\infty}$. If $G_{\infty}$ has no real embeddings into it algebraic closure, assume
additionally that $K_{\infty}$ has a totally real extension of degree two. Then $\Z$ is
existentially definable and HTP is unsolvable over the ring of integers of $G_{\infty}$. (See
Theorem \ref{thm:mainintdeg2}.) %

\su{Main Theorem C}%
Let $K_{\infty}$ be a totally real possibly infinite algebraic extension of $\Q$, normal over some number field and
with a finite  degree index  for some odd rational prime number $p$.  Let $U_{\infty}$ be a finite
extension of $K_{\infty}$ such that there exists an elliptic curve ${\tt E}$ defined over $U_{\infty}$ with
${\tt E}(U_{\infty})$ finitely generated and of a positive rank. Then%
\be%
\item $K_{\infty}$ contains a large subring $R$ such that $\Z$ is definable over $R$ and HTP is unsolvable over $R$.%
\item For any small subring $R$ of $K_{\infty}$ we have that $\Z$ is definable over $R$ and HTP is unsolvable over
$R$.
\ee%
(See Theorems \ref{thm:mainnonint0} and \ref{thm:smallring}.)

\su{Main Theorem D}%
 Let $K_{\infty}$ be a totally real possibly infinite algebraic extension of $\Q$ normal over
some number field and such that there exist infinitely many  rational prime numbers of finite degree index with
respect to $K_{\infty}$.   Let $U_{\infty}$ be a finite extension of $K_{\infty}$ such that there exists an
elliptic curve ${\tt E}$ defined over $U_{\infty}$ with ${\tt E}(U_{\infty})$ finitely generated
and of a positive rank.  Then for any $\varepsilon  >0$ we have that $K_{\infty}$ contains a large
subring $R$ satisfying the following conditions: %
\be%
\item There exists a number field $K \subset K_{\infty}$ and a set $\calW_K \subset \calP(K)$ of (Dirichlet or
natural) density greater than $1-\varepsilon$ such that $R=O_{K_{\infty},\calW_{K_{\infty}}}$.%
\item $\Z$ is definable over $R$ and HTP is unsolvable over $R$.%
\ee%
   (See Theorem \ref{thm:mainnonint}.)
\begin{remark}%
Note that in this paper we will have no assumptions on the nature of the totally real field besides the assumption
on the existence of the elliptic curve satisfying the conditions above.  Thus we will be able to
consider a larger class of fields than in  \cite{Sh17}, \cite{Sh26},  and \cite{Sh36}.
\end{remark}%
\su{Main Theorem E}

Let $K_{\infty}$ be a totally real possibly infinite algebraic extension of $\Q$ normal  over some
number field $K$,  with an odd rational prime $p > [K^{\Gal}:\Q]$ of 0 degree index relative to
$K_{\infty}$, and with a 0 degree index for 2. Let $U_{\infty}$ be a finite extension of
$K_{\infty}$ such that there exists an elliptic curve ${\tt E}$ defined over $U_{\infty}$ with
${\tt E}(U_{\infty})$ finitely generated and of a positive rank. Let $GK_{\infty}$ be an extension
of degree 2 over $K_{\infty}$.
\be%
\item $GK_{\infty}$ contains a big ring where $\Z$ is existentially definable and HTP is unsolvable. %
\item $\Z$ is definable and HTP is unsolvable over any small subring of $GK_{\infty}$.%
\item If we assume additionally that the set of rational primes with 0 degree index with respect to
$K_{\infty}$ is infinite, then for every $\varepsilon >0$ there exist a number field $K \subset K_{\infty}$ and a
set $\calZ_K \subset \calP(K)$ of density (natural or Dirichlet) bigger than $1/2 -\varepsilon$ such that $\Z$ is
existentially definable and HTP is unsolvable in the integral closure of $O_{K,\calZ_K}$ in $G_{\infty}$.
\ee%
(See Theorems \ref{thm:bigrings2} and \ref{thm:bigrings3}.)
\su{Main Theorem F}%
Let $K_{\infty}$ be an algebraic extension of $\Q$ such that there exists an elliptic curve ${\tt E}$ defined over
$K_{\infty}$ with ${\tt E}(K_{\infty})$ of rank 1 and finitely generated.  Fix a Weierstrass
equation for ${\tt E}$ and a number field $K$ containing all the coefficients of  the Weierstrass
equation and the coordinates of all the generators of ${\tt E}(K_{\infty})$.  Assume that $K$ has
two odd relative degree one primes $\pp$ and $\qq$ such that integrality is definable at $\pp$ and
$\qq$ over $K_{\infty}$.
\be%
\item There exist a set $\calW_K$ of $K$-primes of natural density 1 such that over
$O_{K_{\infty},\calW_{K_{\infty}}}$ there exists an infinite Diophantine set
simultaneously discrete in all archimidean and non-archimedean topologies of $K_{\infty}$.

\item There exist a set $\calW_K$ of $K$-primes of natural density 1 such that over
$O_{K_{\infty},\calW_{K_{\infty}}}$ there exists a Diophantine model of $\Z$ and therefore HTP is
not solvable over $O_{K_{\infty},\calW_{K_{\infty}}}$.
\ee%

(See Theorem \ref{thm:rank1}.)

\section{Bounds and Their Uses.}%
\setcounter{equation}{0}%
We start with another notation set and some terminology.%
\begin{notationassumption}%
\label{not:0}%
\begin{itemize}%
We will use the following notation throughout the rest of the paper. %

\item Let $T$ be any field and let $t \in T$. Then let $\displaystyle \mte{d}_T(t) = \prod_{\mte{p}
\in \calP(T)}\mte{p}^{a(\mte{p})}$, where the product is taken over all primes $\mte{p}$ of $T$
such that $-a(\mte{p})=\ord_{\mte{p}}t <0$. Further, let $\mte{n}_T(t) = \mte{d}_N(t^{-1})$.%
\item Let $G/U$ be any number field extension. Let $\displaystyle \mte{A}=\prod_{\pp_i \in \calP(G)}
\pp_i^{n_i}$ be an integral divisor of $G$ such that $\mte{A}$ is equal to the factorization in $G$
of some integral divisor of $U$. Then we
will say that `` $\mte{A}$ can be considered as an integral divisor of $U$''. %
\item Let $T$ be a number field. Let $\mte{A}, \mte{B}$ be integral divisors of $T$ such that for any $\pp \in
\calP(T)$ we have that $\ord_{\pp}\mte{A} \leq \ord_{\pp}\mte{B}$. Then we will say that ``$\mte{A}$
divides $\mte{B}$ in the semigroup of integral divisors of $T$'' or simply ``$\mte{A}$ divides $\mte{B}$''.%
\item Let $T$ be a number field and let $\mte{A}, \mte{B}$ be  divisors of $T$ such that $\mte{A}=\mte{B}^2$.  Then
by $\sqrt{\mte{A}}$ we will mean $\mte{B}$.%

\end{itemize}%
\end{notationassumption}%

Next we prove two technical lemmas dealing with bounds on valuations.
\begin{lemma}%
\label{le:numerators} %
Let $T$ be a number field, let $\calW_T \subset \calP(T)$. Let $x \in O_{T, \overline \calW_T}, z
\in O_{T, \calW_T}, xz\not = 0$. Assume that $\mte{n}_T(x)$ divides $\mte{n}_T(z)$ in the semigroup
of integral divisors of $T$. Let $X, Y, Z, W \in \Z_{>0}$ be such that $(X,Y)=1, (Z,W)=1$,
$\displaystyle \left |{\mathbf N}_{T/\Q}(x)\right |=\frac{X}{Y}$, and $\displaystyle \left |{\mathbf
N}_{T/\Q}(z)\right |=\frac{Z}{W} $. Then $\displaystyle \frac{Z}{X} \in \Z_{>0}$.
\end{lemma}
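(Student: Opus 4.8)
The plan is to compare the numerator divisors $\mte{n}_T(x)$ and $\mte{n}_T(z)$ prime by prime and then translate the divisibility of divisors into a divisibility statement about the integers $X$ and $Z$. First I would recall the setup: $x \in O_{T,\overline{\calW}_T}$ means that the primes appearing in the \emph{denominator} divisor $\mte{d}_T(x)$ all lie in $\overline{\calW}_T$, and likewise $z \in O_{T,\calW_T}$ means $\mte{d}_T(z)$ is supported on $\calW_T$. The numbers $X,Y$ and $Z,W$ are, up to the usual identification, the norms to $\Q$ of the numerator and denominator divisors of $x$ (resp. $z$): more precisely $X = {\mathbf N}_{T/\Q}(\mte{n}_T(x))$ and $Y = {\mathbf N}_{T/\Q}(\mte{d}_T(x))$ as positive integers, since $|{\mathbf N}_{T/\Q}(x)|$ is the quotient of these norms and the two are coprime because $\mte{n}_T(x)$ and $\mte{d}_T(x)$ have disjoint support. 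The same applies to $z$ with $Z,W$.

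The key step is then immediate from the hypothesis: since $\mte{n}_T(x)$ divides $\mte{n}_T(z)$ in the semigroup of integral divisors of $T$, i.e. $\ord_{\pp}\mte{n}_T(x) \le \ord_{\pp}\mte{n}_T(z)$ for every $\pp \in \calP(T)$, applying the multiplicative norm ${\mathbf N}_{T/\Q}$ (which is monotone on integral divisors, sending $\pp$ to a power of the rational prime below it) yields that ${\mathbf N}_{T/\Q}(\mte{n}_T(x))$ divides ${\mathbf N}_{T/\Q}(\mte{n}_T(z))$ in $\Z_{>0}$, that is, $X \mid Z$. Hence $Z/X \in \Z_{>0}$, as claimed.

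The only genuine point that needs care — and the one place where the hypotheses $x \in O_{T,\overline{\calW}_T}$, $z \in O_{T,\calW_T}$ are actually used, rather than just the divisor divisibility — is the identification of $X$ with ${\mathbf N}_{T/\Q}(\mte{n}_T(x))$ and $Z$ with ${\mathbf N}_{T/\Q}(\mte{n}_T(z))$: one must check that when the fraction $|{\mathbf N}_{T/\Q}(x)|$ is written in lowest terms $X/Y$, the numerator $X$ is exactly the norm of the numerator divisor. This follows because the numerator and denominator divisors of $x$ have disjoint prime support, so their norms are coprime positive integers whose quotient is $|{\mathbf N}_{T/\Q}(x)|$ (up to sign, but everything is positive), and a representation of a rational number as a ratio of coprime positive integers is unique. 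I expect this bookkeeping — keeping straight which primes of $\Q$ can occur in $X$ versus $Y$ and verifying the coprimality — to be the main (though still routine) obstacle; once it is in place the conclusion drops out of the monotonicity of the norm map on divisors.
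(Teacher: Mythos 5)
There is a genuine gap at exactly the step you flag as the crux: the claim that because $\mte{n}_T(x)$ and $\mte{d}_T(x)$ have disjoint support in $\calP(T)$, their norms ${\mathbf N}_{T/\Q}(\mte{n}_T(x))$ and ${\mathbf N}_{T/\Q}(\mte{d}_T(x))$ are coprime integers. This is false in general: two distinct primes of $T$ lying over the same rational prime $p$ both have norm a power of $p$, so an element with one of them upstairs and the other downstairs produces cancellation. For instance in $T=\Q(i)$ the element $(2+i)/(2-i)$ has numerator and denominator divisors with disjoint support, yet $|{\mathbf N}_{T/\Q}((2+i)/(2-i))|=5/5=1$, so $X=1\neq 5={\mathbf N}_{T/\Q}(\mte{n}_T(x))$. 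The coprimality you need is precisely what the hypotheses involving $\overline \calW_T$ deliver, and the paper's proof uses them: it writes the divisor of $x$ as $\mathfrak{X}/\mathfrak{Y}$ with $\mathfrak{X}$ supported outside $\overline \calW_T$ and $\mathfrak{Y}$ inside, and splits $\mte{n}_T(z)=\mathfrak{Z}_1\mte{Z}_2$ with $\mathfrak{Z}_1$ outside and $\mte{Z}_2$ inside $\overline \calW_T$. Since $\overline \calW_T$ is closed under conjugation over $\Q$, no rational prime can divide both the norm of a divisor supported inside $\overline \calW_T$ and the norm of one supported outside it; it is this separation, not disjointness of supports, that yields $X={\mathbf N}_{T/\Q}(\mathfrak{X})$.

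The second, related, defect is the identification $Z={\mathbf N}_{T/\Q}(\mte{n}_T(z))$. The hypothesis $z\in O_{T,\calW_T}$ constrains only the denominator of $z$; its numerator may contain primes of $\overline \calW_T$ (and does in the application, where the lemma is invoked with $z=x-\hat{x}$), and those can cancel against $W$ when the fraction is put in lowest terms, so in general $Z$ is a proper divisor of ${\mathbf N}_{T/\Q}(\mte{n}_T(z))$. Consequently ``$X$ divides ${\mathbf N}_{T/\Q}(\mte{n}_T(z))$'' does not yet give $X\mid Z$. The paper's decomposition repairs this: any cancellation can only eat into ${\mathbf N}_{T/\Q}(\mte{Z}_2)$, so ${\mathbf N}_{T/\Q}(\mathfrak{Z}_1)$ survives as a divisor of $Z$; and because $\mte{n}_T(x)=\mathfrak{X}$ is supported outside $\overline \calW_T$, the hypothesis $\mte{n}_T(x)\mid\mte{n}_T(z)$ forces $\mathfrak{X}\mid\mathfrak{Z}_1$, whence $X={\mathbf N}_{T/\Q}(\mathfrak{X})\mid{\mathbf N}_{T/\Q}(\mathfrak{Z}_1)\mid Z$. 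Your monotonicity-of-the-norm step is fine; what is missing is the sorting of primes into those over rational primes met by $\overline \calW_T$ and the rest, which is where the hypotheses actually do their work.
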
%
\begin{proof}%
Let $\mathfrak{Z}_1, \mte{Z}_2, \mathfrak {W}, \mathfrak{X}, \mathfrak {Y}$ be integral divisors of $T$ such that
${\mathfrak Z}_1$  and $\mathfrak{X}$ are composed of the primes outside $\overline \calW_T$, while
$\mte{Z}_2$, $\mathfrak Y, \mathfrak W$ are composed of primes in $\overline \calW_T$,
$\mathfrak{Z}_1$, $\mte{Z}_2$, and $\mathfrak {W}$ are pairwise relatively prime, $\mathfrak{X}$
and $\mathfrak {Y}$ are relatively prime, $\displaystyle \frac{\mathfrak{Z}_1\mte{Z}_2}{\mathfrak
W}$ is a divisor of $z$, and $\displaystyle \frac{\mathfrak X}{\mathfrak Y}$ is a divisor of $x$.
Since $\overline \calW_T$ is closed under conjugation over $\Q$, we conclude that ${\mathbf
N}_{T/\Q}({\mathfrak X})$ and ${\mathbf N}_{T/\Q}({\mathfrak Y})$ have no common factors as rational
integers. Similarly, there are no rational primes occurring simultaneously in ${\mathbf
N}_{T/\Q}({\mathfrak Z}_1)$ and ${\mathbf N}_{T/\Q}({\mathfrak W})$. So we can conclude that
$X={\mathbf N}_{T/\Q}({\mathfrak X})$ (as divisors of $\Q$), and $ {\mathbf N}_{T/\Q}({\mathfrak
Z}_1)$  divides $Z$ (as divisors of $\Q$). Further, by assumption,
$\displaystyle \frac{\mathfrak{Z}_1}{\mathfrak{X}}$ is an integral divisor. Thus,
$\displaystyle\frac{{\mathbf N}_{T/\Q}({\mathfrak Z}_1)}{{\mathbf N}_{T/\Q}({\mathfrak X})}$ is
also an integral divisor. In other words, $\displaystyle \frac{Z}{X}$ is an integer.
\end{proof}%
\begin{lemma}%
\label{le:denominators} %
Let $T$ be a number field. Let $\calW_T$ be a set of primes of $T$. Let $x_1 \in O_{T,\overline
\calW_T}, x_1\not = 0$ be such that $x_1$ does not have a positive order at any prime of $\overline
\calW_T$. Let $X, Y\in \Z_{>0}$ be such that $(X,Y)=1,   \left |{\mathbf N}_{T/\Q}(x_1)\right
|=\frac{X}{Y}$.  Let $x_2 \in O_{T,\overline \calW_T}$ be a conjugate of $x_1$ over $\Q$.  Then
$Y^2|{\mathbf N}_{T/\Q}(x_1-x_2)| \in \Z_{>0}$.
\end{lemma}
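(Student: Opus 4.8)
The plan is to bound the denominator divisor $\mte{d}_T(x_1-x_2)$ by the product $\mte{d}_T(x_1)\,\mte{d}_T(x_2)$ and to show that each of these two denominator divisors has $\Q$-norm exactly $Y$; taking $\norm_{T/\Q}$ then yields a denominator dividing $Y^2$.

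First I would unpack the hypotheses on $x_1$. Since $x_1\in O_{T,\overline{\calW}_T}$ has no positive order at any prime of $\overline{\calW}_T$, its divisor is $\divv_T(x_1)=\mte{n}_T(x_1)/\mte{d}_T(x_1)$ with $\mte{n}_T(x_1)$ composed of primes outside $\overline{\calW}_T$ and $\mte{d}_T(x_1)$ composed of primes in $\overline{\calW}_T$, the two divisors being relatively prime. Because $\overline{\calW}_T$ is closed under conjugation over $\Q$, no rational prime can lie below a prime of $\overline{\calW}_T$ and also below a prime outside $\overline{\calW}_T$; hence $\norm_{T/\Q}(\mte{n}_T(x_1))$ and $\norm_{T/\Q}(\mte{d}_T(x_1))$ are coprime rational integers (this is exactly the device already used in the proof of Lemma~\ref{le:numerators}), and comparison with $|\norm_{T/\Q}(x_1)|=X/Y$ in lowest terms forces $\norm_{T/\Q}(\mte{d}_T(x_1))=Y$.

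Next I would establish the same facts for $x_2$. Working inside the Galois closure of $T$ over $\Q$ (or directly when $T/\Q$ is Galois), $x_2$ is obtained from $x_1$ by a $\Q$-automorphism, and since $\overline{\calW}_T$ is stable under $\Q$-conjugation this automorphism sends primes of $\overline{\calW}_T$ to primes of $\overline{\calW}_T$ and primes outside to primes outside; it follows that $x_2$, like $x_1$, has no positive order at any prime of $\overline{\calW}_T$. Combined with the assumption $x_2\in O_{T,\overline{\calW}_T}$, this shows $\mte{d}_T(x_2)$ is composed of primes of $\overline{\calW}_T$ and $\mte{n}_T(x_2)$ of primes outside it. Since $x_1$ and $x_2$ have the same minimal polynomial over $\Q$ and $\Q(x_1),\Q(x_2)$ are conjugate subfields of $T$ of equal degree, $|\norm_{T/\Q}(x_2)|=|\norm_{T/\Q}(x_1)|=X/Y$; the coprimality argument above then gives $\norm_{T/\Q}(\mte{d}_T(x_2))=Y$ as well.

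Finally I would assemble the estimate: for every prime $\pp$ of $T$, $\ord_\pp(x_1-x_2)\geq\min(\ord_\pp x_1,\ord_\pp x_2)\geq-\bigl(\ord_\pp\mte{d}_T(x_1)+\ord_\pp\mte{d}_T(x_2)\bigr)$, so $\mte{d}_T(x_1-x_2)$ divides $\mte{d}_T(x_1)\,\mte{d}_T(x_2)$ in the semigroup of integral divisors of $T$. Applying $\norm_{T/\Q}$, we get that $\norm_{T/\Q}(\mte{d}_T(x_1-x_2))$ divides $Y^2$ in $\Z$; and since the denominator of the rational number $\norm_{T/\Q}(x_1-x_2)$ in lowest terms divides $\norm_{T/\Q}(\mte{d}_T(x_1-x_2))$, it divides $Y^2$. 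Hence $Y^2\,|\norm_{T/\Q}(x_1-x_2)|$ is a non-negative rational integer, and it is positive as soon as $x_1\neq x_2$. The one place that needs genuine care is the middle paragraph — showing that a $\Q$-conjugate of $x_1$ inherits the property of having no positive order on $\overline{\calW}_T$, and keeping the norm bookkeeping correct when $T/\Q$ is not Galois; the remaining steps are the elementary inequality for $\min$ together with multiplicativity of $\norm_{T/\Q}$ on divisors.
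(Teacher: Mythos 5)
Your proof is correct and follows essentially the same route as the paper's: establish $\norm_{T/\Q}(\mte{d}_T(x_1))=\norm_{T/\Q}(\mte{d}_T(x_2))=Y$ via the coprimality device from Lemma \ref{le:numerators}, observe that $\mte{d}_T(x_1-x_2)$ divides $\mte{d}_T(x_1)\mte{d}_T(x_2)$, and take norms. Your extra care with the conjugate $x_2$ and the remark about the degenerate case $x_1=x_2$ only make explicit what the paper leaves implicit.
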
%
\begin{proof}%
Let $\frac{\mte{X}_1}{\mte{Y}_1}, \frac{\mte{X}_2}{\mte{Y}_2}, \frac{\mte{U}}{\mte{V}}$  be the divisors of $x_1,
x_2$ and $x_1-x_2$ respectively.  Observe that on the one hand, ${\mathbf N}_{T/\Q}(x_1)={\mathbf
N}_{T/\Q}(x_2)=\frac{X}{Y}$ and, using the same argument as in the proof of Lemma \ref{le:numerators}, we have
that  ${\mathbf N}_{T/\Q}(\mte{Y}_1)={\mathbf N}_{T/\Q}(\mte{Y}_2)=Y$.  On the other hand, $\mte{V}$ divides
$\mte{Y}_1\mte{Y}_2$ in the semigroup of  integral divisors and therefore, ${\mathbf N}_{T/\Q}(\mte{V})$ divides
$Y^2$ in the semigroup of integral divisors of $\Q$.   Further, if we let $\displaystyle{\mathbf
N}_{T/\Q}(x_1-x_2)=\frac{A}{B}$, with $A$ and $B$ being relatively prime integers, then $B$ divides ${\mathbf
N}_{T/\Q}(\mte{V})$ in the integral divisor semigroup of $\Q$.  Thus, $B$ divides $Y^2$ in $\Z$.
\end{proof}%
Now consider the following use of bounds:
\begin{proposition}%
\label{prop:usebounds}%
Let $U$ be a number field. Let $\calW_U \subset \calP(U)$. Let $p$ be a rational prime such that no factor of $p$
is in $\calW_U$. Let $x, z$ be elements of the algebraic closure of $U$ and assume that for any prime $\qq$ of
$U(x,z)$ lying above a prime of $\overline \calW_U$ we have that $\ord_{\qq}x\leq 0$ and
$\ord_{\qq}z \leq 0$. Let $T$ be the Galois closure of $U(x,z)$. Let $t \in
O_{U,\calW_U}$. Assume also that $\mte{n}_T(z)$ can be considered as a divisor of $U$. Finally
assume that the following equations and conditions are satisfied over $O_{T,\calW_T}$, where
$\id=\sigma_1,\ldots, \sigma_{[T:\Q]}$ are all the distinct embeddings of $T$ into $\C$.
\begin{equation}%
\label{eq:1}
|\sigma_i(x)| \leq |\sigma_i(z)|, i=1,\ldots,[T:\Q],
\end{equation}
\begin{equation}
\label{eq:2}
|\sigma_i(x)| \geq 1, i=1,\ldots,[T:\Q],
\end{equation}
\begin{equation}
\label{eq:2.1}
|\sigma_i(z)| > 1, i=1,\ldots,[T:\Q],
\end{equation}
\begin{equation}%
\label{eq:3}
z \equiv 0 \mod x \mbox{ in } O_{T,\calW_T},
\end{equation}%
\begin{equation}
\label{eq:4}
x \equiv t \mod pz^4 \mbox{ in } O_{T,\calW_T}.
\end{equation}
Then $x \in  O_{U,\calW_U}$.
\end{proposition}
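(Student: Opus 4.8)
The plan is to show first that $x$ actually lies in $U$, after which membership in $O_{U,\calW_U}$ is almost immediate. Since $T$ is Galois over $\Q$ it is Galois over $U$, so $x\in U$ if and only if $\sigma(x)=x$ for every $\sigma\in\Gal(T/U)$; fix such a $\sigma$ and put $\delta_\sigma:=x-\sigma(x)$, the goal being $\delta_\sigma=0$. I would first record the easy structural facts: equations (3)--(4) are relations inside $O_{T,\calW_T}$, so $x,z\in O_{T,\calW_T}$; the set $\calW_T$ (hence also $\overline\calW_T$) is stable under $\Gal(T/U)$; $x$ and $z$ have non-positive order at every prime of $\overline\calW_T$ by hypothesis; and $p,t$ are fixed by $\Gal(T/U)$, so applying $\sigma$ to (4) gives $\sigma(x)-t=p\,\sigma(z)^4\sigma(s)$ with $\sigma(s)\in O_{T,\calW_T}$, whence $\delta_\sigma=p\bigl(z^4s-\sigma(z)^4\sigma(s)\bigr)$. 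The proof of $\delta_\sigma=0$ is then a contest between a lower bound for $\bigl|\norm_{T/\Q}(\delta_\sigma)\bigr|$ coming from divisibility and an upper bound coming from the archimedean conditions (1)--(2.1).

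For the lower bound, write $\bigl|\norm_{T/\Q}(x)\bigr|=X/Y$ and $\bigl|\norm_{T/\Q}(z)\bigr|=Z/W$ in lowest terms. By (3), $x\mid z$ in $O_{T,\calW_T}$, so $\mte{n}_T(x)$ divides $\mte{n}_T(z)$, and Lemma \ref{le:numerators} gives $X\mid Z$; combined with $X\ge Y$ (from $|\sigma_i(x)|\ge1$) this yields $Z\ge X\ge Y$. Since $\mte{n}_T(z)$ is a divisor of $U$ and $T/U$ is Galois, $\mte{n}_T(\sigma(z))=\sigma(\mte{n}_T(z))=\mte{n}_T(z)=:\mathfrak{z}$, so $z$ and $\sigma(z)$ have the same numerator. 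Because $z$ has no pole outside $\calW_T$ and non-positive order at every prime of $\overline\calW_T\supseteq\calW_T$, the zeros of $z$ lie outside $\overline\calW_T$ and its poles inside; as $T/\Q$ is Galois and $\overline\calW_T$ is conjugation-closed, no rational prime lies under both, so $\norm_{T/\Q}(\mathfrak{z})=Z$. Comparing orders in $\delta_\sigma=p\bigl(z^4s-\sigma(z)^4\sigma(s)\bigr)$ at the primes dividing $p$ or $\mathfrak{z}$ (all outside $\calW_T$, where $s,\sigma(s)$ are integral and $\ord z=\ord\sigma(z)=\ord\mathfrak{z}$) shows that $(p)\,\mathfrak{z}^4$ divides $\mte{n}_T(\delta_\sigma)$. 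The support of $\mte{d}_T(\delta_\sigma)$ lies in $\calW_T\subseteq\overline\calW_T$ while that of $(p)\mathfrak{z}^4$ lies outside $\overline\calW_T$, so again no rational prime is common to the two; hence $p^{[T:\Q]}Z^4$ divides the reduced numerator of $\norm_{T/\Q}(\delta_\sigma)$, while Lemma \ref{le:denominators} (with $x_1=x$, $x_2=\sigma(x)$) forces its reduced denominator to divide $Y^2$. Therefore, if $\delta_\sigma\neq 0$,
\[
\bigl|\norm_{T/\Q}(\delta_\sigma)\bigr|\ \ge\ \frac{p^{[T:\Q]}Z^{4}}{Y^{2}}.
\]

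For the upper bound, $\sigma_i\mapsto\sigma_i\circ\sigma$ permutes the embeddings of $T$, so writing $c_i:=|\sigma_i(z)|>1$ (by (2.1)) and $c_{\pi(i)}:=|(\sigma_i\sigma)(z)|$ for the corresponding permutation $\pi$, and using $|\sigma_i(x)|\le|\sigma_i(z)|$ for every embedding (by (1)), one estimates
\[
\bigl|\norm_{T/\Q}(\delta_\sigma)\bigr|=\prod_i\bigl|\sigma_i(x)-(\sigma_i\sigma)(x)\bigr|\le\prod_i\bigl(c_i+c_{\pi(i)}\bigr)<\prod_i 2c_i c_{\pi(i)}=2^{[T:\Q]}\Bigl(\prod_i c_i\Bigr)^{2}=2^{[T:\Q]}\,\frac{Z^{2}}{W^{2}},
\]
the strict inequality coming from $(c_i-1)(c_{\pi(i)}-1)>0$. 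If $\delta_\sigma\neq0$, combining the two displays with $p\ge2$ gives $Z^{2}W^{2}<Y^{2}$, contradicting $ZW\ge Z\ge Y$. Hence $\delta_\sigma=0$ for all $\sigma\in\Gal(T/U)$, so $x\in U$. Finally, since $x\in O_{T,\calW_T}$, for any $\pp\in\calP(U)\setminus\calW_U$ every prime of $T$ above $\pp$ avoids $\calW_T$, so $\ord_\pp x\ge0$; thus $x\in O_{U,\calW_U}$.

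I expect the main obstacle to be precisely the bookkeeping that converts divisibilities of divisors of $T$ into divisibilities of rational integers: one needs the supports of the numerator and denominator divisors of $\delta_\sigma$ (and of $z$) to involve disjoint sets of rational primes, which is exactly what the conjugation-closedness of $\overline\calW_T$ buys — and this is why the hypotheses that $\mte{n}_T(z)$ is a $U$-divisor, that $T/\Q$ is Galois, and that $p$ has no factor in $\calW_U$ are all needed. The other delicate point is the numerology: the factor $p^{[T:\Q]}\ge2^{[T:\Q]}$ coming from the modulus in (4) is exactly what absorbs the loss of $2^{[T:\Q]}$ in the crude estimate $|a-b|\le|a|+|b|\le2|ab|$, and the fourth power of $z$ is what makes the final inequality collapse to $ZW<Y$; a lower power of $z$, or dropping the factor $p$, would leave a genuine gap.
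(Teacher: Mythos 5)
Your proof is correct and follows the same strategy as the paper: compare conjugates of $x$ over $U$, get a lower bound on the norm of the difference from the divisibility by $p\,\mte{n}_T(z)^4$ forced by (\ref{eq:4}), an upper bound from the archimedean conditions, and a contradiction via $Y\le X\le Z$ (Lemma \ref{le:numerators}) and $B\mid Y^2$ (Lemma \ref{le:denominators}). The one place you genuinely diverge is the archimedean estimate: the paper bounds $|\norm_{T/\Q}(x-\hat x)|$ by $2^{[T:\Q]}|\norm_{T/\Q}(z)|$, whereas you bound it by $2^{[T:\Q]}|\norm_{T/\Q}(z)|^2$ via $c_i+c_{\pi(i)}<2c_ic_{\pi(i)}$; your version is the safe one, since $|\sigma_i(x)-\sigma_i(\hat x)|$ is controlled by $|\sigma_i(z)|+|\sigma_{\pi(i)}(z)|$ rather than by $2|\sigma_i(z)|$, and your sharper use of the denominators ($A/B\ge p^{[T:\Q]}Z^4/Y^2$ against $A/B<2^{[T:\Q]}Z^2/W^2$, with strictness supplied by (\ref{eq:2.1})) still closes the argument.
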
%
\begin{proof}%
Using notation of Lemma \ref{le:numerators}, let $\displaystyle \left |{\mathbf N}_{T/\Q}(x)\right |
=\frac{X}{Y}$, where $(X,Y)$ are relatively prime positive integers, and let $\displaystyle\left
|{\mathbf N}_{T/\Q}z\right | = \frac{Z}{W}$, where $Z, W \in \Z_{>0}$ and $(Z,W)=1$ in $\Z$. Then
by Lemma \ref{le:numerators} we have that  $X$ divides $Z$ in $\Z$ and therefore $1 \leq X \leq Z$.
Further, from equation (\ref{eq:2}), we also have that
\begin{equation}%
\label{eq:4.01}
Y \leq X \leq Z.
\end{equation}%
Let $\mte{Z}= \mte{n}_T(z)$. From (\ref{eq:4}) we have that $p\mte{Z}^4$ divide $\nn_T(x-t)$. Now,
assume that $\hat{x}$ is a conjugate of $x$ over $U$. Then $p\mte{Z}^4$ divides $\nn_T(x- \hat{x})$
in the semigroup of integral divisors of $T$. Let
\begin{equation}%
\label{eq:4.1}
\left |{\mathbf N}_{T/\Q}(x-\hat{x})\right | = \frac{A}{B},
\end{equation}%
where $A, B$ are relatively prime positive integers.  Then on the one hand, by Lemma \ref{le:numerators} again, we
have that either $x=\hat{x}$ or $p^{[T:\Q]}Z^4$ divides $A$ and therefore%
\begin{equation}%
\label{eq:4.2}
p^{[T:\Q]}Z^4 \leq A.
\end{equation}%
On the other hand, from equation (\ref{eq:1}) we have that the absolute value of any conjugate of $x-\hat{x}$
over $\Q$ is less than 2 times the absolute value of the corresponding conjugate of $z$. Thus,
 \begin{equation}%
 \label{eq:5}%
 | {\mathbf N}_{T/\Q}(x-\hat{x})| \leq 2^{[T:\Q]}|{\mathbf N}_{T/\Q}(z)|.
 \end{equation}%
 Using equation (\ref{eq:4.1}) we can now write
 \begin{equation}%
 \label{eq:6}%
 \frac{A}{B} \leq 2^{[T:\Q]}|{\mathbf N}_{T/\Q}(z)|
 \end{equation}%
 and so
 \begin{equation}%
 \label{eq:7}
 A \leq 2^{[T:\Q]}B|{\mathbf N}_{T/\Q}(z)|.
 \end{equation}%
 Thus, combining (\ref{eq:7}), (\ref{eq:4.2}), (\ref{eq:4.01}) and using Lemma \ref{le:denominators} we get
 \begin{equation}%
 \label{eq:8}%
p^{[T:\Q]}Z^4 \leq 2^{[T:\Q]}B|{\mathbf N}_{T/\Q}(z)|=2^{[T:\Q]}B\frac{Z}{W} \leq 2^{[T:\Q]}Y^2\frac{Z}{W} \leq
2^{[T:\Q]}Z^3.
\end{equation}%
Since $p \geq 2$ and from equation (\ref{eq:2}) we know that $Z >1$, the last inequality cannot be true. Thus,
$x=\hat{x}$. Since $\hat{x}$ was an arbitrary conjugate of $x$ over $U$, we must conclude that $x \in U$.
\end{proof}%

The bounds also come in the following lemma which we will use below.  It is a slight modification of Lemma 5.1 of
\cite{Sh26}.
\begin{lemma}%
\label{le:modify}%
Let $T/U$ be an extension of number fields. Let $x \in T$ and let $\mte{T}=\mte{n}_{T}(x)$.  Let $\mte{A}$ be an
integral divisor of $U$ such that $\mte{T}^2$ divides $\mte{A}$ in the semigroup of the integral divisors of $T$.
Let $t \in U$ be such that  $\mte{A}$ divides $\mte{M}=\mte{n}_{T}(x-t)$ in the semigroup of integral divisors of
$T$.  Then $\mte{T}$ can be considered as a divisor of $U$.
\end{lemma}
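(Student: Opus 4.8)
The plan is to argue prime by prime that every prime of $U$ occurring in $\mte{T}=\mte{n}_T(x)$ divides $\mte{T}$ to a multiplicity that is constant across its factors in $T$, and that this constant is a multiple of the ramification indices, which is exactly the condition for $\mte{T}$ to descend to an integral divisor of $U$. So fix a prime $\pp_U$ of $U$ dividing $\mte{T}$, and let $\pp_{T}$ be any prime of $T$ above $\pp_U$ with $a:=\ord_{\pp_T}\mte{T} = -\ord_{\pp_T}x > 0$. We must show that $a = c\cdot e(\pp_T/\pp_U)$ for a constant $c$ independent of the choice of $\pp_T$, equivalently that all primes of $T$ above $\pp_U$ appear in $\mte{T}$ and do so with multiplicity proportional to their ramification index over $U$.

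First I would use the divisibility $\mte{T}^2 \mid \mte{A}$ together with $\mte{A}\mid \mte{M}=\mte{n}_T(x-t)$ to get $\mte{T}^2 \mid \mte{n}_T(x-t)$ in the semigroup of integral divisors of $T$. Now consider $\pp_T \mid \mte{T}$ as above, so $\ord_{\pp_T} x = -a < 0$; since $t\in U$ and the prime $\pp_T$ lies over $\pp_U$, and since $\ord_{\pp_T}t \geq \min(0,\ldots)$ — actually the key point is that $\ord_{\pp_T}(x-t) = \min(\ord_{\pp_T}x,\ord_{\pp_T}t)$ whenever these differ, and $\ord_{\pp_T}x = -a < 0 \le$ anything forces $\ord_{\pp_T}(x-t) = -a$ if $\ord_{\pp_T}t \ge 0$, while if $\ord_{\pp_T}t = \ord_{\pp_U}t\cdot e(\pp_T/\pp_U) < 0$ one still gets $\ord_{\pp_T}(x-t)\le 0$. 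The constraint $\mte{T}^2\mid \mte{n}_T(x-t)$ says $\ord_{\pp_T}(x-t) \le -2a$ only through $\mte{A}$, which is a divisor of $U$; hence $\ord_{\pp_T'}(x-t)$ depends on $\pp_T'$ only through $\pp_U$ and the ramification index, so $\ord_{\pp_U}\mte{A}$ is well-defined and $\ge 2a/e(\pp_T/\pp_U)$ times that index — the upshot being that $\ord_{\pp_{T}}(x-t)$, hence $\ord_{\pp_T}(x)$, must be the \emph{same} (up to the factor $e(\pp_T/\pp_U)$) for every prime $\pp_T'$ above $\pp_U$, because it is squeezed between a divisor coming from $U$ and the valuation of $x$ itself.

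Concretely, the argument I'd write is: since $\mte{A}$ is a divisor of $U$, for each $\pp_U$ we have $\ord_{\pp_T'}\mte{A} = b_{\pp_U}\cdot e(\pp_T'/\pp_U)$ for a fixed $b_{\pp_U}\in\Z_{\ge 0}$, independent of $\pp_T'\mid\pp_U$. The chain $\mte{T}^2\mid\mte{A}\mid\mte{n}_T(x-t)$ gives $2\,\ord_{\pp_T'}\mte{T} \le b_{\pp_U} e(\pp_T'/\pp_U) \le \ord_{\pp_T'}\mte{n}_T(x-t)$. But for $\pp_T'$ with $\ord_{\pp_T'}x<0$ we have $\ord_{\pp_T'}\mte{n}_T(x-t) = -\ord_{\pp_T'}(x-t) = -\ord_{\pp_T'}x = \ord_{\pp_T'}\mte{T}$ (using $\ord_{\pp_T'}t\ge\ord_{\pp_T'}x$, which holds because... here is the delicate point), so $2\,\ord_{\pp_T'}\mte{T} \le \ord_{\pp_T'}\mte{T}$, forcing $\ord_{\pp_T'}\mte{T}=0$ unless the middle term $b_{\pp_U}e(\pp_T'/\pp_U)$ is what actually controls things; reexamining, the correct reading is that $\mte{A}$ being a divisor of $U$ and lying between $\mte{T}^2$ and $\mte{n}_T(x-t)$ pins down $\mte{n}_T(x)$ to be $\mte{A}^{1/2}$-like, i.e.\ forces $\mte{T}$ to have the form $e(\pp_T/\pp_U)\cdot(\text{const})$ at each $\pp_U$. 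I would then conclude by noting that a divisor of $T$ all of whose prime exponents factor through $\pp_U$ and the ramification index in this way is precisely one that ``can be considered as a divisor of $U$''.

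The main obstacle I anticipate is the bookkeeping at primes $\pp_T$ where $t$ itself has negative order (i.e.\ $\pp_U$ in the denominator of $t$): there one must check that such primes cannot contribute to $\mte{n}_T(x-t)$ in a way that breaks the symmetry, and this is where one uses that $t\in U$ so its denominator is a divisor of $U$ — combined with $\mte{A}\mid\mte{n}_T(x-t)$ and $\mte{A}$ a divisor of $U$, the $U$-rationality propagates. I would handle this by splitting $\calP(T)$ into primes above denominators of $t$ and the rest, and treating $\mte{n}_T(x)$ on each piece separately, reducing in both cases to: a divisor squeezed between $\mte{T}^2$ and $\mte{M}$, with $\mte{A}$ in the middle descending to $U$, must itself have $U$-rational exponent pattern. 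This is exactly the content of Lemma 5.1 of \cite{Sh26}, so I would mirror that proof, adjusting only for the set of primes under consideration.
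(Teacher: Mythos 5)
Your proposal has the right target (show that the exponent of $\mte{T}$ at each prime of $T$ is determined by the prime of $U$ below it) and correctly isolates the squeeze $\mte{T}^2\mid\mte{A}\mid\mte{M}$ with $\mte{A}$ a divisor of $U$ as the engine, but it does not close the argument, and the computation you do write down is derailed by a sign error. In the paper's notation $\mte{T}=\mte{n}_T(x)$ is the \emph{numerator} of $x$: a prime $\pp$ divides $\mte{T}$ exactly when $\ord_{\pp}x>0$, with exponent $\ord_{\pp}x$, not $-\ord_{\pp}x$. Reading $\mte{T}$ as the denominator is what leads you to the dead end $2\,\ord_{\pp}\mte{T}\le\ord_{\pp}\mte{T}$, which you notice is absurd and then abandon in favor of the unproven assertion that $\mte{A}$ ``pins down'' $\mte{T}$ to be $\mte{A}^{1/2}$-like. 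That assertion is exactly what needs proof, and the closing appeal to Lemma 5.1 of \cite{Sh26} does not supply it, since the present statement is a modification of that lemma and the adjustment is precisely where your computation stalls.

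The two missing steps are short. First, for $\pp$ with $\ord_{\pp}x>0$ the hypotheses give $\ord_{\pp}(x-t)\ge\ord_{\pp}\mte{A}\ge 2\ord_{\pp}x>\ord_{\pp}x$; by the ultrametric inequality this forces $\ord_{\pp}x=\ord_{\pp}t$ (otherwise $\ord_{\pp}(x-t)=\min(\ord_{\pp}x,\ord_{\pp}t)\le\ord_{\pp}x$). So the comparison divisor is $t$ itself, not a square root of $\mte{A}$. Second, one must show that every conjugate $\qq$ of such a $\pp$ over $U$ also divides $\mte{T}$: since $\mte{A}$ is a divisor of $U$, $\ord_{\qq}\mte{A}>0$, hence $\ord_{\qq}(x-t)>0$; since $t\in U$ and $\ord_{\pp}t>0$, also $\ord_{\qq}t>0$; therefore $\ord_{\qq}x>0$, and the first step applied at $\qq$ gives $\ord_{\qq}x=\ord_{\qq}t$. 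As the exponents $\ord_{\qq}t$ for $\qq$ above a fixed prime of $U$ are those of a divisor of $U$, $\mte{T}$ descends. Your write-up gestures at both points (``here is the delicate point'', ``the $U$-rationality propagates'') without carrying either out, so as it stands the proof has a genuine gap.
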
%
\begin{proof}%
We will show that for all $\pp \in \calP(T)$, such that $\ord_{\pp}x >0$ we have that $\ord_{\pp}x = \ord_{\pp}t$,
and for any $\qq$ conjugate to $\pp$ over $U$ we also have that $\ord_{\qq}x = \ord_{\qq}t >0$. Indeed, let $\pp$
be a prime of $T$ such that $\ord_{\pp}x >0$. Then given our assumptions on $\mte{A}$ we have that $\ord_{\pp}(x -
t) >\ord_{\pp}x$. The only way this can happen is for $\ord_{\pp}x=\ord_{\pp}t >0$. Next note that if $\qq$ is a
conjugate of $\pp$ over $U$, then $\ord_{\qq}\mte{A} >0$ implying that $\ord_{\qq}(x-t) >0$ and since $t \in T$,
we also have that $\ord_{\qq}t >0$. Thus, $\ord_{\qq}x >0$ and as above $\ord_{\qq}x = \ord_{\qq}t>0$. Thus,
$\mte{T}$ can be viewed as a divisor of $U$.
\end{proof}%


\section{Properties of Elliptic Curves}
\label{sec:ec} In this section we go over some properties of elliptic curves which will allow us to make sure we
can satisfy equivalencies of the form (\ref{eq:3}) and (\ref{eq:4}).
\begin{notation}%
\label{not:1}
We start with a notation set to be used below.
\begin{itemize}%
\item Let $U$ be a number field.%
\item Let ${\tt E}$ denote an elliptic curve defined over $U$ -- i.e. a non-singular cubic curve
whose affine part is given by a fixed Weierstrass equation with coefficients in $O_U$. (See
III.1 of \cite{Sil1}.)%
 \item For any field $T$ and any  $m \in \Z_{\geq 0}$ let ${\tt E}(T)[m]$ be
the group of $m$-torsion
points of ${\tt E}(T)$. By ``0-torsion'' we mean the identity of ${\tt E}$.%
 \item If $Q \in {\tt E}(U)$ is any point, then $(x(Q),y(Q))$ will denote the
affine coordinates of $Q$ given by the Weierstrass equation above.
\item Let $P$ be a fixed point of infinite order. %
\item Let $U_{\infty}/U$ be an infinite abelian Galois extension.
\item Assume $\rank ({\tt E}(U_{\infty}))=\rank ({\tt E}(U))$.%
\end{itemize}%
\end{notation}%

\begin{lemma}%
\label{le:Po1}%
If $I \subset O_U$ is a nonzero ideal. Then there exists a non-zero multiple $[l]P$ of $P$ such that $I \Big{|}
\mte{d}(x([l]P))$.%
\end{lemma}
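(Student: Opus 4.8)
The plan is to dispose of the primes dividing $I$ one at a time, using the formal group of ${\tt E}$, and then to combine. Write $I=\prod_{\pp\in S}\pp^{e_\pp}$, where $S$ is the finite set of primes of $O_U$ dividing $I$ and $e_\pp=\ord_\pp I\ge 1$. Since $\mte{d}(x([l]P))=\prod_{\pp:\ \ord_\pp x([l]P)<0}\pp^{-\ord_\pp x([l]P)}$, it suffices to produce a single nonzero integer $l$ with $\ord_\pp x([l]P)\le -e_\pp$ for every $\pp\in S$ simultaneously; then $[l]P\ne O$ automatically, because $P$ has infinite order and $l\ne 0$. I will in fact produce, for each $\pp\in S$, a nonzero integer $l_\pp$ with the stronger property that $\ord_\pp x([cl_\pp]P)\le -e_\pp$ for \emph{every} nonzero integer $c$, and then take $l=\prod_{\pp\in S}l_\pp$.

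Fix $\pp\in S$ and pass to the completion $U_\pp$. Let ${\tt E}_1(U_\pp)$ be the subgroup of points of ${\tt E}(U_\pp)$ reducing to the origin modulo $\pp$; by the standard reduction theory of elliptic curves over local fields (see \cite{Sil1}, Ch. VII), ${\tt E}_1(U_\pp)$ has finite index in ${\tt E}(U_\pp)$, so there is a nonzero integer $m_\pp$ with $[m_\pp]P\in {\tt E}_1(U_\pp)$, and $[m_\pp]P\ne O$ because $P$ has infinite order. Using the formal group of ${\tt E}$ over $O_{U_\pp}$ with the standard parameter $s=-x/y$ at $O$, the subsets ${\tt E}_n(U_\pp)=\{Q:\ \ord_\pp s(Q)\ge n\}$ form a descending chain of subgroups of ${\tt E}_1(U_\pp)$; for $O\ne Q\in {\tt E}_1(U_\pp)$ one has $\ord_\pp x(Q)=-2\,\ord_\pp s(Q)<0$; and multiplication by the residue characteristic $p$ carries ${\tt E}_n(U_\pp)$ into ${\tt E}_{n+1}(U_\pp)$, because ${\tt E}_n/{\tt E}_{n+1}$ is isomorphic to the additive group of the residue field, which is annihilated by $p$. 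Choosing $j$ with $1+j\ge\lceil e_\pp/2\rceil$ and setting $l_\pp=p^{\,j}m_\pp$, we obtain $[l_\pp]P\in {\tt E}_{\lceil e_\pp/2\rceil}(U_\pp)$, hence $\ord_\pp x([l_\pp]P)\le -2\lceil e_\pp/2\rceil\le -e_\pp$.

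Because each ${\tt E}_n(U_\pp)$ is a subgroup, $[cl_\pp]P\in {\tt E}_{\lceil e_\pp/2\rceil}(U_\pp)$ for every nonzero integer $c$, and $[cl_\pp]P\ne O$ since $P$ has infinite order; therefore $\ord_\pp x([cl_\pp]P)=-2\,\ord_\pp s([cl_\pp]P)\le -e_\pp$, which is the property demanded of $l_\pp$. Taking $l=\prod_{\pp\in S}l_\pp$ then yields $\ord_\pp x([l]P)\le -e_\pp$ for all $\pp\in S$, i.e. $I\mid \mte{d}(x([l]P))$ in the semigroup of integral divisors of $U$, with $[l]P\ne O$. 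The only substantive input is the behaviour of $\pp$-adic valuations of the coordinates under multiplication, which is the standard formal-group computation, so I expect the sole difficulty to be the routine bookkeeping over the finitely many primes of $S$ rather than any genuine obstacle.
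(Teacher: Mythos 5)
Your argument is correct and is exactly the standard formal-group/kernel-of-reduction argument that underlies Lemma 10 of Poonen's paper, which is all the paper itself invokes here (it gives no independent proof, only the citation together with the remark that dropping the rank-one hypothesis changes nothing). So you have simply written out in full the proof the paper is pointing to; no discrepancy.
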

\begin{proof}%
This lemma follows immediately from Lemma 10 of \cite{Po} even though we no longer assume that the curve is of
rank 1. The proof is unaffected by this change.
\end{proof}%
\begin{lemma}%
\label{le:Po2}%
There exists a positive integer $r$ such that for any positive integers $l, m$,
\[%
\mte{d}_U(x([lr]P) \Big{|} \mte{n}_U(\frac{x[lr](P)}{x([mlr]P)}-m^2)^2.%
\]%
\end{lemma}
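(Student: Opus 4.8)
The plan is to exploit the standard ``denominator doubling'' behavior of multiplication by an integer on an elliptic curve, applied to the formal group / the reduction map at primes dividing $\mte{d}_U(x([lr]P))$. Recall the classical fact (see III.2 and VII.2 of \cite{Sil1}, or the computation underlying Lemma~10 of \cite{Po}): if $\pp$ is a non-archimedean prime of $U$ with $\ord_\pp x(Q) = -2k < 0$ for a point $Q \in {\tt E}(U)$, then $\ord_\pp y(Q) = -3k$, the point $Q$ lies in the kernel of reduction at $\pp$, and for any integer $n$ one has $\ord_\pp x([n]Q) = -2k - 2\ord_\pp(n)$; in particular $\ord_\pp x([n]Q) \le \ord_\pp x(Q)$, i.e. $\mte{d}_\pp(x(Q))$ divides $\mte{d}_\pp(x([n]Q))$. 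The first step is to choose $r$ so that this uniformizes across all the bad primes at once.

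\textbf{Step 1: choosing $r$.} The point $P$ has only finitely many primes $\pp_1, \dots, \pp_s$ in $\mte{d}_U(x(P))$, at each of which $x(P)$ has even negative order and $P$ reduces into the formal group. Take $r$ to be (a suitable power times) the product of the residue characteristics of the $\pp_i$, or more simply any positive integer such that $[r]P$ still has the same set of ``denominator primes'' and such that at primes of good reduction away from $\{\pp_i\}$ the point $[r]P$ acquires no new denominators of the wrong shape — in fact $r=1$ after replacing $P$ by $P$ works for the divisibility, and the role of $r$ is only to guarantee that for the primes $\qq \in \mte{d}_U(x([lr]P))$ for various $l$ we are always in the formal group so the order formula applies. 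Concretely I would set $r$ to be the exponent of the torsion subgroup ${\tt E}(U)_{\tors}$ times the product over the finitely many primes $\pp$ of bad reduction of $P$ of their residue characteristics; this ensures that for every $l,m$ and every prime $\qq \mid \mte{d}_U(x([lr]P))$, the points $[lr]P$ and $[mlr]P$ lie in the kernel of reduction at $\qq$ and the formal group order formula $\ord_\qq x([n]Q) = \ord_\qq x(Q) - 2\ord_\qq n$ is valid.

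\textbf{Step 2: the local estimate.} Fix $l, m$ and a prime $\qq$ of $U$ with $a := \ord_\qq \mte{d}_U(x([lr]P)) > 0$, so $\ord_\qq x([lr]P) = -a$ with $a$ even. By Step 1, $[lr]P$ is in the formal group at $\qq$, and since $[mlr]P = [m]([lr]P)$ we get $\ord_\qq x([mlr]P) = -a - 2\ord_\qq m$. Therefore
\[
\ord_\qq\!\left(\frac{x([lr]P)}{x([mlr]P)}\right) = -a + (a + 2\ord_\qq m) = 2\ord_\qq m \ge 0,
\]
and in fact $\ord_\qq\!\big(\frac{x([lr]P)}{x([mlr]P)} - m^2\big)$: since $\frac{x([lr]P)}{x([mlr]P)}$ and $m^2$ are both $\qq$-integers, I compute the difference by using the refined formal-group expansion $x([m]Q) = m^{-2} x(Q)(1 + (\text{higher order in the uniformizer}))$, which gives $\frac{x([lr]P)}{x([mlr]P)} = m^2(1 + \eta_\qq)$ with $\ord_\qq \eta_\qq \ge a$ (the leading correction term in the formal group law for $[m]$ has order at least $\ord_\qq x(Q) \cdot(\text{positive integer})$; being careful here, the correction is $O(\text{uniformizer}^{a})$). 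Hence $\ord_\qq\!\big(\frac{x([lr]P)}{x([mlr]P)} - m^2\big) \ge a + \ord_\qq(m^2) \ge a = \ord_\qq \mte{d}_U(x([lr]P))$. Summing over all such $\qq$ and noting that these are exactly the primes appearing in $\mte{d}_U(x([lr]P))$, we obtain $\mte{d}_U(x([lr]P)) \mid \mte{n}_U\!\big(\frac{x([lr]P)}{x([mlr]P)} - m^2\big)$, which already gives a statement slightly stronger than needed (the square on the right-hand side gives room to spare, and handles the possibility that I have been off by a bounded factor in the correction-term estimate).

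\textbf{Main obstacle.} The only delicate point is the precise order of the correction term $\eta_\qq$ in $\frac{x([lr]P)}{x([mlr]P)} = m^2(1+\eta_\qq)$: I need $\ord_\qq \eta_\qq \ge \ord_\qq \mte{d}_U(x([lr]P))$, i.e. at least $a$. This follows from writing everything in the formal group parameter $w = -x/y$, where $[m]$ acts as a power series $mw + (\text{higher})$ and $x = w^{-2} + \cdots$; a direct expansion shows the relative error between $m^2 x([mlr]P)$ and $x([lr]P)$ is divisible by the uniformizer to order at least $2\,\ord_\qq w([lr]P) = a$. The square on the right of the claimed inequality is inserted precisely so that even a crude version of this estimate (losing a factor bounded independently of $l,m$, absorbable into $r$) suffices. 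I would carry out this formal-group bookkeeping once, cite III.2/VII.2 and IV of \cite{Sil1} for the expansions, and invoke Lemma~\ref{le:Po1} only to know such $[l]P$ with prescribed denominators exist; the global divisibility is then just the product of the local inequalities over the finite set of relevant primes.
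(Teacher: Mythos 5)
The paper disposes of this lemma by citing Lemmas 8 and 11 of \cite{Po} verbatim (remarking only that the rank-one hypothesis plays no role), so your formal-group computation is a reconstruction of Poonen's argument rather than a different route, and its architecture — choose $r$ so that $[lr]P$ lies in the kernel of reduction at every prime of its denominator, then estimate locally — is the right one. The problem is that your key local estimate is wrong, and not in the harmless ``absorbable'' way you invoke. At a relevant prime $\qq$ write $\ord_\qq x([lr]P)=-a=-2s$, so $s=\ord_\qq t([lr]P)$ for the formal parameter $t=-x/y$. From $[m](T)=mT+c_2T^2+\cdots$ and $xT^2=1-a_1T-a_2T^2-\cdots$ one gets
\[
\frac{x([lr]P)}{x([mlr]P)} = m^2\bigl(1+\eta_\qq\bigr), \qquad \ord_\qq \eta_\qq \;\ge\; s \;=\; a/2,
\]
and in general this is sharp: the first correction term involves $a_1$ and $c_2$ and has order exactly $s$ unless those coefficients vanish. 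Your repeated assertion that $\ord_\qq\eta_\qq\ge a=2s$ (``the relative error is divisible by the uniformizer to order at least $2\,\ord_\qq w([lr]P)$''), and the resulting claim that $\mte{d}_U(x([lr]P))$ already divides $\mte{n}_U\bigl(\tfrac{x([lr]P)}{x([mlr]P)}-m^2\bigr)$ \emph{without} the square, are false. The square on the right-hand side is not room to spare; it is exactly the factor of two that converts the true bound $\ord_\qq\bigl(\tfrac{x([lr]P)}{x([mlr]P)}-m^2\bigr)\ge s$ into $\ord_\qq\bigl(\tfrac{x([lr]P)}{x([mlr]P)}-m^2\bigr)^2\ge 2s=\ord_\qq\mte{d}_U(x([lr]P))$. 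Nor is a multiplicative factor of $2$ in the exponent ``a bounded factor absorbable into $r$'': enlarging $r$ cannot repair a deficit that grows with $s$.

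A secondary gap is the concrete choice of $r$ in Step 1. What is needed is that for every $l$ and every prime $\qq$ dividing $\mte{d}_U(x([lr]P))$, the point $[lr]P$ lies in the kernel of reduction on a minimal model at $\qq$ with $\ord_\qq x([lr]P)$ even; for this, $r$ must be divisible by the index $[{\tt E}(U_\qq):{\tt E}_1(U_\qq)]$ at each of the finitely many problematic primes (bad reduction, non-minimality of the fixed Weierstrass model, and the primes already in the denominator of $x(P)$). That index is the local Tamagawa number times the order of the group of nonsingular points of the reduced curve, not merely the residue characteristic, and the exponent of ${\tt E}(U)_{\tors}$ is irrelevant here. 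Both defects are repairable, and once the local bound is corrected to $\ord_\qq\eta_\qq\ge a/2$ the proof closes exactly as in Lemma 11 of \cite{Po}; but as written, Step 2 rests on a false inequality.
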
%
\begin{proof}%
Let $r$ be a positive integer defined in Lemma 8 of \cite{Po}.  Then the statement above follows immediately
from Lemma 11 of \cite{Po}.  The proof is again unaffected by the fact that we no longer assume ${\tt E}$ to be of
rank 1.%
\end{proof}%
\begin{lemma}%
\label{le:Po3}%
Let $r$ be as in Lemma \ref{le:Po2}.  Let $Q',Q \in [r]{\tt E}(U) \setminus\{O\}, Q' =[k]Q$.  Then
$\mte{d}_U(x(Q))$ divides $\mte{d}_U(x(Q'))$ in the semigroup of integral divisors of $U$.
\end{lemma}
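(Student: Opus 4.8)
The statement is a divisibility of integral divisors of $U$, so I would check it one prime at a time, which reduces everything to a purely local assertion. Fix a prime $\pp$ of $U$ with $\ord_{\pp}x(Q) < 0$; it suffices to show $\ord_{\pp}x(Q') \le \ord_{\pp}x(Q)$, since at all other primes $\mte{d}_U(x(Q))$ contributes nothing. Pass to the completion $U_{\pp}$. Because the fixed Weierstrass equation for ${\tt E}$ has coefficients in $O_U$, hence in the valuation ring of $U_{\pp}$, the formal group of that model is available over $U_{\pp}$, and its standard filtration is visible on Weierstrass coordinates (Chapters IV and VII of \cite{Sil1}): for each integer $n \ge 1$ the set
\[
E_n(U_{\pp}) = \{R \in {\tt E}(U_{\pp}) : \ord_{\pp}x(R) \le -2n\} \cup \{O\}
\]
is a subgroup of ${\tt E}(U_{\pp})$, and any point $R$ with $\ord_{\pp}x(R) < 0$ has $\ord_{\pp}x(R) = -2n$ for some $n \ge 1$ (equivalently $\ord_{\pp}y(R) = -3n$), so that $R \in E_n(U_{\pp})$.

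\textbf{Key steps.} First, record $n \ge 1$ with $\ord_{\pp}x(Q) = -2n$ and note $Q \in E_n(U_{\pp})$. Second, use the hypothesis $Q' = [k]Q$: since $E_n(U_{\pp})$ is a subgroup and $k \in \Z$, also $Q' \in E_n(U_{\pp})$, hence $\ord_{\pp}x(Q') \le -2n = \ord_{\pp}x(Q) < 0$. Third, since $\pp$ was an arbitrary prime occurring in $\mte{d}_U(x(Q))$, unwind the definition of divisibility in the semigroup of integral divisors (Notation \ref{not:0}): for every $\pp$ the exponent of $\pp$ in $\mte{d}_U(x(Q))$ is $\le$ its exponent in $\mte{d}_U(x(Q'))$, which is precisely the claim.

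\textbf{Main obstacle.} There is no deep difficulty; the entire content is the subgroup property of the $E_n(U_{\pp})$, which is classical. The one point that needs care is \emph{uniformity over all primes}: we have only assumed the Weierstrass equation to be integral, not minimal, and we make no good-reduction hypothesis, so one must not argue via a minimal model or via the reduction type at $\pp$. This is avoided by working throughout inside the formal group of the given equation, which exists over $O_{U_{\pp}}$ regardless of reduction type and for which $\ord_{\pp}x$ and $\ord_{\pp}y$ behave exactly as stated above. (One checks in passing that $\ord_{\pp}x(R) < 0$ forces $\ord_{\pp}x(R)$ even and $\ord_{\pp}y(R) = \tfrac{3}{2}\ord_{\pp}x(R)$, using only that the $a_i$ are $\pp$-integral.) Note finally that the hypothesis $Q, Q' \in [r]{\tt E}(U)$ plays no role in this argument; it is kept only for compatibility with the way the lemma will be invoked alongside Lemmas \ref{le:Po1} and \ref{le:Po2}.
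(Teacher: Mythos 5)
Your proof is correct and is essentially the argument behind the paper's own proof, which simply defers to Lemma 9 of \cite{Po}: there too the divisibility is checked prime by prime via the formal-group filtration subgroups $E_n(U_{\pp})$ and the observation that $Q'=[k]Q$ remains in whichever $E_n(U_{\pp})$ contains $Q$. Your two side remarks are also accurate: only $\pp$-integrality (not minimality) of the fixed Weierstrass model is needed, since the filtration sets are still subgroups via the formal group of the given equation, and the hypothesis $Q,Q'\in [r]{\tt E}(U)$ plays no role in this particular lemma.
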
%
\begin{proof}%
See Lemma 9 of \cite{Po}.%
\end{proof}%

\begin{lemma}%
\label{le:ratio}%
Let $Q,Q'$ be as in Lemma \ref{le:Po3}. Then $\mte{d}_U(x(Q))$ and $\displaystyle \mte{d}_U\left
(\frac{x(Q)}{x(Q')}\right )$ do not have any common factors.
\end{lemma}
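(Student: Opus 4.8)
The plan is to argue primewise. Since $Q' \in [r]{\tt E}(U)\setminus\{O\}$ the point $Q'$ is not the identity, so $x(Q')$ is a well-defined element of $U^\times$ and the ratio $x(Q)/x(Q')$ makes sense. To show that $\mte{d}_U(x(Q))$ and $\mte{d}_U\!\left(\tfrac{x(Q)}{x(Q')}\right)$ have no common factor, it suffices to check that there is no prime $\pp \in \calP(U)$ with simultaneously $\ord_{\pp} x(Q) < 0$ and $\ord_{\pp}\!\left(\tfrac{x(Q)}{x(Q')}\right) < 0$, since $\pp$ appears in $\mte{d}_U(t)$ exactly when $\ord_{\pp} t < 0$.

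So fix a prime $\pp$ with $\ord_{\pp} x(Q) < 0$; I will show $\ord_{\pp}\!\left(\tfrac{x(Q)}{x(Q')}\right) \ge 0$. By Lemma \ref{le:Po3}, applied to $Q, Q' \in [r]{\tt E}(U)\setminus\{O\}$ with $Q' = [k]Q$, the divisor $\mte{d}_U(x(Q))$ divides $\mte{d}_U(x(Q'))$ in the semigroup of integral divisors of $U$. Comparing $\pp$-orders of these two denominator divisors and using $\ord_{\pp}x(Q) < 0$, this divisibility yields
\[
-\ord_{\pp} x(Q) \;=\; \ord_{\pp}\mte{d}_U(x(Q)) \;\le\; \ord_{\pp}\mte{d}_U(x(Q')) \;=\; -\ord_{\pp} x(Q'),
\]
the last equality because $\ord_{\pp}x(Q') \le \ord_{\pp}x(Q) < 0$ forces $\pp$ to occur in $\mte{d}_U(x(Q'))$. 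Hence $\ord_{\pp} x(Q') \le \ord_{\pp} x(Q)$, and therefore
\[
\ord_{\pp}\!\left(\frac{x(Q)}{x(Q')}\right) \;=\; \ord_{\pp} x(Q) - \ord_{\pp} x(Q') \;\ge\; 0,
\]
so $\pp$ does not divide $\mte{d}_U\!\left(\tfrac{x(Q)}{x(Q')}\right)$. Since $\pp$ was an arbitrary prime dividing $\mte{d}_U(x(Q))$, the two denominator divisors share no common prime factor, which is the assertion of the lemma.

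There is essentially no obstacle here: the statement is a formal consequence of the divisibility $\mte{d}_U(x(Q)) \mid \mte{d}_U(x(Q'))$ from Lemma \ref{le:Po3} together with the elementary identity $\ord_{\pp}(a/b) = \ord_{\pp}a - \ord_{\pp}b$. The only points requiring a word of care are that $x(Q')$ is well-defined (guaranteed by $Q' \ne O$) and the bookkeeping of turning the divisor-divisibility relation into an inequality of $\pp$-orders with the correct signs, as done above.
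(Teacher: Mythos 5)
Your proof is correct and is essentially the paper's argument: both rest entirely on the divisibility $\mte{d}_U(x(Q))\mid\mte{d}_U(x(Q'))$ from Lemma \ref{le:Po3}, the paper phrasing it as divisor arithmetic (cancelling $\mte{d}_U(x(Q))$ from the divisor of the ratio, so that $\mte{d}_U\bigl(x(Q)/x(Q')\bigr)$ divides $\mte{n}_U(x(Q'))$, which is coprime to $\mte{d}_U(x(Q'))$ and hence to $\mte{d}_U(x(Q))$), while you run the same computation prime by prime. One cosmetic point: your justification of the last equality is worded circularly --- the correct order is that the divisibility forces $\ord_{\pp}\mte{d}_U(x(Q'))\ge -\ord_{\pp}x(Q)>0$, hence $\ord_{\pp}x(Q')<0$, hence the equality --- but the substance is right.
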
%
\begin{proof}%
By Lemma \ref{le:Po3} we know that $\displaystyle \frac{\mte{d}_U (x(Q'))}{\mte{d}_U(x(Q))}=\mte{A}$
is an integral divisors. Next we note that the divisor of $\displaystyle \frac{x(Q)}{x(Q')}$ is of
the form
\[%
\frac{\mte{n}_U(x(Q))\mte{d}_U(x(Q'))}{\mte{d}_U(x(Q))\mte{n}_U(x(Q'))}=
\frac{\mte{A}\mte{n}_U(x(Q))}{\mte{n}_U(x(Q'))}
\]%
so that $\displaystyle\mte{d}_U\left (\frac{x(Q)}{x(Q')}\right )$ divides $\mte{n}_U(x(Q'))$ in the
group of integral divisors of $U$. Now $\mte{n}_U(x(Q'))$ has no common factors with
$\mte{d}_U(x(Q'))$ and thus with $\mte{d}_U(x(Q))$. Consequently,
$\displaystyle\mte{d}_U\left(\frac{x(Q)}{x(Q')}\right )$ has no common factors with
$\mte{d}_U(x(Q))$
\end{proof}%

The last lemma of this section follows from the chosen form of the Weierstrass equation.

\begin{lemma}%
\label{le:evenorder}%
$\mte{d}_U(x(Q))$ is a square of an integral divisor of $U$.
\end{lemma}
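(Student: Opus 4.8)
The plan is to show that $\mte{d}_U(x(Q))$ is a square by verifying that every prime occurs in it to an even exponent, i.e.\ that $\ord_{\pp}x(Q)$ is even for every $\pp \in \calP(U)$ with $\ord_{\pp}x(Q) < 0$ (this is automatic for any affine rational point $Q$, in particular for $Q$ as in the preceding lemmas). This is a purely local assertion at each such $\pp$, and it will follow from the shape of the Weierstrass equation
\[
y^2 + a_1 x y + a_3 y = x^3 + a_2 x^2 + a_4 x + a_6,
\]
whose coefficients $a_1,\ldots,a_6$ lie in $O_U$, so that $\ord_{\pp}a_i \geq 0$ for all $i$.

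Fix a prime $\pp$ with $n := -\ord_{\pp}x(Q) > 0$ and set $m := -\ord_{\pp}y(Q)$. First I would bound the order of the right-hand side: since $\ord_{\pp}x(Q)^3 = -3n$ while $\ord_{\pp}\bigl(a_2 x(Q)^2\bigr) \geq -2n$, $\ord_{\pp}\bigl(a_4 x(Q)\bigr) \geq -n$, and $\ord_{\pp}a_6 \geq 0$, the cubic term strictly dominates (using $n > 0$), so the right-hand side has order exactly $-3n$; in particular it is nonzero, hence $y(Q) \neq 0$ and $m$ is well defined. Next I claim $m > n$: otherwise each of $y(Q)^2$, $a_1 x(Q) y(Q)$, $a_3 y(Q)$ would have order $\geq -2n > -3n$, contradicting that the left-hand side equals the right-hand side. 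Given $m > n$ (so in particular $m > 0$), one checks that $y(Q)^2$ strictly dominates the remaining terms on the left, since $\ord_{\pp}\bigl(a_1 x(Q) y(Q)\bigr) \geq -(n+m) > -2m$ and $\ord_{\pp}\bigl(a_3 y(Q)\bigr) \geq -m > -2m$; hence the left-hand side has order exactly $-2m$. Equating orders yields $2m = 3n$, which forces $n$ to be even.

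Therefore every exponent occurring in $\mte{d}_U(x(Q))$ is even, so $\mte{d}_U(x(Q)) = \mte{B}^2$ with $\mte{B} = \sqrt{\mte{d}_U(x(Q))}$ an integral divisor of $U$ in the notation fixed above, which proves the lemma. I do not expect a real obstacle here: the only point needing care is excluding a coincidental cancellation between $y(Q)^2$ and the cross terms $a_1 x(Q) y(Q)$, $a_3 y(Q)$ on the left-hand side, and this is exactly why one first extracts the inequality $m > n$ by comparison with the right-hand side before concluding that $y(Q)^2$ is the dominant term.
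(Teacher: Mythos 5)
Your proof is correct and is precisely the standard local computation that the paper leaves implicit when it says the lemma "follows from the chosen form of the Weierstrass equation": at a pole of $x(Q)$ of order $n$ one deduces $2m=3n$ for the pole order $m$ of $y(Q)$, forcing $n$ even. Your two-step argument (first $m>n$ by comparison with the right-hand side, then exact dominance of $y(Q)^2$ on the left) correctly rules out cancellation, so nothing is missing.
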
%

The next two propositions will provide foundations for a construction of examples of elliptic curves with
finitely generated groups in some infinite extensions.  We first state a theorem which is a special case of
Theorem 12 from \cite{Sil3}.  For our special case below we can set the parameter $[F(nP):F]$ to 1.
\begin{theorem}
\label{thm:silverman}%
Let $F/\Q$ be a number field, let ${\tt E}/F$ be an elliptic curve that does not have complex multiplication. Then there
is an integer $k=k({\tt E}/F)$ so that for any point $P \in {\tt E}(\tilde{\Q})$, where $\tilde \Q$
is the algebraic closure of $\Q$, with $F(P)/F$ abelian we have that $[F(P):F]$ divides
$k[F(nP):F]$ for all $n$.
\end{theorem}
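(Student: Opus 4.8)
The plan is to deduce the stated divisibility from a bound that does not involve $n$ at all. Since $P\in{\tt E}(F(P))$, every multiple $nP$ has its coordinates in $F(P)$, so $F\subseteq F(nP)\subseteq F(P)$ and $[F(P):F]=[F(P):F(nP)]\cdot[F(nP):F]$. Hence it suffices to produce an integer $k=k({\tt E}/F)$, depending only on ${\tt E}/F$, such that $[F(P):F(nP)]\mid k$ for every $P$ with $F(P)/F$ abelian and every $n$; for then $k[F(nP):F]$ is a multiple of $[F(P):F]$.

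The one nonelementary ingredient I would use is that ${\tt E}(F^{\ab})_{\tors}$ is finite, where $F^{\ab}$ is the maximal abelian extension of $F$; write $N$ for its order. This follows from Serre's open image theorem: because ${\tt E}/F$ has no complex multiplication, the image of $\Gal(\tilde{\Q}/F)$ acting on the Tate module is open in $\GL_2(\Zhat)$, so at all but finitely many primes $\ell$ this image is all of $\GL_2(\Z_\ell)$; since for $\ell\ge 5$ the group $\SL_2(\Z/\ell^r)$ is perfect and equals the commutator subgroup of $\GL_2(\Z/\ell^r)$, every abelian subextension of $F({\tt E}[\ell^r])/F$ lies in $F(\mu_{\ell^r})$, whereas a torsion point of exact order $\ell^r$ has $\Gal(\tilde{\Q}/F)$-orbit of size of order $\ell^{2r}$, and comparing these forces $r$ to be bounded; the finitely many remaining primes are handled identically using that the $\ell$-adic image has finite index. (One may instead simply quote this finiteness from the literature.)

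Granting this, fix $P$ with $F(P)/F$ abelian, so $F(P)\subseteq F^{\ab}$, and fix $n$. As $F(P)/F$ is Galois, so is $F(P)/F(nP)$, and I consider the map
\[
\Gal(F(P)/F(nP))\longrightarrow {\tt E}(F(P)),\qquad \sigma\longmapsto \sigma(P)-P,
\]
the subtraction taken in the group law of ${\tt E}$. It is injective as a map of sets, since $\sigma(P)=\tau(P)$ forces $\tau^{-1}\sigma$ to fix the coordinates of $P$, hence to fix $F(P)$; and its image lies in ${\tt E}(F(P))[n]$, since $\sigma$ fixes $nP$, so $n(\sigma(P)-P)=\sigma(nP)-nP=O$. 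As ${\tt E}(F(P))[n]\subseteq{\tt E}(F^{\ab})_{\tors}$, we obtain
\[
[F(P):F(nP)]=\bigl|\Gal(F(P)/F(nP))\bigr|\le N ,
\]
so $[F(P):F(nP)]$ divides $N!$. Taking $k=N!$ finishes the argument, with $k$ depending only on ${\tt E}/F$ as required.

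The substantive point, and the only place where the absence of complex multiplication enters, is the finiteness of ${\tt E}(F^{\ab})_{\tors}$; I expect the only real work there to be the careful treatment of the finitely many ``bad'' primes $\ell$ in deriving that finiteness from Serre's theorem -- that is, controlling the abelian quotients of the $\ell$-adic Galois image when it is a proper open subgroup of $\GL_2(\Z_\ell)$ -- and this is standard and may be cited rather than reproved.
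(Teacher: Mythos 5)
Your proof is correct, but note that the paper itself offers no proof to compare against: Theorem \ref{thm:silverman} is simply quoted as a special case of Theorem 12 of \cite{Sil3}. Your route --- factoring $[F(P):F]=[F(P):F(nP)]\cdot[F(nP):F]$ and bounding the first factor uniformly via the injection $\sigma\mapsto\sigma(P)-P$ of $\Gal(F(P)/F(nP))$ into ${\tt E}(F(P))[n]\subseteq {\tt E}(F^{\ab})_{\tors}$ --- is sound in every step (the map is well defined because $F(P)/F$ abelian makes $F(P)/F(nP)$ Galois, it is injective since $\sigma(P)=\tau(P)$ forces $\tau^{-1}\sigma$ to fix $F(P)$, and its image is $n$-torsion because $\sigma$ fixes $nP$), and it is essentially the argument underlying Silverman's theorem. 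You correctly isolate the one substantive input, the finiteness of ${\tt E}(F^{\ab})_{\tors}$ for a non-CM curve, which is exactly where the no-complex-multiplication hypothesis enters; this is a known consequence of Serre's open image theorem (and also follows from Ribet's work on torsion of abelian varieties over $F^{\ab}$) and is legitimately citable, with your sketch of the derivation being the standard one. Two minor remarks: the statement must be read with $n\ge 1$ (your injection says nothing for $n=0$, and that case is not needed for the paper's application in Proposition \ref{prop:fingen}); and taking $k=N!$ where $N=\#{\tt E}(F^{\ab})_{\tors}$ is fine, since any integer at most $N$ divides $N!$ --- one could equally take the least common multiple of $1,\dots,N$. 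In short: correct, complete modulo a citable fact, and in the same spirit as the source the paper relies on.
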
%
We now use the theorem to prove that ${\tt E}(U_{\infty})$ is finitely generated.  The proof of the
 proposition was suggested to the author by Karl Rubin.
\begin{proposition}%
\label{prop:fingen}
${\tt E}(U_{\infty})$ is finitely generated.
\end{proposition}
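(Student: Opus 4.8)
The plan is to use Theorem~\ref{thm:silverman} together with the hypotheses on $U_{\infty}/U$: namely that $U_{\infty}/U$ is an infinite abelian Galois extension and that $\rank({\tt E}(U_{\infty})) = \rank({\tt E}(U))$. First I would reduce to the case where ${\tt E}$ has no complex multiplication: if ${\tt E}$ has CM by an order in an imaginary quadratic field $F_0$, then one handles this separately by a theory-of-complex-multiplication argument (the relevant analogue of Silverman's theorem in the CM case, or by passing to $U F_0$ and using that $U_{\infty} U F_0 / U F_0$ is still abelian), so the heart of the matter is the non-CM case. Fix a number field $F$ over which ${\tt E}$ is defined (we may take $F = U$), and let $k = k({\tt E}/F)$ be the integer supplied by Theorem~\ref{thm:silverman}.

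Next I would bound the torsion. For any $P \in {\tt E}(U_{\infty})$, the field $F(P)$ is contained in $U_{\infty}$, hence $F(P)/F$ is abelian, so Theorem~\ref{thm:silverman} gives $[F(P):F] \mid k[F(nP):F]$ for all $n$. Taking $P$ to be a torsion point of order $N$ and choosing $n = N$, we get $nP = O$, so $[F(nP):F] = 1$ and therefore $[F(P):F] \mid k$. Thus every torsion point of ${\tt E}(U_{\infty})$ generates an extension of $F$ of degree at most $k$; since there are only finitely many extensions of a number field of bounded degree inside a fixed algebraic closure, and each such number field has finite torsion subgroup by the Mordell--Weil theorem (or Mazur-type bounds), the torsion subgroup ${\tt E}(U_{\infty})_{\tors}$ is finite. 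Say it is killed by an integer $D$.

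For the free part, I would argue as follows. Since $\rank({\tt E}(U_{\infty})) = \rank({\tt E}(U)) =: \rho$, choose points $P_1,\dots,P_{\rho} \in {\tt E}(U)$ generating a finite-index subgroup of ${\tt E}(U)$, hence also of ${\tt E}(U_{\infty})$ modulo torsion. Given any $P \in {\tt E}(U_{\infty})$, there is a positive integer $m$ with $mP \in \langle P_1,\dots,P_\rho\rangle + {\tt E}(U_{\infty})_{\tors} \subseteq {\tt E}(U)$, so $mP \in {\tt E}(U)$ and hence $F(mP) = F$, giving $[F(mP):F]=1$. By Theorem~\ref{thm:silverman} applied with $n=m$, $[F(P):F] \mid k$. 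Therefore \emph{every} point of ${\tt E}(U_{\infty})$ lies in ${\tt E}(L)$ for some number field $L$ with $[L:F] \le k$. Let $L_1,\dots,L_s$ be the finitely many subfields of the algebraic closure with $[L_i:F]\le k$ that are contained in $U_{\infty}$; then ${\tt E}(U_{\infty}) = \bigcup_i {\tt E}(L_i \cap U_{\infty})$, and in fact ${\tt E}(U_{\infty})$ is the subgroup generated by the finitely many finitely generated groups ${\tt E}(L_i)$ — so it is finitely generated. Alternatively and more cleanly: ${\tt E}(U_{\infty}) \subseteq {\tt E}(L)$ where $L$ is the compositum of all the $L_i$, a single number field, whence ${\tt E}(U_{\infty}) \subseteq {\tt E}(L)$ is finitely generated by Mordell--Weil.

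The step I expect to be the main obstacle is ensuring that Theorem~\ref{thm:silverman} really applies with the parameter $[F(nP):F]$ set to $1$ in the way needed — that is, correctly producing, for an arbitrary $P \in {\tt E}(U_{\infty})$, an integer $n$ with $nP \in {\tt E}(F)$ (so that $F(nP)=F$). This is exactly where the hypothesis $\rank({\tt E}(U_{\infty})) = \rank({\tt E}(U))$ is essential and must be combined with the finiteness of torsion established in the previous step; getting the quantifiers right (first bound torsion uniformly, then use the rank equality to kill the free part into ${\tt E}(U)$) is the delicate point. A secondary technical point is the CM case, which should be dispatched at the outset either by citing the CM analogue of Silverman's result or by the base-change trick to $U F_0$.
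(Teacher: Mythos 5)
Your proof is correct and takes essentially the same route as the paper's: use the rank equality to push a multiple of each point of ${\tt E}(U_{\infty})$ into ${\tt E}(U)$, apply Theorem~\ref{thm:silverman} to bound $[U(P):U]$ by $k$ uniformly, and finish with Mordell--Weil over the compositum of the finitely many subfields of bounded degree (the paper's version is just terser and, like yours, tacitly assumes the non-CM case). One small slip: ${\tt E}(U_{\infty})_{\tors}$ need not lie in ${\tt E}(U)$, so in the free-part step you should also multiply by the torsion exponent $D$ before concluding $[F(P):F]\mid k$ --- exactly the quantifier issue you yourself flagged.
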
%
\begin{proof}%
Let $k$ be a positive integer defined in Theorem \ref{thm:silverman}.   Since $\rank({\tt E}(U_{\infty}))=\rank({\tt E}(U))$,
for any $Q\in {\tt E}(U_{\infty})$ for some $m \in \Z_{>0}$ we have that $[m]Q \in {\tt E}(U)$.  Further, since
$U_{\infty}/U$ is abelian, we also have that $U(Q)/U$ is abelian.  Thus, $m$ divides $k$ and
${\tt E}(U_{\infty})$ is finitely generated.
\end{proof}%

\section{Diophantine Definitions of  Rational Integers  for the Totally Real Case.}
\label{sec:def}%
\setcounter{equation}{0}%
In this section we construct  Diophantine definitions of   $\Z$ and  \emph{some} rings of rational $\calS$-integers
with finite $\calS$ which we also called the small rings.
\begin{notationassumption}%
\label{not:integers}
We add the following notation and assumptions to our notation and assumption list.
\begin{itemize}%
\item Let $K$ be a totally real number field.%
\item Let $K_{\infty}$ be a possibly infinite totally real  extension of $K$.
\item Let $F$ be a finite extension of $K$ such that $F \cap K_{\infty} =K$. ($F$ can be equal to $K$.)%
\item Let ${\tt E}$ denote an elliptic curve defined over $F$ with $\rank {\tt E}(F) >0$ and
$i=[{\tt E}(FK_{\infty}):{\tt E}(F)] <\infty$.%

\item Let $O$ denote the identity element of the Mordell-Weil group of ${\tt E}$.
\end{itemize}
\end{notationassumption}

We will separate our Diophantine definition into two parts.
\begin{lemma}%
\label{le:Part1.1}%
Under assumptions in \ref{not:integers} there exists a set {\bf A} contained in $O_{FK_{\infty}}$
and Diophantine over $O_{FK_{\infty}}$, such that if $x \in$ {\bf A} then $\mte{n}_{F(x)}(x)$ can
be considered as a divisor of $F$. Further, if $x$ is a square of a rational integer, then $x \in$
{\bf A}, and all the equations comprising the Diophantine definition of {\bf A} can be satisfied
with variables (except for the variables representing the points on ${\tt E}$) ranging over $O_F$.
\end{lemma}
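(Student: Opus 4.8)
The plan is to construct the set $\mathbf{A}$ using the elliptic curve machinery of Section \ref{sec:ec} together with the bound-manipulation lemma \ref{le:modify}. The point is that $\mathbf{A}$ should capture elements $x \in O_{FK_\infty}$ that are of the form $x([lr]P)/x([mlr]P) - m^2$ (up to the usual adjustments), and then Lemma \ref{le:Po2} supplies the key divisibility $\mte{d}_U(x([lr]P)) \mid \mte{n}_U(x([lr]P)/x([mlr]P) - m^2)^2$. I would first pass to $[r]\mathbf{E}$ where $r$ is the integer of Lemma \ref{le:Po2}, so that Lemmas \ref{le:Po2}, \ref{le:Po3} and \ref{le:ratio} all apply. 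The Diophantine definition of $\mathbf{A}$ will then assert: there exist non-zero multiples $Q = [l]([r]P)$ and $Q' = [m]Q$ of the fixed point $P$, lying in $\mathbf{E}(FK_\infty)$, and there exists $t$ (to play the role of $m^2$) such that $x = x(Q)/x(Q') - t$ and such that $\mte{d}(x(Q))$ divides $\mte{n}_{F(x)}(x-(\text{something}))$ appropriately — all of this expressible by polynomial equations over $O_{FK_\infty}$ using the group law on $\mathbf{E}$, the Weierstrass relations, and Proposition on nonzero elements.

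Next I would verify the two required properties. For the first property — that $x \in \mathbf{A}$ forces $\mte{n}_{F(x)}(x)$ to be considerable as a divisor of $F$ — the idea is to apply Lemma \ref{le:modify} with $U = F$, $T = F(x)$, and $\mte{A}$ taken to be (a power of) $\mte{d}_F(x(Q))$, which by Lemma \ref{le:evenorder} is a square of an integral divisor of $F$, hence genuinely a divisor of $F$, and which by Lemma \ref{le:Po2} and Lemma \ref{le:ratio} dominates $\mte{T}^2 = \mte{n}_{F(x)}(x)^2$ while dividing $\mte{n}_{F(x)}(x - t)$ for the appropriate $t$. The conclusion of Lemma \ref{le:modify} is exactly that $\mte{n}_{F(x)}(x)$ can be considered as a divisor of $F$. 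The delicate point is to set up the equivalences so that the divisor inequality $\mte{T}^2 \mid \mte{A}$ holds; this is where Lemma \ref{le:ratio} (the denominator $\mte{d}(x(Q)/x(Q'))$ is coprime to $\mte{d}(x(Q))$) and Lemma \ref{le:Po2} have to be combined carefully, possibly squaring or taking a further multiple of the bounding divisor.

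For the second property — if $x = n^2$ for $n \in \Z$ then $x \in \mathbf{A}$, with all auxiliary variables (other than the points on $\mathbf{E}$) taken in $O_F$ — the plan is to run the construction in reverse: given $n$, use Lemma \ref{le:Po1} to find a multiple $[l]P$ whose $x$-coordinate has denominator divisible by any prescribed ideal (here we need nothing, so we just need a nonzero multiple), set $Q = [lr]P$ and $Q' = [nlr]P$, so $m = n$, and observe $x(Q)/x(Q') - n^2$ is an element whose relevant divisor data is governed by Lemma \ref{le:Po2}. Then all the witnessing equations are satisfied with the non-point variables equal to elements of $O_F$ — this uses that $P$, hence $Q$ and $Q'$, have coordinates in $F$ (since $\rank \mathbf{E}(FK_\infty) = \rank \mathbf{E}(F)$ means $\mathbf{E}(F)$ already has positive rank and $P$ can be taken in $\mathbf{E}(F)$), that $x(Q), x(Q') \in F$, and that $n^2 \in \Z \subset O_F$.

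The main obstacle I anticipate is arranging the bookkeeping of integral divisors so that Lemma \ref{le:modify} applies cleanly over the \emph{variable} field $F(x)$ rather than over $F$ or $FK_\infty$: one has to know that the relevant divisibilities of divisors, established by Lemmas \ref{le:Po2}--\ref{le:ratio} as statements about divisors of $F$, remain valid after extension to $F(x)$ and after the substitutions that express everything in terms of $x$ alone. A secondary technical nuisance is ensuring the Diophantine definition only uses operations legitimately available (the group law on $\mathbf{E}$ is given by polynomials over $O_F$, and the passage to $[r]\mathbf{E}$ and to multiples $[l]P$, $[m]Q$ must be encoded by existential quantification over points of $\mathbf{E}(FK_\infty)$, which is fine since ``being a point on $\mathbf{E}$'' and ``$Q' = [m]Q$ for some integer $m$'' — the latter handled exactly as in \cite{Po} — are themselves Diophantine).
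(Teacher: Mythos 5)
You have the right toolbox here, but the assembly has a genuine gap at the crucial step. In the paper's construction the element $x$ being tested for membership in $\mathbf{A}$ is \emph{not} set equal to $x(Q)/x(S)-t$; instead two separate conditions are imposed: (i) $x^4$ divides the denominator $v_Q$ of $x(Q)$ (equation (\ref{eq:divisibility1.1}), which together with the coprimality of $x$ and $u_Q$ from (\ref{eq:Q1.3}) forces $\mte{n}(x)^4$ to divide $\mte{d}(x(Q))$); and (ii) $x$ is merely \emph{congruent} to the $F$-element $x(Q)/x(S)$ modulo $\sqrt{\mte{d}(x(Q))}$ (equations (\ref{eq:cong1.1}) and (\ref{eq:Q1.2})). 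Condition (i) is exactly what supplies the hypothesis $\mte{T}^2 \mid \mte{A}$ of Lemma \ref{le:modify}, and it cannot be extracted from Lemmas \ref{le:Po2} and \ref{le:ratio} as you propose: those are statements about the points $Q$, $S$ alone and say nothing about the arbitrary element $x$ under test. Without an explicit defining equation tying $\mte{n}(x)$ to $\mte{d}(x(Q))$, the hypothesis of Lemma \ref{le:modify} is unverifiable, and your set is far too large -- with $t$ existentially quantified over $O_{FK_{\infty}}$ the equality $x = x(Q)/x(Q')-t$ is satisfied by every $x$, so the first assertion of the lemma fails.

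Relatedly, defining $\mathbf{A}$ by an exact equality is either vacuous (if $t$ is unconstrained) or circular (if $t$ is required to be a square of a rational integer, which is what one is ultimately trying to define). Lemma \ref{le:Po2} enters only in the converse direction: when $x=m^2$ one chooses $Q$ with $m^4$ dividing $\mte{d}_F(x(Q))$ -- this is precisely where Lemma \ref{le:Po1} is invoked with a nontrivial prescribed ideal, not merely to produce ``a nonzero multiple'' as you say -- and sets $S=[m]Q$, so that the divisibility $\mte{d}(x(Q)) \mid \mte{n}(x(Q)/x(S)-m^2)^2$ shows the congruence (ii) is satisfiable. Note also that the relation $S=[m]Q$ need only hold when verifying that squares belong to $\mathbf{A}$; the forward direction uses only $x(S)\in F$, so you do not need to (and should not try to) encode ``$S$ is an integer multiple of $Q$'' as part of the Diophantine definition.
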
%
\begin{proof}%
Consider the following equations, where $x,  u_Q, v_Q, w, Z, W, A, a, b \in O_{FK_{\infty}}, x(Q),
x(S) \in FK_{\infty}$.

\begin{equation}
\label{eq:firstpart1.1}
S, Q  \in [i]{\tt E}(FK_{\infty}) \setminus \{O\},
\end{equation}
\begin{equation}
\label{eq:Q1.1}
x(Q)=\frac{u_Q}{v_Q},
\end{equation}

\begin{equation}%
\label{eq:Q1.2}
Xb +Yv_Q=1
\end{equation}%
  \begin{equation}%
\label{eq:Q1.3}
Zx +Wu_Q=1
\end{equation}%

\begin{equation}%
\label{eq:Q1.31}
\frac{x(Q)}{x(S)}=\frac{a}{b},
\end{equation}%

\begin{equation}%
\label{eq:cong1.1}
u_Q(xb -a)^2 = v_Qw,
\end{equation}
\begin{equation}%
\label{eq:divisibility1.1}%
v_Q=x^4A.
\end{equation}%

Indeed suppose that equations (\ref{eq:firstpart1.1})--(\ref{eq:divisibility1.1}) are satisfied with all the
variables ranging over the sets described above. Let $M = K(x)$. From equation (\ref{eq:firstpart1.1}) we know
that $x(Q) \in F$.  Since $x(Q) \in F$,  by Lemma \ref{le:evenorder}, we can conclude that $\mte{d}_{FM}(x(Q))$
can be viewed as a second power of a divisor of $F$.  Note that  $\mte{n}_{FM}(v_Q) = \mte{d}_{FM}(x(Q))\mte{W}$,
where $\mte{W}$ is an integral divisor of $FM$ and $\mte{W}$ divides $\mte{n}_{FM}(u_Q)$ in the integral divisor
semigroup of $FM$.  Since $(u_Q,x) =1$ from (\ref{eq:Q1.3}), we must conclude that $\mte{n}_{FM}(x^4)$ divides
$\mte{d}_{FM}(x(Q))$  in the integral divisor semigroup of $FM$. From (\ref{eq:cong1.1}) we see that
$\displaystyle \mte{n}_{FM}\left (\frac{v_Q}{u_Q} \right )$, and therefore $\mte{d}_{FM}(x(Q))$,
divide $\mte{n}_{FM}(xb-a)^2$. Since $b$ and $v_Q$ are relatively prime by (\ref{eq:Q1.2}), we
deduce that $\sqrt{\mte{d}_{FM}(x(Q)})$ also divides $\displaystyle \mte{n}_{FM}\left
(x-\frac{a}{b}\right )$.     Next note that  $\displaystyle \frac{a}{b} \in F$ and  by Lemma
\ref{le:modify} we have the desired conclusion.

Suppose now that $x =m^2$ where $m \in \Z_{>0}$. Let $Q \in [i]{\tt E}(FK_{\infty}) $ be of infinite order and such that
$m^4$ divides $\mte{d}_F(x(Q))$.  Such an $Q$ exists by Lemma \ref{le:Po1}.   Let $S = [m]Q$.  By Lemma
\ref{le:ratio} we have that $\mte{d}_{FM}(x(Q))$ is relatively prime to $\displaystyle
\mte{d}_{FM}\left(\frac{x(Q)}{x(S)}\right )$.  Thus by Lemma \ref{le:denom} we can write
$\displaystyle x(Q) =\frac{u_Q}{v_Q}, \frac{x(Q)}{x(S)}=\frac{a}{b}$,  where $u_Q, v_Q, a, b \in
O_F, (v_Q,b) =1, (\mte{n}_F(u_Q), \mte{d}_F(x(Q)))=1$. Therefore, since $\nn_F(x)$ divides
$\mte{d}_F(x(Q))$, we also have that $(x,u_Q)=1$.      Further by Lemma \ref{le:Po2}, we also have
$\mte{d}_F(x(Q))$ divides $\displaystyle \mte{n}_F\left (\frac{x(Q)}{x(S)}-m^2\right )^2$ as
integral divisors of $F$.   As above $\mte{n}_F(v_Q)= \mte{d}_F(x(Q))\mte{W}$, where $\mte{W}$ is
an integral divisor dividing $\mte{n}_F(u_Q)$.  Therefore $\mte{n}_F(v_Q)$ divides $\displaystyle
\mte{n}_F(u_Q)\mte{n}_F\left (\frac{x(Q)}{x(S)}-m^2\right )^2$. Thus all the equations above can be
satisfied.

Finally we note that all the equations above can be rewritten so that the variables range over $O_{FK_{\infty}}$
only.
\end{proof}%

We now proceed to the second part  Diophantine definition.
\begin{proposition}%
\label{prop:thenbelow1.1}%
Consider the following equations and conditions, where $x, z \in O_{K_{\infty}};
x(Q),  x(P_j) \in FK_{\infty}; u_Q, v_Q, y_j, a_j, b_j,  X_j, Y_j, c_j, d_j, U_j, V_j \in
O_{FK_{\infty}};  j=0,1, 2$.
\begin{equation}%
\label{eq:firstpart1.2}
z \in \mbox{\bf A},
\end{equation}

\begin{equation}%
\label{eq:x1.1}%
x_j=(x+j)^2 \in \mbox{\bf A}, j=0,1,2
\end{equation}%

\begin{equation}%
\label{eq:sigma1.1}%
\sigma(x_j) \geq 1, j=0,1, 2
\end{equation}%
for all $\sigma$, embeddings of $K_{\infty}$ into $\R$,
\begin{equation}
\label{eq:first1.1}
Q, P_0,P_1, P_2 \in [i]E(FK_{\infty}) \setminus \{O\},
\end{equation}

\begin{equation}
\label{eq:classnumber1.1}
\sigma(z) > \max \{\sigma(x_0),\sigma(x_1),\sigma(x_2)\}
\end{equation}
for all $\sigma$, embeddings of $K_{\infty}$ into $\R$,
\begin{equation}
\label{eq:newz1.1}
z_j=x_jz, j=0,1,2
\end{equation}
\begin{equation}
\label{eq:P_21.1}
x(Q) = \frac{u_Q}{v_Q},
\end{equation}
\begin{equation}    %
\label{eq:P_21.2}
X_jv_Q + Y_jb_j=1
\end{equation}
\begin{equation}%
\label{eq:P21.3}%
U_jz_j + V_ju_Q=1
\end{equation}%

\begin{equation}
\label{eq:relprime1.1}
\frac{a_j}{b_j}=\frac{x(Q)}{x(P_j)}, j=0,1,2
\end{equation}

\begin{equation}
\label{eq:divide1.1}
v_Q =p^2z_j^8y_j,
\end{equation}

\begin{equation}%
\label{eq:last1.1}%
u_Q(x_jb_j-a_j)^2=c_jv_Q, j=0,1,2
\end{equation}%
We claim that if these equations are satisfied with variables in the sets as indicated above, then $x \in
O_K$. Conversely, if $x \in \Z_{>0}$ then all the equations can be satisfied with $z, y, y_j, x_j \in
O_K; x(Q), x(P_j) \in F; a, b, u_j \in O_{FK}; j=0,1,2$.
\end{proposition}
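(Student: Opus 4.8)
The plan is to run the bound machinery of Proposition \ref{prop:usebounds} three times, once for each index $j \in \{0,1,2\}$, in order to force each $x_j = (x+j)^2$ to lie in $O_K$, and then to conclude from $x_0, x_1, x_2 \in O_K$ that $x$ itself lies in $O_K$. First I would unwind the elliptic-curve equations exactly as in the proof of Lemma \ref{le:Part1.1}: from \eqref{eq:first1.1} we get $x(Q), x(P_j) \in F$; Lemma \ref{le:evenorder} makes $\mte{d}_{F(x)}(x(Q))$ a square of a divisor of $F$; the coprimality conditions \eqref{eq:P_21.2}, \eqref{eq:P21.3} together with \eqref{eq:last1.1} and \eqref{eq:divide1.1} show, just as before, that $\mte{n}_{F(x)}(z_j)^4$ (carrying the factor $p^2$) divides $\mte{n}_{F(x)}(x_j b_j - a_j)^2$, and hence $p\,\mte{n}_{F(x)}(z_j)^4$ divides $\nn_{F(x)}\!\bigl(x_j - \tfrac{a_j}{b_j}\bigr)$ with $\tfrac{a_j}{b_j} \in F$. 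This is precisely the divisibility hypothesis \eqref{eq:4} of Proposition \ref{prop:usebounds}, with $t = a_j/b_j$, and with $z$ there taken to be $z_j = x_j z$.

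Next I would verify the remaining hypotheses of Proposition \ref{prop:usebounds} for the triple $(x_j, z_j)$ over the appropriate fields: take $U = F$ (or $K$, after noting $x_j \in K_\infty$), $\calW_U = \emptyset$ so that $O_{U,\calW_U} = O_U$ and the prime $p$ has no factor in $\calW_U$; the real-embedding inequalities \eqref{eq:sigma1.1} give $|\sigma(x_j)| \ge 1$ (hypothesis \eqref{eq:2}), \eqref{eq:classnumber1.1} combined with \eqref{eq:newz1.1} gives $|\sigma(z_j)| = |\sigma(x_j)|\,|\sigma(z)| > |\sigma(x_j)| \ge 1$, which yields both \eqref{eq:1} and \eqref{eq:2.1}; and \eqref{eq:newz1.1} itself gives $x_j \mid z_j$ in the relevant ring, which is \eqref{eq:3}. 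Here one uses that all the fields in play are totally real, so the archimedean embeddings are all real and the conditions quantified over $\sigma : K_\infty \to \R$ suffice after passing to the Galois closure $T$ of $F(x)$. One also needs $\mte{n}_T(z_j)$ to be considered as a divisor of $U$: this comes from the conclusion of Lemma \ref{le:Part1.1} applied via \eqref{eq:firstpart1.2} and \eqref{eq:x1.1} (which assert $z, x_j \in \mathbf{A}$), together with the fact that $z_j = x_j z$ is a product of elements whose numerator divisors descend. Proposition \ref{prop:usebounds} then delivers $x_j \in O_U$, i.e. $x_j \in O_K$, for $j = 0, 1, 2$.

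Finally, from $x_0 = x^2$, $x_1 = (x+1)^2$, $x_2 = (x+2)^2$ all lying in $O_K$ I would conclude $x \in O_K$: indeed $x_1 - x_0 = 2x + 1 \in O_K$, so $x \in O_K$ (here $2$ is a unit only up to the integrality statement, but $2x+1 \in O_K$ already forces $x \in \tfrac{1}{2}O_K$; using additionally $x_2 - x_1 = 2x+3 \in O_K$ gives $(2x+3)-(2x+1) = 2 \in O_K$ trivially, so instead one observes $x^2 \in O_K$ and $2x+1 \in O_K$ imply $x$ is integral over $O_K$ hence in $O_K$ since $x \in K_\infty$ and $O_K$ is integrally closed in $K_\infty \cap$ its fraction field — more cleanly, $x$ satisfies $4x^2 = 4x_0 \in O_K$ and $2x = x_1 - x_0 - 1 \in O_K$, and $x = \tfrac{(2x)^2}{2(2x)}$ when $2x \ne 0$, while a direct check handles $x \in \{0\}$). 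For the converse, given $x \in \Z_{>0}$ one sets $x_j = (x+j)^2$, chooses $z \in O_K$ a large enough rational integer (a square, so that $z \in \mathbf{A}$ by Lemma \ref{le:Part1.1}) to satisfy the two real-size conditions \eqref{eq:sigma1.1} and \eqref{eq:classnumber1.1} simultaneously for all embeddings — possible since $K_\infty$ is totally real of the shape in \ref{not:integers} — then invokes Lemma \ref{le:Po1} to pick $Q \in [i]{\tt E}(FK_\infty)$ of infinite order with $p^2 z_j^8$ (for all three $j$ at once, by taking a common multiple) dividing $\mte{d}_F(x(Q))$, sets $P_j = [x+j]Q$, and reads off that \eqref{eq:P_21.1}--\eqref{eq:last1.1} hold with the displayed integrality of the auxiliary variables exactly as in the second half of the proof of Lemma \ref{le:Part1.1}.

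The main obstacle I expect is bookkeeping the fields and divisors carefully enough that Proposition \ref{prop:usebounds} applies cleanly: one must pass to a single Galois closure $T \supseteq F(x, z, x(Q), x(P_j))$ valid for all three values of $j$, check that the hypothesis ``$\mte{n}_T(z_j)$ can be considered as a divisor of $U$'' genuinely follows (this is where Lemma \ref{le:Part1.1}'s output about $\mte{n}_{F(z)}(z)$ and $\mte{n}_{F(x_j)}(x_j)$ being divisors of $F$ gets combined, using that $z_j = x_j z$), and ensure the prime $p$ is chosen with no factor among the finitely many primes one is tracking. Everything else is a routine re-run of the arguments already carried out in Lemma \ref{le:Part1.1} and Proposition \ref{prop:usebounds}, specialized to the three shifts $x, x+1, x+2$ whose squares pin down $x$.
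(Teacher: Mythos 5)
Your proposal follows essentially the same route as the paper's proof: use membership in $\mathbf{A}$ to make $\mte{n}(z_j)$ a divisor of $F$, extract the divisibility $p\,\mte{n}(z_j)^4 \mid \mte{n}(x_j - a_j/b_j)$ from the elliptic-curve equations as in Lemma \ref{le:Part1.1}, apply Proposition \ref{prop:usebounds} to each pair $(x_j, z_j)$ to get $x_j \in F$ hence $x_j \in K$ (via $M \cap F = K$), and then recover $x$ from the three consecutive squares (the paper cites Lemma 5.1 of \cite{Sh1} for this last step, which your explicit computation with $x_1 - x_0 = 2x+1$ replicates); the converse construction via Lemma \ref{le:Po1} and $P_j = [x+j]Q$ also matches. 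The argument is correct.
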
%

\begin{proof}%
Let $M$ be the Galois closure of $K(x,z)$ over $\Q$. From equation (\ref{eq:x1.1}) we conclude that $x_j \in
M \subset FM, j=0, 1, 2$. Observe also that from (\ref{eq:firstpart1.2}) and (\ref{eq:x1.1}) we have
that $z$ and $x_j$ are elements of {\bf A}, and thus by definition of {\bf A}, we have that
$\mte{n}_{FM}(z_j)=\mte{n}_{FM}(x_jz)$ can be considered as a divisor of $F$. Further, from
(\ref{eq:sigma1.1}) and (\ref{eq:classnumber1.1}) we have that
\begin{equation}%
\label{eq:bigger1.1}
1 \leq \sigma(x_j) < \sigma(z_j)
\end{equation}%
for any embedding $\sigma: FM \longrightarrow \C$. Next using the fact that $b_j$ and $v_Q$ are
relatively prime by equation (\ref{eq:P_21.2}), as in Lemma \ref{le:Part1.1} we conclude that
$\displaystyle {\mathfrak d}_{FM}\left (\frac{x(Q)}{x(P_j)}\right )$ divides
$\displaystyle \mte{n}_{FM} \left (x_j -\frac{a_j}{b_j}\right )^2$ for  $j=0,1,2$ in the integral
divisor semigroup of $FM$.   Next, again as in Lemma \ref{le:Part1.1}, since
$\mte{n}_{FM}(p^2z_j^8)$ divides $\mte{d}_{FM}(x(Q))$ in the integral divisor semigroup of $FM$
by equation (\ref{eq:divide1.1}) and equation (\ref{eq:P21.3}), we also have that
$\mte{n}_{FM}(pz_j^4)$ divides  $\displaystyle \mte{n}_{FM} \left (x_j -\frac{a_j}{b_j}\right )$
for  $j=0,1,2$.
Now by Proposition \ref{prop:usebounds}, we can conclude that $x_j \in F$. But $x_j \in M$ and $M\cap F =K$. Thus
for all $j=0,1,2$ we have that $x_j \in K$. Now by Lemma 5.1 of \cite{Sh1}, we can
conclude that $x \in K$.

Suppose now that $x \in \Z_{>0}$. Then $x_j$ is a non-zero square of a rational integer. Next
choose $z$ to be a square of a rational integer satisfying (\ref{eq:classnumber1.1}).  From this
point on the argument proceeds in the same fashion as in Lemma \ref{le:Part1.1}.
\end{proof}%
The last lemma is all we need for the proof of the following theorem.
\begin{theorem}%
\label{thm:mainint}
Let $K$ be a totally real field.  Let $K_{\infty}$ a totally real possibly infinite algebraic extension of $K$.
Let $F$ be a finite extension of $K$ such that for some elliptic curve ${\tt E}$ defined over $F$
and of a positive rank over $F$ we have that $[{\tt E}(FK_{\infty}):{\tt E}(F)] < \infty$ and
$K_{\infty} \cap F=K$.  Then $O_K$ and $\Z$ have a Diophantine definition over $O_{K_{\infty}}$ and
Hilbert's Tenth Problem is not solvable over $O_{K_{\infty}}$.
\end{theorem}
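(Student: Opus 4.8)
At this point the theorem is essentially a repackaging of Proposition~\ref{prop:thenbelow1.1}, so the plan is short. \emph{Step 1: pull all the variables down to $O_{K_\infty}$.} In Proposition~\ref{prop:thenbelow1.1} the quantified objects range over $O_{K_\infty}$, over $O_{FK_\infty}$, or over ${\tt E}(FK_\infty)$. Since $F/K$ is finite, so is $FK_\infty/K_\infty$; fix a $K$-basis $\omega_1,\ldots,\omega_d$ of $F$ lying in $O_F$, so that $FK_\infty=K_\infty\omega_1+\ldots+K_\infty\omega_d$. Write each $FK_\infty$-valued variable as $\frac{1}{\beta}(a_1\omega_1+\ldots+a_d\omega_d)$ with $a_i,\beta\in O_{K_\infty}$, $\beta\neq0$ (the ratio device, used throughout the proof of Proposition~\ref{prop:thenbelow1.1} and in \cite{Po}), and similarly for $x(Q)$, $x(P_j)$; the conditions $Q,P_0,P_1,P_2\in[i]{\tt E}(FK_\infty)\setminus\{O\}$ are Diophantine because multiplication by $i$ on ${\tt E}$ is polynomial and ``$\neq O$'' is expressible ($K_\infty$, being totally real, is not algebraically closed), and the archimedean constraints (\ref{eq:sigma1.1}) and (\ref{eq:classnumber1.1}) -- ``every real embedding of $K_\infty$ sends $w$ to a nonnegative (resp.\ positive) value'' -- are Diophantine over $O_{K_\infty}$ by Proposition~\ref{prop:realembed}, exactly as they are used inside the proof of Proposition~\ref{prop:thenbelow1.1}. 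After these substitutions the entire system (\ref{eq:firstpart1.2})--(\ref{eq:last1.1}) becomes a polynomial system over $O_{K_\infty}$; let $D\subseteq O_{K_\infty}$ be the set of values of $x$ for which it is solvable. By Proposition~\ref{prop:thenbelow1.1}, $D$ is Diophantine over $O_{K_\infty}$ and $\Z_{>0}\subseteq D\subseteq O_K$.

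\emph{Step 2: from the sandwich to $\Z$, then to $O_K$.} Put $D^{*}=\{t\in O_{K_\infty}:t=0\ \vee\ t\in D\ \vee\ -t\in D\}$; over a ring whose fraction field is not algebraically closed a disjunction of Diophantine conditions is again Diophantine, so $D^{*}$ is Diophantine over $O_{K_\infty}$ and $\Z\subseteq D^{*}\subseteq O_K$. When $K=\Q$ this already gives $D^{*}=\Z$. For a general finite totally real $K$ one combines $D^{*}$ with the known Diophantine definition of $\Z$ over $O_K$ (Denef; see also \cite{Sha-Sh}): writing it as $t\in\Z\Leftrightarrow\exists\bar y\,(g(t,\bar y)=0)$ for $t\in O_K$ and re-running it over $O_{K_\infty}$ with $t$ and all the witnesses $\bar y$ constrained to lie in $D^{*}$, one recovers $\Z$ as a Diophantine subset of $O_{K_\infty}$ -- the point to check being that the finite-case system is not satisfied by any $t\in O_K\setminus\Z$ once its variables are allowed to roam over $O_{K_\infty}$, which is where the total reality of $K_\infty$ (hence the renewed use of Proposition~\ref{prop:realembed} to reinstate the positivity side conditions in Denef's definition) enters. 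Once $\Z$ is Diophantine over $O_{K_\infty}$, so is $O_K$: fixing an integral basis $\nu_1,\ldots,\nu_m$ of $O_K$ over $\Z$, ``$x\in O_K$'' is just ``$\exists a_1,\ldots,a_m\in\Z\ (x=\sum_i a_i\nu_i)$''.

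\emph{Step 3: HTP, and where the work sits.} Since $\Z\subseteq O_K\subseteq O_{K_\infty}$, every ring in sight is recursive, $K_\infty$ is not algebraically closed, $\Z$ is Diophantine over $O_{K_\infty}$ by Step~2, and HTP is undecidable over $\Z$ (Davis--Putnam--Robinson--Matijasevich), the easy transfer lemma from the introduction yields that HTP is undecidable over $O_{K_\infty}$. Essentially all of the substance has been absorbed into Proposition~\ref{prop:thenbelow1.1}, whose proof reconciles the two competing demands of the elliptic-curve method over infinite extensions: control of archimedean size (the bounds of Proposition~\ref{prop:usebounds}) and control of non-archimedean integrality at the primes occurring in the $x$-coordinates of the curve points (membership in the set $\mathbf A$ of Lemma~\ref{le:Part1.1}), set up so that the divisibilities (\ref{eq:divide1.1})--(\ref{eq:last1.1}) force $x_j\in F$ and hence $x_j\in K$ by $M\cap F=K$. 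Within the proof of the theorem itself the only non-formal points are the descent of the $FK_\infty$- and curve-valued variables in Step~1 (routine, carried out with ratios of $O_{K_\infty}$-elements for the coordinates) and the $O_K\to\Z$ passage in Step~2, both handled as in \cite{Po}, \cite{Sh33}, \cite{Den1}.
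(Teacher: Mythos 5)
Your overall architecture is the right one, and it matches the paper's (the paper offers no proof body at all beyond ``the last lemma is all we need,'' so everything is indeed meant to follow formally from Proposition~\ref{prop:thenbelow1.1}); Steps 1 and 3 are fine. The gap is in Step 2, in the passage from the sandwich $\Z\subseteq D^{*}\subseteq O_K$ to a Diophantine definition of $\Z$. You propose to ``re-run'' Denef's definition $t\in\Z\Leftrightarrow\exists\bar y\in O_K^n\,(g(t,\bar y)=0)$ with the variables constrained to $D^{*}$, or (as your ``point to check'' suggests) with the witnesses roaming over $O_{K_\infty}$; either reading breaks one direction of the equivalence. If the witnesses are constrained to $D^{*}$, soundness is automatic (since $D^{*}\subseteq O_K$) but completeness fails: for $t\in\Z$ Denef only guarantees witnesses somewhere in $O_K^n$, and you have no control over whether they land in the possibly much smaller set $D^{*}$ --- all you know about $D^{*}$ is that it is squeezed between $\Z$ and $O_K$. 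If instead the witnesses roam over $O_{K_\infty}$, completeness is automatic but soundness is exactly the hard direction: Denef's definition over $O_K$ is built from Pell equations, and over an infinite totally real extension the relevant norm-one unit groups need not be finitely generated, so the definition is not known to remain sound there; making such arguments work over $O_{K_\infty}$ is precisely what the elliptic-curve hypothesis is for, and it cannot be recovered just by reinstating positivity conditions via Proposition~\ref{prop:realembed}.

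The repair is to reverse the order of your two targets. From $D^{*}$ define $O_K$ directly: fix an integral basis $\nu_1,\ldots,\nu_m$ of $O_K$ over $\Z$ and set $\tilde D=\{a_1\nu_1+\cdots+a_m\nu_m:\ a_i\in D^{*}\}$. Then $\tilde D\subseteq O_K$ because $D^{*}\subseteq O_K$ and $O_K$ is a ring, and $\tilde D\supseteq O_K$ because $\Z\subseteq D^{*}$; hence $\tilde D=O_K$ is Diophantine over $O_{K_\infty}$. Now apply Denef's definition of $\Z$ over the totally real number field $K$ (\cite{Den1}) with $t$ and \emph{all} witnesses constrained to the Diophantine set $O_K$: both directions of the equivalence are then literally Denef's statement interpreted in $O_K$, and transitivity of Diophantine definitions (legitimate since $K_\infty$ is formally real, hence not algebraically closed, so conjunctions can be combined) gives $\Z$ Diophantine over $O_{K_\infty}$. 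Your Step 3 then goes through unchanged.
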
%

We now state a corollary whose proof follows  from our definition of definability of integrality at a finite set
of primes in infinite algebraic extensions.
\begin{corollary}%
\label{cor:smallrings}%
Let $\calC_K \subset \calP(K)$ be a set of primes of $K$ such that integrality is definable at primes of $\calC_K$
in $K_{\infty}$. Then $O_{K,\calC_K}, O_K$ and $\Z$ are existentially definable over
$O_{K_{\infty},\calC_{K_{\infty}}}$  and therefore HTP is not solvable over $O_{K_{\infty},\calC_{K_{\infty}}}$.
\end{corollary}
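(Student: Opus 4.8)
The plan is to derive the corollary from Theorem~\ref{thm:mainint} by a relativization argument, using the hypothesis on $\calC_K$ only to cut the enlarged ring $R:=O_{K_\infty,\calC_{K_\infty}}$ (the integral closure of $O_{K,\calC_K}$ in $K_\infty$, with fraction field $K_\infty$) down to the ring of integers $O_{K_\infty}$. The key observation is that $R$ differs from $O_{K_\infty}$ exactly by allowing denominators at the primes of $K_\infty$ lying above $\calC_K$, so for $x\in R$ we have $x\in O_{K_\infty}$ if and only if $x$ is integral at every prime of $\calC_{K_\infty}$. Since integrality is definable at the primes of $\calC_K$ over $K_\infty$, there is a polynomial $I_{\calC_K/K_\infty}(x,t_1,\dots,t_k)\in K[x,t_1,\dots,t_k]$ (with $u_{\calC_K/K_\infty}=1$) whose vanishing for some $t_i\in K_\infty$ forces $x$ to be integral at all primes of $\calC_K$, and which conversely is solvable whenever $x$ is integral there. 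Clearing denominators in the $t_i$ and using the usual device of letting variables range over the fraction field (legitimate once the nonzero elements of $R$ are known to be Diophantine over $R$), this becomes a Diophantine definition $\varphi_{O_{K_\infty}}(x)$ of $O_{K_\infty}$ inside $R$; the identical argument over $FK_\infty$ gives a Diophantine definition $\varphi_{O_{FK_\infty}}(x)$ of $O_{FK_\infty}$ inside the integral closure of $O_{K,\calC_K}$ in $FK_\infty$, which is the ring over which the auxiliary (elliptic-curve) variables of Theorem~\ref{thm:mainint} naturally live.

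Next I would relativize the Diophantine definition of $\Z$ over $O_{K_\infty}$ furnished by Theorem~\ref{thm:mainint}. That definition is the polynomial system of Lemma~\ref{le:Part1.1} and Proposition~\ref{prop:thenbelow1.1} in a distinguished variable $t$ together with auxiliary variables, some confined to $O_{K_\infty}$ and the rest to $O_{FK_\infty}$. Moving the ambient rings up to $R$ and to the integral closure of $O_{K,\calC_K}$ in $FK_\infty$, and conjoining to each auxiliary variable $v$ (and to $t$) the clause $\varphi_{O_{K_\infty}}(v)$ or $\varphi_{O_{FK_\infty}}(v)$ as appropriate, produces a system over $R$ equivalent to $t\in\Z$: a solution over $R$ forces every value back into $O_{K_\infty}$ resp. $O_{FK_\infty}$, whereupon the original equivalence applies, while a solution over $O_{K_\infty}$ automatically satisfies the adjoined clauses. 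Hence $\Z$ is Diophantine over $R$; and then so is $O_K$, by fixing an integral basis $\omega_1,\dots,\omega_{d}$ of $O_K$ and writing $x=\sum_i n_i\omega_i$ with $n_i\in\Z$.

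For $O_{K,\calC_K}$ itself, finiteness of $\calC_K$ gives $O_{K,\calC_K}=O_K[1/a]$ for a suitable nonzero $a\in O_K$ (take $a$ to be a product of generators of the $h$-th powers of the primes of $\calC_K$, with $h$ the class number of $K$), so $O_{K,\calC_K}=R\cap K$. I would isolate it exactly as $\Z$ and $O_K$ were isolated: re-run the construction of Proposition~\ref{prop:thenbelow1.1} — whose output is the membership $x\in O_K$, obtained from $x_j\in F$, $x_j\in M$ for a number field $M$, and $M\cap F=K$ — with the rings of integers replaced throughout by the $\calC$-extended rings. Nothing in that argument is affected: the bound argument of Proposition~\ref{prop:usebounds} only needs a rational prime $p$ with no factor in $\calC_K$, which exists since $\calC_K$ is finite, and the archimedean bounds of Proposition~\ref{prop:realembed} are untouched; the output now reads $x\in R\cap K=O_{K,\calC_K}$. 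Finally, $\Frac(R)=K_\infty$ is formally real, hence not algebraically closed, so the transitivity lemma of Section~1 upgrades the undecidability of HTP over $\Z$ to undecidability of HTP over $R$.

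The step I expect to demand the most care is the first one. Two points need checking: that the auxiliary variables $t_i$ of $I_{\calC_K/K_\infty}$, a priori ranging over $K_\infty$, may be replaced by ratios of elements of $R$ (i.e.\ that the nonzero-elements lemma does hold for the infinite ring $R$); and, more delicately, that the ``solvability'' half of the definition of \emph{integrality definable} in Notation~\ref{not:infversion}, stated there only for $x$ in the ground number field, in fact propagates to every $x\in O_{K_\infty}$ that is integral at $\calC_{K_\infty}$ — and likewise over $FK_\infty$. A related point is to confirm that the relativized construction isolating $O_{K,\calC_K}$ remains solvable for all of $O_{K,\calC_K}$, not merely for the rational integers treated in the converse half of the proof of Proposition~\ref{prop:thenbelow1.1}.
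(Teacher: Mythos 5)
Your overall strategy is the one the paper intends (the paper offers no written proof beyond the remark that the corollary ``follows from our definition of definability of integrality''): conjoin the integrality clause $I_{\calC_K/K_{\infty}}(v,t_1,\ldots,t_k)=0$ to each variable of the system underlying Theorem~\ref{thm:mainint}, thereby forcing every variable of $O_{K_{\infty},\calC_{K_{\infty}}}$ back into $O_{K_{\infty}}$, and then quote the theorem. Moreover, both of the worries you single out at the end dissolve on inspection. First, Theorem~\ref{thm:mainint} already delivers its definition rewritten so that all variables range over $O_{K_{\infty}}$ (this is the coordinate--polynomial rewriting invoked at the end of Lemma~\ref{le:Part1.1} and in the proof of Theorem~\ref{thm:1}); relativizing \emph{after} that rewriting uses only the hypothesis as stated, and no separate integrality definition over $FK_{\infty}$ is required. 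Second, the asymmetry in Notation~\ref{not:infversion} (solvability of (\ref{eq:I}) guaranteed only for $x$ in the ground number field) is exactly adequate: in the forward direction one uses only the ``only if'' half, which holds for arbitrary elements of $K_{\infty}$, while in the backward direction the converse clauses of Lemma~\ref{le:Part1.1} and Proposition~\ref{prop:thenbelow1.1} place all witnesses in $O_K\subset K$ (resp.\ $O_{FK}$, which disappears after the rewriting), where solvability is guaranteed.

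The one step that does not close as you describe it is the definition of $O_{K,\calC_K}$. Re-running Proposition~\ref{prop:thenbelow1.1} over the $\calC$-extended rings would at best produce a Diophantine set $S$ with $\Z_{>0}\subseteq S\subseteq O_{K,\calC_K}$, because the converse half of that proposition is proved only for $x\in\Z_{>0}$; the elliptic-curve divisibility lemmas (\ref{le:Po1}, \ref{le:Po2}) are tailored to integer multiples $[m]Q$ and do not hand you solvability for an arbitrary element of $O_{K,\calC_K}$. So this route does not isolate $O_{K,\calC_K}$, as you yourself suspect. But you already have everything needed for the correct one-line argument: having defined $O_K$ over $R:=O_{K_{\infty},\calC_{K_{\infty}}}$ via $\Z$ and an integral basis, and recalling that the set of nonzero elements is Diophantine (any solution of the relevant equations lies in some $O_{M,\calC_M}$ with $M\subset K_{\infty}$ a number field, so the number-field statement applies), write
\[
O_{K,\calC_K}=R\cap K=\{x\in R:\ \exists\, a,b\in R,\ a,b\in O_K,\ b\neq 0,\ bx=a\},
\]
which is correct since $O_{K,\calC_K}$ is integrally closed in $K$ and hence equals $R\cap K$. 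With that substitution the proof is complete and agrees with the paper's intended argument.
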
%

Theorem \ref{thm:mainint} and Corollary can also be stated in a different way avoiding reference to any number
fields.  (See Main Theorem A.)
\begin{theorem}%
\label{thm:mainint2}%
Let $K_{\infty}$ be a totally real possibly infinite algebraic extension of $\Q$. Let $U_{\infty}$ be a finite
extension of $K_{\infty}$ such that there exists an elliptic curve ${\tt E}$ defined over $U_{\infty}$ with
${\tt E}(U_{\infty})$ finitely generated and of a positive rank. Then $\Z$ is existentially
definable and HTP is unsolvable over the ring of integers of $K_{\infty}$.  Further, $\Z$ is
existentially definable and HTP is unsolvable over any small subring of $K_{\infty}$ where
integrality is definable at all the primes allowed in the denominator of the divisors  of elements
of the ring.
\end{theorem}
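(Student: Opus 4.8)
The statement is, in effect, a number-field-free reformulation of Theorem~\ref{thm:mainint} together with Corollary~\ref{cor:smallrings}, and the plan is to deduce it from them after choosing the auxiliary number fields suitably. The crucial step is to descend the elliptic curve to a number field. Since $K_{\infty}$ is algebraic over $\Q$ and $U_{\infty}/K_{\infty}$ is finite, $U_{\infty}$ is itself an algebraic extension of $\Q$. Fix a Weierstrass equation for ${\tt E}$, and --- this is precisely where the hypothesis that ${\tt E}(U_{\infty})$ be finitely generated is used --- fix finitely many generators of ${\tt E}(U_{\infty})$. Let $F$ be the number field generated over $\Q$ by the coefficients of the Weierstrass equation, the affine coordinates of the chosen generators, and finitely many elements that generate $U_{\infty}$ over $K_{\infty}$. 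Then ${\tt E}$ is defined over $F$; the chosen generators lie in ${\tt E}(F)$, so ${\tt E}(F)={\tt E}(U_{\infty})$ and in particular $\rank {\tt E}(F)=\rank {\tt E}(U_{\infty})>0$; and $FK_{\infty}=U_{\infty}$, so that $[{\tt E}(FK_{\infty}):{\tt E}(F)]=1<\infty$.

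Now put $K=F\cap K_{\infty}$. Then $K$ is a number field (a subfield of $F$), it is totally real (a subfield of the totally real field $K_{\infty}$), $[F:K]<\infty$, the extension $K_{\infty}/K$ is totally real, ${\tt E}$ is defined over $F$ with $\rank{\tt E}(F)>0$ and $[{\tt E}(FK_{\infty}):{\tt E}(F)]<\infty$, and $F\cap K_{\infty}=K$ holds by construction. These are exactly the hypotheses of Theorem~\ref{thm:mainint} for $K,K_{\infty},F,{\tt E}$, so that theorem supplies a Diophantine definition of $O_K$ and of $\Z$ over $O_{K_{\infty}}$ and the unsolvability of HTP over $O_{K_{\infty}}$; this is the first assertion.

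For the second assertion, let $R$ be a small subring of $K_{\infty}$ at whose denominator primes integrality is definable over $K_{\infty}$; by definition there are a number field $K_0\subset K_{\infty}$ and a finite set $\calS_{K_0}$ of primes of $K_0$ with $R=O_{K_{\infty},\calS_{K_{\infty}}}$ and integrality definable at the primes of $\calS_{K_0}$ over $K_{\infty}$. Repeat the construction of the first paragraph, now also adjoining to $F$ a finite generating set of $K_0$ over $\Q$; all the properties of $F$, of $K:=F\cap K_{\infty}$, and of ${\tt E}$ established above persist (in particular $FK_{\infty}=U_{\infty}$, since $K_0\subseteq K_{\infty}$), and moreover $K_0\subseteq K$. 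Then definability of integrality at $\calS_{K_0}$ over $K_{\infty}$ carries over to the set $\calS_K$ of primes of $K$ lying above $\calS_{K_0}$, and $R=O_{K_{\infty},\calS_{K_{\infty}}}$; so Corollary~\ref{cor:smallrings}, applied with $\calC_K=\calS_K$, yields existential definitions of $O_{K,\calS_K}$, $O_K$ and $\Z$ over $R$, whence HTP is unsolvable over $R$. The argument presents no genuine obstacle --- all of the substance sits in Theorem~\ref{thm:mainint} (hence in Proposition~\ref{prop:usebounds}, Proposition~\ref{prop:thenbelow1.1}, and the elliptic-curve lemmas of Section~\ref{sec:ec}) --- and the only point meriting a second look is that passing to a common base number field inside $K_{\infty}$ simultaneously preserves the three elliptic-curve conditions ($F\cap K_{\infty}=K$, finiteness of the Mordell--Weil index, positivity of the rank) and the definability of integrality at the denominator primes.
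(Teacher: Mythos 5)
Your reduction is exactly what the paper intends: Theorem \ref{thm:mainint2} is presented there as a restatement of Theorem \ref{thm:mainint} and Corollary \ref{cor:smallrings}, and the implicit proof is precisely the descent you carry out --- take $F$ to be the number field generated by the Weierstrass coefficients, the coordinates of the (finitely many) generators of ${\tt E}(U_{\infty})$, and generators of $U_{\infty}$ over $K_{\infty}$, set $K=F\cap K_{\infty}$, and check the hypotheses. Your verification is correct (and more explicit than the paper's, which gives no argument at all), so there is nothing to add.
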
%

\section{Diophantine Definitions of Some Big and Arbitrary Small Subrings of Rational Numbers  for the
Totally Real Case.}%
\label{sec:real}
In this section we will consider large rings and arbitrary small rings.  However in order to be able to manage
large sets of primes in the denominator it is necessary to make additional assumptions on the nature of the
infinite extension $K_{\infty}$. These extra assumptions are listed below.
\begin{notationassumption}%
\label{not:biggerrings}%
In what follows we add the following to the list of notation and assumptions.
\begin{itemize}%
\item $K_{\infty}$ is normal over $K$.
\item Let $E$ be a number field satisfying  the following conditions:
\begin{itemize}
\item $n_E=[E:\Q]=[EK:K]$.%
\item For any number field $M \subset K_{\infty}$ and such that $K \subset M$ we have that $([M:K], n_E)=1$.
\end{itemize}%
\item Let $\mu_E \in O_E$ be a generator of $E$ over $\Q$. Assume also that $\mu_E$ is an integral
unit. Let $R(T)
\in \Z[T]$ be the monic irreducible polynomial of an integral generator $\mu_E$ of $E$ over $\Q$. %
\item Let $\calV_K \subset \calP(K)$ be a set of primes of $K$ without relative degree 1 factors in $E$. %
\item Let $\calS_K$ be a finite set of primes of $K$. \item Let $\calW_K = \calV_K \cup \calS_K$.%
\item Let $\calE_K$ consist of all the primes $\pp_K$ of $\calW_K$ such that either $\pp_K$ divides the
discriminant
of $R(T)$ or $\pp_K$ has a relative degree 1 factor in the extension $EK/K$.%
\item For any $C >0$, let $A(C)\geq 2$ be an integer such that for any real $t > A(C)$ we have that $R(t) >C$.%
\item Let $N_0=0, \ldots,N_{2n_E}$ be positive integers selected so that the set of polynomials
$\{R(A(1) + x+N_j)^2\}$ is linearly independent. Such a set of positive integers exists by Lemma
12.1 of \cite{Sh36}.)
\item Let $p$ be a rational prime without any factors in $\calW_K$.%
\item For any number field $U$, any set $\calD_U \subset \calP(U)$ and any $x \in U$ let
\[%
\mte{n}_{U,\calD_U}(x)= \prod_{\pp \in \calP(U)\setminus \calD_U, \ord_{\pp}x >0}\pp^{\ord_{\pp}x}
\]%
\end{itemize}%
\end{notationassumption}%
We start with some observations concerning our prime sets.
\begin{lemma}%
\be
\item $\calE_K$ is a finite set of primes.
\item All the primes of $\overline \calE_K$ are $K_{\infty}$-boundable.
\ee
\end{lemma}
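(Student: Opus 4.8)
The statement has two parts. For part (1), the plan is to show that $\calE_K$ is finite by analyzing its defining conditions. Recall $\calE_K$ consists of primes $\pp_K \in \calW_K = \calV_K \cup \calS_K$ that either divide $\disc(R(T))$ or have a relative degree one factor in $EK/K$. First I would note that $\calS_K$ is finite by hypothesis, so it suffices to bound the contribution from $\calV_K$. For a prime $\pp_K \in \calV_K$ to land in $\calE_K$ it must either divide $\disc(R(T))$—and there are only finitely many such primes since $\disc(R(T))$ is a fixed nonzero rational integer (nonzero because $R$ is separable, being the minimal polynomial of $\mu_E$)—or it must have a relative degree one factor in $EK/K$. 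But by Lemma \ref{le:negorder} applied with $T = EK$, $K = K$, the set $\calV_K$ was defined precisely to avoid primes with relative degree one factors in $E$ (equivalently in $EK/K$, since $EK = E \cdot K$ and $[EK:K] = [E:\Q] = n_E$). Hence no prime of $\calV_K$ can satisfy the second condition, and the only primes of $\calV_K$ in $\calE_K$ are those dividing $\disc(R(T))$. Therefore $\calE_K \subseteq \{\pp_K : \pp_K \mid \disc(R(T))\} \cup \calS_K$, a finite set.

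For part (2), I would invoke Corollary \ref{cor:deg} or, more directly, Proposition \ref{prop:inffinmany}. The hypotheses in force (Notation/Assumptions \ref{not:biggerrings}) include that $K_\infty$ is normal over $K$, and that for any number field $M \subset K_\infty$ with $K \subseteq M$ we have $([M:K], n_E) = 1$. This coprimality condition is exactly the kind of hypothesis that, via Proposition \ref{prop:inffinmany}, makes any finite set of $K$-primes boundable over $K_\infty$: pick any rational prime $q \mid n_E$ (so $q \geq 2$, and if $n_E$ is odd we may take $q$ odd); then for any number field $M \subset K_\infty$, since $([M:K], n_E)=1$ we get $q \nmid [M:K]$, so the relative degrees $f$ and ramification degrees $e$ of primes above a fixed $\pp_K$ in $M$ all divide $[M:K]$ and hence satisfy $\ord_q f = \ord_q e = 0$. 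Thus the two bulleted conditions of Proposition \ref{prop:inffinmany} hold with $m_f = m_e = 0$, giving that $\calC_K$-primes are $K_\infty$-boundable for any finite $\calC_K$. Applying this to the finite set $\overline{\calE_K}$ (finite because $\calE_K$ is finite and each prime has only finitely many $\Q$-conjugates) yields the claim. One should note that $\overline{\calE_K}$ need not consist of $K$-primes if $K/\Q$ is not normal; but since $K_\infty$ is normal over $K$ and we only need boundability of a finite set, one can first enlarge to $K^{\Gal}$, or simply observe that the argument via Proposition \ref{prop:inffinmany} only uses the degree/ramification bounds, which are conjugation-invariant.

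The main obstacle I anticipate is purely bookkeeping: making sure the coprimality hypothesis $([M:K], n_E)=1$ genuinely forces the $q$-adic valuations of $e$ and $f$ to vanish for all number fields $M \subset K_\infty$ simultaneously—this is where normality of $K_\infty/K$ matters, since it guarantees that primes above $\pp_K$ behave uniformly and that $e, f$ are well-defined independent of the chosen factor. A secondary subtlety is the passage between "relative degree one factor in $E$" and "in $EK/K$", which relies on $EK$ being the compositum with $[EK:K] = n_E$; this is immediate from the assumption $n_E = [E:\Q] = [EK:K]$. Neither of these should require more than a sentence each in the final write-up.
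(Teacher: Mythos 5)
Your proof is correct and takes essentially the same route as the paper's (which compresses it to two lines): part (1) follows because the primes of $\calV_K$ landing in $\calE_K$ must divide $\disc(R(T))$ and $\calS_K$ is finite, and part (2) follows from the coprimality assumption $([M:K],n_E)=1$, which the paper feeds into Corollary \ref{cor:deg} and you feed directly into Proposition \ref{prop:inffinmany}. Your added caveats (oddness of $q$, the meaning of $\overline{\calE_K}$ when $K/\Q$ is not normal) are reasonable refinements of points the paper leaves implicit, not departures from its argument.
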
%
\begin{proof}%
\be%
\item Only finitely many can divide the discriminant of $R(T)$.
\item By assumption the extension $K_{\infty}/K$ satisfies the requirement of Corollary \ref{cor:deg}.  Thus, any
finite set of primes of $K$ is $K_{\infty}$-boundable.%
  \ee
\end{proof}%
Next we note that from our assumptions on the degree of subextensions of $K_{\infty}$  and Lemma
\ref{le:nodegonefact} we can obtain the following lemma.
\begin{lemma}%
Let $M \subset K_{\infty}$ be a number field containing $K$.  Let $\pp_M$ be a prime lying above a prime of $K$
without relative degree one factors in the extension $EK/K$.  Then $\pp_M$ does not have a relative
degree one factor in the extension $EM/M$.
\end{lemma}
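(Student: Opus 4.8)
The statement to prove is the last lemma: if $M \subset K_\infty$ is a number field containing $K$, and $\pp_M$ lies above a prime $\pp_K$ of $K$ that has no relative degree one factor in $EK/K$, then $\pp_M$ has no relative degree one factor in $EM/M$. The plan is to reduce this to a statement about residue field extensions and then exploit the coprimality hypothesis $([M:K], n_E) = 1$ built into Notation/Assumptions \ref{not:biggerrings}. The key structural fact is that $EM = (EK)M$, so the extension $EM/M$ is obtained from $EK/K$ by base change along $M/K$, and since $E/\Q$ is generated by $\mu_E$ with $[E:\Q] = [EK:K] = n_E$, the extension $EK/K$ is ``linearly disjoint from $M$ over $K$ up to degree considerations'' — more precisely $[EM:M]$ divides $n_E$.

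**Main steps.** First I would pass to residue fields: let $\Pp_M$ be a prime of $EM$ above $\pp_M$, and let $\pp_K$, $\Pp_K$ (in $EK$), $\pp_M$, $\Pp_M$ have residue fields $k_0 \subset k_1$, $k_0 \subset k_2$, and so on. The claim ``$\pp_M$ has no relative degree one factor in $EM/M$'' means: for every prime $\Pp_M \mid \pp_M$ in $EM$, the residue degree $f(\Pp_M / \pp_M) \geq 2$. Since the hypothesis says every prime $\Pp_K \mid \pp_K$ in $EK$ has $f(\Pp_K/\pp_K) \geq 2$, and $EM/EK$... — wait, the cleaner route is via $M/K$ and $MK E / M$. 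Here is the step I expect to be central: because $([M:K], n_E) = 1$ and $[EK:K] = n_E$, the compositum $EM$ has $[EM:M] = n_E$ as well (the degree of $\mu_E$ over $M$ equals its degree over $K$, since $[M(\mu_E):M]$ divides $n_E = [K(\mu_E):K]$ and is a multiple of $[K(\mu_E):K]/\gcd(\ldots)$ — cleanest: $[EM:M] = [EM:M]$ divides $n_E$, and $[EM:K] = [EM:M][M:K]$ is divisible by $[EK:K] = n_E$, so $n_E \mid [EM:M][M:K]$, whence $n_E \mid [EM:M]$ by coprimality, giving $[EM:M] = n_E$). Consequently $E$ and $M$ are linearly disjoint over $K$, and moreover — this is the part I would lean on — the Galois-theoretic picture shows that the residue extension data for $\pp_M$ in $EM/M$ ``matches'' that of $\pp_K$ in $EK/K$ after accounting for the residue degree $f(\pp_M/\pp_K)$, which is itself prime to $n_E$ (since $f(\pp_M/\pp_K) \mid [M:K]$).

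**Finishing.** With $[EM:M] = n_E$, apply Lemma \ref{le:nodegonefact} (referenced just before this lemma, and available from the excerpt). In the intended application it presumably says: if $N/L$ is an extension with $([N':L], [EL:L]) = 1$-type coprimality and $\pp_L$ has no degree one factor in $EL/L$, then no prime above $\pp_L$ has a degree one factor in the compositum. Concretely: suppose for contradiction $\Pp_M \mid \pp_M$ in $EM$ has $f(\Pp_M / \pp_M) = 1$. Lift to $EMK = EM$ and look at $\Pp_M$ over $\pp_K$: then $f(\Pp_M/\pp_K) = f(\Pp_M/\pp_M) f(\pp_M/\pp_K) = f(\pp_M/\pp_K)$, which divides $[M:K]$ and hence is prime to $n_E$. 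On the other hand, $\Pp_M$ restricts to some $\Pp_K \mid \pp_K$ in $EK$ with $f(\Pp_K/\pp_K) \geq 2$, and $f(\Pp_K/\pp_K) \mid [EK:K] = n_E$, so $f(\Pp_K/\pp_K)$ shares a common factor with $n_E$. But $f(\Pp_K/\pp_K) \mid f(\Pp_M/\pp_K)$, which is prime to $n_E$ — a contradiction unless $f(\Pp_K/\pp_K) = 1$, contradicting the hypothesis. Hence $f(\Pp_M/\pp_M) \geq 2$ for all $\Pp_M \mid \pp_M$, which is the conclusion.

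**Main obstacle.** The subtle point — and the one I would be most careful about — is justifying that $f(\Pp_K/\pp_K) \mid f(\Pp_M/\pp_K)$, i.e., that the residue field of $\Pp_K$ (a prime of $EK$ over $\pp_K$) embeds in the residue field of some prime $\Pp_M$ of $EM$ over it, compatibly over $k_0$. This is where linear disjointness of $E$ and $M$ over $K$ (equivalently, of $EK$ and $M$ over $K$) is used: it guarantees $EM = EK \cdot M$ with $[EM:M] = [EK:K]$, so that $EM/M$ is the ``same'' extension as $EK/K$ pushed up, and every prime of $EM$ does sit over a prime of $EK$ with the expected residue-degree multiplicativity. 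If Lemma \ref{le:nodegonefact} is stated in exactly this packaged form, the present lemma is essentially an immediate corollary obtained by applying it with the roles $L = K$, $N = M$, and $E$ fixed; the only real content to check is that the coprimality hypothesis $([M:K], n_E) = 1$ from \ref{not:biggerrings} is precisely the hypothesis that Lemma \ref{le:nodegonefact} requires.
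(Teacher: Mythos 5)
Your argument is essentially the paper's: the author derives this lemma directly from the coprimality assumption $([M:K],n_E)=1$ of Notation \ref{not:biggerrings} together with Lemma \ref{le:nodegonefact}, whose proof is exactly the residue-degree computation you carry out ($f(\Pp_M/\pp_M)f(\pp_M/\pp_K)=f(\Pp_M/\Pp_K)f(\Pp_K/\pp_K)$ plus coprimality of the two sides' divisors). The one point to make explicit is that your divisibility claims $f(\Pp_K/\pp_K)\mid n_E$ and $f(\pp_M/\pp_K)\mid [M:K]$ require $EK/K$ and $M/K$ to be Galois (residue degrees need not divide the field degree otherwise), which is why Lemma \ref{le:nodegonefact} carries Galois hypotheses; since $K_{\infty}/K$ is normal one first replaces $M$ by its Galois closure over $K$ inside $K_{\infty}$, to which the coprimality assumption still applies, and then descends back to $M$.
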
%

We now proceed to the big ring versions of Lemma \ref{le:Part1.1} and Proposition \ref{prop:thenbelow1.1}.
\begin{lemma}%
\label{le:Part1}%
The exist a set {\bf A} contained in $O_{FK_{\infty},\calW_{K_{\infty}}}$ and Diophantine over
$O_{FK_{\infty},\calW_{FK_{\infty}}}$,  such that if $\ord_{\pp}x \leq 0$ for all $\pp$ in $F(x)$ lying above
primes in $\overline \calW_F$, and  $x \in$ {\bf A}, then $\mte{n}_{F(x)}(x)$ can be considered as a divisor of
$F$ (composed solely of factors of $F$-primes outside $\overline \calW_F$).  Further, if $x $ is a square of a
rational integer  then $x \in$ {\bf A} and all the equations comprising the Diophantine definition can be
satisfied with variables ranging over $O_{F,\calW_F}$.
\end{lemma}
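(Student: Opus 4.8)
The plan is to mimic the proof of Lemma~\ref{le:Part1.1}, keeping essentially the same system of elliptic-curve equations but now interpreting every variable in the large ring $O_{FK_{\infty},\calW_{FK_{\infty}}}$, and defining $\mathbf{A}$ to be the set of $x$ for which that system is solvable there. As before, the conditions $Q,S\in[i]{\tt E}(FK_{\infty})\setminus\{O\}$, the identifications $x(Q)=u_Q/v_Q$ and $x(Q)/x(S)=a/b$, and the coprimality relations $Xb+Yv_Q=1$, $Zx+Wu_Q=1$ are Diophantine over the ring, so $\mathbf{A}$ is Diophantine and is contained in $O_{FK_{\infty},\calW_{K_{\infty}}}$ by construction. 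The one structural change from Lemma~\ref{le:Part1.1} is that integrality is now available only away from $\calW$: a relation such as $Xb+Yv_Q=1$ only forces $b,v_Q$ to be coprime at primes not above $\calW_F$, and an element like $w$ or $A$ is known to have nonnegative order only at such primes. This is matched precisely by the new hypothesis $\ord_{\pp}x\le 0$ for every $\pp$ of $F(x)$ above $\overline\calW_F$, which forces $\mte{n}_{F(x)}(x)$ to be supported entirely off the primes above $\overline\calW_F$; since $\calW_F\subseteq\overline\calW_F$, coprimality away from $\calW$ already controls the orders at every prime carrying $\mte{n}_{F(x)}(x)$.

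First I would treat the forward implication; the pattern is that of Lemma~\ref{le:Part1.1}. Put $M=K(x)$, so $F(x)=FM$; since $[{\tt E}(FK_{\infty}):{\tt E}(F)]=i$ we have $Q\in{\tt E}(F)$, hence $x(Q)\in F$, and by Lemma~\ref{le:evenorder} we may write $\mte{d}_F(x(Q))=\mte{B}_F^2$ for an integral divisor $\mte{B}_F$ of $F$. Fix a prime $\pp$ of $FM$ not lying above any prime of $\overline\calW_F$, so that $\pp\notin\calW_{FM}$ and every variable is $\pp$-integral. If $\ord_{\pp}x>0$, then $Zx+Wu_Q=1$ forces $\ord_{\pp}u_Q=0$ and $v_Q=x^4A$ gives $\ord_{\pp}v_Q\ge 4\ord_{\pp}x>0$, so $\pp$ divides the denominator of $x(Q)$ and $\ord_{\pp}\mte{d}_{FM}(x(Q))=\ord_{\pp}v_Q\ge 4\ord_{\pp}x$. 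If merely $\pp$ divides the denominator of $x(Q)$, then $\ord_{\pp}v_Q>\ord_{\pp}u_Q\ge 0$, so $Xb+Yv_Q=1$ forces $\ord_{\pp}b=0$ and $u_Q(xb-a)^2=v_Qw$ together with $\ord_{\pp}w\ge 0$ yields $\ord_{\pp}(x-a/b)=\ord_{\pp}(xb-a)\ge\frac{1}{2}\ord_{\pp}\mte{d}_{FM}(x(Q))$. Let $\mte{A}$ be the integral divisor of $F$ obtained from $\mte{B}_F=\sqrt{\mte{d}_F(x(Q))}$ by deleting the primes in $\overline\calW_F$. These order estimates show that, in the semigroup of integral divisors of $FM$, $\mte{n}_{FM}(x)^2$ divides $\mte{A}$ while $\mte{A}$ divides $\mte{n}_{FM}(x-a/b)$; and $a/b$, being a ratio of $x$-coordinates of points of ${\tt E}(F)$, lies in $F$. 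Applying Lemma~\ref{le:modify} with $U=F$, $T=FM$, $t=a/b$, $\mte{T}=\mte{n}_{FM}(x)$ and this $\mte{A}$ then gives that $\mte{n}_{F(x)}(x)$ can be considered as a divisor of $F$, which by the hypothesis is composed only of $F$-primes outside $\overline\calW_F$.

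For the converse, suppose $x=m^2$ with $m\in\Z_{>0}$; this is exactly the case disposed of at the end of the proof of Lemma~\ref{le:Part1.1}, and the argument there is unaffected. Briefly: by Lemma~\ref{le:Po1} pick an infinite-order $Q\in[i]{\tt E}(FK_{\infty})$ with $m^4$ dividing $\mte{d}_F(x(Q))$, set $S=[m]Q$, use Lemma~\ref{le:ratio} so that $u_Q,v_Q,a,b$ may be chosen in $O_F$ with $(v_Q,b)=1$ and $\mte{n}_F(u_Q)$ coprime to $\mte{d}_F(x(Q))$, and use Lemma~\ref{le:Po2} so that $\mte{d}_F(x(Q))$ divides $\mte{n}_F(x(Q)/x(S)-m^2)^2$; the remaining equations are then solvable over $O_F$. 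Since $O_F\subseteq O_{F,\calW_F}$, the statement about the range of the variables is automatic, and one finishes by rewriting the system so that the variables officially range over $O_{F,\calW_F}$ (respectively $O_{FK_{\infty},\calW_{FK_{\infty}}}$), using the Diophantine definition of the set of nonzero elements of the ring.

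The work is bookkeeping rather than a new idea; the point requiring care is that each divisibility among divisors that was automatic in Lemma~\ref{le:Part1.1} still holds \emph{after restriction to the primes not above $\overline\calW_F$}, and, in the forward step, that the divisor $\mte{A}$ of $F$ fed into Lemma~\ref{le:modify} is genuinely the $\overline\calW_F$-free part of a square root of $\mte{d}_F(x(Q))$, still dominates $\mte{n}_{F(x)}(x)^2$, and is still dominated by $\mte{n}_{F(x)}(x-a/b)$. The hypothesis $\ord_{\pp}x\le 0$ above $\overline\calW_F$ is exactly what makes these checks succeed at the primes of $\overline\calW_F$, where the auxiliary variables are not controlled, and it is the condition that Lemma~\ref{le:boundable} is designed to supply when Lemma~\ref{le:Part1} is used in the sequel.
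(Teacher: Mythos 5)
Your proof is correct and follows the paper's own argument for Lemma \ref{le:Part1}: the same system of equations, the same reduction to $M=K(x)$, and the same final appeal to Lemma \ref{le:modify}, with the converse handled exactly as in Lemma \ref{le:Part1.1}. The only difference is bookkeeping --- you check the divisibilities prime-by-prime at primes outside $\overline \calW_{FM}$ and explicitly pass to the square root of $\mte{d}_{FM}(x(Q))$ before invoking Lemma \ref{le:modify}, whereas the paper decomposes the relevant divisors into their $\overline \calW$-supported and $\overline \calW$-free parts; the two are equivalent, and your handling of the square-root step is if anything slightly more careful.
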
%
\begin{proof}%
Consider the following equations, where $x, y \in O_{FK_{\infty},\calW_{FK_{\infty}}}, u_Q, v_Q, w, A, B, C \in
O_{FK_{\infty},\calW_{FK_{\infty}}}$, $x(Q), x(R) \in FK_{\infty}$ and $\ord_{\pp} x \leq 0$ for all
$\pp$ in $F(x)$ lying above primes in $\overline \calW_F$.

\begin{equation}
\label{eq:firstpart1}
S, Q,  \in [i]{\tt E}(FK_{\infty}) \setminus \{O\},
\end{equation}%

\begin{equation}
\label{eq:Q}
x(Q)=\frac{u_Q}{v_Q}, v_Q \not=0,
\end{equation}

\begin{equation}%
\label{eq:Q1.4}
Xb +Yv_Q=1
\end{equation}%

\begin{equation}%
\label{eq:Q1.4.1}%
\frac{x(Q)}{x(S)}  =\frac{a}{b}
\end{equation}%

  \begin{equation}%
\label{eq:Q1.5}
Zx +Wu_Q=1
\end{equation}%

\begin{equation}
\label{eq:cong}
u_Q(bx - a )^2 = v_Qw,
\end{equation}
\begin{equation}%
\label{eq:divisibility}%
v_Q=x^4A.
\end{equation}%
We claim that if for some $x \in O_{FK_{\infty},\calW_{FK_{\infty}}}$  these equations are satisfied with all the
variable as indicated above, then $x$ satisfies the requirements for the membership in {\bf A} as described in the
statement of the lemma, and if $x$ is a square of an integer, then all the equations can be satisfied with all the
variables ranging over  $O_{F,\calW_F}$.

Indeed suppose that equations (\ref{eq:firstpart1})--(\ref{eq:divisibility}) are satisfied with all the variables
ranging over the sets described above.  Let $M = K(x)$.  Then by assumption we have that for all $\pp \in
\overline \calW_{FM}$ it is the case that $\ord_{\pp}x \leq 0$.  Next let $\mte{d}_{FM}(x(Q))=
\mte{N}_1\mte{N}_2$, where $\mte{N}_1, \mte{N}_2$ are integral divisors of $FM$,  all the primes  occurring in
$\mte{N}_1$ are outside  $\overline \calW_{FM}$ and all the primes  occurring in $\mte{N}_2$ are in
$\overline \calW_{FM}$.  Since $x(Q) \in F$ and $\overline \calW_{FM}$ is, by definition,  closed under conjugation
over $F$, by Lemma \ref{le:evenorder}, we can conclude that $\mte{N}_1, \mte{N}_2$ can be both viewed as second
powers of divisors of $F$.  Next write
\begin{equation}%
\mte{n}_{FM}(v_Q) = \mte{N}_1\mte{A}\mte{B}, \mte{n}_{FM}(u_Q)=\mte{C}\mte{A}\mte{D},
\end{equation}%
where $\mte{N}_1, \mte{A}, \mte{B}, \mte{C}. \mte{D}$ are integral divisors of $FM$,  $\mte{N}_1, \mte{A}, \mte{B}$
are pairwise relatively prime,  $\mte{C},\mte{A}, \mte{D}$ are pairwise relatively prime,  $\mte{C},\mte{A}$ are
composed of primes outside $\overline \calW_{FM}$ only, while $\mte{B}, \mte{D}$ are composed of
primes in $\overline \calW_{FM}$.   From equation (\ref{eq:Q1.5}) we can conclude that
$(\mte{n}_{FM}(x),\mte{A})=1$, and since $x$ does not have a positive order at any prime of
$\overline \calW_{FM}$, we can conclude from equation (\ref{eq:divisibility}) that
$\mte{n}_{FM}(x)$ divides $\mte{N}_1$ in the integral divisor semigroup of $FM$. From
(\ref{eq:Q1.4}) we know that $\mte{n}_{FM}(b)$ is relatively prime to $\mte{N}_1$ and we have
already established that $\mte{N}_1$ is relatively prime ot $\mte{n}_{FM}(u_Q)$.    So if  we let
$\displaystyle \mte{M} = \mte{n}_{FM}\left (x - \frac{a}{b} \right )$.  Then from (\ref{eq:cong})
we conclude that $\mte{N}_1$ divides $\mte{M}$ in the semigroup of integral divisors of $FM$.
Therefore, by Lemma \ref{le:modify} we have the desired conclusion.

Suppose now that $x =m^2$ where $m \in \Z_{>0}$.   Then we can again proceed as in Lemma \ref{le:Part1.1}.

Finally we note that all the equations above can be rewritten so that the variables range over
$O_{F,\calW_F}$ only.
\end{proof}%
We now proceed to the part two of the big ring Diophantine definition.
\begin{proposition}%
\label{prop:thenbelow}%
Consider the following equations and conditions, where $x, z, z_0, x_j \in
O_{K_{\infty},\calW_{K_{\infty}}}; x(Q), x(P_j) \in FK_{\infty}; a_j, b_j, c_j,  X_j, Y_j, Z_j,
W_j, w_j \in O_{FK_{\infty},\calW_{FK_{\infty}}};  j=0, \ldots, 2n_E$.
\begin{equation}%
\label{eq:firstpart}
z = R(z_0) \in \mbox{\bf A},
\end{equation}

\begin{equation}%
\label{eq:x}%
x_j=(R(A(1) + x +N_j))^2 \in \mbox{\bf A}, j=0,\ldots,2n_E,
\end{equation}%

\begin{equation}%
\label{eq:sigma}%
 \sigma(x_j) \geq 1
\end{equation}%
for all $\sigma: K_{\infty} \rightarrow \R$,
\begin{equation}
\label{eq:first}
 Q, P_1,\ldots, P_{2n_E} \in [i]{\tt E}(FK_{\infty}) \setminus \{O\},
\end{equation}

\begin{equation}
\label{eq:classnumber}
\sigma(z) \geq \max\{\sigma(x_1),\ldots,\sigma(x_{2n_E})\}
\end{equation}
for all $\sigma$, embeddings of $K_{\infty}$ into $\R$,%

\begin{equation}
\label{eq:newz}
z_j=x_jz
\end{equation}  %

\begin{equation}
\label{eq:P_2}
x(Q) = \frac{u_Q}{v_Q},
\end{equation}

\begin{equation}%
\label{eq:Q1.4.0.1}
X_jb_j +Y_jv_Q=1
\end{equation}%

\begin{equation}%
\label{eq:Q1.4.1.1}%
\frac{x(Q)}{x(P_j)}  =\frac{a_j}{b_j}
\end{equation}%

  \begin{equation}%
\label{eq:Q1.5.1.1}
Z_jz_j +W_ju_Q=1
\end{equation}%

\begin{equation}
\label{eq:divide}
v_Q =p^2z_j^8w_j
\end{equation}

\begin{equation}%
\label{eq:last}
u_Q  (b_jx_j -a_j )^2 = v_Qc_j.
\end{equation}%
We claim that if these equations are satisfied with variables in the sets as indicated above, then $x \in
O_{K,\calW_K}$. Conversely, if $x \in \Z_{>0}$ then all the equations can be satisfied with $z_0,
x \in O_{K,\calW_K}; x(Q), x(P_j) \in F; a_j, b_j, c_j, w_j, X_j, Y_j, Z_j, W_j \in
O_{F,\calW_{F}}; j=0, \ldots, 2n_E$.
\end{proposition}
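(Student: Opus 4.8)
The plan is to adapt the proof of Proposition \ref{prop:thenbelow1.1} almost verbatim, inserting the polynomial $R$-twists and localising every divisor computation at $\calW$, and to reduce the whole forward implication to one application of Proposition \ref{prop:usebounds} for each index $j$. Since $Q,P_j\in[i]{\tt E}(FK_\infty)\subseteq{\tt E}(F)$ by (\ref{eq:first}) and $i=[{\tt E}(FK_\infty):{\tt E}(F)]$, the elliptic data $u_Q,v_Q,a_j,b_j,\dots$ may be taken in $O_{F,\calW_F}$; and since $x,z_0\in O_{K_\infty,\calW_{K_\infty}}$ and $R\in\Z[T]$, $A(1),N_j\in\Z$, the quantities $z=R(z_0)$, $x_j=R(A(1)+x+N_j)^2$, $z_j=x_jz$ all lie in $K(x,z_0)\subseteq K_\infty$. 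I would work inside $L=F\,K(x,z_0)$ and the Galois closure $T$ of $F(x_j,z_j)$ over $\Q$.

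The first real task is the pole conditions: for every prime $\qq$ of $L$ over $\overline\calW_F$ one needs $\ord_\qq x_j\le 0$ and $\ord_\qq z_j\le 0$. Outside $\calW$ there is nothing to check because $x,z_0$ are $\calW$-integral. Over the primes of $\calV_K\cup(\calS_K\setminus\calE_K)$ the degree-coprimality hypothesis and Lemma \ref{le:nodegonefact} show that the primes of $L$ above them have no relative degree-one factor in the relevant compositum with $E$, so Lemma \ref{le:negorder} gives $\ord_\qq R(\,\cdot\,)\le 0$ there; over the finitely many primes of $\calE_K$, which are $K_\infty$-boundable (since $\overline\calE_K$ is), I would bring in the auxiliary integrality equation $I_{\calE_K/K_\infty}$ and multiplication by the integer $a$ divisible by the primes of $\calE_K$, exactly as in Lemma \ref{le:boundable}, so that $R$ is a unit modulo each such prime (here $\mu_E$ being an integral unit is used). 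This last point is the only genuine departure from the number-field case, and I expect it to be the main obstacle — everything else is bookkeeping. With the pole conditions in hand, $z,x_j\in\mathbf A$ (by (\ref{eq:firstpart}), (\ref{eq:x})) and Lemma \ref{le:Part1} give that $\mte{n}_{L}(z_j)$ and $\mte{n}_{L}(x_j)$ can be considered as divisors of $F$.

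Next I would carry out the divisor bookkeeping of Lemma \ref{le:Part1}: using $(b_j,v_Q)=1$ from (\ref{eq:Q1.4.0.1}), the identity (\ref{eq:Q1.4.1.1}), $(z_j,u_Q)=1$ from (\ref{eq:Q1.5.1.1}), the factorisation $v_Q=p^2z_j^8w_j$ from (\ref{eq:divide}), and the congruence relation (\ref{eq:last}), one deduces that $\mte{n}_{L}(pz_j^4)$ divides $\mte{n}_{L}(x_j-a_j/b_j)$, i.e. $x_j\equiv a_j/b_j\pmod{pz_j^4}$. From (\ref{eq:sigma}), (\ref{eq:classnumber}) and the total reality of $K_\infty$ one gets $1\le\sigma(x_j)\le\sigma(z_j)$ and $\sigma(z_j)>1$ for every embedding $\sigma$ of $T$, and $z_j\equiv 0\pmod{x_j}$ holds by (\ref{eq:newz}); $p$ has no factor in $\calW_K$. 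So Proposition \ref{prop:usebounds} applies with $U=F$, $t=a_j/b_j$, and yields $x_j\in O_{F,\calW_F}$, hence $x_j\in F$; since also $x_j\in K(x)\subseteq K_\infty$ and $F\cap K_\infty=K$, in fact $x_j\in K$ for all $j=0,\dots,2n_E$. Because the polynomials $R(A(1)+x+N_j)^2$ were chosen linearly independent (Lemma 12.1 of \cite{Sh36}), they span the space of polynomials in $x$ of degree $\le 2n_E$, which contains $1$ and $x$; hence $x\in K$ by Lemma 5.1 of \cite{Sh1}, and together with $x\in O_{K_\infty,\calW_{K_\infty}}$ this gives $x\in O_{K,\calW_K}$.

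For the converse, take $x\in\Z_{>0}$. Then each $A(1)+x+N_j>A(1)$, so $x_j=R(A(1)+x+N_j)^2$ is the square of a positive rational integer and lies in $\mathbf A$ by Lemma \ref{le:Part1}; I would then pick $z_0$ so that $z=R(z_0)\in\mathbf A$ dominates all the $\sigma(x_j)$, set $z_j=x_jz$, and produce points $Q,P_j\in[i]{\tt E}(FK_\infty)\setminus\{O\}$ realising the divisibility $z_j^4\mid\mte{d}_F(x(Q))$ and the congruence (\ref{eq:last}) by Lemmas \ref{le:Po1}, \ref{le:Po2}, \ref{le:Po3}, \ref{le:ratio} and Lemma \ref{le:denom}, exactly as in the proof of Lemma \ref{le:Part1}/Proposition \ref{prop:thenbelow1.1}; all remaining equations are satisfied with the indicated integral variables. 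The delicate points to watch are again the $\calE_K$-primes (so that $z_0$, and the value $u_{\calE_K/K_\infty}$, are chosen compatibly with the auxiliary equations) and the strictness of the archimedean inequality (\ref{eq:classnumber}), which must be arranged so that $\sigma(z_j)>1$ strictly for every $\sigma$.
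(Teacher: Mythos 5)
Your proposal is correct and follows essentially the same route as the paper: pole conditions via Lemma \ref{le:negorder} and Lemma \ref{le:nodegonefact}, membership in $\mbox{\bf A}$ to view $\mte{n}(z_j)$ as a divisor of $F$, the divisor bookkeeping from Lemma \ref{le:Part1} to obtain the congruence modulo $pz_j^4$, Proposition \ref{prop:usebounds} to force $x_j\in F$ and hence $x_j\in K$, and Lemma 5.1 of \cite{Sh1} to recover $x$. Your explicit treatment of the finitely many $\calE_K$-primes via the mechanism of Lemma \ref{le:boundable} is a welcome refinement of a step the paper passes over by citing Lemma \ref{le:negorder} alone, but it does not change the argument.
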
%

\begin{proof}%
Let $M$ be the Galois closure of $K(x,z_0)$ over $\Q$. Next from  equation (\ref{eq:x}) we conclude that $x_j
\in M \subset MF, j=0,\ldots, 2n_E$ and by Lemma \ref{le:negorder},
\begin{equation}%
\label{eq:nonzero}%
\forall \pp \in \overline \calW_M: \ord_{\pp}x_j \leq 0.
\end{equation}%
Similarly, we have from (\ref{eq:firstpart}) that
\begin{equation}%
\label{eq:nonzeroz}%
\forall\pp \in \overline \calW_M: \ord_{\pp}z \leq 0.%
\end{equation}%
Observe also that from (\ref{eq:x}) and (\ref{eq:firstpart}) we deduce that $z$ and $x_j$ are elements of {\bf A},
and thus by definition of {\bf A}, we have that $\mte{n}_{MF}(z_j)=\mte{n}_{MF}(x_jz)$ can be considered as a
divisor of $F$. Additionally from (\ref{eq:nonzero}) and (\ref{eq:nonzeroz}) we know that all the primes occurring
in $\mte{n}_{MF}(z_j)$ are outside $\overline \calW_{MF}$.  Further from (\ref{eq:sigma}) and
(\ref{eq:classnumber}) we now have that
\begin{equation}%
\label{eq:bigger}
1 \leq \sigma(x_j) \leq \sigma(z_j)
\end{equation}%
for any embedding $\sigma: MF \longrightarrow \C$. Next using equations (\ref{eq:divide}) and
(\ref{eq:Q1.5.1.1}), by an argument analogous to the one used in Lemma \ref{le:Part1.1}, we observe
that $\mte{n}_{MF}(pz_j^4)$ divides $\displaystyle \mte{n}_{MF}(x_j -\frac{a_j}{b_j})$ for all
$j=0,\ldots,2n_E$.

Now by Proposition \ref{prop:usebounds}, we can conclude that $x_j \in F$.  But $x_j \in M$ and
$M\cap F =K$. Thus for all $j=0,\ldots,2n_E$ we have that $x_j \in K$.  Further, by our assumption
on $N_0,\ldots, N_{2n_E}$ and by Lemma 5.1 of \cite{Sh1}, we can conclude that $x \in K$. \\

Suppose now that $x \in \Z_{>0}$. Then $x_j$ is a non-zero square of a rational integer. From this
point on  the argument proceeds in the same fashion as in Lemma \ref{le:Part1.1}.
\end{proof}%

We are now ready for the main theorem.%
\begin{theorem}%
\label{thm:1}
 There exists a polynomial equation $P(x,\bar{t}) \in O_K[x,\bar{t}]$ such that the following statements are true.%
\be%
\item For any $x \in O_{K_{\infty},\calW_{K_{\infty}}}$, if $P(x,\bar{t}) =0$ for some
$\bar{t}=(t_1,\ldots,t_m)$ with $t_i \in O_{K_{\infty},\calW_{K_{\infty}}}$, then $x \in O_{K,\calW_K}$.%
\item If $x \in \Z$ there exists $\bar{t}=(t_1,\ldots,t_m)$ with $t_i \in O_{K, \calW_K}$ such that
$P(x,\bar{t}) =0$ 
\item $O_{K_{\infty},\calW_{K_{\infty}}} \cap K = O_{K,\calW_K}$ has a Diophantine definition over
$O_{K_{\infty},\calW_{K_{\infty}}}$.%
\ee
\end{theorem}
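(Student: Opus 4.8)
The plan is to obtain Theorem~\ref{thm:1} as essentially a repackaging of Lemma~\ref{le:Part1} and Proposition~\ref{prop:thenbelow} into a single polynomial over $O_K$, after two routine reductions: eliminating the archimedean side conditions and performing a restriction-of-scalars descent from the auxiliary field $FK_\infty$ and the auxiliary curve ${\tt E}$ down to $O_{K_\infty,\calW_{K_\infty}}$.

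\emph{Step 1: assembling the Diophantine system.} Take the conditions (\ref{eq:firstpart})--(\ref{eq:last}) of Proposition~\ref{prop:thenbelow}; for each occurrence of ``$\,\cdot\in\mathbf A$'' substitute the Diophantine definition supplied by Lemma~\ref{le:Part1}; and replace the real-embedding inequalities (\ref{eq:sigma}) and (\ref{eq:classnumber}) as follows. Since $K_\infty$ is totally real and $x_j,z$ lie in $K_\infty$, Proposition~\ref{prop:realembed} (applied with $G_\infty=K_\infty$) makes the set of totally nonnegative elements of $K_\infty$ Diophantine; hence ``$\sigma(x_j)\geq 1$ for all $\sigma$'' and ``$\sigma(z)\geq\sigma(x_j)$ for all $\sigma$'' become Diophantine conditions on $x_j-1$ and $z-x_j$, and the maximum in (\ref{eq:classnumber}) is handled by imposing the latter separately for each $j$. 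What results is a finite conjunction of polynomial equations with coefficients in $O_F$, together with nonvanishing side conditions ($v_Q\neq 0$; $Q,P_j\neq O$), in variables ranging partly over $O_{K_\infty,\calW_{K_\infty}}$, partly over $O_{FK_\infty,\calW_{FK_\infty}}$, and partly over the groups $[i]{\tt E}(FK_\infty)\setminus\{O\}$.

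\emph{Step 2: descent.} Fix an integral basis $\omega_1,\dots,\omega_n$ of $F$ over $K$; since $F\cap K_\infty=K$ it is simultaneously a $K_\infty$-basis of $FK_\infty$ and, after clearing one fixed denominator, a generating family of $O_{FK_\infty,\calW_{FK_\infty}}$ as a module over $O_{K_\infty,\calW_{K_\infty}}$. Write every $FK_\infty$-valued variable, and the affine coordinates $x(Q),x(P_j)$ together with those of a chosen $[i]$-preimage of each point, in the basis $\{\omega_l\}$ with coefficients in $K_\infty$; expanding each polynomial identity and equating $\omega_l$-components turns it into finitely many identities with coefficients in $K$, hence in $O_K$ after clearing the bounded denominators. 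The membership ``$Q\in[i]{\tt E}(FK_\infty)\setminus\{O\}$'' is expressed by introducing a point $\widetilde Q$, requiring it to satisfy the fixed Weierstrass equation and $[i]\widetilde Q=Q$ (polynomial, via the multiplication-by-$i$ formulas), with ``$Q\neq O$'' translating in affine coordinates to a nonvanishing condition. Because the fraction field of $O_{K_\infty,\calW_{K_\infty}}$ is not algebraically closed, all such nonvanishing conditions and the passage from several polynomials to one are legitimate (nonzero elements are themselves Diophantine over $O_{K_\infty,\calW_{K_\infty}}$). The upshot is the single polynomial $P(x,\bar t)\in O_K[x,\bar t]$ of the statement, with $x$ and all $t_i$ now ranging over $O_{K_\infty,\calW_{K_\infty}}$.

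\emph{Step 3: verification.} Claim (1) is precisely the forward implication of Proposition~\ref{prop:thenbelow}: the descent is faithful, so any zero of $P$ in $O_{K_\infty,\calW_{K_\infty}}$ reassembles into a solution of the original $FK_\infty$-system, forcing $x\in O_{K,\calW_K}$; here one must check that the $\overline{\calW}$-order hypotheses of Lemma~\ref{le:Part1} and Proposition~\ref{prop:thenbelow} hold, which is where Lemmas~\ref{le:negorder}, \ref{le:conjugates}, \ref{le:boundable} and the finiteness of the degree index enter. Claim (2) is the converse implication of Proposition~\ref{prop:thenbelow} together with the converse half of Lemma~\ref{le:Part1}: for $x\in\Z_{>0}$ one produces $x(Q),x(P_j)\in F$ and the remaining witnesses in $O_{F,\calW_F}$ and $O_{K,\calW_K}$, all lying in $O_{K_\infty,\calW_{K_\infty}}$, with the choice of $N_0,\dots,N_{2n_E}$ being exactly what makes the linear-independence hypothesis of Lemma 5.1 of \cite{Sh1} applicable; arbitrary $x\in\Z$ reduces to $\Z_{>0}$ in the standard way, using that $\mathbf A$ contains every square of a rational integer. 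For Claim (3), by (1) and (2) the set $S_P=\{x\in O_{K_\infty,\calW_{K_\infty}}:\exists\bar t,\ P(x,\bar t)=0\}$ is Diophantine with $\Z\subseteq S_P\subseteq O_{K,\calW_K}$; since $O_{\Q,\calW_\Q}$ is a principal ideal domain, $O_{K,\calW_K}=O_K\otimes_{\Z}O_{\Q,\calW_\Q}$ is a free $O_{\Q,\calW_\Q}$-module on a basis $\eta_1,\dots,\eta_n$ of elements of $O_{K,\calW_K}$, so $O_{K,\calW_K}=\{\sum_{l=1}^{n}c_l\eta_l : c_l\in O_{\Q,\calW_\Q}\}$; a routine variant of the converse of Proposition~\ref{prop:thenbelow} applied to inputs in $O_{\Q,\calW_\Q}$ rather than only $\Z$ gives $O_{\Q,\calW_\Q}\subseteq S_P$, whence $O_{K,\calW_K}=\{\sum_{l=1}^{n}c_l\eta_l : c_l\in S_P\}$ is Diophantine over $O_{K_\infty,\calW_{K_\infty}}$; since $O_{K_\infty,\calW_{K_\infty}}\cap K=O_{K,\calW_K}$, this is the asserted definition. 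The only real work here is the bookkeeping of Step~2 and the verification that the order hypotheses at the conjugation-closed prime sets survive the substitution; conceptually nothing new is needed beyond Proposition~\ref{prop:thenbelow}, of which Theorem~\ref{thm:1} is the clean reformulation at the level of a single $O_K$-polynomial.
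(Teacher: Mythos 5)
Your Steps 1 and 2 and your treatment of items (1) and (2) are exactly the paper's (very terse) argument: assemble Lemma~\ref{le:Part1} and Proposition~\ref{prop:thenbelow}, rewrite the archimedean conditions via Proposition~\ref{prop:realembed}, and push all variables down to $O_{K_\infty,\calW_{K_\infty}}$ by restriction of scalars (the paper outsources this to Proposition 2.6 of \cite{Sh36}). That part is fine.

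Your argument for item (3), however, has a genuine gap, in two places. First, the identification $O_{K,\calW_K}=O_K\otimes_{\Z}O_{\Q,\calW_\Q}$ is false in general: it would require $\calW_K$ to consist of \emph{all} $K$-primes above some set of rational primes, whereas here $\calW_K=\calV_K\cup\calS_K$ with $\calV_K$ a set of primes without degree-one factors in $E$ — a rational prime can easily have one factor in $\calW_K$ and another outside it, so $O_{K,\calW_K}$ is not a free $O_{\Q,\calW_\Q}$-module on a basis of the claimed form (whichever way you define $\calW_\Q$, the module you get is either strictly larger or strictly smaller than $O_{K,\calW_K}$). Second, the assertion that ``a routine variant of the converse of Proposition~\ref{prop:thenbelow}'' yields $O_{\Q,\calW_\Q}\subseteq S_P$ is unjustified and, for the set as constructed, should not be expected to hold: the converse direction of Proposition~\ref{prop:thenbelow} (and of Lemma~\ref{le:Part1}) hinges on $x_j$ being the square of a rational \emph{integer} $m$, so that one can take $S=[m]Q$ and invoke Lemma~\ref{le:Po2}, which produces the congruence $x(Q)/x([m]Q)\equiv m^2$ modulo $\mte{d}_U(x(Q))$ only for integer multipliers $m$. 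For $x\in O_{\Q,\calW_\Q}\setminus\Z$ (say $x=1/\ell$) there is no such point, and this is precisely why item (2) of the theorem is stated for $x\in\Z$ rather than for all of $O_{K,\calW_K}$ or $O_{\Q,\calW_\Q}$. The correct (and standard) route to item (3) uses only $\Z\subseteq S_P\subseteq O_{K,\calW_K}$: fix $\gamma\in O_K$ generating $K/\Q$ with $n=[K:\Q]$; then
\[
O_{K,\calW_K}=O_{K_\infty,\calW_{K_\infty}}\cap K=\Bigl\{w\in O_{K_\infty,\calW_{K_\infty}}:\exists\, a_0,\ldots,a_{n-1},b\in S_P,\ b\neq 0,\ bw=\sum_{i=0}^{n-1}a_i\gamma^i\Bigr\},
\]
where the forward inclusion uses $S_P\subseteq K$ to conclude $w\in K$, and the reverse uses that any $w\in K$ has such a representation with $a_i,b\in\Z\subseteq S_P$; the condition $b\neq 0$ is Diophantine by the nonvanishing proposition quoted in Section 2. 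With this replacement your proof is complete.
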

\begin{proof}%
The proof of the theorem pretty much follows from Lemma \ref{le:Part1} and Proposition
\ref{prop:thenbelow}. We just need to remind the reader that (\ref{eq:sigma}) can be rewritten in a
polynomial form by Proposition \ref{prop:realembed} and we can make sure that all the variables
range over $O_{K_{\infty},\calW_{K_{\infty}}}$ as opposed to $FK_{\infty}$ or
$O_{FK_{\infty},\calW_{FK_{\infty}}}$. This can be done using Proposition 2.6 of \cite{Sh36}. (For
more extensive discussion of ``rewriting'' issues the reader can see the section on coordinate
polynomials in the Appendix B of \cite{Sh34}.)
\end{proof}%
To get down to $\Q$ we need additional notation and assumptions.
\begin{notationassumption}%
\label{not:bigringtoreal}
We add the following to our notation and assumption list.
\begin{itemize}
\item Let $K^{Gal}$ be the Galois closure of $K$ over $\Q$.%
\item Assume $E/\Q$ is cyclic of prime degree and $n_E >[K^{Gal}:\Q]$.%
\item Using Corollary 7.6.1 of \cite{Sh34} and Proposition \ref{prop:finmany} we know that for some set of
$K$-primes $\calT_K$ such that $\calV_K\subset \calT_K$ and $\calT_K \setminus \calV_K$ is a finite set, we have
that $O_{K,\calT_K}\cap \Q$ has a Diophantine definition over $O_{K,\calT_K}$. From Proposition \ref{prop:finmany}
it also follows that we can add finitely many primes to $\calT_K$ without changing the situation. Thus for the
results below it is enough to assume that $\calS_K$ contains all the primes of $\calT_K \setminus \calV_K$. \item
Let $\calR_K = \hat \calV_K \cup \calS_K$.
\end{itemize}%
\end{notationassumption}%

Given the additional notation and assumptions above we now have the following corollary.
\begin{corollary}%
\label{cor:downreal}%
\be%
\item $O_{K_{\infty},\calW_{K_{\infty}}} \cap \Q$ has a Diophantine definition over
$O_{K_{\infty},\calW_{K_{\infty}}}$.%
\item For any archimedean and non-archimedean topology of $K_{\infty}$ we can choose $\calS_K$ so
that  $O_{K_{\infty},\calW_{K_{\infty}}}$ has an infinite Diophantine subset which is discrete in this topology of the field.%
\item $O_{K,\calW_K}$ is contained in a set with a (natural or Dirichlet) density is $1-\frac{1}{[E:\Q]}$.%
\item $O_{K_{\infty},\calR_{K_{\infty}}} \cap \Q$ has a Diophantine definition over
$O_{K_{\infty},\calR_{K_{\infty}}}$ and therefore HTP is unsolvable over
$O_{K_{\infty},\calR_{K_{\infty}}}$%
\item $O_{K, \calR_K}$ is contained in a set with a (natural or Dirichlet) density less or equal to
$1-\frac{1}{[K:\Q]}-\frac{1}{[E:\Q]}$.
 \ee%
\end{corollary}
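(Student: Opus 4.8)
\emph{Proof proposal.} The plan is to obtain all five assertions by chaining together Diophantine definitions that are already in hand, so that the substance is a bookkeeping of \emph{which} subring of $\Q$ one lands in, together with one Chebotarev computation. The organizing observation is that, by Theorem~\ref{thm:1}, $O_{K,\calW_K}$ is Diophantine over $O_{K_{\infty},\calW_{K_{\infty}}}$ and $O_{K_{\infty},\calW_{K_{\infty}}}\cap K=O_{K,\calW_K}$; intersecting the latter equality with $\Q$ and using $\Q\subseteq K$ gives $O_{K_{\infty},\calW_{K_{\infty}}}\cap\Q=O_{K,\calW_K}\cap\Q$, and likewise with $\calR_K$ in place of $\calW_K$ once the analogue of Theorem~\ref{thm:1} for $\calR_K$ is established.

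For (1): by the arrangement made in \ref{not:bigringtoreal} we have $\calS_K\supseteq\calT_K\setminus\calV_K$, hence $\calT_K\subseteq\calW_K$ with $\calW_K\setminus\calT_K$ finite, so Corollary~7.6.1 of \cite{Sh34} together with Proposition~\ref{prop:finmany} shows $O_{K,\calW_K}\cap\Q$ is Diophantine over $O_{K,\calW_K}$ (one defines $O_{K,\calT_K}$ inside $O_{K,\calW_K}$ by Proposition~\ref{prop:finmany}, then adjoins the finitely many missing rational denominators by a condition of the shape ``$\prod p_i^{n_i}x$ is $\calT_K$-integral for some $n_i\ge 0$''). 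Composing this with Theorem~\ref{thm:1} via transitivity of Diophantine definitions (the reduction lemma recalled in the introduction), and using the identity above, yields (1).

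For (4): rerun the chain of (1) with $\calR_K=\hat\calV_K\cup\calS_K$ in place of $\calW_K$. The points to check are that Lemmas~\ref{le:Part1}, \ref{le:negorder} and Proposition~\ref{prop:thenbelow} — hence the analogue of Theorem~\ref{thm:1} — and Corollary~7.6.1 of \cite{Sh34} all go through with $\hat\calV_K$, which they do since $\hat\calV_K\subseteq\calV_K$ is still a set of primes of $K$ with no relative degree one factor in $E$, and since those proofs are phrased throughout in terms of the conjugation-closures $\overline{\calW}$ rather than of $\calW$ itself. The reason (4) — unlike (1) — also gives HTP-undecidability is that the operation $\calV_K\mapsto\hat\calV_K$ deletes, from each complete set of $\Q$-conjugate primes sitting inside $\calV_K$, exactly one prime; such a conjugacy set is precisely the set of \emph{all} $K$-primes above a rational prime that does not split completely in $E$, so after the deletions $\hat\calV_K$ contains no complete set of factors of a rational prime, and choosing the finite set $\calS_K$ so that it too supplies no such complete set gives $O_{K,\calR_K}\cap\Q=\Z$. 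Hence $\Z$ is Diophantine over $O_{K_{\infty},\calR_{K_{\infty}}}$ and HTP-undecidability follows from Matijasevich's theorem and the reduction lemma. By contrast $O_{K,\calW_K}\cap\Q$ is a ring of $\calS$-integers of $\Q$ with a positive-density set of primes inverted, over which HTP is not known to be undecidable, which is why (1) stops at the Diophantine definition.

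For (3) and (5) I would invoke the Chebotarev density theorem. Since $E/\Q$ is cyclic of prime degree and $R$ is the minimal polynomial of $\mu_E$, a prime $\pp$ of $K$ has a relative degree one factor in $E$ exactly when $R$ has a linear factor modulo $\pp$, i.e.\ when the rational prime below $\pp$ splits completely in $E$; that set of rational primes has density $1/[E:\Q]$, so $\calV_K$ — and hence $\calW_K$, up to the finite set $\calS_K$ — has density $1-1/[E:\Q]$, giving (3). For (5) one subtracts further the primes discarded by $\calV_K\mapsto\hat\calV_K$, namely one prime of top residue degree from each conjugacy set above a rational prime not split in $E$; the density-carrying part comes from the rational primes that split completely in $K$, and a direct count of the $K$-primes so removed brings the bound down to $1-1/[K:\Q]-1/[E:\Q]$. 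Finally, (2) adapts the constructions of \cite{Po2} and \cite{PS}: fixing an archimedean or $\mathfrak v$-adic topology of $K_{\infty}$, one chooses $\calS_K$ so that the chosen valuation is unbounded on a suitable infinite Diophantine family lying in $O_{K_{\infty},\calW_{K_{\infty}}}$ — for a non-archimedean topology, put a prime below $\mathfrak v$ into $\calS_K$ and use the powers of that now-invertible rational prime, or the $x$-coordinates $x([n]P)$ of the multiples of the point $P$ of infinite order, whose $\mathfrak v$-adic denominators grow; for an archimedean topology the $x([n]P)$ already work since $|x([n]P)|$ grows without bound — with (1) guaranteeing that this family and its defining equations may be taken inside the ring. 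I expect the only genuinely non-formal steps to be the density bookkeeping in (3) and (5) and the verification that $\calV_K\mapsto\hat\calV_K$ leaves $\Z$, and not a larger ring of $\calS$-integers of $\Q$, below; everything else is an application of the reduction lemma to results already proved.
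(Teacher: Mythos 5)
Your proposal is correct and follows essentially the same route as the paper, whose own proof of this corollary is little more than the same chain of citations: part (1) from Theorem \ref{thm:1} together with Proposition \ref{prop:finmany} and Corollary 7.6.1 of \cite{Sh34}, part (4) from the observation that the hat operation leaves no complete conjugacy class of $K$-primes over any rational prime so that $O_{K_{\infty},\calR_{K_{\infty}}}\cap\Q$ is a small ring of $\Q$ (the paper then invokes Proposition \ref{prop:finmany} rather than adjusting $\calS_K$ to force the intersection to be exactly $\Z$, a trivial variation), parts (3) and (5) from Chebotarev and the standard density bookkeeping, and part (2) from the constructions of \cite{Sh21} and \cite{PS}. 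The details you supply are consistent with what those references provide, so there is nothing further to flag.
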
%
\begin{proof}%
\be%
\item This assertion follows directly from Theorem \ref{thm:1}, Proposition \ref{prop:finmany} and Corollary 7.6.1
of \cite{Sh34}.%
\item  This part of the corollary follows from Section 3 of \cite{Sh21} and Section 2 of \cite{PS}. %
\item This  statement  follows from the fact that the set of $K$ primes inert in the extension $EK/K$ has
(Dirichlet or natural) density $1-\frac{1}{[E:\Q]}$. %
\item This assertion is true because by construction of $\calR_K$ we have that $O_{K_{\infty},\calR_{K_{\infty}}}
\cap \Q$ is a ``small'' ring, i.e. a ring of the form $O_{\Q,\calT_{\Q}}$, where $\calT_{\Q}$ is a finite, possibly
empty set of rational primes. Thus, by Proposition \ref{prop:finmany}, $\Z$ has a Diophantine definition over
$\calT_{\Q}$ and therefore  over $O_{K_{\infty},\calR_{K_{\infty}}}$. Hence HTP is unsolvable over this ring.
\item  This part of the corollary follows from a standard density calculation (see for example Section B.5 of
\cite{Sh34}).
\ee
\end{proof}%
We restate our results in the following two formulations.
\begin{theorem}%
\label{thm:mainnonint0}%
Let $K_{\infty}$ be a totally real possibly infinite algebraic extension of $\Q$ normal over some
number field and such that for some rational prime number $p$ we have that $i_{K_{\infty}}(p)=0$.
Let $U_{\infty}$ be a finite extension of $K_{\infty}$ such that there exists an elliptic curve
${\tt E}$ defined over $U_{\infty}$ with ${\tt E}(U_{\infty})$ finitely generated and of a positive
rank. Then $K_{\infty}$ contains a large subring $R$ such that $Z$ is definable over $R$ and HTP is
unsolvable over $R$.
\end{theorem}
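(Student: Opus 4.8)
The plan is to derive Theorem \ref{thm:mainnonint0} from Theorem \ref{thm:1} and Corollary \ref{cor:downreal}; the real content is only (a) passing from the abstract hypothesis on $U_\infty$ to the concrete data of Notation and Assumptions \ref{not:integers}, \ref{not:biggerrings} and \ref{not:bigringtoreal}, and (b) checking that those assumptions can be met given $i_{K_\infty}(p)=0$. First I would descend the curve to a number field: the Weierstrass equation of ${\tt E}$ has finitely many coefficients in $U_\infty$, and ${\tt E}(U_\infty)$ is finitely generated, so there is a number field $F_0\subseteq U_\infty$ containing those coefficients and the coordinates of a finite generating set of ${\tt E}(U_\infty)$, whence ${\tt E}$ is defined over $F_0$ with $\rank{\tt E}(F_0)=\rank{\tt E}(U_\infty)>0$; since $F_0K_\infty\subseteq U_\infty$, the group ${\tt E}(F_0K_\infty)$ is finitely generated of the same rank as ${\tt E}(F_0)$, so $[{\tt E}(F_0K_\infty):{\tt E}(F_0)]<\infty$. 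I then enlarge $F_0$ — compositing with the number field over which $K_\infty$ is normal, and, should $F_0\cap K_\infty$ otherwise be $\Q$, with a subfield of $K_\infty$ of least degree exceeding $1$ — and set $F:=F_0$, $K:=F\cap K_\infty$. Then $K$ is a totally real number field with $K\neq\Q$, $K_\infty/K$ is normal and totally real, $F\cap K_\infty=K$, $\rank{\tt E}(F)>0$, $[{\tt E}(FK_\infty):{\tt E}(F)]<\infty$, and $i_{K_\infty}(p)=0$ still holds; this is precisely the setting of \ref{not:integers} and \ref{not:biggerrings}.

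Next I would construct $E$. Note $i_{K_\infty}(p)=0$ means $p\nmid[M:\Q]$ for every number field $M\subseteq K_\infty$, so $p\nmid[K:\Q]$ and $p\nmid[M:K]$ whenever $K\subseteq M\subseteq K_\infty$; hence $p$ is the one prime we may safely use for $[E:\Q]$, and I take $E$ cyclic of degree $p$ — by Dirichlet, pick a rational prime $q\equiv 1\pmod{2p}$ and let $E$ be the degree-$p$ subfield of $\Q(\zeta_q)^+$. This $E/\Q$ is cyclic of prime degree, hence has no proper subfield other than $\Q$; since its unit rank is $p-1\geq 2$ there is an integral unit $\mu_E\in O_E$, $\mu_E\neq\pm1$, and any such $\mu_E$ generates $E$ over $\Q$ and has minimal polynomial $R(T)\in\Z[T]$ with constant term $\pm1$, as \ref{not:biggerrings} requires; also $n_E=[E:\Q]=[EK:K]=p$ since $p\nmid[K:\Q]$. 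One still needs $n_E=p>[K^{\Gal}:\Q]$: this is what forces the economical choice of $F$ (hence of $K$) above, and is where I expect the argument to be most delicate, $F$ having to be at once big enough to carry ${\tt E}$ with full Mordell-Weil rank and small enough that $[K^{\Gal}:\Q]<p$. I would then let $\calV_K$ be the set of $K$-primes with no relative degree-one factor in $EK/K$ (density $1-1/p$ by Chebotarev, so infinite), fix a finite, $\Q$-conjugation-closed set $\calS_K$ of $K$-primes containing $\calE_K$, the divisors of $\disc R$ in $K$, and the primes of $\calT_K\setminus\calV_K$ from \ref{not:bigringtoreal}, and put $\calW_K=\calV_K\cup\calS_K$ and $\calR_K=\hat{\calV}_K\cup\calS_K$. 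Corollary \ref{cor:deg}, applied with the odd prime $p$ (degree index $0$, a fortiori finite), shows the primes of $\overline{\calE}_K$ are $K_\infty$-boundable, so the boundability hypotheses of Section \ref{sec:real} are in force.

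Finally I would invoke the machinery. Theorem \ref{thm:1} then yields that $O_{K,\calW_K}$ is Diophantine over $O_{K_\infty,\calW_{K_\infty}}$, and the fourth assertion of Corollary \ref{cor:downreal} yields that $R:=O_{K_\infty,\calR_{K_\infty}}$ has the property that $R\cap\Q$ is a small ring $O_{\Q,\calT}$ of $\Q$ which is Diophantine over $R$; so by Proposition \ref{prop:finmany} $\Z$ is Diophantine over $O_{\Q,\calT}$, hence over $R$, and HTP is unsolvable over $R$ (the quotient field $K_\infty$ not being algebraically closed). It remains to see $R$ is large: as $K\neq\Q$ and $\gcd([K:\Q],p)=1$, Chebotarev supplies infinitely many rational primes that split completely in $K$ but not in $E$, for each of which at least one $K$-factor survives in $\hat{\calV}_K$, so $\calR_K$ is infinite and $R$ is a large subring of $K_\infty$, as claimed. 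Besides the degree bookkeeping for $E$ flagged above, the remaining point needing care is the transfer from $\calW_K$ to $\calR_K$: checking that deleting one prime from each $\Q$-conjugacy orbit keeps the set of primes infinite while turning its intersection with $\Q$ into a small ring.
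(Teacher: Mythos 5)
Your overall route is exactly the paper's: Theorem \ref{thm:mainnonint0} is presented there only as a reformulation of Theorem \ref{thm:1} and Corollary \ref{cor:downreal} (``We restate our results in the following two formulations''), so the real work is instantiating Notation and Assumptions \ref{not:integers}, \ref{not:biggerrings} and \ref{not:bigringtoreal}, which you carry out correctly for the most part: the descent of ${\tt E}$ to a number field $F_0$ via finite generation of ${\tt E}(U_\infty)$, the choice $K=F\cap K_\infty$, the construction of a cyclic degree-$p$ field $E$ with an integral unit generator, and the largeness bookkeeping for $\hat{\calV}_K$ are all sound.

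The gap is the one you flag yourself, and it is genuine: Notation \ref{not:bigringtoreal} requires $n_E>[K^{\Gal}:\Q]$ (this is what Corollary 7.6.1 of \cite{Sh34} needs for the descent from $K$ to $\Q$), while the coprimality condition $([M:K],n_E)=1$ for all number fields $M$ with $K\subseteq M\subseteq K_\infty$ forces $n_E=p$, the one prime known to have zero degree index. You cannot buy $p>[K^{\Gal}:\Q]$ by an ``economical choice of $F$'': $K$ must contain both $F_0\cap K_\infty$ (dictated by the curve and its Mordell--Weil generators) and the number field over which $K_\infty$ is normal, so $[K^{\Gal}:\Q]$ is bounded below by data you do not control, and nothing in the hypotheses prevents it from exceeding $p$. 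The paper itself implicitly concedes the point: the parallel degree-two statement (Theorem \ref{thm:bigrings2}, Main Theorem E) carries the extra hypothesis $p>[K^{\Gal}:\Q]$. The repair consistent with the paper's own toolkit is not a cleverer $F$ but a decoupling of the two roles of the auxiliary field, as in Notation \ref{not:more}: keep $E$ of degree $p$ for the vertical step $K_\infty\to K$ (Theorem \ref{thm:1} only needs $\calV_K$ to be \emph{a} set of primes without degree-one factors in $EK$), and introduce a second totally real cyclic $\bar E/\Q$ of prime degree exceeding $[K^{\Gal}:\Q]$, unconstrained by $K_\infty$, to drive the horizontal step $K\to\Q$, working with primes that are simultaneously without degree-one factors in $EK/K$ and inert in $\bar EK/K$. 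Two smaller points: Corollary \ref{cor:deg} requires the prime to be odd, so your appeal to it tacitly strengthens the hypothesis to $p$ odd (in line with Main Theorem C, not with the literal statement); and your largeness argument for $\hat{\calV}_K$ needs $K\neq\Q$, which you do arrange.
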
%

\begin{theorem}%
\label{thm:mainnonint}%
Let $K_{\infty}$ be a totally real possibly infinite algebraic extension of $\Q$ normal over some number field and
such that there exists a  non-repeating sequence of  rational prime numbers $\{p_i\}$ with the
property that $i_{K_{\infty}}(p_i)<\infty$. Let $U_{\infty}$ be a finite extension of $K_{\infty}$
such that there exists an elliptic curve ${\tt E}$ defined over $U_{\infty}$ with ${\tt
E}(U_{\infty})$ finitely generated.  Then for any $\varepsilon  >0$ we have that $K_{\infty}$
contains a large subring $R$
satisfying the following conditions: %
\be%
\item There exists a number field $K \subset K_{\infty}$ and a set $\calW_K \subset \calP(K)$ of (Dirichlet or
natural) density greater than $1-\varepsilon$ such that $R=O_{K_{\infty},\calW_{K_{\infty}}}$.%
\item $\Z$ is definable over $R$ and HTP is unsolvable over $R$.%
\ee%
\end{theorem}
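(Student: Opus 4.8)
The plan is to run the apparatus of Section~\ref{sec:real} --- Theorem~\ref{thm:1} together with Corollary~\ref{cor:downreal}, of which Theorem~\ref{thm:mainnonint0} is just the ``one prime of degree index $0$'' incarnation --- but now with the auxiliary cyclic field $E$ of prime degree $n_E$ equal to one of the $p_i$, chosen so large that the prime set it produces has density past $1-\varepsilon$. The whole proof then reduces to choosing compatible data $(K,F,{\tt E},E)$; once that is done, assertions (1) and (2) fall straight out of the cited results.

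Here is how I would choose the data. Given $\varepsilon>0$, pick an odd prime $p\in\{p_i\}$ with $1/p<\varepsilon/2$ (possible since a non-repeating sequence of primes is unbounded); we may assume $K_\infty$ is genuinely infinite, the number-field case being covered by \cite{Sh33}. Let $F_0\subseteq U_\infty$ be a number field holding the coefficients of a Weierstrass model of ${\tt E}$ and the coordinates of a finite generating set of the finitely generated group ${\tt E}(U_\infty)$, so that ${\tt E}(F_0)={\tt E}(U_\infty)$ has positive rank; let $K_0\subset K_\infty$ be a number field over which $K_\infty$ is normal; and let $K_1\subset K_\infty$ be a number field with $\ord_p[K_1:\Q]=i_{K_\infty}(p)$ (available because $i_{K_\infty}(p)<\infty$) and, after enlarging inside $K_\infty$, with $[K_1:\Q]>2/\varepsilon$. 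Put $L:=F_0K_0K_1\subseteq U_\infty$, $K:=L\cap K_\infty$, $F:=F_0K$. Then $K$ is totally real; $K_\infty/K$ is normal ($K\supseteq K_0$); since $F\subseteq L$ we get $F\cap K_\infty\subseteq L\cap K_\infty=K$, hence $F\cap K_\infty=K$; since $F\subseteq U_\infty$ we get ${\tt E}(F)={\tt E}(FK_\infty)={\tt E}(U_\infty)$, of positive rank, with $[{\tt E}(FK_\infty):{\tt E}(F)]=1<\infty$; and $\ord_p[K:\Q]=i_{K_\infty}(p)$ (it is $\ge$ this since $K\supseteq K_1$ and $\le$ this since $K\subset K_\infty$), so $p\nmid[M:K]$ for \emph{every} number field $M$ with $K\subseteq M\subset K_\infty$. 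Finally take $E/\Q$ cyclic of degree $p$ with $E\not\subseteq K^{\Gal}$ and with an integral unit generator $\mu_E$; such $E$ exist (infinitely many cyclic degree-$p$ fields, only finitely many inside $K^{\Gal}$, and Dirichlet's unit theorem supplies a unit generator). Then $n_E=p=[EK:K]$ because $E\cap K\subseteq E\cap K^{\Gal}=\Q$, the coprimality $([M:K],n_E)=1$ holds for all the relevant $M$, and $E$ is linearly disjoint from $K^{\Gal}$ over $\Q$. At this point I would use linear disjointness of $E$ and $K^{\Gal}$ in place of the stronger printed requirement $n_E>[K^{\Gal}:\Q]$ of \ref{not:bigringtoreal}, checking that this is all Corollary~7.6.1 of \cite{Sh34} actually needs for the descent to $\Q$ --- it has to be, since when $i_{K_\infty}(p)>0$ the field $K$ is forced to contain a degree-$p$ subfield and ``$n_E>[K^{\Gal}:\Q]$'' fails outright. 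Now all the standing hypotheses of \ref{not:integers}, \ref{not:biggerrings} and \ref{not:bigringtoreal} are in force.

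With this in hand I would quote the earlier work directly. Let $\calV_K$ be the set of $K$-primes with no relative degree-one factor in the cyclic extension $EK/K$; by Chebotarev it has natural (and Dirichlet) density $1-1/p$. Build $\calS_K$, $\calR_K=\hat\calV_K\cup\calS_K$ and $R:=O_{K_\infty,\calR_{K_\infty}}$ as in \ref{not:bigringtoreal}, and set $\calW_K:=\calR_K$. By Theorem~\ref{thm:1}, $O_{K,\calW_K}=R\cap K$ is Diophantine over $R$; by Corollary~\ref{cor:downreal}(4), $R\cap\Q$ is a \emph{small} subring $O_{\Q,\calT_\Q}$ of $\Q$ with $\calT_\Q$ finite (precisely the purpose of the operation $\hat{(\cdot)}$) and is Diophantine over $R$; by Proposition~\ref{prop:finmany}, $\Z$ is Diophantine over $O_{\Q,\calT_\Q}$. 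Composing these definitions, $\Z$ is existentially definable over $R$ and HTP is undecidable over $R$. A routine density count (as for Corollary~\ref{cor:downreal}(5), cf. Section~B.5 of \cite{Sh34}) shows $\calW_K$ has density $\ge\bigl(1-\tfrac1p\bigr)\bigl(1-\tfrac1{[K:\Q]}\bigr)>1-\varepsilon$, since passing from $\calV_K$ to $\hat\calV_K$ deletes density at most $1/[K:\Q]$. That $R$, with this $K$ and $\calW_K\subset\calP(K)$, satisfies both (1) and (2).

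The hard part is entirely the bookkeeping in the data-choosing step: securing one prime $p$ and one number field $K$ that simultaneously make $1/p<\varepsilon$, make $[K:\Q]$ large (so that $\hat{(\cdot)}$ does not spoil the density), make $\ord_p[K:\Q]=i_{K_\infty}(p)$ (so that $([M:K],p)=1$ for \emph{all} intermediate $M$, not merely some), carry the elliptic curve and the normality base, and admit a cyclic degree-$p$ field $E$ with $E\cap K^{\Gal}=\Q$ and a unit generator. It is exactly the finiteness of $i_{K_\infty}(p)$ --- granted for infinitely many $p$ by hypothesis --- that lets a \emph{number field} $K$ absorb the full $p$-part of the degrees occurring in $K_\infty$; and it is the sharper separation $E\cap K^{\Gal}=\Q$, rather than the coarse $n_E>[K^{\Gal}:\Q]$, that keeps this absorption compatible with the descent to $\Q$. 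Everything past that is a direct application of Theorem~\ref{thm:1} and Corollary~\ref{cor:downreal}.
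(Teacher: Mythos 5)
Your proposal is correct and follows the route the paper itself intends: Theorem \ref{thm:mainnonint} is stated in the paper as a restatement of Theorem \ref{thm:1} and Corollary \ref{cor:downreal}, with the selection of the data $(K,F,E)$ left implicit, and your argument supplies exactly that selection (absorbing the full $p$-part of the degrees into $K$ via the finiteness of $i_{K_\infty}(p)$, arranging $F\cap K_\infty=K$ through $K:=L\cap K_\infty$, and taking $n_E=p$ large to push the density of the inert set past $1-\varepsilon$ after the $\hat{(\cdot)}$ correction). The one place you genuinely depart from the printed text is the replacement of the condition $n_E>[K^{\Gal}:\Q]$ of Notation \ref{not:bigringtoreal} by linear disjointness of $E$ from $K^{\Gal}$; your observation that the printed inequality is unsatisfiable whenever $i_{K_\infty}(p)>0$ (since then $p\mid[K:\Q]$, forcing $[K^{\Gal}:\Q]\geq n_E$) is accurate and points to a real looseness in the paper's hypotheses. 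Whether Corollary 7.6.1 of \cite{Sh34} indeed needs only $E\cap K^{\Gal}=\Q$ cannot be checked from this paper alone, so that substitution rests on the external reference; but this is a defect of the paper's bookkeeping rather than of your argument, and the rest of your proof (the density count using $1/p<\varepsilon/2$ and $[K:\Q]>2/\varepsilon$, and the descent $R\to O_{K,\calR_K}\to O_{\Q,\calT_\Q}\to\Z$) is sound.
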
%
Before stating the theorem concerning arbitrary small rings we change assumptions again.
\begin{notationassumption}
We will now remove conditions imposed on $\calS_K$ in Notation and Assumptions \ref{not:bigringtoreal}.  That is in
what follows we again assume that $\calS_K$ is an arbitrary finite set of primes of $K$.
\end{notationassumption}
First as a consequence of  Theorem \ref{thm:1} we have the following corollary.
\begin{corollary}%
$\Z$, $O_{K_{\infty},\calS_{K_{\infty}}} \cap \Q$ and $O_{K,\calS_K}$ have  Diophantine definitions over
$O_{K_{\infty},\calS_{K_{\infty}}}$.  Thus HTP is not solvable over  $O_{K_{\infty},\calS_{K_{\infty}}}$.
\end{corollary}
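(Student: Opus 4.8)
The plan is to reuse the polynomial $P(x,\bar{t})\in O_K[x,\bar t]$ furnished by Theorem~\ref{thm:1} — stated there for the base ring $O_{K_{\infty},\calW_{K_{\infty}}}$ with $\calW_K=\calV_K\cup\calS_K$ — now over the \emph{smaller} ring $O_{K_{\infty},\calS_{K_{\infty}}}$, and then to descend to $\Q$ and to $\Z$. Two bookkeeping facts are used throughout: since $\calS_K\subseteq\calW_K$ we have $O_{K_{\infty},\calS_{K_{\infty}}}\subseteq O_{K_{\infty},\calW_{K_{\infty}}}$ and $O_{K,\calS_K}\subseteq O_{K,\calW_K}$; and since an element of $K$ integral at every prime of $K_{\infty}$ lying above a prime of $K\setminus\calS_K$ is already integral at every prime of $K\setminus\calS_K$, we get $O_{K_{\infty},\calS_{K_{\infty}}}\cap K=O_{K,\calS_K}$, hence $O_{K,\calW_K}\cap O_{K_{\infty},\calS_{K_{\infty}}}=O_{K,\calS_K}$.

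First I would check that $\exists\bar t\,(P(x,\bar t)=0)$, read over $O_{K_{\infty},\calS_{K_{\infty}}}$, is a Diophantine definition of $O_{K,\calS_K}$. The forward inclusion is immediate from Theorem~\ref{thm:1}(1): if $x$ and all $t_i$ lie in $O_{K_{\infty},\calS_{K_{\infty}}}\subseteq O_{K_{\infty},\calW_{K_{\infty}}}$ and $P(x,\bar t)=0$, then $x\in O_{K,\calW_K}$, and intersecting with $O_{K_{\infty},\calS_{K_{\infty}}}$ gives $x\in O_{K,\calS_K}$ by the previous paragraph. For the converse I would return to the witness construction in Lemma~\ref{le:Part1} and Proposition~\ref{prop:thenbelow}: for $x\in O_{K,\calS_K}$ (in particular for $x\in\Z_{\ge 0}$) one takes the auxiliary quantities $x_j=(R(A(1)+x+N_j))^2$, $z=R(z_0)$ and $z_j=x_jz$ inside $O_{K,\calS_K}$ (squares where required), and chooses the points $Q,P_j\in[i]{\tt E}(FK_{\infty})\setminus\{O\}$ and the remaining elements $u_Q,v_Q,a_j,b_j,c_j,\dots$ to be algebraic integers of $F$, exactly as in the integer cases of Lemma~\ref{le:Part1.1} and Proposition~\ref{prop:thenbelow1.1}; all of these then lie in $O_{K_{\infty},\calS_{K_{\infty}}}$, respectively in $O_{FK_{\infty},\calS_{FK_{\infty}}}$. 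Rewriting the $FK_{\infty}$-valued variables as $K_{\infty}$-valued ones via Proposition~2.6 of \cite{Sh36}, exactly as in the proof of Theorem~\ref{thm:1}, keeps the whole system over $O_{K_{\infty},\calS_{K_{\infty}}}$. This gives $O_{K,\calS_K}$ Diophantine over $O_{K_{\infty},\calS_{K_{\infty}}}$.

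To descend, observe that $O_{K_{\infty},\calS_{K_{\infty}}}\cap\Q=O_{\Q,\calS_{\Q}}$, where $\calS_{\Q}$ is the \emph{finite} set of rational primes lying below $\calS_K$. The ``getting down to $\Q$'' argument used for Corollary~\ref{cor:downreal}(1) — Corollary~7.6.1 of \cite{Sh34} together with Proposition~\ref{prop:finmany} — applied now to $O_{K,\calS_K}$, shows $O_{K,\calS_K}\cap\Q=O_{\Q,\calS_{\Q}}$ is Diophantine over $O_{K,\calS_K}$, hence over $O_{K_{\infty},\calS_{K_{\infty}}}$. Since $\calS_{\Q}$ is finite, Proposition~\ref{prop:finmany} also gives $\Z$ Diophantine over $O_{\Q,\calS_{\Q}}$, so $\Z$ is Diophantine over $O_{K_{\infty},\calS_{K_{\infty}}}$; as the quotient field of this ring is not algebraically closed, HTP is unsolvable over it by the easy lemma of the introduction.

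The main obstacle is the converse half of the second step: one must go into the proofs of Lemma~\ref{le:Part1} and Proposition~\ref{prop:thenbelow} and verify that for $x\in O_{K,\calS_K}$ \emph{every} auxiliary witness — notably $z_0$, chosen so that $z=R(z_0)$ is large and lies in $\mathbf A$, and all the data attached to ${\tt E}$ — can be taken inside $O_{K_{\infty},\calS_{K_{\infty}}}$ (or $O_{FK_{\infty},\calS_{FK_{\infty}}}$) rather than merely inside the larger ring $O_{K_{\infty},\calW_{K_{\infty}}}$. This should be routine, since an $\calS_K$-integral input already forces those witnesses to be $\calS$-integral, but it is the one point in the argument that is not purely formal.
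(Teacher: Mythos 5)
Your proposal is correct and is essentially the paper's (implicit) argument: the paper offers no proof beyond the phrase ``as a consequence of Theorem~\ref{thm:1}'', and what you write is the honest unpacking of that phrase, including the one non-formal point --- that the witnesses produced for $x\in\Z_{>0}$ in Lemma~\ref{le:Part1} and Proposition~\ref{prop:thenbelow} are in fact $\calS$-integral (they live in $O_F$, resp.\ $O_{F,\calS_F}$, after the rewriting), so the definition survives restriction to the smaller ring. Two small remarks. First, you can sidestep the witness-tracking entirely by invoking Theorem~\ref{thm:1} with $\calV_K=\emptyset$, so that $\calW_K=\calS_K$ and part (3) of that theorem literally asserts that $O_{K,\calS_K}$ is Diophantine over $O_{K_\infty,\calS_{K_\infty}}$; the hypotheses of Notation~\ref{not:biggerrings} are vacuously satisfied by the empty $\calV_K$. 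Second, Corollary~7.6.1 of \cite{Sh34} is the wrong tool for descending from $O_{K,\calS_K}$ to $\Q$: it concerns the large sets $\calT_K\supseteq\calV_K$ attached to the cyclic extension $E$, not arbitrary finite $\calS_K$. For a small ring the descent is the ``integer route'' the author herself describes in the proof of Theorem~\ref{thm:bigrings2}(2): Proposition~\ref{prop:finmany} makes $O_K$ Diophantine over $O_{K,\calS_K}$, Denef's theorem (or Corollary~\ref{cor:smallrings}) makes $\Z$ Diophantine over $O_K$ since $K$ is totally real, and then $O_{K_\infty,\calS_{K_\infty}}\cap\Q=O_{\Q,\calS_\Q}$ is Diophantine because the powers of $\prod_{p\in\calS_\Q}p$ form a Diophantine subset of $\Z$. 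Since you also invoke Proposition~\ref{prop:finmany} and the finiteness of $\calS_\Q$, this is a mis-citation rather than a gap.
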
%

This result can also be restated in the following form.
\begin{theorem}%
\label{thm:smallring}%
Let $K_{\infty}$ be a totally real possibly infinite algebraic extension of $\Q$ normal over some number field and
such that for some  rational prime number $p$   we have that $i_{K_{\infty}}(p)=0$. Let $U_{\infty}$
be a finite extension of $K_{\infty}$ such that there exists an elliptic curve ${\tt E}$ defined over
$U_{\infty}$ with ${\tt E}(U_{\infty})$ finitely generated. Then for any small subring $R$ of
$K_{\infty}$ we have that $\Z$ is definable over $R$ and HTP is unsolvable over $R$.
\end{theorem}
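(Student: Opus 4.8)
The plan is to repackage the abstract hypotheses as the concrete setup of Notation and Assumptions \ref{not:integers} and \ref{not:biggerrings}, invoke Theorem \ref{thm:1}, and then come down from $O_{K,\calS_K}$ to $\Z$ by classical means. First I would descend the curve to a number field: picking a finite generating set $P_1,\dots,P_s$ of the finitely generated group ${\tt E}(U_\infty)$ and letting $F_0\subseteq U_\infty$ be the number field generated over $\Q$ by the coefficients of the fixed Weierstrass equation of ${\tt E}$ and by the affine coordinates of the $P_j$, we have ${\tt E}(F_0)\supseteq\langle P_1,\dots,P_s\rangle={\tt E}(U_\infty)\supseteq{\tt E}(F_0)$, so ${\tt E}(F_0)={\tt E}(U_\infty)$ is finitely generated of positive rank. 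Next I would fix a number field $K_{00}$ over which $K_\infty$ is normal, and a number field $K_1\subseteq K_\infty$ together with a finite prime set $\calS_{K_1}$ so that $R$ is the integral closure of $O_{K_1,\calS_{K_1}}$ in $K_\infty$, and then put $F:=F_0K_1K_{00}$ and $K:=F\cap K_\infty$. Now $F/K$ is a finite extension of number fields with $F\cap K_\infty=K$; since $K\supseteq K_{00}$ the extension $K_\infty/K$ is normal; $K$ is totally real; ${\tt E}$ is defined over $F$ with ${\tt E}(F)={\tt E}(U_\infty)$; and since $FK_\infty\subseteq U_\infty$ we get ${\tt E}(FK_\infty)={\tt E}(U_\infty)={\tt E}(F)$, whence $[{\tt E}(FK_\infty):{\tt E}(F)]=1$. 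With $\calS_K$ the (finite) set of primes of $K$ above $\calS_{K_1}$, transitivity of integral closure gives $R=O_{K_\infty,\calS_{K_\infty}}$. This is exactly the situation of Notation and Assumptions \ref{not:integers}.

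To meet the full hypotheses of Theorem \ref{thm:1} I must also produce the auxiliary number field $E$ of Notation and Assumptions \ref{not:biggerrings}. Since $i_{K_\infty}(p)=0$ and $[M:K]$ divides $[M:\Q]$ for every number field $M$ with $K\subseteq M\subseteq K_\infty$, the prime $p$ divides no such $[M:K]$; in particular $p\nmid[K:\Q]$. Hence any number field $E$ of degree $p$ over $\Q$ generated by an integral unit --- for instance $\Q[x]/(x^{p}-x-1)$, irreducible over $\Q$ by a theorem of Selmer, with the class of $x$ an integral unit --- satisfies $[E:\Q]=[EK:K]=p$ and $(n_E,[M:K])=1$ for all such $M$. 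I would then take $\calV_K=\emptyset$, so that $\calW_K=\calV_K\cup\calS_K=\calS_K$, after which the remaining items of Notation and Assumptions \ref{not:biggerrings} --- the polynomial $R$, the integers $N_0,\dots,N_{2n_E}$ via Lemma 12.1 of \cite{Sh36}, the functions $A(C)$, and a rational prime without a factor in $\calS_K$ --- are available. Finally $\calE_K$ is a finite set of primes of $K$ which is $K_\infty$-boundable: for odd $p$ this is Corollary \ref{cor:deg}, while for $p=2$ the relative degree and the ramification index of any factor, in a number subfield $M\subset K_\infty$, of a prime of $\overline \calE_K$ divide the odd integer $[M:K]$ and hence have $2$-adic valuation $0$, so Proposition \ref{prop:inffinmany} still applies.

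With this data assembled, Theorem \ref{thm:1} furnishes a polynomial over $O_K$ exhibiting $O_{K,\calS_K}=O_{K_\infty,\calS_{K_\infty}}\cap K$ as a Diophantine subset of $O_{K_\infty,\calS_{K_\infty}}=R$. Because $\calS_K$ is finite, Proposition \ref{prop:finmany} makes $O_K$ Diophantine over $O_{K,\calS_K}$, and because $K$ is totally real, Denef's theorem (\cite{Den1}) supplies a Diophantine definition of $\Z$ over $O_K$. Composing these Diophantine definitions --- legitimate since the fraction field of $R$ is not algebraically closed --- gives a Diophantine definition of $\Z$ over $R$, and the easy lemma of the introduction then yields the unsolvability of HTP over $R$. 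As $R$ was an arbitrary small subring of $K_\infty$, the theorem follows.

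The only substantive ingredient, Theorem \ref{thm:1}, is already in hand, so no deep work remains. The delicate parts are the repackaging in the first paragraph --- arranging $F\cap K_\infty=K$ while the curve retains the same positive rank over $F$ as over $FK_\infty$ and $K_\infty/K$ stays normal --- and, in the second paragraph, manufacturing from the lone hypothesis $i_{K_\infty}(p)=0$ an auxiliary field $E$ with an integral unit generator and all the divisibility properties, together with handling the $p=2$ case of the boundability of $\calE_K$; I expect this last point to be the fiddliest.
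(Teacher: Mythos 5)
Your proof is correct and follows the paper's own route: the paper derives this theorem as an immediate corollary of Theorem \ref{thm:1} with $\calV_K=\emptyset$ (so $\calW_K=\calS_K$ is finite) and then descends $O_{K,\calS_K}\to O_K\to\Z$ via Proposition \ref{prop:finmany} and Denef. You simply make explicit the reductions the paper leaves implicit (descending ${\tt E}$ to a number field, arranging $F\cap K_\infty=K$, constructing $E$ of degree $p$ with a unit generator, and the $p=2$ case of boundability), all of which check out.
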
%

\section{Diophantine Definition of Rational Integers for Extensions of Degree 2 of Totally
Real Fields.}%
\setcounter{equation}{0}
Most of the work necessary for treating the case of extensions of degree 2 of the totally real fields discussed
above has been done in \cite{Sh36}. However we will revisit some of the results because in the current case the
presentation can be simplified and the results concerning rings of integers can be made slightly more general. We
start, as usual with notation and assumptions.

\begin{notationassumption}
\label{not:ext2integers}
In this section we make the following changes and additions to the notation and assumption list.
\begin{itemize}%
\item For the case of integers we remove all the assumptions on $K_{\infty}$
besides the fact that it is an algebraic possibly infinite extension of $\Q$ and $K$ contains a totally positive
element (that is an element all of whose $\Q$-conjugates are positive) which is not a square in $K_{\infty}$.%
\item Let $G$ be an extension of degree 2 of $K$. Let $\alpha_G$ be a generator of $G$ over $K$ with
$\alpha_G^2=a_G \in O_K$. Assume $G$ is not a totally real field.%
\item Assume $[GK_{\infty}:K_{\infty}]=2$.%
\item Let $d_H \in O_K$ be such that it is not a square in $K_{\infty}$ and all the conjugates of $d_H$ over $\Q$
are greater than 1 in absolute value.%
\item Assume that for any embedding $\sigma: K \longrightarrow \C$ we have that $\sigma(d_H) >0$ if
and only if $\sigma(a_G)
<0$.%
\item Let $\delta_H \in \C$ be a square root of $d_H$. \item Let $H=K(\delta_H)$. \item Given a
number field $T$ we let $U_T$ denote the group of integral units of $T$, let $s_T$ denote the
number of non-real embeddings of $T$ into $\C$, and let $r_T$ denote the number of real
embeddings of $T$ into $\C$.%
\item Let $M$ as before be a number field contained in $K_{\infty}$ and containing $K$.

\end{itemize}%
\end{notationassumption}
\begin{remark}
$d_H$ as described above exists by the Strong Approximation Theorem and our assumptions.
\end{remark}
We start our discussions of technical details with a proposition due to Denef and Lipshitz in \cite{Den2}.
\begin{proposition}%
\label{prop:deneflipshitz}%
Let $M \subset K_{\infty}$ be a number field with $K \subseteq M$.  Let
\[%
A_{GM}= \{\varepsilon \in U_{GHM}: {\mathbf N}_{GHM/GM}(\varepsilon)=1\}
\]%
\[%
A_{M}= \{\varepsilon \in U_{HM}: {\mathbf N}_{HM/M}(\varepsilon)=1\}
\]%
Then $A_{GM}$ and $A_M$ are multiplicative groups of equal rank.
\end{proposition}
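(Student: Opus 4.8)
The plan is to compute both ranks via Dirichlet's unit theorem applied to the relevant fields, showing that each rank equals $r_{HM}+s_{HM}-r_M-s_M$ (equivalently, the difference of unit ranks between $HM$ and $M$), and that the same formula holds for the $G$-version because tensoring with $G$ over $K$ doubles the relevant signature data uniformly. More precisely, for a norm-one subgroup $A = \ker({\mathbf N}_{L/F}: U_L \to U_F)$ with $[L:F]=2$, one has an exact sequence $1 \to A \to U_L \xrightarrow{{\mathbf N}} U_F$, and the image has finite index in the subgroup of $U_F$ consisting of norms; taking ranks, $\rk A = \rk U_L - \rk U_F$ (the cokernel being finite since $U_F/{\mathbf N}(U_L)$ embeds into a quotient related to the ideal class group and ramification, hence is finite). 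So the entire problem reduces to showing
\begin{equation}
\label{eq:rankident}
\rk U_{GHM} - \rk U_{GM} = \rk U_{HM} - \rk U_{M}.
\end{equation}

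First I would set up notation for the archimedean places. By Dirichlet, $\rk U_T = r_T + s_T - 1$, so the left side of \eqref{eq:rankident} is $(r_{GHM}+s_{GHM}) - (r_{GM}+s_{GM})$ and the right side is $(r_{HM}+s_{HM}) - (r_{M}+s_{M})$; the $-1$'s cancel. The number $r_T + s_T$ is just the number of archimedean places of $T$, i.e. $[T:\Q]$ minus the number of complex places, or more usefully: the number of archimedean places of $T$ equals $\sum_{v \mid \infty \text{ of } \Q} (\text{number of places of } T \text{ above } v)$, which is controlled by how the unique infinite place of $\Q$ splits. I would instead track things relative to $M$: the number of archimedean places of $HM$ above a given archimedean place $w$ of $M$ is $2$ if $w$ splits in $HM/M$ and $1$ if $w$ is inert or ramified (i.e. if the local extension is $\C/\R$ or $\C/\C$... but $HM/M$ is degree $2$ so a complex place of $M$ always has exactly one place above it, and a real place $w$ of $M$ has two real places above it if $d_H$ is a square in $M_w = \R$, i.e. $\sigma_w(d_H)>0$, and one complex place above it if $\sigma_w(d_H)<0$). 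The key observation is that, by the assumption in Notation \ref{not:ext2integers} that $\sigma(d_H)>0 \iff \sigma(a_G)<0$ for every embedding $\sigma$ of $K$, the splitting behavior of a real place in $H/K$ and in $G/K$ are \emph{complementary}: a real place of $K$ that becomes complex in $G$ splits into two real places in $H$, and vice versa.

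Next I would combine these. Fix an archimedean place $w$ of $M$; since $M \supseteq K$ and everything in sight is a compositum, the local behavior is governed by the restriction of $w$ to $K$. Count $N_H(w) := \#\{\text{arch. places of } HM \text{ over } w\} - \#\{\text{arch. places of } M \text{ over } w\}$; this is $1$ if $w$ (equivalently $w|_K$) is real-and-splits-in-$H$, and $0$ otherwise (if $w$ is complex, both counts see one place; if $w$ is real and inert/ramified in $H$, top count is $1$, bottom is $1$). Summing over $w$ gives the right side of \eqref{eq:rankident} as the number of real places of $M$ that split in $HM/M$. Similarly the left side is the number of real places of $GM$ that split in $GHM/GM$. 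Now a real place of $GM$ lies over a complex place of... no: $G$ is \emph{not} totally real, so $GM$ has both real and complex places, but the real places of $GM$ are exactly those lying over real places $w$ of $M$ with $\sigma_w(a_G) > 0$ (so that $\sqrt{a_G}$ is real), and such a place splits in $GHM/GM$ iff $\sigma_w(d_H)>0$. By the complementarity assumption $\sigma_w(d_H)>0 \iff \sigma_w(a_G)<0$, so a real place of $M$ with $\sigma_w(a_G)>0$ has $\sigma_w(d_H)<0$, hence does \emph{not} split. This seems to give $0$ on the left, which would be wrong unless the right side is also handled symmetrically — so **the main obstacle**, and the step I would be most careful about, is getting the archimedean bookkeeping exactly right: one must count places of $GHM$ over $GM$ correctly, remembering that $GHM = GM(\delta_H)$ and that over a \emph{complex} place of $GM$ there is always exactly one place of $GHM$, while over a real place the split depends on the sign of $d_H$ there; and crucially a real place of $M$ inert in $G$ (i.e. $\sigma_w(a_G)<0$, so $\sigma_w(d_H)>0$) contributes a complex place to $GM$ but its two extensions to $HM$ are real and split further in $GHM$. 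Working this through carefully, the contributions from $H$ and from $GH$ match place-by-place on $M$, yielding \eqref{eq:rankident}. The finiteness of the unit-norm cokernels (needed to pass from the exact sequences to the rank identity) is standard: $U_F/{\mathbf N}_{L/F}(U_L)$ is a subquotient of $\widehat{H}^0(\Gal(L/F), U_L)$, which is finite for $L/F$ a finite extension of number fields. Assembling: $\rk A_{GM} = \rk U_{GHM} - \rk U_{GM} = \rk U_{HM} - \rk U_M = \rk A_M$, as claimed.
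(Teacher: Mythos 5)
Your overall strategy is sound and is essentially the standard one (the paper itself gives no proof of this proposition, citing Denef--Lipshitz instead): reduce to the identity $\rk U_{GHM}-\rk U_{GM}=\rk U_{HM}-\rk U_{M}$ via the exact sequence for the norm kernel (the cokernel being finite already because ${\mathbf N}_{L/F}(U_L)\supseteq U_F^2$ for a quadratic extension), and then verify that identity by Dirichlet's unit theorem using the complementarity $\sigma(d_H)>0\iff\sigma(a_G)<0$. The computation does close up, and both ranks come out equal to the number of real embeddings $\sigma$ of $M$ with $\sigma(a_G)<0$.

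However, your archimedean bookkeeping contains a concrete error that you notice (it "seems to give $0$ on the left") but never actually repair: in a quadratic extension $L/F$ of number fields, a \emph{complex} place $v$ of $F$ has exactly \emph{two} places of $L$ above it, not one, since $\sum_{w\mid v}[L_w:F_v]=[L:F]=2$ and each $[L_w:\C]=1$. (For $HM/M$ this slip is harmless because $M$ is totally real, but for $GHM/GM$ it is exactly the missing contribution.) With the correct count: write $A$ (resp.\ $B$) for the number of real embeddings $\sigma$ of $M$ with $\sigma(a_G)>0$ (resp.\ $\sigma(a_G)<0$), so $A+B=[M:\Q]$. Then $r_{HM}=2B$, $s_{HM}=A$, giving $\rk A_M=(2B+A)-(A+B)=B$. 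On the other side $r_{GM}=2A$, $s_{GM}=B$; each real place of $GM$ has $\sigma(d_H)<0$ and hence one complex place of $GHM$ above it, while each complex place of $GM$ has \emph{two} complex places of $GHM$ above it, so $s_{GHM}=2A+2B$ and $\rk A_{GM}=(2A+2B)-(2A+B)=B$. This is the "careful working through" your last sentence defers to; without the corrected count over complex places the left-hand side really would come out short. (One further small point worth a line: the reduction presumes $[GHM:GM]=2$, which holds because $d_H$ is not a square in $K_\infty$ and $d_Ha_G$ is totally negative, hence not a square in the totally real field $M$.)
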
%

From this proposition we derive the following corollary.
\begin{corollary}%
\label{cor:4power}
Let
\[%
B_{GM}= \{x +\delta_H y: x, y \in O_G, x+\delta_Hy  \in A_{GM}\},
\]%
\[%
B_{M}= \{x +\delta_H y: x, y \in O_M, x+\delta_Hy  \in A_{M}\}.
\]%
Then if  $x +\delta_H y \in B_{GM}$, it follows that $(x + \delta_H y)^4 \in B_M$.  Further, assuming that
$[M:\Q] \geq 2$ we have that $B_M$ contains elements of infinite order.
\end{corollary}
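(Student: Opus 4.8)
The plan is to prove the two assertions separately, both by reducing to the norm-group structure supplied by Proposition~\ref{prop:deneflipshitz}. For the first assertion, suppose $x+\delta_H y\in B_{GM}$, so $x,y\in O_G$ and $\varepsilon:=x+\delta_H y\in A_{GM}=\{\varepsilon\in U_{GHM}:{\mathbf N}_{GHM/GM}(\varepsilon)=1\}$. I would first observe that raising to the fourth power keeps $\varepsilon^4\in A_{GM}$ (the norm-one condition is preserved under any power, and integral units stay integral units). The content of the claim is therefore that $\varepsilon^4$, a priori only in $O_G[\delta_H]$, actually has its ``$x$-part'' and ``$y$-part'' in $O_M$ rather than merely in $O_G$. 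For this I would use the standard descent argument behind the Denef--Lipshitz trick: the extension $GM/M$ is degree $2$ with Galois conjugation $\tau:\alpha_G\mapsto -\alpha_G$, and $\tau$ commutes with the conjugation $\rho:\delta_H\mapsto-\delta_H$ that defines $HM$ over $M$. Writing $\varepsilon=u+\alpha_G v$ with $u,v\in O_H$, the norm-one relation over $GM$ together with the interplay of $\tau$ and $\rho$ forces $\varepsilon\cdot\tau(\varepsilon)$ (or an appropriate small power) to lie in $A_M$; the precise bookkeeping shows that the fourth power lands in $B_M$. This is exactly the computation isolated in \cite{Den2}, and I would cite Proposition~\ref{prop:deneflipshitz} together with that computation rather than redo it in full; the one thing to check carefully is that the exponent $4$ (and not some other power) is what the commuting-involutions argument actually yields, which is why the statement is phrased with a fourth power.

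For the second assertion, that $B_M$ contains elements of infinite order when $[M:\Q]\geq 2$, the plan is a rank count via Dirichlet's unit theorem. By Proposition~\ref{prop:deneflipshitz}, $A_M$ and $A_{GM}$ have equal rank, and $A_M$ is the norm-one subgroup of $U_{HM}$ under ${\mathbf N}_{HM/M}$. Since $HM/M$ is a quadratic extension with $HM$ \emph{not} totally real (here is where the hypothesis on $d_H$ having all conjugates of absolute value $>1$, hence $H$ non-real at the places where $a_G<0$, and the assumption that $G$ is not totally real, enter), a comparison of the unit ranks $r_{HM}+s_{HM}-1$ and $r_M+s_M-1$ via the archimedean places shows $A_M$ has strictly positive rank as soon as $[M:\Q]\geq 2$. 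Concretely, $\ord\, A_M = (r_{HM}+s_{HM}-1)-(r_M+s_M-1)$ is positive because passing from $M$ to the quadratic extension $HM$ strictly increases $r+s$ unless $M=\Q$ and $HM$ is imaginary quadratic --- and that exceptional case is excluded by $[M:\Q]\geq 2$. A positive-rank multiplicative group contains elements of infinite order, and each such element, written as $x+\delta_H y$ with $x,y\in O_M$, lies in $B_M$ by definition.

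The main obstacle I anticipate is not the rank count but the first part: pinning down that the fourth power, rather than merely the square, is forced, and that the descent genuinely lands the coefficients in $O_M$ rather than in the ring of integers of some intermediate field. The subtlety is that $\varepsilon\in A_{GM}$ only controls the norm down to $GM$, so one is juggling two independent quadratic involutions ($\tau$ for $GM/M$ and $\rho$ for $HM/M$, extended compatibly to $GHM$); the argument must use that $\varepsilon$ is simultaneously fixed-modulo-sign structure under both, and the least common power that kills the ambiguity is $4=2\cdot 2$. I would handle this by writing $\varepsilon\in U_{GHM}$ explicitly in terms of a $K$-basis $1,\alpha_G,\delta_H,\alpha_G\delta_H$, applying $\tau$, $\rho$, and $\tau\rho$, and multiplying the four conjugates in the right pattern; the norm-one hypotheses collapse the product, leaving $\varepsilon^4$ (after dividing out by the explicit norm-one factors) in the $\delta_H$-form with $M$-rational coefficients. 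I would present this as a short computation citing the identical lemma in \cite{Den2}, since the only novelty here is the mild generality of the coefficient rings, which does not affect the manipulation.
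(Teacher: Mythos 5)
There is a genuine gap in your argument for the first assertion, and it sits exactly where you flagged your own uncertainty. The only relation that membership in $A_{GM}$ gives you is $\varepsilon\,\sigma_H(\varepsilon)=1$, i.e.\ $\sigma_H(\varepsilon)=\varepsilon^{-1}$; it imposes \emph{no} relation between $\varepsilon$ and $\sigma_G(\varepsilon)$. Consequently the computation you propose --- expand $\varepsilon$ in the basis $1,\alpha_G,\delta_H,\alpha_G\delta_H$ and multiply the four conjugates ``in the right pattern'' --- cannot produce $\varepsilon^4$: the full product $\varepsilon\,\sigma_G(\varepsilon)\,\sigma_H(\varepsilon)\,\sigma_G\sigma_H(\varepsilon)$ collapses to $1$, and the partial product $\varepsilon\,\sigma_G(\varepsilon)={\mathbf N}_{GHM/HM}(\varepsilon)$ is $\sigma_G$-invariant for free but is not a power of $\varepsilon$. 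No formal manipulation of the norm conditions forces $\sigma_G(\varepsilon^4)=\varepsilon^4$. The paper's proof instead uses Proposition \ref{prop:deneflipshitz} in an essentially different way: since $A_{GM}$ and $A_M$ have equal rank, \emph{some} power $\varepsilon^l$ lies in $B_M$ and hence is $\sigma_G$-fixed, so $\xi:=\varepsilon/\sigma_G(\varepsilon)$ is a root of unity; moreover $\xi$ lies in the group $B_{GM}$, i.e.\ it is of the form $x-\delta_H y$ with $x^2-d_Hy^2=1$, and Lemma \ref{le:rootofunity} (the Pell-type constraint: $\xi+\xi^{-1}=2x$ with $x$ integral forces $\xi$ rational or purely imaginary) gives $\xi^4=1$. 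That is where the exponent $4$ comes from; it is not $2\cdot 2$ from two commuting involutions. Without the rank-equality step making $\varepsilon/\sigma_G(\varepsilon)$ torsion, and without Lemma \ref{le:rootofunity} bounding its order, the first assertion does not follow.

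Two smaller points on the second assertion. Your claim that $r+s$ strictly increases in any quadratic extension ``unless $M=\Q$ and $HM$ is imaginary quadratic'' is false: for a CM extension of a totally real field ($M$ totally real, $HM$ totally imaginary) one has $r_{HM}+s_{HM}=r_M+s_M$ and the norm-one group has rank $0$. Positivity here really comes from the standing hypothesis that $G$ is not totally real, which forces $d_H$ to have at least one positive conjugate and hence $HM$ to have a real place. Second, an infinite-order element of $A_M\subset U_{HM}$ need not lie in $O_M[\delta_H]$ (its coefficients in the $\delta_H$-expansion can have denominators, as with $(1+\sqrt5)/2$), so it is not ``in $B_M$ by definition''; one must pass to a suitable power to land in the suborder, which is exactly what the paper's citation of Lemmas 6.1.4 and B.4.12 of \cite{Sh34} accomplishes.
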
%
\begin{proof}%
Let $\sigma_G$ be the generator of $\Gal(GM/M)$   and let $\sigma_{H}$  be a generator of $\Gal(HM/M)$.  We can
extend both elements to $GHM$ by requiring $\sigma_{G}$ to be the identity on $H$ and similarly
$\sigma_H$ to be the identity on $G$.  Then we observe that $\Gal(GHM/M)$ is generated by $\sigma_G$
and $\sigma_H$ and $\sigma_G\sigma_H=\sigma_H\sigma_G$.

Now let $\varepsilon \in A_{GM}$.   Then $\sigma_G(\varepsilon) \in A_{GM}$.  Indeed,    $\varepsilon \in A_{GM}$
if and only if $\varepsilon \sigma_H(\varepsilon)=1$.  Therefore we have the following equality:
\[%
\sigma_G(\varepsilon)\sigma_H(\sigma_G(\varepsilon))=\sigma_G(\varepsilon \sigma_H(\varepsilon))=1.
\]%
Given our definition of $B_{GM} =A_{GM} \cap O_G[\delta_H]$, it also follows that  $\varepsilon \in B_{GM}$ if and
only if $\sigma_G(\varepsilon) \in B_{GM}$.  Further, it is also the case that   $\varepsilon \in
B_{GM}$  if and only if $\varepsilon^{-1} \in B_{GM}$.   Finally it is clear that $B_{GM}$ is a
multiplicative group and from Proposition \ref{prop:deneflipshitz} we have that for any
$\varepsilon \in B_{GM}$ for some $l$ it is the case that $\varepsilon^l \in B_M$ and consequently
$\varepsilon^l = \sigma_G(\varepsilon)^l$.  Thus,
$\displaystyle \frac{\varepsilon}{\sigma_G(\varepsilon)}=\xi_l$, where $\xi_l$ is a root of  unity
which is also an element of $B_{GM}$.  By Lemma \ref{le:rootofunity}, $\xi_l^4=1$.  Thus our first
assertion is true.

Next we observe that given our assumption on $[M:\Q]$ we have that  $A_M$ has elements of infinite order.  Finally,
by Lemma  6.1.4 and Lemma B.4.12 of \cite{Sh34}, we conclude that $B_M$ has elements of infinite order.
\end{proof}%
Next for the convenience of the reader we state two technical lemmas which are simplified versions of
results in \cite{Sh36} concerning bounds.
\begin{lemma}%
\label{le:bound1}%
 Let $x=y_0 + y_1\alpha_G \equiv z \mod \mte{Z}$ in $O_{GHM}$ where $x \in O_{GM},  y_0, y_1 \in O_{M}, z \in
O_{HM},  \mte{Z}$ an integral divisor of $HM$.  Let $Z = {\mathbf N}_{GHM/\Q}(\mte{Z})$.  Then
$\displaystyle \frac{{\mathbf N}_{GHM/\Q}(2\alpha y_1)}{Z}$  is an integer.
\end{lemma}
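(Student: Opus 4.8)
The plan is to extract the desired divisibility from the congruence $x \equiv z \bmod \mte{Z}$ by passing to a conjugate. Write $\sigma_G$ for the generator of $\Gal(GHM/HM)$, so that $\sigma_G(\alpha_G) = -\alpha_G$ and $\sigma_G$ fixes $O_{HM}$ pointwise; in particular $\sigma_G(y_0) = y_0$, $\sigma_G(y_1) = y_1$, and $\sigma_G(z) = z$. First I would apply $\sigma_G$ to the congruence. Since $z \in O_{HM}$, the element $\sigma_G(\mte{Z})$ is again an integral divisor of $HM$ equal to $\mte{Z}$ (a divisor of $HM$ is fixed by $\sigma_G$ because $\sigma_G$ restricts to the identity on $HM$), so we obtain $\sigma_G(x) = y_0 - y_1\alpha_G \equiv z \bmod \mte{Z}$ in $O_{GHM}$.

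Next I would subtract the two congruences. This gives $x - \sigma_G(x) = 2 y_1 \alpha_G \equiv 0 \bmod \mte{Z}$ in $O_{GHM}$, i.e. $\mte{Z}$ divides the integral divisor $\mte{n}_{GHM}(2\alpha_G y_1)$ — more precisely $\mte{Z} \mid (2\alpha_G y_1)$ in the semigroup of integral divisors of $GHM$, noting $2\alpha_G y_1$ is integral since $\alpha_G \in O_K$ and $y_1 \in O_M$. Taking norms ${\mathbf N}_{GHM/\Q}$ of both sides of this divisibility relation, and using that the norm is multiplicative on the semigroup of integral divisors, we conclude that ${\mathbf N}_{GHM/\Q}(\mte{Z}) = Z$ divides ${\mathbf N}_{GHM/\Q}(2\alpha_G y_1) = \left|{\mathbf N}_{GHM/\Q}(2\alpha_G y_1)\right|$ in $\Z$, which is exactly the assertion that $\displaystyle \frac{{\mathbf N}_{GHM/\Q}(2\alpha_G y_1)}{Z}$ is an integer.

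The only point requiring any care — and the mildest of the ``obstacles'' — is the bookkeeping around the congruence being stated in $O_{GHM}$ while $\mte{Z}$ is a divisor of $HM$: one must check that applying $\sigma_G$ genuinely leaves $\mte{Z}$ (viewed in $O_{GHM}$) unchanged, which follows because every prime of $GHM$ above a prime in the support of $\mte{Z}$ either lies over a split or inert prime of $HM$ in $GHM$ and in all cases the set of such primes, with multiplicities, is $\sigma_G$-stable since $\sigma_G$ fixes $HM$. One should also observe that the conclusion is really about the \emph{absolute value} of the norm, matching the convention in the lemma statement, and that $Z > 0$ so the quotient is well-defined; these are immediate. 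No deeper input is needed, so I would not expect any genuine difficulty here.
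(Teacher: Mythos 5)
Your argument is correct: applying the generator $\sigma_G$ of $\Gal(GHM/HM)$ (which fixes $y_0,y_1,z$ and the divisor $\mte{Z}$, since all come from $HM$) and subtracting the two congruences gives $\mte{Z}\mid 2\alpha_G y_1$ in the integral divisors of $GHM$, and taking norms finishes the proof; the degenerate case $y_1=0$ is harmless as you note. The paper itself gives no proof here, deferring to Lemma 6.6 of \cite{Sh36}, and your conjugation-and-subtraction argument is exactly the standard one behind that reference, so there is nothing to correct.
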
%
(See Lemma 6.6 of \cite{Sh36}. )
\begin{lemma}%
\label{le:bound2}%
Let $x \in GM, x = y_0 + \alpha_Gy_1, y_1, y_2 \in M, z \in GM$. Suppose further that for any
$\sigma$, an embedding of $GM$ into $\R$, we have that
\[%
1 \leq |\sigma(x)| < |\sigma(z)|
\]%
while for any $\tau$, a non-real embeddings of $GM$ into $\C$, we have that
\[%
\tau(z) \geq 1.
\]%
Then $|{\mathbf N}_{GM/\Q}(y_1)| \leq |{\mathbf N}_{GM/\Q}(x)||{\mathbf N}_{GM/\Q}(z)|$.
\end{lemma}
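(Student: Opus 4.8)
\emph{Proof proposal.} The plan is to estimate $|{\mathbf N}_{GM/\Q}(y_1)|$ one archimedean place at a time, exploiting that $x-\sigma_G(x)=2\alpha_G y_1$, where $\sigma_G$ is the nontrivial element of $\Gal(GM/M)$ (so $\sigma_G(\alpha_G)=-\alpha_G$ and $\sigma_G$ fixes $y_0$ and $y_1$). If $y_1=0$ there is nothing to prove, so assume $y_1\neq 0$. First I would replace $y_1$ by $\alpha_G y_1$: since $\alpha_G\in O_G$ is a nonzero algebraic integer, $|{\mathbf N}_{GM/\Q}(\alpha_G)|$ is a positive integer, hence
\[
|{\mathbf N}_{GM/\Q}(y_1)|\le |{\mathbf N}_{GM/\Q}(\alpha_G y_1)|=\prod_{\psi}|\psi(\alpha_G y_1)|,
\]
the product running over the $2[M:\Q]$ embeddings $\psi\colon GM\to\C$.

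The next step is to group these embeddings into the $[M:\Q]$ pairs $\{\psi,\psi\circ\sigma_G\}$. Because $M$ is totally real, the common restriction $\psi|_M$ is a real embedding of $M$, so each pair consists either of two real embeddings of $GM$ (when $\psi(a_G)>0$) or of a complex-conjugate pair of non-real embeddings (when $\psi(a_G)<0$). Since $\psi(\alpha_G y_1)=\tfrac12\bigl(\psi(x)-(\psi\sigma_G)(x)\bigr)$ and $(\psi\sigma_G)(\alpha_G y_1)=-\psi(\alpha_G y_1)$, such a pair contributes $\tfrac14|\psi(x)-(\psi\sigma_G)(x)|^2$ to the product above, while contributing $|\psi(x)||(\psi\sigma_G)(x)|$ to $|{\mathbf N}_{GM/\Q}(x)|$ and $|\psi(z)||(\psi\sigma_G)(z)|$ to $|{\mathbf N}_{GM/\Q}(z)|$.

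Thus it suffices to check, for each pair, the pointwise inequality
\[
|\psi(x)-(\psi\sigma_G)(x)|^2\le 4\,|\psi(x)|\,|(\psi\sigma_G)(x)|\,|\psi(z)|\,|(\psi\sigma_G)(z)|.
\]
For a complex-conjugate pair, $(\psi\sigma_G)(x)=\overline{\psi(x)}$, so the left-hand side equals $4(\Im\psi(x))^2\le 4|\psi(x)|^2$, and the factors $|\psi(z)|,|(\psi\sigma_G)(z)|$ are $\ge 1$ by the hypothesis on non-real embeddings; so the inequality holds. For a pair of real embeddings, set $a=|\psi(x)|$, $a'=|(\psi\sigma_G)(x)|$, $b=|\psi(z)|$, $b'=|(\psi\sigma_G)(z)|$; the hypothesis gives $1\le a<b$ and $1\le a'<b'$, hence $bb'>aa'$, and, combined with the elementary estimate $a+a'\le 2aa'$ (valid for $a,a'\ge 1$) and the triangle inequality, this yields $|\psi(x)-(\psi\sigma_G)(x)|^2\le (a+a')^2\le (2aa')^2<4aa'bb'$. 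Multiplying the pointwise bounds over all pairs gives $|{\mathbf N}_{GM/\Q}(\alpha_G y_1)|\le |{\mathbf N}_{GM/\Q}(x)|\,|{\mathbf N}_{GM/\Q}(z)|$, and combining with the first reduction completes the argument.

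I expect the only real difficulty to be the bookkeeping of the archimedean places: one must verify that $\psi\mapsto\{\psi,\psi\sigma_G\}$ really does partition the embeddings of $GM$, and that, $M$ being totally real, each such pair falls into exactly one of the two types above, so that the hypotheses on $x$ and $z$ become available in precisely the form needed. Everything else is the triangle inequality, multiplicativity of the norm, and the fact that a nonzero algebraic integer has absolute norm at least $1$.
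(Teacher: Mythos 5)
Your argument is correct. The paper itself gives no proof of this lemma --- it simply cites Lemma 6.9 of \cite{Sh36} --- but your proof is the natural one for statements of this Denef--Lipshitz type and it goes through: the identity $2\alpha_G y_1 = x - \sigma_G(x)$, the reduction $|{\mathbf N}_{GM/\Q}(y_1)| \le |{\mathbf N}_{GM/\Q}(\alpha_G y_1)|$ via $|{\mathbf N}_{GM/\Q}(\alpha_G)| \ge 1$, the partition of the embeddings of $GM$ into the $[M:\Q]$ fibers $\{\psi,\psi\sigma_G\}$ over the (real) embeddings of $M$, and the two pointwise estimates (the $4(\Im\psi(x))^2 \le 4|\psi(x)|^2$ bound at complex-conjugate pairs, and $(a+a')^2 \le (2aa')^2 < 4aa'bb'$ at real pairs) are all right, as is the elementary inequality $a+a'\le 2aa'$ for $a,a'\ge 1$. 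You also correctly read past two infelicities in the statement: ``$y_1, y_2 \in M$'' should be ``$y_0, y_1 \in M$'', the condition $\tau(z)\ge 1$ at non-real $\tau$ must mean $|\tau(z)|\ge 1$, and the splitting of each fiber into ``two real'' versus ``complex-conjugate'' embeddings requires $M$ to be totally real, which is the intended standing hypothesis in the sections where the lemma is used.
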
%
(See Lemma 6.9 of \cite{Sh36}.)\\%
\begin{lemma}%
\label{le:closeto1}%
Let $\varepsilon \in B_K$. Then for any positive integer $k$ and any $\lambda >0$ there exists a positive integer
$r$ such that for all $\tau : H \rightarrow \C $ with $\tau(H) \not \subset \R$ we have that
\[%
\left|\frac{\tau(\varepsilon^{rk}-1)}{\tau(\varepsilon^r-1)}-k\right|< \lambda.
\]%
\end{lemma}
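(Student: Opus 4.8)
The plan is to exploit the fact that $\varepsilon$, being a unit of $O_H$ of relative norm $1$ over $K$, is carried by every non-real embedding of $H$ onto a point of the unit circle, and then to reduce the assertion to a statement about simultaneous Diophantine approximation of the associated angles. Concretely, I would first write $\varepsilon = x + \delta_H y$ with $x, y \in O_K$, so that $x^{2} - d_H y^{2} = \mathbf{N}_{H/K}(\varepsilon) = 1$ by the definition of $B_K$. Given $\tau : H \to \C$ with $\tau(H) \not\subset \R$, the restriction $\sigma = \tau|_K$ maps into $\R$ (as $K$ is totally real), and $\sigma(d_H)$ cannot be positive --- otherwise $\tau(\delta_H) = \pm\sqrt{\sigma(d_H)}$ would be real and $\tau(H) \subset \R$; hence $\sigma(d_H) < 0$, $\tau(\delta_H)$ is purely imaginary, and
\[
|\tau(\varepsilon)|^{2} = \sigma(x)^{2} + |\sigma(d_H)|\,\sigma(y)^{2} = \sigma\!\left(x^{2} - d_H y^{2}\right) = 1 .
\]
So $\tau(\varepsilon) = e^{i\theta_\tau}$ for some real $\theta_\tau$.

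Next I would check that each $\theta_\tau$ is an irrational multiple of $2\pi$. Here I may assume $\varepsilon$ has infinite order --- the only case we use, and such elements of $B_K$ exist by Corollary \ref{cor:4power} (if $H$ has no non-real embedding the lemma is vacuous). Were some $\tau(\varepsilon)$ a root of unity, its minimal polynomial over $\Q$, which coincides with that of $\varepsilon$, would be cyclotomic, and then $\varepsilon$ itself would be a root of unity, a contradiction; hence $\theta_\tau/(2\pi) \notin \Q$ for every non-real $\tau$.

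Then I would run a telescoping estimate: for $r \in \Z_{>0}$ with $w := \tau(\varepsilon^{r}) = e^{ir\theta_\tau} \neq 1$ one has $\tau(\varepsilon^{rk}-1)/\tau(\varepsilon^{r}-1) = (w^{k}-1)/(w-1) = \sum_{j=0}^{k-1} w^{j}$, and, since $|w| = 1$,
\[
\left|\sum_{j=0}^{k-1} w^{j} - k\right| = \left|\sum_{j=1}^{k-1}(w^{j}-1)\right| \le \sum_{j=1}^{k-1} j\,|w-1| = \frac{k(k-1)}{2}\,|w-1|, \qquad |w-1| \le 2\pi\,\|r\theta_\tau/(2\pi)\|,
\]
where $\|\cdot\|$ denotes distance to the nearest integer. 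Thus (the case $k=1$ being trivial, take $r=1$) it suffices to produce a single $r \in \Z_{>0}$ with $\|r\theta_\tau/(2\pi)\| < \lambda/(\pi k(k-1))$ for all non-real $\tau$ simultaneously. Since the non-real embeddings come in complex-conjugate pairs $\{\tau,\bar\tau\}$ with $\theta_{\bar\tau} = -\theta_\tau$, it is enough to handle one representative $\tau_1,\dots,\tau_s$ per pair; each $\theta_{\tau_j}/(2\pi)$ being irrational, Dirichlet's theorem on simultaneous Diophantine approximation supplies arbitrarily large $r \in \Z_{>0}$ making $\|r\theta_{\tau_j}/(2\pi)\|$ as small as desired for all $j$ at once, and necessarily nonzero, so every $\tau_j(\varepsilon^{r}) \neq 1$ and the ratios above are defined. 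Choosing such an $r$ with that quantity below $\lambda/(\pi k(k-1))$ completes the argument.

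The one genuinely delicate point --- the ``main obstacle'' --- is the passage to irrational angles: the argument breaks down unless each $\tau(\varepsilon)$ is a \emph{non-torsion} point of the unit circle, since otherwise no choice of $r$ can both keep $\tau(\varepsilon^{r}) \neq 1$ and force the angle toward $0$; indeed the displayed inequality is false for $\varepsilon = -1$ and $k = 2$. This is exactly where the hypothesis that $\varepsilon$ has infinite order is used, together with the cyclotomic-polynomial observation above.
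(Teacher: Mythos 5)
Your proof is correct and follows essentially the same route as the argument behind the cited Lemma 7.2 of \cite{Sh36}: the non-real conjugates of a relative-norm-one unit lie on the unit circle, $(w^k-1)/(w-1)=1+w+\cdots+w^{k-1}\to k$ as $w\to 1$, and a pigeonhole (Dirichlet) argument produces $r$ with all $\tau(\varepsilon)^r$ simultaneously near $1$. You also correctly flag the implicit hypothesis that $\varepsilon$ has infinite order, which is indeed how the lemma is applied in the paper.
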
%
(See Lemma 7.2 of \cite{Sh36}.)\\

We are now ready to list the equations comprising a Diophantine definition we seek.
\begin{theorem}%
\label{thm:2integers}
 There exists a polynomial equation $P(x,\bar{t}) \in O_K[x,\bar{t}]$ such that the following statements are true.%
\be%
\item For any $x \in O_{GK_{\infty}}$, if $P(x,\bar{t}) =0$ for some $\bar{t}=(t_1,\ldots,t_m)$
with $t_i \in O_{GK_{\infty}}$, then $x \in O_{K_{\infty}}$.%
\item If $x \in \Z$ there exists $\bar{t}=(t_1,\ldots,t_m)$ with $t_i \in O_K$ such that $P(x,\bar{t}) =0$ 
\ee
\end{theorem}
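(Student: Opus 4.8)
The plan is to build $P(x,\bar t)$ along exactly the same two-part template used in the totally real case (Lemma \ref{le:Part1.1} and Proposition \ref{prop:thenbelow1.1}), but with the elliptic-curve ``integrality-at-$\overline\calW$'' gadget replaced, for the descent from $O_{GK_\infty}$ to $O_{K_\infty}$, by the Denef--Lipshitz unit gadget captured in Corollary \ref{cor:4power}. First I would assemble the analogue of set $\mathbf A$: using Lemmas \ref{le:Po1}--\ref{le:ratio} and the relative-prime/divisibility bookkeeping as in Lemma \ref{le:Part1.1}, produce a Diophantine set $\mathbf A \subseteq O_{GK_\infty}$ such that membership forces $\mte n_{F(x)}(x)$ to be considered as a divisor of $F$, while squares of rational integers lie in $\mathbf A$ with all witnesses chosen over $O_F$ (here $F$ plays the role it does in Notation \ref{not:integers}, i.e. a finite extension of $K$ carrying an elliptic curve of positive rank with finitely generated Mordell--Weil group over $FK_\infty$, which exists by Proposition \ref{prop:fingen}; the hypothesis on the totally positive non-square $d_H$ is what we use on the degree-$2$ side rather than total reality).

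Next I would write down the ``part two'' system in the shape of Proposition \ref{prop:thenbelow1.1}, with three ingredients running in parallel. (i) An elliptic-curve block, exactly equations (\ref{eq:firstpart1.2})--(\ref{eq:last1.1}) adapted so that $x_j=(x+j)^2\in\mathbf A$ and the divisibility $v_Q=p^2z_j^8y_j$ together with Proposition \ref{prop:usebounds} force each $x_j\in F$; since $x_j$ also lies in $M:=K(x,z)^{\Gal}$ and $M\cap F=K$ one gets $x_j\in K$, hence $x\in K$ by Lemma 5.1 of \cite{Sh1}. (ii) A unit block: introduce $\varepsilon=x'+\delta_H y'$ constrained by a norm-one equation so that $\varepsilon\in B_{GK_\infty}$ in the sense of Corollary \ref{cor:4power}, together with a congruence $x \equiv (\text{coordinate of }\varepsilon^{4})\bmod(\ldots)$ and the bound Lemma \ref{le:bound1}; the point is that if $x\in O_{GK_\infty}$ then writing $x=y_0+y_1\alpha_G$, the congruence forces $\mathbf N(2\alpha_G y_1)$ to be divisible by a large integer while Lemma \ref{le:bound2} forces it to be small, so $y_1=0$ and $x\in K_\infty$. (iii) The archimedean-size inputs $\sigma(z)>\max_j\sigma(x_j)$ and $\sigma(x_j)\ge 1$ needed by Proposition \ref{prop:usebounds}; since $G$ need not be totally real, Proposition \ref{prop:realembed} is applied over the totally real field $K$ (or over the auxiliary totally real degree-$2$ extension $H=K(\delta_H)$ via $d_H$), using the sign compatibility between $d_H$ and $a_G$ to keep the relevant conjugates positive — this is precisely the role of the hypothesis in Notation \ref{not:ext2integers} relating $\sigma(d_H)$ and $\sigma(a_G)$.

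For the converse I would, given $x\in\Z_{>0}$, choose $x_j=(x+j)^2$ to be squares of positive rational integers, pick $z$ a large square rational integer meeting the size conditions, produce the points $Q,P_j$ over $F$ by Lemma \ref{le:Po1}, set $S=[m]Q$ with the relevant $m$, and invoke Lemma \ref{le:ratio} plus Lemma \ref{le:Po2} exactly as in Lemma \ref{le:Part1.1} to satisfy the elliptic-curve block with witnesses in $O_F$ (hence in $O_{FK}$); for the unit block, Corollary \ref{cor:4power} gives $B_K$-elements of infinite order (as $[K:\Q]\ge 2$, or one passes to a number field of degree $\ge 2$ inside $K_\infty$), and Lemma \ref{le:closeto1} lets me adjust exponents so the congruence for $x$ is solvable with $x',y'\in O_K$; finally ``rewrite'' every $FK_\infty$-valued or $GK_\infty$-valued variable as a tuple of $O_{K_\infty}$- or $O_{GK_\infty}$-variables via coordinate polynomials (Proposition 2.6 of \cite{Sh36}, or Appendix B of \cite{Sh34}) to land with $P\in O_K[x,\bar t]$.

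The main obstacle I expect is the simultaneous calibration of the two independent ``large divisor vs.\ small norm'' squeeze arguments: the elliptic-curve squeeze in Proposition \ref{prop:usebounds} needs $\sigma(z)$ strictly larger than all $\sigma(x_j)$ and a clean eighth-power divisor $p^2z_j^8$, while the Denef--Lipshitz squeeze needs $|\sigma(x)|<|\sigma(z)|$ together with the non-real-embedding positivity $\tau(z)\ge 1$ of Lemma \ref{le:bound2}, and these size demands must be met by the \emph{same} auxiliary element(s) $z$ over a field with both real and non-real places. Getting a single $z$ (built as a square rational integer, or as $R(z_0)$-type value if one later wants the large-ring refinement) that simultaneously feeds both blocks — and verifying that the congruence moduli $pz_j^4$ and $\mte Z$ are coprime enough that the two divisibility conclusions don't interfere — is the delicate bookkeeping step; everything else is a faithful transcription of the arguments already in Sections \ref{sec:ec}--\ref{sec:def} and in \cite{Sh36}.
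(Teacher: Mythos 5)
Your unit block (ii) is, by itself, essentially the paper's entire proof of this theorem: the system (\ref{eqdeg2:2i})--(\ref{eqdeg2:10i}) consists exactly of the Pell equations $u_i^2-d_Hv_i^2=1$, the fourth powers $\gamma_i=(u_i-\delta_Hv_i)^4$ (so that Corollary \ref{cor:4power} puts each $\gamma_i$ into $B_M$, hence into $HM$ with $M$-rational coordinates), the ratios $(\gamma_{2j}-1)/(\gamma_{2j-1}-1)=a_j-\delta_Hb_j$, the congruence $x-(a_2-\delta_Hb_2)=(c_3-1+\delta_Hd_3)(u+v\delta_H)$, the size condition (\ref{eqdeg2:7i}) and the divisibility (\ref{eqdeg2:10i}); Lemma \ref{le:bound1} then makes ${\mathbf N}(2\alpha_Gy_1)$ divisible by ${\mathbf N}(\gamma_3-1)$, while Lemma \ref{le:bound2} together with the choices permitted by Lemma \ref{le:closeto1} makes it strictly smaller, forcing $y_1=0$ and $x\in K_{\infty}$. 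That part of your proposal matches the paper.

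The difficulty is the elliptic-curve block (i), which you propose to run ``in parallel'' on $x\in O_{GK_{\infty}}$. First, it is not needed: the theorem asserts only that $x\in O_{K_{\infty}}$, not that $x\in K$ or $x\in\Z$; the paper deliberately defers the elliptic-curve descent to Theorem \ref{thm:mainintdeg2}, which is obtained by composing Theorem \ref{thm:2integers} with Theorem \ref{thm:mainint} applied \emph{inside} the totally real field $K_{\infty}$. Second, as designed your block (i) would fail: Proposition \ref{prop:usebounds} requires $|\sigma(x_j)|\geq 1$ and $|\sigma(x_j)|\leq|\sigma(z_j)|$ at \emph{every} embedding of the relevant Galois closure into $\C$, and the only tool for expressing such inequalities Diophantinely, Proposition \ref{prop:realembed}, works at real embeddings of (finite extensions of) a totally real field. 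Over $GK_{\infty}$, which may have few or no real embeddings, these conditions are not available before you already know $x_j\in K_{\infty}$; and the sign compatibility between $d_H$ and $a_G$ serves a different purpose (it guarantees that real embeddings of $GM$ extend to non-real embeddings of $HGM$, which is what makes the archimedean estimate of Lemma \ref{le:closeto1} applicable in the unit block), not the positivity of elements of $GK_{\infty}$. The ``calibration of two squeeze arguments'' that you identify as the main obstacle is therefore self-inflicted: drop block (i), and what remains is precisely the paper's argument.
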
%
\begin{proof}%
The proof will use Proposition 7.3 of \cite{Sh36} as its foundation. However, our notation is a bit different from
the notation used in that proposition. Let $x \in O_{GK_{\infty}}, a_1, a_2, b_1, b_2, c_1, d_1,
\ldots. c_4, d_4, u, v, u_1, v_1,\ldots,u_4, v_4 \in O_{GK_{\infty}}$, $\gamma_1, \ldots,
\gamma_4\in O_{GHK_{\infty}}$, and assume the following conditions and equations are satisfied.

\begin{equation}%
\label{eqdeg2:2i}
u_i^2 - d_Hv_i^2=1, i=1,\ldots,4,
\end{equation}%

\begin{equation}%
\label{eqdeg2:3i}%
\gamma_i = (u_i - \delta_Hv_i)^4, i=1, \ldots,4,
\end{equation}%
\begin{equation}%
\label{eqdeg2:4i}%
\frac{\gamma_{2j}-1}{\gamma_{2j-1}-1} =a_j-\delta_H b_j, j=1,2
\end{equation}%
\begin{equation}
\label{eqdeg2:4.2i}%
\gamma_i=c_i+\delta_H d_i, i=1,\ldots, 4,
\end{equation}%
\begin{equation}%
\label{eqdeg2:7i}%
1 \leq |\sigma(x)| \leq 1 +\sigma(a_1^2-d_Hb_1^2)^2,
\end{equation}%
where $\sigma$ ranges over all embeddings of $GK_{\infty}$ into $\R$,
\begin{equation}%
\label{eqdeg2:9i}%
x-(a_2-\delta_H b_2) =(c_3-1+\delta_H d_3)(u+v\delta_H),
\end{equation}%
\begin{equation}%
\label{eqdeg2:10i}%
6\alpha_Gx(1+(a_1^2-d_Hb_1^2)^2) \big{|} (c_3-1+\delta_H d_3).
\end{equation}%
Then, we claim,  $x \in O_{K_{\infty}}$.

Conversely, we claim that if $x \in \Z_{>0}$ , the conditions and equations above can be satisfied
with $a_1, a_2$, $b_1, b_2, c_1, d_1, \ldots, c_4, d_4$,  $u, v, u_1, v_1, z_1,\ldots,u_4, v_4,
\gamma_i \in O_{HK}, i=1,\ldots,4$.

To prove the first claim, observe the following. Let $M$ be such that $HGM$ contains $\alpha_G, \delta_H, x,
a_1,a_2$, $b_1,b_2, c_1, d_1, \ldots, c_4, d_4, u, v, u_1, v_1, \ldots, u_4, v_4, \gamma_1,\ldots,
\gamma_4 $. Then given our assumptions on the fields under consideration, in the equations above we
can replace $HGK_{\infty}$ by $HGM$, $GK_{\infty}$ by $GM$, and finally $K_{\infty}$ by $M$, while
the equalities and other conditions will continue to be true.

By Corollary \ref{cor:4power}  and equations (\ref{eqdeg2:2i}) and (\ref{eqdeg2:3i}) we have that
$\gamma_1,\ldots,\gamma_4 \in HM$, and consequently from equation (\ref{eqdeg2:4i}) we also have that $a_j^2 -
d_Hb_j^2 \in M$ for $j=1,2$.   From equation (\ref{eqdeg2:7i}) we know that
\[%
1 \leq \sigma(x) \leq 1 + \sigma(a_1^2-d_Hb_1^2)^2
\]%
for all real embeddings $\sigma$ of $GM$. Next observe that
\[%
x - (a_2 - \delta_H b_2) \equiv 0 \mod (\gamma_3 -1),
\]%
and therefore if $x=y_0 + y_1\alpha_G, y_0, y_1 \in M$, by Lemma \ref{le:bound1} we have that
\[%
{\mathbf N}_{HGM/\Q}(2y_1\alpha_G) \equiv 0 \mod  {\mathbf N}_{HGM/\Q}(\gamma_3 -1).
\]%
So either $y_1=0$ or
\[%
{\mathbf N}_{HGM/\Q}(2y_1\alpha_G) \geq  {\mathbf N}_{HGM/\Q}(\gamma_3 -1).
\]%
Observe that for any $\tau: GM \longrightarrow \C \setminus \R$ we have that
$|\tau(1+(a_1^2-d_Hb_1^2)^2)| \geq 1$ since  $\tau(a_1^2-d_Hb_1^2) \in \R$.
 Thus  by
Lemma \ref{le:bound2}, equation (\ref{eqdeg2:7i}) and equation (\ref{eqdeg2:10i}) we have that
\[%
{\mathbf N}_{HGM/\Q}(2\alpha_Gy_1) \leq {\mathbf N}_{HGM/\Q}(2\alpha_G) {\mathbf N}_{HGM/\Q}(x){\mathbf
N}_{GMH/\Q}(1+(a_1^2-d_Hb_1^2)^2) <{\mathbf N}_{GMH/\Q}(\gamma_3 -1).
\]%

Therefore unless $y_1=0$, we have a contradiction.\\

We will now show that assuming that $x > 1$ is a natural number, we can satisfy all the equations
and conditions (\ref{eqdeg2:2i})--(\ref{eqdeg2:10i}). Let $\nu \in B_K$ be a unit of $O_{H}$ which
is not a root of unity.   Such a $\nu$ exists by Corollary \ref{cor:4power}.    Let
$\{\phi_1,\ldots,\phi_{s_H}\}$ be a set containing a representative from every complex-conjugate
pair of non-real conjugates of $\nu$. By Lemma \ref{le:closeto1}, we can find a positive integer $r
\cong 0 \mod 4$ such that for all $i=1,\ldots, s_{H}$ we have that
\[%
\left |\frac{\phi_i^{rx}-1}{\phi_i^r-1}-x\right| < \frac{1}{2},
\]%
and thus,
\[%
\left |\frac{\phi_i^{rx}-1}{\phi_i^{r}-1}\right| > x-\frac{1}{2}.
\]%

 So we set $u_1 - \delta_H v_1=\nu^{r/4}, \gamma_1=\nu^r, u_2- \delta_H v_2=\nu^{rx/4}, \gamma_2=\nu^{rx}$. Then
for $i=1,2$ equation (\ref{eqdeg2:2i}) is satisfied. We also satisfy (\ref{eqdeg2:3i}) for these
values of $i$. Next we define $a_1$ and $b_1$ so that (\ref{eqdeg2:4i}) is satisfied for $j=1$.
Next let $\sigma$ be an embedding of $M$ into $\R$ extending to a real embedding of $GM$
and therefore corresponding to a real embedding of $G$. Then by assumption on $H$, we have that
$\sigma$ extends to a non-real embedding $\hat{\sigma}$ on $HM$.  Thus, without loss of generality,
for some $i =1,\ldots,s_{H}$ we have that
\[%
\hat\sigma(a_1-\delta_H b_1) = \hat \sigma\left (\frac{\nu^{rx}-1}{\nu^r-1}\right )=
\frac{\phi_i^{rx}-1}{\phi_i^r-1},
\]%
and therefore
\[%
\sigma(a_1^2-d_Hb_1^2)=\left | \frac{\phi_i^{rx}-1}{\phi_i^r-1}\right |^2 > x - \frac{1}{2},
\]%
leading to
\[%
1+ \sigma(a_1^2-d_Hb_1^2)^2 > x = \sigma(x) >1.
\]%
Thus we can satisfy (\ref{eqdeg2:7i}).

Let $\nu_3 \in B_K$, assume $\nu_3$ is not a root of unity, and let $\nu_3=u_3-\delta_Hv_3$ with $u_3, v_3 \in
O_{M}[\delta_H]$ be such that
\begin{equation}%
\label{eq:satisfy}%
\nu_3 - 1 \equiv 0 \mod 6a_Gx(1+(a_1^2-d_Hb_1^2)^2)
\end{equation}%
This can be done by Corollary \ref{cor:4power} and by Section 2.1.1 of \cite{Sh16}. Set $\gamma_3=
\nu_3^4$. Then (\ref{eqdeg2:2i}), (\ref{eqdeg2:3i}), and (\ref{eqdeg2:4.2i}) for $i=3$ are
satisfied. Finally, set $\gamma_4=\gamma_3^{x}, \nu_4=\nu_3^x$. In this case we can satisfy
(\ref{eqdeg2:2i}), (\ref{eqdeg2:3i}), and (\ref{eqdeg2:4.2i}) for $i=4$. We now observe that
\[%
a_2-\delta b_2=\frac{\gamma_4-1}{\gamma_3-1} =x + (\gamma_3-1)(u+\delta_H v)=x +(c_3-1-\delta_H
d_3)(u+v\delta_H),
\]%
where $u, v \in O_{K}$.  Thus (\ref{eqdeg2:9i}) will also be satisfied. The only remaining issue is  to note that
from the discussion of Section 2 we can rewrite all the equations and conditions as polynomial equations with
variables taking values in $O_{GK_{\infty}}$.  This can be done using Propositions 2.6 -- 2.8 of \cite{Sh36}.
\end{proof}%

We can now state the main theorem of this section.
\begin{theorem}
\label{thm:mainintdeg2}%
Let $K_{\infty}$ be a totally real possibly infinite algebraic extension of $\Q$. Let $U_{\infty}$ be a finite
extension of $K_{\infty}$ such that there exists an elliptic curve ${\tt E}$ defined over $U_{\infty}$ with
${\tt E}(U_{\infty})$ finitely generated and of a positive rank. Let $GK_{\infty}$ be an extension
of degree 2 over $K_{\infty}$. If $GK_{\infty}$ has no real embeddings, assume additionally that
$K_{\infty}$ has a totally real extension of degree 2.  Then $\Z$ is existentially definable and
HTP is unsolvable over the ring of integers of $GK_{\infty}$.
\end{theorem}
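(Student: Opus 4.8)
The plan is to obtain the statement by chaining Diophantine definitions down to Main Theorem A. Since $K_\infty$ is totally real and $U_\infty/K_\infty$ is a finite extension carrying an elliptic curve ${\tt E}$ with ${\tt E}(U_\infty)$ finitely generated of positive rank, Main Theorem A (Theorem \ref{thm:mainint2}) already gives that $\Z$ is existentially definable and HTP is unsolvable over $O_{K_\infty}$. So it will be enough to capture $\Z$ inside $O_{GK_\infty}$ --- more precisely, to produce a Diophantine definition over $O_{GK_\infty}$ of $O_{K_\infty}$, or at least of a set sitting between $\Z$ and $O_{K_\infty}$ --- and then splice this with the $O_{K_\infty}$-definition of $\Z$, rewriting every auxiliary variable of the latter so that it ranges over $O_{GK_\infty}$ (or $O_{FGK_\infty}$) subject to the constraints furnished by the first definition. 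This last step is the standard coordinate-polynomial bookkeeping of Propositions 2.6--2.8 of \cite{Sh36} (see also Appendix B of \cite{Sh34}).

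Write $GK_\infty = K_\infty(\sqrt a)$ and split on the real places of $GK_\infty$. If $GK_\infty$ is totally real (equivalently $a$ is totally positive in $K_\infty$), then $GK_\infty$ is itself a totally real, possibly infinite, algebraic extension of $\Q$; the compositum $U_\infty GK_\infty$ is a finite (at most quadratic) extension of $U_\infty$, hence a finite extension of $GK_\infty$, over which ${\tt E}$ is defined with ${\tt E}(U_\infty GK_\infty)$ finitely generated of positive rank. Hence Main Theorem A applies directly with $GK_\infty$ playing the role of $K_\infty$, and nothing further is needed in this case.

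If $GK_\infty$ is not totally real, I would invoke Theorem \ref{thm:2integers}, which requires assembling the data of Notation \ref{not:ext2integers}. First choose a number field $K\subseteq K_\infty$ with $a\in O_K$ (after replacing $a$ by a square multiple, which changes neither the field nor the square class), set $G=K(\sqrt a)$ and $a_G=a$; since $a$ has a negative real conjugate, $G$ is not totally real and $[GK_\infty:K_\infty]=2$. It then remains to produce $d_H\in O_K$, not a square in $K_\infty$, with all $\Q$-conjugates of absolute value $>1$ and with $\sigma(d_H)>0\iff\sigma(a_G)<0$ for every real embedding $\sigma$ of $K$. When $a_G$ is not totally negative the prescribed sign pattern for $d_H$ is mixed, so $d_H$ has a negative real conjugate and is therefore automatically a non-square in the totally real field $K_\infty$; strong approximation then supplies such a $d_H$ with large conjugates. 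When $a_G$ is totally negative --- that is, $GK_\infty$ has no real embedding --- the prescribed $d_H$ must be totally positive, and this is exactly where the extra hypothesis enters: a totally real degree-two extension $K_\infty(\sqrt e)$ furnishes a totally positive $e\in K_\infty$ that is not a square in $K_\infty$, and after enlarging $K$ to contain $e$ and multiplying by the square of a large rational integer we may take $d_H=e\in O_K$ with conjugates $>1$; this $d_H$ also serves as the totally positive non-square element of $K$ required by Notation \ref{not:ext2integers}. With $H=K(\delta_H)$ all hypotheses of Theorem \ref{thm:2integers} hold, and the polynomial it produces, combined as in the first paragraph with the definition of $\Z$ over $O_{K_\infty}$, yields the desired Diophantine definition of $\Z$ over $O_{GK_\infty}$ and the unsolvability of HTP there.

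The step I expect to be the main obstacle is this last splicing in the non-totally-real case: Theorem \ref{thm:2integers} only forces the distinguished variable into $O_{K_\infty}$ and captures the integers, so one must carefully re-express the $O_{K_\infty}$- and $O_{FK_\infty}$-valued auxiliary variables of the $\Z$-definition over $O_{K_\infty}$ as suitably constrained variables over $O_{GK_\infty}$, and verify that this introduces no spurious solutions. A secondary point requiring care is checking, in the totally real case, that finite generation of ${\tt E}(U_\infty)$ passes to the finite extension ${\tt E}(U_\infty GK_\infty)$, together with the strong-approximation construction of $d_H$ with all conjugates of absolute value greater than one.
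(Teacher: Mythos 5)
Your proposal matches the paper's (largely implicit) proof: Theorem \ref{thm:mainintdeg2} is stated immediately after Theorem \ref{thm:2integers} with no separate argument, the intended route being exactly your splicing of Theorem \ref{thm:2integers} with Main Theorem A over $O_{K_{\infty}}$, together with the case analysis on the signature of $GK_{\infty}$ to construct $d_H$ via strong approximation (this is the content of the Remark following Notation \ref{not:ext2integers}). The two difficulties you flag --- the coordinate-polynomial rewriting needed to splice the two definitions, and, in the subcase where $GK_{\infty}$ is totally real, whether finite generation of ${\tt E}(U_{\infty})$ persists over the compositum $U_{\infty}G$ --- are left at exactly the same level of detail in the paper itself, so your treatment is as complete as the source.
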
%

\section{Diophantine Definition of Some Large Rational Subrings and Arbitrary Small Subrings  for
Extensions of Degree 2 of Totally Real Fields.}%
\setcounter{equation}{0}%
In this section we will prove a result analogous to Theorem \ref{thm:mainintdeg2} for some big and
arbitrary small subrings of $K_{\infty}$ under some assumptions on the field under consideration.
As in the case of the ring of integers most of the work has already been done for this case in
\cite{Sh36} but we will have to adjust notation and some details.
\begin{notationassumption} %
\label{not:degree2bigrings}%
We now bring back all the assumptions we had concerning $K_{\infty}$ as in Notation and Assumptions
\ref{not:biggerrings} and \ref{not:bigringtoreal}. We also continue to use notation and assumptions from
\ref{not:ext2integers} except for the first item which deals with $K_{\infty}$. Finally we add the following to
our notation and assumptions  list.
\begin{itemize}%
\item Let $\calZ_K$ be a subset of $\calV_K$ such that the primes of $\calZ_K$ do not split in the extension $G/K$
and do not divide the discriminant of $R(X)$.%

\item For any number field $M$ with $K \subset M \subset K_{\infty}$, assume that $[M:K]$ is odd. (By Lemma
\ref{le:nodegonefact} this assumption implies that primes of $\calZ_M$ remain inert in the extension $GM/M$.)%
\item Assume that at least two $K$-primes $\qq_1$ and $\qq_2$ lying above two different rational primes are not
ramified in $K_{\infty}$.%
\item Assume that the field $EHGK_{\infty}$ contains no roots of unity which are not already in
$EGK_{\infty}$. (The lemma below assures us that we can arrange this under our assumptions.)
\item Let $p$ be any rational prime without any factors in $\calZ_K$ and such that $p >
|\phi(\alpha_G)|$ for any $\phi$, an embeddings of  $K$ into its algebraic closure.
\end{itemize}%
\end{notationassumption}%
\begin{proposition}%
There exists a number field $H$ satisfying all the requirements from Notation and Assumptions
\ref{not:ext2integers} and  \ref{not:degree2bigrings}.
\end{proposition}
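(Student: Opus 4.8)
The plan is to reduce the statement to the construction of a single element $d_H \in O_K$ with four properties, and then to build $d_H$ by approximation. Writing $\delta_H$ for a square root of $d_H$ and $H = K(\delta_H)$, the properties needed are: (i) $d_H$ is not a square in $K_\infty$; (ii) every $\Q$-conjugate of $d_H$ has absolute value greater than $1$; (iii) for each embedding $\sigma\colon K\hookrightarrow\R$ one has $\sigma(d_H) > 0$ exactly when $\sigma(a_G) < 0$; and (iv) $EHGK_\infty$ contains no roots of unity beyond those already in $EGK_\infty$. I would first record the routine reductions. Since $K_\infty$ is totally real and (iii) makes $d_H/a_G$ totally negative, $\sqrt{d_H/a_G}$ is totally imaginary and hence lies outside $K_\infty$; together with (i) and the standing hypothesis $[GK_\infty:K_\infty]=2$ this shows that $GK_\infty$, $HK_\infty$, $HGK_\infty$ form a genuine biquadratic tower over $K_\infty$, so in particular $d_H$ is a nonsquare in $GK_\infty$. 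As $n_E=[E:\Q]$ is an odd prime with $([M:K],n_E)=1$ for every number field $M$, $K\subseteq M\subset K_\infty$, a Kummer argument carries this nonsquareness up to $L:=EGK_\infty$, so $[EHGK_\infty:L]=2$ and (iv) is equivalent to the assertion that $L(\delta_H)/L$ is \emph{not} cyclotomic (a new root of unity $\zeta$ would force $L(\zeta)=L(\delta_H)$).

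Next I would produce $d_H$ by the Strong Approximation Theorem, as already sketched in the Remark following Notation and Assumptions~\ref{not:ext2integers}. Using the two primes $\qq_1,\qq_2$ of $K$ that lie over distinct rational primes $p_1,p_2$ and are unramified in $K_\infty/K$ --- after the routine relabelling that lets us assume $p_1,p_2$ odd --- I prescribe: at each real place of $K$ a target of absolute value $2$ and of sign opposite to that of $\sigma(a_G)$; a uniformizer at $\qq_1$ and at $\qq_2$; the value $1$ at every prime over $2$ (so that $d_H$ is a square in, and $L(\delta_H)/L$ unramified over, each place at $2$); and nonnegative valuation at all other finite places. Strong approximation then yields $d_H\in O_K$ realizing all of these; (ii) and (iii) hold by construction, and since $\qq_1$ is unramified in $K_\infty/K$ with $\ord_{\qq_1}d_H=1$, the valuation of $d_H$ at the prime over $\qq_1$ in every number field $M\subset K_\infty$ containing $K$ is odd, so $d_H\notin(M^\times)^2$ and therefore $d_H\notin(K_\infty^\times)^2$, which is (i).

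Finally, for (iv) I would argue that $L(\delta_H)/L$ cannot be cyclotomic. A root of unity $\zeta$ with $L(\zeta)=L(\delta_H)$ would place the square class of $d_H$ in $L$ among those $c$ with $L(\sqrt c)$ cyclotomic; since $n_E$ is odd and $GK_\infty=K_\infty(\sqrt{a_G})$ with $K_\infty$ totally real, a $K$-element whose $L$-class is such a $c$ is, modulo $(K_\infty^\times)^2$, of the form $e$ or $a_Ge$ with $K_\infty(\sqrt e)$ contained in a cyclotomic extension of $K_\infty$. Condition (iii) makes the sign pattern of $d_H$ complementary to that of $a_G$, and this at once excludes every candidate $c$ representable by a rational number of constant sign --- in particular, when $a_G$ has mixed sign it excludes all of them. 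The remaining task --- to identify exactly which quadratic extensions of $K_\infty$ are cut out by roots of unity over $EGK_\infty$ (a delicate point, since nearly all primes are ramified in $K_\infty$ and quadratic ramification can be absorbed by such a field) and to check that the valuation conditions at $\qq_1,\qq_2$ together with the congruences at $2$ force $d_H$ to avoid all of them --- is precisely the content of the lemma stated immediately below, which I would invoke here. This last step is where I expect the real work to lie; the rest is routine approximation and Kummer theory.
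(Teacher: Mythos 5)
Your construction of $d_H$ by strong approximation, and the verification of conditions (i)--(iii) together with nonsquareness in $K_\infty$ via the odd valuation at an unramified prime, matches the paper's argument. The gap is in step (iv), which is exactly the point of the whole proposition. The paper's reason for imposing $\ord_{\qq_i}d_H=1$ at \emph{two} primes $\qq_1,\qq_2$ lying over \emph{distinct} rational primes is not (only) nonsquareness: it is that the quadratic extension $EHGK_\infty/EGK_\infty$ is then ramified at primes above two different rational primes, and a quadratic extension generated by a root of unity can ramify only above a single rational prime (if $[L(\zeta_n):L]=2$ then $L(\zeta_n)=L(\zeta_{p^a})$ for some prime power $p^a\mid n$, and that extension ramifies only above $p$). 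This is the content of Lemma 2.4 of \cite{Sh17}, which the paper cites, and it disposes of (iv) in one line. You use the two primes only for (i) and never extract this consequence.

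Your substitute argument for (iv) does not work. The sign argument fails outright: the quadratic extensions generated by roots of unity are cut out by elements such as $-1$ and $-3$, i.e.\ by totally negative rational numbers, so no sign pattern on $d_H$ (whose signs are dictated by $a_G$ and may well be constant) can exclude them; your own parenthetical concession that the argument only works ``when $a_G$ has mixed sign'' confirms this. You then defer ``the real work'' to ``the lemma stated immediately below,'' but no such lemma exists in the paper --- the next result after the proposition is Proposition \ref{prop:norm}, about norm equations, and is irrelevant here. So the essential idea (ramification above two distinct rational primes is incompatible with a cyclotomic quadratic extension) is missing, and without it the proof is incomplete. As a minor aside, prescribing the value $1$ at primes over $2$ does not make $d_H$ a local square at $2$ (one needs a congruence modulo a sufficiently high power of $2$), but that condition is not needed for the paper's argument anyway.
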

\begin{proof}%
Let $a_1, a_2 \in O_K$ be such that $\ord_{\qq_i}a_i=1$.  By the Strong approximation theorem we can find $d_H \in
O_K$ in such that its conjugates over $\Q$ have the right sign and such that $\ord_{\qq_i}(d_H-a_i) > 2$ for
$i=1,2$. The last requirement will make sure that $\ord_{\qq_i}d_H=1$ and  factors of two primes are ramified in
the extension $EHGK_{\infty}/EGK_{\infty}=EGK_{\infty}(\delta_H)/HGK_{\infty}$. This is enough to make sure that
this extension of degree 2 is not generated by any root of unity. (See Lemma 2.4 of \cite{Sh17}.)
\end{proof}%
To prove our results we will need the following technical propositions from \cite{Sh36}.
\begin{proposition}%
\label{prop:norm}%
Consider the following system of equations where $M$, as usual, is a number field with $K \subseteq
M \subset K_{\infty}$.
\begin{equation}%
\label{sys:norm}
\left \{%
\begin{array}{c}
{\mathbf N}_{EHGM/EGM}(\varepsilon)=1\\%
{\mathbf N}_{EHGM/HGM}(\varepsilon)=1%
\end{array} \right .%
\end{equation}%
We claim that  any for  $\varepsilon \in O_{EHGM,\calZ_{EHGM}}$ satisfying
(\ref{sys:norm}) we have that $\varepsilon^2 \in O_{HME}$.  Further, there is always $\varepsilon
\in U_{HME}$ such that it is not a root of unity and is a solution to the system.
\end{proposition}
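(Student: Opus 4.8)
The plan is to run the argument of Corollary \ref{cor:4power} over the field $EHGM$, with the two norm conditions of (\ref{sys:norm}) in place of the single Pell equation. Write $\Gamma=\Gal(EHGM/M)$. Since $E/\Q$ is cyclic of odd prime degree $n_E$ with $n_E$ coprime to $[HGM:\Q]$, and since $\delta_H\notin EGK_{\infty}$ (the extension $EHGK_{\infty}/EGK_{\infty}$ has degree $2$, being ramified at the factors of $\qq_1,\qq_2$ in the proof of the preceding Proposition), one checks that $\Gamma=\langle\sigma_G\rangle\times\langle\sigma_H\rangle\times\langle\sigma_E\rangle\cong\Z/2\times\Z/2\times\Z/n_E$, where $\sigma_G$ generates $\Gal(EHGM/EHM)$, $\sigma_H$ generates $\Gal(EHGM/EGM)$, and $\sigma_E$ generates $\Gal(EHGM/HGM)$; in particular $HME=EHM$ is the fixed field of $\sigma_G$. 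Because $\Gamma$ is abelian, $\sigma_G$ carries the solution set $\calA$ of (\ref{sys:norm}) into itself, and $\calA$ is a multiplicative group.

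First I would reduce to honest units. If $\varepsilon\in O_{EHGM,\calZ_{EHGM}}$ solves (\ref{sys:norm}), then ${\mathbf N}_{EHGM/EGM}(\varepsilon)=\varepsilon\,\sigma_H(\varepsilon)=1$, so $\varepsilon^{-1}=\sigma_H(\varepsilon)$ is again a $\calZ$-integer and $\varepsilon$ is a $\calZ$-unit. Since $\calZ_K\subseteq\calV_K$, each prime of $\calZ_K$ is inert in the unramified degree-$n_E$ Galois extension $EK/K$ (it has no relative-degree-one factor there, and $n_E$ is prime); as $n_E$ is coprime to $[HGM:K]$, each prime of $\calZ_{HGM}$ is then inert in $EHGM/HGM$, so the second equation of (\ref{sys:norm}) forces $\ord_\Pp\varepsilon=0$ at every $\Pp\in\calZ_{EHGM}$. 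Hence $\varepsilon$ and $\varepsilon^{-1}$ are integral, i.e.\ $\varepsilon\in U_{EHGM}$, and everything below may be carried out inside $U_{EHGM}$.

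The core of the proof is the rank identity $\rank\calA=\rank(\calA\cap EHM)$, which I would deduce from Proposition \ref{prop:deneflipshitz} (applied to the quadratic extension $EHGM/EGM$ over the base $EHM/EM$) together with the $\Q[\Gamma]$-module structure of $U_{EHGM}\otimes\Q$: up to the trivial summand it is $\bigoplus_{w\mid\infty}\Ind_{\Gamma_w}^{\Gamma}\mathbf 1$, where $w$ runs over the archimedean places of the totally real field $M$ and $\Gamma_w$ is the corresponding decomposition group. The hypothesis that $\sigma(d_H)>0$ if and only if $\sigma(a_G)<0$ — which in particular makes $\sigma(a_Gd_H)<0$ for every real $\sigma$ — guarantees that at each such $w$ the field $EHGM$ is complex, with complex conjugation equal to $\sigma_G$ (when $a_G$ is negative there) or to $\sigma_H$ (when $d_H$ is negative there), and never equal to $\sigma_G\sigma_H$, because $(\alpha_G\delta_H)^2=a_Gd_H<0$ so $\alpha_G\delta_H$ is not real at $w$. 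Hence every $\Gamma_w$ is $\langle\sigma_G\rangle$ or $\langle\sigma_H\rangle$, and Frobenius reciprocity shows that the $\Q$-irreducible constituent of $U_{EHGM}\otimes\Q$ on which $\sigma_G$ acts by $-1$, $\sigma_H$ by $-1$ and $\sigma_E$ through a primitive character does not occur. Since $\calA\otimes\Q$ is cut out inside $U_{EHGM}\otimes\Q$ by $1+\sigma_H=0$ and $1+\sigma_E+\cdots+\sigma_E^{n_E-1}=0$, the vanishing of that constituent says exactly that $\calA\otimes\Q$ is $\sigma_G$-fixed, which is the claimed rank identity. This Galois-module bookkeeping, parallel to the arguments of \cite{Sh36}, is the step I expect to be the main obstacle, mainly because one must track the two norm conditions and the $\calZ$-primes simultaneously.

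Granting the rank identity, $\calA/(\calA\cap EHM)$ is finite, so $\varepsilon^l\in EHM$ for some $l\geq 1$; as $\sigma_G$ fixes $EHM$, the element $\xi:=\sigma_G(\varepsilon)/\varepsilon\in\calA$ satisfies $\xi^l=\sigma_G(\varepsilon^l)/\varepsilon^l=1$, so $\xi$ is a root of unity of $EHGM\subseteq EHGK_{\infty}$. By the assumption that $EHGK_{\infty}$ contains no roots of unity outside $EGK_{\infty}$, and since $EHGM\cap EGK_{\infty}=EGM$, we get $\xi\in EGM$; but then $1={\mathbf N}_{EHGM/EGM}(\xi)=\xi\,\sigma_H(\xi)=\xi^2$ because $\sigma_H$ fixes $EGM$. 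Therefore $\sigma_G(\varepsilon)^2=\varepsilon^2$, so $\varepsilon^2$ lies in the fixed field $EHM=HME$ of $\sigma_G$, and being a power of a unit it lies in $U_{HME}\subseteq O_{HME}$. For the last assertion I would fix a unit $\mu$ of $O_{EHM}$ of infinite order — abundant by Dirichlet's theorem, since the sign hypothesis together with $G$ not being totally real forces $EHM$ to be real at some archimedean place, so the relevant isotypic component of $U_{EHM}\otimes\Q$ is nonzero (cf.\ the unit-existence lemmas of \cite{Sh34} and \cite{Sh16}) — and set $\varepsilon=\mu^{(1-\sigma_H)(1-\sigma_E)}$. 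Since $(1+\sigma_H)(1-\sigma_H)(1-\sigma_E)=0$ and $(1+\sigma_E+\cdots+\sigma_E^{n_E-1})(1-\sigma_H)(1-\sigma_E)=0$ in $\Z[\Gamma]$, one gets ${\mathbf N}_{EHGM/EGM}(\varepsilon)={\mathbf N}_{EHGM/HGM}(\varepsilon)=1$; and $\varepsilon\in U_{EHM}=U_{HME}$ has infinite order for a generic choice of $\mu$, which completes the plan.
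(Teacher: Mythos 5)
Your argument is correct, and it is the argument the paper intends: the paper itself gives no proof here, deferring to Lemma 5.3 and Corollary 5.4 of \cite{Sh36}, and those results are proved by exactly the strategy you follow for the simpler Corollary \ref{cor:4power} — show the solution set $\calA$ is a $\sigma_G$-stable group of units, establish $\rank\calA=\rank(\calA\cap U_{EHM})$, and then control the root of unity $\sigma_G(\varepsilon)/\varepsilon$. Your reduction to integral units via inertness of the $\calZ$-primes in $EHGM/HGM$, and your use of the no-new-roots-of-unity hypothesis to get $\xi^2=1$ (rather than only $\xi^4=1$ as Lemma \ref{le:rootofunity} would give), both match what the cited proof needs. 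The one point I would not lean on is the literal invocation of Proposition \ref{prop:deneflipshitz} ``over the base $EM$'': as stated it requires the base field to lie in $K_{\infty}$, which $EM$ does not. But this is harmless, since your explicit computation with $U_{EHGM}\otimes\Q\cong\bigoplus_{w\mid\infty}\Ind_{\Gamma_w}^{\Gamma}\mathbf 1\ominus\mathbf 1$ and the observation that every decomposition group is $\langle\sigma_G\rangle$ or $\langle\sigma_H\rangle$ (never trivial and never $\langle\sigma_G\sigma_H\rangle$, by the sign condition on $a_G$ and $d_H$) proves the rank identity directly and makes the citation unnecessary; the same computation also yields the positive rank needed for the existence assertion.
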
%
(See Lemma 5.3 and  Corollary 5.4 of \cite{Sh36} for a proof.)
\begin{lemma}%
\label{le:order}%
Let $x \in O_{GM,\calZ_{GM}}$, $x =y_0+y_1\alpha_G \equiv z \mod \mte{Z}$ in $O_{HEGM,\calZ_{HEGM}}$, where $z
\in O_{HEM,\calZ_{HEM}}$, $y_0,y_1 \in M$, $\mte{Z}$ is an integral divisor of $HEM$ without any factors in
$\overline \calZ_{HEM}$. Assume additionally that for any $\mte{t} \in \overline{\calZ}_{GEM}$ we have that
$\ord_{\mte{t}}x \leq 0$. Let $\displaystyle {\mathbf N}_{HEGM/\Q}(x)=\frac{X}{Y}$, where $X, Y \in
\Z$ and $(X,Y)=1$ in $\Z$. Let $Z= {\mathbf N}_{HEGM/\Q}(\mte{Z})$. Then $\displaystyle\frac{Y}{Z}{\mathbf
N}_{HEGM/\Q}(2\alpha y_1)$ is an integer.
\end{lemma}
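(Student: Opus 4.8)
The plan is to imitate the proof of Lemma~\ref{le:bound1} (equivalently, Lemma~6.6 of \cite{Sh36}), the one new feature being that $x$ and $2\alpha_G y_1$ are no longer integral, so the extra denominators must be controlled; I would do this with the conjugation-closure bookkeeping of Lemma~\ref{le:numerators}. Let $\sigma_G$ denote the nontrivial element of $\Gal(HEGM/HEM)$ (it has order $2$ and restricts to a generator of $\Gal(GM/M)$ under the standing assumptions); it carries $\alpha_G$ to $-\alpha_G$, fixes $HEM$ pointwise (so it fixes $z$, $E$, $M$ and the integral divisor $\mte{Z}$ of $HEM$) and stabilizes the prime set $\calZ_{HEGM}$. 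First I would apply $\sigma_G$ to the hypothesis $x\equiv z\pmod{\mte{Z}}$ in $O_{HEGM,\calZ_{HEGM}}$, getting $\sigma_G(x)\equiv z\pmod{\mte{Z}}$, and subtract the two congruences to obtain $x-\sigma_G(x)=2\alpha_G y_1\equiv 0\pmod{\mte{Z}}$ in $O_{HEGM,\calZ_{HEGM}}$. Since $\mte{Z}$ is integral and its support is disjoint from $\overline{\calZ}_{HEGM}$ (hence from $\calZ_{HEGM}$), this is exactly the assertion that $\mte{Z}$ divides $\mte{n}_{HEGM}(2\alpha_G y_1)$ in the semigroup of integral divisors of $HEGM$; taking norms gives $Z={\mathbf N}_{HEGM/\Q}(\mte{Z})\mid{\mathbf N}_{HEGM/\Q}(\mte{n}_{HEGM}(2\alpha_G y_1))$ in $\Z$. (If $y_1=0$ the lemma is trivial, so I assume $y_1\neq 0$ from now on.)

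Next I would bound the denominator of ${\mathbf N}_{HEGM/\Q}(2\alpha_G y_1)$. Because the primes of $\calZ_M$ remain inert in $GM/M$ (Notation and Assumptions~\ref{not:degree2bigrings}), each prime of $\calZ_{GM}$ is fixed by $\sigma_G$; so, comparing $\ord$-values prime by prime in the equality $2\alpha_G y_1=x-\sigma_G(x)$ and using that $x,\sigma_G(x)\in O_{GM,\calZ_{GM}}$, I would conclude that $\mte{d}_{GM}(2\alpha_G y_1)$ divides $\mte{d}_{GM}(x)$ in the semigroup of integral divisors of $GM$, hence that ${\mathbf N}_{HEGM/\Q}(\mte{d}_{HEGM}(2\alpha_G y_1))$ divides ${\mathbf N}_{HEGM/\Q}(\mte{d}_{HEGM}(x))$. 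Then I would use the hypothesis $\ord_{\mte{t}}x\le 0$ for all $\mte{t}\in\overline{\calZ}_{GEM}$, which forces $\mte{n}_{GM}(x)$ to be composed of primes outside $\overline{\calZ}_{GM}$, while $\mte{d}_{GM}(x)$ is composed of primes of $\calZ_{GM}\subseteq\overline{\calZ}_{GM}$; since $\overline{\calZ}$ is closed under conjugation over $\Q$, the argument of Lemma~\ref{le:numerators} then shows that ${\mathbf N}_{HEGM/\Q}(\mte{n}_{HEGM}(x))$ and ${\mathbf N}_{HEGM/\Q}(\mte{d}_{HEGM}(x))$ have no common rational prime factor, so the latter is exactly $Y$. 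Combining, ${\mathbf N}_{HEGM/\Q}(\mte{d}_{HEGM}(2\alpha_G y_1))\mid Y$.

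Finally I would combine the two divisibilities. Writing ${\mathbf N}_{HEGM/\Q}(2\alpha_G y_1)=A/B$ in lowest terms and setting $\mte{n}=\mte{n}_{HEGM}(2\alpha_G y_1)$, $\mte{d}=\mte{d}_{HEGM}(2\alpha_G y_1)$, we have $A\,{\mathbf N}_{HEGM/\Q}(\mte{d})=B\,{\mathbf N}_{HEGM/\Q}(\mte{n})$; since $Z\mid{\mathbf N}_{HEGM/\Q}(\mte{n})$ and ${\mathbf N}_{HEGM/\Q}(\mte{d})\mid Y$, it follows that $ZB\mid B\,{\mathbf N}_{HEGM/\Q}(\mte{n})=A\,{\mathbf N}_{HEGM/\Q}(\mte{d})\mid AY$, i.e., $\frac{Y}{Z}{\mathbf N}_{HEGM/\Q}(2\alpha_G y_1)=\frac{AY}{ZB}\in\Z$. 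The step I expect to be the main obstacle is the assertion that ${\mathbf N}_{HEGM/\Q}(\mte{d}_{HEGM}(x))$ equals $Y$ exactly rather than just a multiple of $Y$: this is where the hypothesis on $\ord_{\mte{t}}x$ over $\overline{\calZ}_{GEM}$ and the conjugation-closedness of $\overline{\calZ}$ have to be invoked carefully, exactly as in Lemma~\ref{le:numerators}, and the auxiliary fact that the $\calZ$-primes are $\sigma_G$-fixed rests on the standing hypotheses that $\calZ_K$-primes are non-split in $G/K$ and that $[M:K]$ is odd.
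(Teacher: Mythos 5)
Your proof is correct, and it is the argument the paper itself omits: Lemma \ref{le:order} is stated with only a pointer to Lemma 6.6 of \cite{Sh36}, and what you give is precisely the expected adaptation of the integral-case Lemma \ref{le:bound1} (conjugate the congruence by $\sigma_G$, subtract to get $\mte{Z}\mid\mte{n}_{HEGM}(2\alpha_G y_1)$, then control the denominator). The two points that needed care --- that the primes of $\calZ_{GM}$ are $\sigma_G$-fixed because $\calZ_M$ is inert in $GM/M$, so $\mte{d}_{HEGM}(2\alpha_G y_1)$ divides $\mte{d}_{HEGM}(x)$ rather than its square, and that ${\mathbf N}_{HEGM/\Q}(\mte{d}_{HEGM}(x))=Y$ exactly by the conjugation-closure argument of Lemma \ref{le:numerators} --- are both handled correctly.
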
%
  (See Lemma 6.6 of \cite{Sh36} for proof.)\\

We are now ready to state the main result of this section whose proof is similar to the proof of
 Proposition 10.5 in \cite{Sh36}.  However, as usual, some details need to be changed.
\begin{theorem}%
\label{thm:2}
 There exists a polynomial equation $P(x,\bar{t}) \in O_K[x,\bar{t}]$ such that the following statements are true.%
\be%
\item For any $x \in O_{GK_{\infty},\calZ_{GK_{\infty}}}$, if $P(x,\bar{t}) =0$ for some
$\bar{t}=(t_1,\ldots,t_m)$ with $t_i \in O_{GK_{\infty},\calZ_{GK_{\infty}}}$, then $x \in O_{K_{\infty},\calZ_{K_{\infty}}}$.%
\item If $x \in \Z$ then there exists $\bar{t}=(t_1,\ldots,t_m)$ with $t_i \in O_{K, \calZ_K}$ such that $P(x,\bar{t}) =0$%
\ee%
\end{theorem}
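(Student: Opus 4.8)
The plan is to transplant the proof of Theorem~\ref{thm:2integers} into the $\calZ$-integer setting, replacing the Pell equations~(\ref{eqdeg2:2i}) by the norm system~(\ref{sys:norm}) of Proposition~\ref{prop:norm} and the integral height bounds there by the $\calZ$-sensitive bound of Lemma~\ref{le:order}. Concretely, $P(x,\bar t)$ will encode: norm equations for auxiliary units $\varepsilon_i\in O_{EHGK_\infty,\calZ_{EHGK_\infty}}$, so that by Proposition~\ref{prop:norm} suitable powers $\gamma_i$ lie in $O_{HK_\infty E}$ and may be written $\gamma_i=c_i+\delta_H d_i$; the Denef--Lipshitz relations $a_j-\delta_H b_j=(\gamma_{2j}-1)/(\gamma_{2j-1}-1)$; for a suitable finite family of shifts $N_j$, a real-embedding bound of the shape~(\ref{eqdeg2:7i}) imposed not on $x$ itself but on the quantities $R(A(1)+x+N_j)$, so that (by Lemma~\ref{le:negorder}, and since the primes of $\calZ_K\subset\calV_K$ have no relative degree one factor in $E$ and do not divide $\disc R$) those quantities have nonpositive order at every prime of $\overline{\calZ}$, which is what Lemma~\ref{le:order} needs; congruences $R(A(1)+x+N_j)-(a_2-\delta_H b_2)\equiv 0\bmod(\gamma_3-1)$; and divisibilities $6p^2\alpha_G\,R(A(1)+x+N_j)\bigl(1+(a_1^2-d_Hb_1^2)^2\bigr)\;\big|\;(\gamma_3-1)$, the extra $p^2$ playing the role it does in~(\ref{eq:divide}). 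The real-positivity conditions become polynomial via Proposition~\ref{prop:realembed}, and the usual coordinate bookkeeping lets every variable range over $O_{GK_\infty,\calZ_{GK_\infty}}$, exactly as in~\cite{Sh36}.

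For the forward implication I would fix a number field $M$, Galois over $\Q$ and large enough to contain all the values of the variables in a given solution, and pass from $K_\infty$ to $M$ throughout. Proposition~\ref{prop:norm} puts the $\gamma_i$, hence each $a_j^2-d_Hb_j^2$, into $HME$; the real bound together with Lemma~\ref{le:negorder} then supplies the hypotheses of Lemma~\ref{le:order}, which gives that $\frac{Y}{Z}{\mathbf N}_{HEGM/\Q}(2\alpha_G y_1)$ is an integer, where, writing a given $R(A(1)+x+N_j)$ as $y_0+y_1\alpha_G$ with $y_0,y_1\in M$, $Y$ is the denominator of its $\Q$-norm and $Z={\mathbf N}_{HEGM/\Q}(\mte{Z})$ with $\mte{Z}$ the part of $\mte{n}_{HEGM}(\gamma_3-1)$ lying outside $\overline{\calZ}$. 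Combining this with Lemma~\ref{le:bound2} (bounding $\left|{\mathbf N}(2\alpha_G y_1)\right|$ by a fixed constant times $\left|{\mathbf N}(R(A(1)+x+N_j))\right|\left|{\mathbf N}(1+(a_1^2-d_Hb_1^2)^2)\right|$) and with the divisibility by $6p^2\alpha_G R(A(1)+x+N_j)(1+\cdots)$, one finds that the power of $p$ that is forced into $Z$ cannot be absorbed into $Y$, whose prime divisors all lie below $\calZ_K$ and so are different from $p$; this is numerically impossible unless $y_1=0$. Hence each $R(A(1)+x+N_j)$ lies in $K_\infty$, and the linear independence built into the choice of the $N_j$ (cf. Lemma~5.1 of~\cite{Sh1}) gives $x\in K_\infty$; since $x\in O_{GK_\infty,\calZ_{GK_\infty}}$ has nonnegative order at every prime of $GK_\infty$ outside $\calZ_{GK_\infty}$, the same holds for $x$ in $K_\infty$, so $x\in O_{K_\infty,\calZ_{K_\infty}}$.

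For the converse, given $x\in\Z$ (the finitely many values with $x\le 1$ being checked directly) I would take, by the last assertion of Proposition~\ref{prop:norm}, a non-root-of-unity solution $\nu\in U_{HME}$ of the norm system, use Lemma~\ref{le:closeto1} to choose $r\equiv 0\bmod 4$ large enough that~(\ref{eqdeg2:7i}) holds with $\gamma_1=\nu^r$ and $\gamma_2=\nu^{rx}$, and then use the Strong Approximation Theorem (as in~\cite{Sh16}) to select a non-root-of-unity solution $\nu_3$ with $\nu_3-1\equiv 0\bmod 6p^2 a_G\,R(A(1)+x+N_j)\bigl(1+(a_1^2-d_Hb_1^2)^2\bigr)$, taking $\gamma_3$ and $\gamma_4$ to be the appropriate powers of $\nu_3$. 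One then checks, exactly as in the converse half of Theorem~\ref{thm:2integers}, that all the congruences, divisibilities and bounds are satisfied with $u,v$ and the $a_j,b_j,c_i,d_i,\dots$ in $O_{HK,\calZ_{HK}}$ and with $x$ in $O_{K,\calZ_K}$.

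The step I expect to be the main obstacle is the height bookkeeping in the forward direction: one must verify that the $p$-adic contribution to $Z={\mathbf N}(\gamma_3-1)$ genuinely dominates everything the denominators allowed at $\calZ$-primes can contribute to $Y$ and to the numerator of ${\mathbf N}(2\alpha_G y_1)$, so that the inequality extracted from Lemma~\ref{le:order} is impossible unless $y_1=0$; and one must check the nonpositivity of $\ord_{\mte{t}}$ of the $R$-values at every prime of $\overline{\calZ}$ \emph{uniformly} over all number fields $M\subset K_\infty$. This last point is precisely where the hypotheses $\calZ_K\subset\calV_K$, the coprimality $([M:K],n_E)=1$ for every intermediate $M$, and the choice of $p$ are consumed; the remaining passages are a routine transcription of the corresponding arguments in~\cite{Sh36}.
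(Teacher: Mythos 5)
Your overall strategy coincides with the paper's: Theorem \ref{thm:2} is indeed proved by transplanting Theorem \ref{thm:2integers}, substituting the norm system (\ref{sys:norm}) of Proposition \ref{prop:norm} for the Pell equations, invoking Lemma \ref{le:order} in place of Lemma \ref{le:bound1}, wrapping the input in $R(\cdot)$ so that Lemma \ref{le:negorder} keeps its numerator away from $\overline{\calZ}$, and finishing with Lemma \ref{le:bound2}, Lemma \ref{le:closeto1} and shifts combined with Lemma 5.1 of \cite{Sh1}. (Whether one shifts via $R(A(1)+x+N_j)$ or, as the paper does, via $x_1=R(x_0)$ together with requiring $x_0+1,\ldots,x_0+n_E$ to satisfy the same equations, is immaterial.)

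There is, however, one concrete defect, and it sits exactly at the point you flag as the expected obstacle. You retain the factor $1+(a_1^2-d_Hb_1^2)^2$ from the ring-of-integers case in both the archimedean bound and the divisibility condition. In the present setting $a_1,b_1$ are only $\calZ$-integers, so $1+(a_1^2-d_Hb_1^2)^2$ may carry primes of $\overline{\calZ}$ in the numerator of its divisor (and $\calZ$-primes in its denominator). The contradiction you need has the shape $|Y\,{\mathbf N}_{GHME/\Q}(2\alpha_G y_1)| < D$, where $D$ is the norm of the part of $\mte{n}(\gamma_3-1)$ supported \emph{outside} $\overline{\calZ}$ --- only that part is visible to Lemma \ref{le:order}. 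The divisibility condition contributes to $D$ only the non-$\calZ$ portion of the numerator of $1+(a_1^2-d_Hb_1^2)^2$, whereas the upper bound supplied by Lemma \ref{le:bound2} involves its full norm; if that numerator is concentrated at $\overline{\calZ}$-primes, the inequality fails and $y_1=0$ cannot be concluded. The paper's remedy is to replace $1+(a_1^2-d_Hb_1^2)^2$ by $R(A(1)+(a_1^2-d_Hb_1^2)^2)$ in both (\ref{eqdeg2:7}) and (\ref{eqdeg2:10}): by Lemma \ref{le:negorder} (and Lemma \ref{le:conjugates}) this quantity has nonpositive order at every prime of $\overline{\calZ}$, so the numerator $U$ of its norm is prime to $\calZ$ and genuinely divides into $D$, while the device $A(1)$ (chosen so that $R(t)>1$ for real $t>A(1)$) preserves the archimedean lower bound needed for Lemma \ref{le:bound2} and lets the converse direction go through. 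Apart from this substitution, your proposal matches the paper's argument.
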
%
\begin{proof}%
Let $x_0, x_1 \in O_{G_{\infty},\calZ_{G_{\infty}}}, a_1, a_2, b_1, b_2, c_1, d_1, \ldots, c_4,
d_4, u, v \in O_{EG_{\infty},\calZ_{EG_{\infty}}}$, $\varepsilon_i, \gamma_i \in
O_{EHGK_{\infty},\calZ_{EHGK_{\infty}}}, i=1,\ldots,4,$ and assume the following conditions and
equations are satisfied.

\begin{equation}%
\label{eqdeg2:1}
x_1 = R(x_0),
\end{equation}

\begin{equation}%
\label{eqdeg2:2}
\left \{%
\begin{array}{c}
{\mathbf N}_{HEGK_{\infty}/EGK_{\infty}}(\varepsilon_i)=1, i=1,\ldots,4,\\%
{\mathbf N}_{HEGK_{\infty}/HGK_{\infty}}(\varepsilon_i)=1, i=1,\ldots,4,%
\end{array} \right .%
\end{equation}%

\begin{equation}%
\label{eqdeg2:3}%
\gamma_i = \varepsilon_i^2, i=1, \ldots,4,
\end{equation}%
\begin{equation}%
\label{eqdeg2:4}%
\frac{\gamma_{2j}-1}{\gamma_{2j-1}-1} =a_j-\delta_H b_j, j=1,2
\end{equation}%
\begin{equation}
\label{eqdeg2:4.2}%
\gamma_i=c_i+\delta_H d_i, i=1,\ldots, 4,
\end{equation}%
\begin{equation}%
\label{eqdeg2:7}%
1 \leq |\sigma(x_1)| \leq R(A(1)+\sigma(a_1^2-d_Hb_1^2)^2),
\end{equation}%
where $\sigma$ ranges over all  embeddings of $EGK_{\infty}$ into $\R$,
\begin{equation}%
\label{eqdeg2:9}%
x_1-(a_2-\delta_H b_2) =(c_3-1+\delta_H d_3)(u+v\delta_H),
\end{equation}%
\begin{equation}%
\label{eqdeg2:10}%
p^2x_1R(A(1)+(a_1^2-d_Hb_1^2)^2) \big{|} (c_3-1+\delta_H d_3).
\end{equation}%
Then, we claim,  $x_1 \in K_{\infty}$.

Conversely, we claim that if $x_0 \in \Z_{>1}$, the conditions and equations above can be satisfied
with $a_1, a_2$, $b_1, b_2, c_1, d_1, \ldots, c_4, \ldots, d_4, u, v \in O_{EK,\calZ_{EK}}$,
$\gamma_i, \varepsilon_i \in O_{EMHK,\calZ_{EH}}, i=1,\ldots,4$.

To prove the first claim, observe the following. Let $M$ be such that $GHEM$ contains $\alpha_G, \delta_H,\mu_E,
x_0,a_1,a_2$, $b_1,b_2,c_1, d_1, \dots, c_4, d_4, u, v, \varepsilon_1, \ldots, \varepsilon_4$. Then
given our assumptions on the fields under consideration, in the equations above we can replace
$EHGK_{\infty}$ by $EHGM$, $EGK_{\infty}$ by $EGM$, and finally $K_{\infty}$ by $M$, while the
equalities and other conditions will continue to be true, assuming we modify the prime sets by
choosing the primes in the finite extensions so that $O_{GK_{\infty},\calZ_{GK_{\infty}}}$ is the
integral closure of $O_{GM,\calZ_{GM}}$, $O_{EGK_{\infty},\calZ_{EGK_{\infty}}}$ is the integral
closure of $O_{EGM,\calZ_{EGM}}$, and $O_{EHGK_{\infty},\calZ_{EHGK_{\infty}}}$ is the integral
closure of $O_{EGHM,\calZ_{EHGM}}$ in $GK_{\infty}$, $EGK_{\infty}$ and $EHGK_{\infty}$
respectively. Then by Proposition \ref{prop:norm} we know that $\gamma_i \in HEM \subset
HEK_{\infty}$.  Since $\delta_H$ generates $HEM$ over $EM$ as well as $HEGM$ over $EGM$, we
conclude that $c_i,  d_i, \in O_{EM,\calZ_{EM}}$ for $i=1,\ldots, 4$. A similar argument tells us
that $a_1,b_1, a_2, b_2 \in O_{EM,\calZ_{EM}}$.

Next from (\ref{eqdeg2:1}) and Lemma \ref{le:conjugates} we conclude that for all $\mte{p} \in
\overline{\calZ}_{EGM}$ we have that $\ord_{\pp}x_1\leq 0$ and
\[%
\ord_{\pp}R(A(1)+ (a_1^2-d_Hb_1^2)^2)\leq 0.
\]%
From the definition of $A(1)$  and the fact that $a_1, b_1 \in EM$ - a totally real field, we have
that
\begin{equation}%
\label{eqdeg2:8}%
1 \leq  R(A(1) + \tau(a_1^2-d_Hb_1^2)^2),
\end{equation}%
where $\tau$ ranges over all non-real embeddings of $EGM$ into $\C$. Combining the bound equations
(\ref{eqdeg2:7}) and (\ref{eqdeg2:8}), and writing $x_1 = y_0 + y_1\alpha_G$, where $y_0, y_1\in O_{M,\calU_{M}}$,
we conclude by Lemma \ref{le:bound2}
\begin{equation}%
\label{eqdeg2:11}%
|{\mathbf N}_{EGM/\Q}(y_1)| \leq      |{\mathbf N}_{EGM/\Q}(x_1){\mathbf N}_{EGM/\Q}(R(A(1)+
(a_1^2-d_Hb_1^2)^2))|.
\end{equation}%

Let $\mte{D}=\mte{n}_{EGHM, \overline \calZ_{EGHM}}(c_3-1+\delta_H d_3)$.  Note that since
$c_3-1+ \delta_H d_3 \in HEM$ and  $\overline \calZ_{HEGM}$ is closed under conjugation over $\Q$ and thus
over $HEM$, we can regard $\mte{D}$ as a divisor of $HEM$.   Observe further that from (\ref{eqdeg2:9}), we
have that $\mte{D} \mbox{ divides }\mte{n}_{GHEM}(x_1-(a_2-b_2\delta_H))$.  Let
\[%
D= |{\mathbf N}_{HEGM/\Q}(\mte{D})| \in \Z_{>0},
\]%
let
\[%
|{\mathbf N}_{HEGM/\Q}(x_1)|=\frac{X}{Y},
\]%
and let
\[%
|{\mathbf N}_{HEGM/\Q}(A(1)+R(a_1^2-d_Hb_1^2)^2)|=\frac{U}{V},
\]%
where $X,Y,U,V \in \Z_{>0}, (X,Y)=1, (U,V)=1$, and $X, U$ are not divisible by any rational primes with factors in
$\calZ_{HEGM}$. Then from (\ref{eqdeg2:10}) we have that
\begin{equation}%
\label{eq:XU}
\norm_{EGHM/\Q}(2\alpha_G)XU <  \norm_{EGHM/\Q}(p^2)XU < D.
\end{equation}%
Note that by Lemma \ref{le:nodegonefact}  we have that all the primes of $\calZ_M$ do not split in the extension
$GM/M$ and therefore by Lemma \ref{le:order}, on the one hand we have that
\[%
\frac{Y{\mathbf N}_{GHME/\Q}(2\alpha_G y_1)}{D} \in \Z,
\]%
and therefore
\[%
|Y{\mathbf N}_{GHME/\Q}(2\alpha y_1)| \geq D\mbox{ or } y_1=0.
\]%
On the other hand, combining (\ref{eqdeg2:11}) and (\ref{eq:XU}), we have that
\[%
|Y{\mathbf N}_{GHME/\Q}(2\alpha_G y_1)| \leq |\norm_{EGHM/\Q}(2\alpha_G)XU| < D.
\]%
Thus $y_1$ is 0 and $x_1 \in M$.\\

We will now show that assuming that $x_0 >1$ is a natural number, we can satisfy all the equations
and conditions (\ref{eqdeg2:1})--(\ref{eqdeg2:10})with all the variables ranging over the
appropriate sets. Observe that by (\ref{eqdeg2:1}), we have that $x_1$ is also a natural number.
Let $\nu \in U_{HE} \cap O_K[\delta_H, \mu_E]$ be a solution to (\ref{eqdeg2:2}) such that it is
not a root of unity. Such a solution exists by Proposition \ref{prop:norm} and by Section 2.1.1 of
\cite{Sh16}. Let $\{\phi_1,\ldots,\phi_{s_{HE}}\}$ be a set containing a representative from every
complex-conjugate pair of non-real conjugates of $\nu$. By Lemma \ref{le:closeto1}, we can find a
positive integer $r \equiv 0 \mod 2$ such that for all $i=1,\ldots, s_{HE}$ we have that
\[%
\left |\frac{\phi_i^{rA}-1}{\phi_i^r-1}-A\right| < \frac{1}{2},
\]%
where $A=A(x_1) +1$, and thus,
\[%
\left |\frac{\phi_i^{rA}-1}{\phi_i^{r}-1}\right| > A-\frac{1}{2}>A(x_1).
\]%

 So we set $\varepsilon_1=\nu^{r/2}, \gamma_1=\varepsilon^r, \varepsilon_2=\varepsilon^{rA/2},
\gamma_2=\varepsilon^{rA}$. Then for $i=1,2$ the system (\ref{eqdeg2:2}) is satisfied. We also satisfy
(\ref{eqdeg2:3}) for these values of $i$. Next we define $a_1$ and $b_1$ so that (\ref{eqdeg2:4}) is satisfied for
$j=1$. Next let $\sigma$ be an embedding of $K$ into $\R$ extending to a real embedding of $G$ and
therefore to a real embedding of $GE$. Then by assumption on $H$, we have that $\sigma$ extends to a non-real
embedding $\hat{\sigma}$ on $HE$.  Thus, without loss of generality, for some $i =1,\ldots,s_{HE}$ we have
that
\[%
\hat\sigma(a_1-\delta_H b_1) = \hat \sigma\left (\frac{\varepsilon^{rA}-1}{\varepsilon^r-1}\right )=
\frac{\phi_i^{rA}-1}{\phi_i^r-1},
\]%
and therefore
\[%
\sigma(a_1^2-d_Hb_1^2)=\left | \frac{\phi_i^{rA}-1}{\phi_i^r-1}\right |^2 > A(x_1)^2 >A(x_1),
\]%
leading to
\[%
Q(A(1)+ \sigma(a_1^2-db_1^2)^2) > x_1 = \sigma(x_1) >1.
\]%
Thus we can satisfy (\ref{eqdeg2:7}).

Now let $\varepsilon_3$ to be a solution to (\ref{eqdeg2:2}) in $O_{K}[\delta_H, \mu_E]$ such that $\gamma_3=
\varepsilon_3^2 \in U_{HE} \cap O_{K}[\delta_H, \mu_E]$, (\ref{eqdeg2:3}), (\ref{eqdeg2:4.2}) for
$i=3$, and (\ref{eqdeg2:10}) are satisfied. Again this can be done by Proposition \ref{prop:norm}
and by Section 2.1.1 of \cite{Sh16}. Finally, set $\varepsilon_4=\varepsilon_3^{x_1},
\gamma_4=\gamma_3^{x_1}$. In this case we can satisfy (\ref{eqdeg2:2}), (\ref{eqdeg2:3}) for
$i=4$.

We now  observe that
\[%
a_2-\delta_H b_2=\frac{\gamma_4-1}{\gamma_3-1} =x_1 + (\gamma_3-1)(u+\delta_H v)=x_1 +(c-1-\delta_H
d)(u+v\delta_H),
\]%
where $u, v \in O_{GM,\calU_{GM}}[\mu_E]$. Thus (\ref{eqdeg2:9}) will also be satisfied. As a last
step we select $c_1, d_1, \ldots,c_4, d_4 \in O_{M}$ so that (\ref{eqdeg2:4.2}) is satisfied. The
only remaining issue is to note that we can rewrite all the equations and conditions as polynomial
equations with variables taking values in $GK_{\infty}$ (see Propositions 2.6 -- 2.8 of \cite{Sh36}
again), and also to  observe that we can require $x_0 + 1,\ldots,x_0 + n_E$ to satisfy the equations
above. Making sure that $x_0, x_0 + 1, \ldots, x_0+n_E \in K_{\infty}$ is enough by Lemma 5.1 of
\cite{Sh1} to insure that $x_0$ is in $K_{\infty}$.
\end{proof}%

Our next task is to go down to $\Q$.  Unfortunately as in Section \ref{sec:real} there are technical
complications which will force us  to modify the prime sets allowed in the denominators of the
divisors of ring elements.    The problem lies in the fact that we might have to add a finite set of
 primes to  $\calZ_K$ in order to go down to $\Q$.  In the case of the totally real fields we just
needed to make sure that in Proposition \ref{prop:thenbelow} the ``key'' variable $x_j$ did not have
the ``extra'' primes in the denominator of its divisor.  This was accomplished by using the fact
that any finite set of primes   over the field under consideration was boundable.  Unfortunately,
in this case we have another problem to worry about: extra solutions to the norm equations.  This
is the same problem which we encountered in \cite{Sh36}.  To avoid the extra solutions we will have
to introduce another extension $\bar EK$ of $K$  which will be totally real and linearly disjoint
from $GHEK_{\infty}$ over $K$.
\begin{notationassumption}%
\label{not:more}
Below we list our additional notation and assumptions.
\begin{itemize}%
\item Let $\bar E$ be a totally real number field of prime degree $n_{\bar E} >  [K^{\Gal}:\Q]$ with $(n_{\bar E}, n_E)=1$.%
\item Let $\calN_K$ be a set of $K$-primes remaining inert in the extension $\bar EK/K$.%
\item Let $\calT_K$ be a set of $K$ primes such that $O_{K, \calN_K \cup \calT_K} \cap \Q$ has a
Diophantine definition over     $O_{K, \calN_K \cup \calT_K}$.  (Such a set $\calT_K$ exists by
Corollary 7.6.1 of \cite{Sh34}.)%
\item Let $\calS_K$ be a finite set of $K$ primes and note that by Proposition \ref{prop:finmany} we
also have that       $O_{K, \calN_K \cup \calT_K \cup \calS_K} \cap \Q$ has a Diophantine
definition over
    $O_{K, \calN_K \cup \calT_K \cup \calS_K}$%
\item Assume that $\calT_K \cup \calN_K \cup \calS_K = \calZ_K$.
\end{itemize}%
\end{notationassumption}%

Given our assumptions we immediately have the following corollary.

\begin{corollary}%
\label{cor:downreal2}%
 $O_{GK_{\infty},  \calZ_{GK_{\infty}}} \cap \Q$ has a Diophantine definition over $O_{GK_{\infty}, \calZ_{GK_{\infty}}}$.%
\end{corollary}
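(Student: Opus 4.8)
The plan is to obtain the statement by chaining together already-established reductions, using the elementary fact that over a ring whose fraction field is not algebraically closed a composition of Diophantine definitions is again a Diophantine definition.

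First I would record the identification of the ring in question. Since $O_{GK_{\infty},\calZ_{GK_{\infty}}}$ is by definition the integral closure of $O_{K,\calZ_K}$ in $GK_{\infty}$, intersecting back down gives $O_{GK_{\infty},\calZ_{GK_{\infty}}}\cap K_{\infty}=O_{K_{\infty},\calZ_{K_{\infty}}}$ and $O_{GK_{\infty},\calZ_{GK_{\infty}}}\cap\Q=O_{K,\calZ_K}\cap\Q=:R_{\Q}$, so what is wanted is a Diophantine definition of $R_{\Q}$ over $O_{GK_{\infty},\calZ_{GK_{\infty}}}$. Theorem~\ref{thm:2} supplies the descent from $GK_{\infty}$ down to $K_{\infty}$: it produces $P(x,\bar t)\in O_K[x,\bar t]$ forcing any of its $O_{GK_{\infty},\calZ_{GK_{\infty}}}$-solutions $x$ into $O_{K_{\infty},\calZ_{K_{\infty}}}$ and solvable, with the auxiliary variables in $O_{K,\calZ_K}$, whenever $x\in\Z$ --- and, by the same argument, whenever $x$ is a sufficiently large positive element of $R_{\Q}$, since that is all the positivity and size conditions built into $P$ really demand of $x$. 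On the other side, $K_{\infty}$ is a totally real field over which the running hypotheses of the section provide the elliptic curve ${\tt E}$ with $[{\tt E}(FK_{\infty}):{\tt E}(F)]<\infty$ and $F\cap K_{\infty}=K$, and by Notation and Assumptions~\ref{not:degree2bigrings} and~\ref{not:more} the set $\calZ_K$ is of the shape to which the constructions of Sections~\ref{sec:def}--\ref{sec:real} apply; hence Theorem~\ref{thm:1} gives a Diophantine definition of $O_{K,\calZ_K}$ over $O_{K_{\infty},\calZ_{K_{\infty}}}$, and Notation and Assumptions~\ref{not:more} --- that is, Corollary~7.6.1 of \cite{Sh34} together with Proposition~\ref{prop:finmany} --- gives a Diophantine definition of $R_{\Q}=O_{K,\calZ_K}\cap\Q$ over $O_{K,\calZ_K}$.

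Finally I would splice these together. Composing the last two definitions over $O_{K_{\infty},\calZ_{K_{\infty}}}$ produces a Diophantine definition of $R_{\Q}$ over $O_{K_{\infty},\calZ_{K_{\infty}}}$, and composing that in turn with $P$ produces a system over $O_{GK_{\infty},\calZ_{GK_{\infty}}}$ whose solutions force the distinguished variable into $R_{\Q}$; conversely, any $x\in R_{\Q}$ can be made to satisfy it after, if necessary, replacing $x$ by a rational-integer translate (which is harmless, $R_{\Q}$ being a ring that contains $\Z$) so as to meet those positivity and size conditions. I expect the one point that genuinely needs care to be the bookkeeping intrinsic to such a composition: in the combined system every auxiliary variable has to be confined to the subring it belongs to --- $O_{K_{\infty},\calZ_{K_{\infty}}}$, $O_{K,\calZ_K}$, or $FK_{\infty}$ for the variables carrying the coordinates of points of the curve --- and, in the converse direction, the witnesses furnished by the $K_{\infty}$-level definitions must be certified to lie there. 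This is arranged exactly as in \cite{Sh36}, via the coordinate-polynomial (``rewriting'') propositions there, which allow one to replace, within a system over $O_{GK_{\infty},\calZ_{GK_{\infty}}}$, an auxiliary variable ranging over any of these subrings by variables ranging over $O_{GK_{\infty},\calZ_{GK_{\infty}}}$ subject to explicit polynomial constraints, together with the fact --- used throughout Section~\ref{sec:real} --- that every finite set of $K$-primes is $K_{\infty}$-boundable, so that nothing is lost at the finitely many exceptional primes.
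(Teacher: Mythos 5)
Your proposal is correct and follows the same route the paper intends when it declares the corollary ``immediate'': descend from $O_{GK_{\infty},\calZ_{GK_{\infty}}}$ to $O_{K_{\infty},\calZ_{K_{\infty}}}$ via Theorem \ref{thm:2}, from there to $O_{K,\calZ_K}$ via the totally real machinery of Theorem \ref{thm:1}, and finally to $O_{K,\calZ_K}\cap\Q$ via the defining property of $\calT_K$ in Notation and Assumptions \ref{not:more}, splicing the definitions together with the rewriting propositions and boundability of the finitely many exceptional primes. Your explicit attention to confining the auxiliary variables to the correct subrings is exactly the bookkeeping the paper leaves implicit.
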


From this corollary standard techniques produce the following consequences.

\begin{corollary}%
\label{cor:downreal2.1}%
\be
\item For any archimedean or non-archimedean topology of $GK_{\infty}$ we can select $\calS_K$ so
that  $O_{GK_{\infty}, \calZ_{K_{\infty}}}$ has an infinite Diophantine subset discrete
in this topology.
\item $\Z$ is definable over $O_{GK_{\infty},\hat \calZ_{K_{\infty}} \cup \calT_K \cup \calS_K}$ and
HTP is not decidable over $O_{GK_{\infty},\hat \calZ_{K_{\infty}}\cup \calT_K\cup \calS_K }$.%
\ee
\end{corollary}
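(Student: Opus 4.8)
The plan is to mimic, in the degree-two setting, the descent arguments that prove Corollary \ref{cor:downreal}, taking Corollary \ref{cor:downreal2} as the input in place of part (1) of Theorem \ref{thm:1}.

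For the first assertion I would start from the fact, supplied by Corollary \ref{cor:downreal2}, that $R_0 := O_{GK_{\infty},\calZ_{GK_{\infty}}}\cap\Q$ is Diophantine over $O_{GK_{\infty},\calZ_{GK_{\infty}}}$; note that $R_0$ is a ring of $\calS$-integers $O_{\Q,\calZ_\Q}$ of $\Q$, where $\calZ_\Q$ is the set of rational primes all of whose $K$-factors lie in $\calZ_K$. Over such large subrings of $\Q$ the constructions of Section 3 of \cite{Sh21} and Section 2 of \cite{PS} produce an infinite Diophantine set which is discrete with respect to a prescribed archimedean or non-archimedean valuation, the only proviso being a condition on which rational primes are allowed to appear in denominators; that condition is met by enlarging $\calS_K$ by at most finitely many primes, which is harmless by Proposition \ref{prop:finmany} and does not disturb any of the hypotheses in Notation and Assumptions \ref{not:biggerrings}--\ref{not:more}. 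Substituting this Diophantine subset of $R_0$ into the Diophantine definition of $R_0$ inside $O_{GK_{\infty},\calZ_{GK_{\infty}}}$ yields an infinite Diophantine subset of the big ring, and it remains discrete because the subspace topology that a place of $GK_{\infty}$ induces on $\Q$ is precisely the corresponding valuation topology of $\Q$.

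For the second assertion I would re-run the argument of Theorem \ref{thm:2} and Corollary \ref{cor:downreal2} with $\calZ_K$ replaced throughout by $\calZ'_K := \hat\calZ_K\cup\calT_K\cup\calS_K$. This is legitimate: since $\calZ'_K\subseteq\calZ_K\subseteq\calV_K$, the set $\calZ'_K$ still consists of primes with no relative degree one factor in $EK/K$, inert in $G/K$, and not dividing $\disc R(X)$, and $p$ still has no factor in it; hence Theorem \ref{thm:2} applies verbatim to $\calZ'_K$, and the descent to $\Q$ afforded by $\calN_K$ and $\calT_K$ (Corollary 7.6.1 of \cite{Sh34} together with Proposition \ref{prop:finmany}) goes through, giving that $O_{GK_{\infty},\calZ'_{GK_{\infty}}}\cap\Q$ is Diophantine over $O_{GK_{\infty},\calZ'_{GK_{\infty}}}$. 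The decisive point is then the combinatorial observation, exactly parallel to part (4) of Corollary \ref{cor:downreal}, that $O_{GK_{\infty},\calZ'_{GK_{\infty}}}\cap\Q = O_{\Q,\calT_\Q}$ for a \emph{finite} (possibly empty) set $\calT_\Q$ of rational primes: passing from $\calZ_K$ to $\hat\calZ_K$ deletes from each complete $\Q$-conjugacy class a prime of largest relative degree, so apart from the finitely many rational primes forced back in by $\calT_K\cup\calS_K$, no rational prime has all of its $K$-factors (equivalently all of its $GK_{\infty}$-factors) in the allowed set. Granting this, $\Z$ is Diophantine over $O_{\Q,\calT_\Q}$ by Proposition \ref{prop:finmany}, hence over $O_{GK_{\infty},\calZ'_{GK_{\infty}}}$, and HTP is consequently undecidable there.

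The main obstacle is the bookkeeping in that last step, together with checking that every hypothesis attached to $\calZ_K$ in Notation and Assumptions \ref{not:degree2bigrings}--\ref{not:more} --- in particular the disjointness and inertia conditions involving $G$, $H$, $E$, and $\bar E$, and the requirement that $O_{K,\calN_K\cup\calT_K}\cap\Q$ be Diophantine over $O_{K,\calN_K\cup\calT_K}$ --- survives the replacement of $\calZ_K$ by its sub-family $\calZ'_K$; once this is confirmed, both statements follow from the cited results with no genuinely new argument.
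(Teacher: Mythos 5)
Your proposal is correct and takes essentially the approach the paper intends: the paper itself proves this corollary only by remarking that ``standard techniques'' yield it and citing \cite{PS}, \cite{Sh36} and \cite{Sh34}, and your argument is exactly the transcription of parts (2) and (4) of Corollary \ref{cor:downreal} to the degree-two setting, with Corollary \ref{cor:downreal2} playing the role of Theorem \ref{thm:1}. The one bookkeeping caveat you flag at the end (primes of $\calT_K\cup\calS_K$ possibly splitting in $G/K$ or dividing $\disc R(X)$) is real but is the same complication the paper itself acknowledges and disposes of in the proof of Theorem \ref{thm:bigrings2} by bounding finite sets of primes, so it does not affect the argument.
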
%

(See, \cite{PS}, \cite{Sh36} or \cite{Sh34}.)\\

Finally we restate our results in the following form.
\begin{theorem}%
\label{thm:bigrings2}
Let $K_{\infty}$ be a totally real possibly infinite algebraic extension of $\Q$, normal over some
number field $K$, with an odd rational prime $p$ of 0 degree index relative to $K_{\infty}$
such that $p > [K^{\Gal}:\Q]$, and with a 0 degree index for 2. Let $U_{\infty}$ be a finite
extension of $K_{\infty}$ such that there exists an elliptic curve ${\tt E}$ defined over
$U_{\infty}$ with ${\tt E}(U_{\infty})$ finitely generated and of a positive rank. Let $GK_{\infty}$
be an extension of degree 2 over $K_{\infty}$.
\be%
\item $GK_{\infty}$ contains a big ring where $\Z$ is existentially definable and HTP is unsolvable. %
\item $\Z$ is definable and HTP is unsolvable over any small subring of $GK_{\infty}$.%
\ee
\end{theorem}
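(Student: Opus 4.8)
The plan is to read Theorem~\ref{thm:bigrings2} as a restatement of Theorem~\ref{thm:2} together with Corollaries~\ref{cor:downreal2} and~\ref{cor:downreal2.1}; the substance is to check that the hypotheses assembled here are exactly what is needed to instantiate the conventions in Notation and Assumptions~\ref{not:biggerrings}, \ref{not:bigringtoreal}, \ref{not:ext2integers}, \ref{not:degree2bigrings} and~\ref{not:more}. First I would descend the elliptic curve to a number field: since ${\tt E}(U_{\infty})$ is finitely generated, a number field $F\subset U_{\infty}$ can be taken to contain the coefficients of a fixed Weierstrass model of ${\tt E}$ and the coordinates of a finite generating set of ${\tt E}(U_{\infty})$, so that $FK_{\infty}=U_{\infty}$, $\rank {\tt E}(F)>0$ and $[{\tt E}(FK_{\infty}):{\tt E}(F)]<\infty$. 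Setting $K=F\cap K_{\infty}$, the extension $K_{\infty}/K$ remains normal and no degree index changes, so we are in the situation of Notation~\ref{not:integers}; we take $K$ to be the number field in the remaining notation blocks, and assume (as the hypothesis allows) that the bound $p>[K^{\Gal}:\Q]$ holds for this $K$.

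Next I would build the auxiliary fields from the three arithmetic conditions on $K_{\infty}$. Since $p$ is odd with $i_{K_{\infty}}(p)=0$ and $p>[K^{\Gal}:\Q]$, take $E$ to be a cyclic number field of prime degree $n_E=p$ (e.g.\ a subfield of a cyclotomic field) with an integral unit generator $\mu_E$; the coprimality $([M:K],n_E)=1$ for every number field $M$ with $K\subseteq M\subset K_{\infty}$ is immediate from $i_{K_{\infty}}(p)=0$, and $n_E>[K^{\Gal}:\Q]$ is precisely the bound required to apply Corollary 7.6.1 of \cite{Sh34} in Notation~\ref{not:bigringtoreal}. Since $i_{K_{\infty}}(2)=0$, every such $M$ has odd degree over $K$, which is the assumption of~\ref{not:degree2bigrings} forcing the primes of $\calZ_M$ to stay inert in $GM/M$; combined with the given $[GK_{\infty}:K_{\infty}]=2$, this lets me present $GK_{\infty}$ as $G\cdot K_{\infty}$ for a quadratic number field $G=K(\sqrt{a_G})$, while the Strong Approximation argument of the proposition preceding Theorem~\ref{thm:2} supplies the totally positive non-square $d_H$, the field $H=K(\delta_H)$, and the two ramified auxiliary primes $\qq_1,\qq_2$, with all the sign compatibilities of~\ref{not:ext2integers} and~\ref{not:degree2bigrings}. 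Finally I would pick a totally real number field $\bar E$ of prime degree $n_{\bar E}>[K^{\Gal}:\Q]$ with $(n_{\bar E},p)=1$, and let $\calN_K$ consist of the $K$-primes that are inert in $\bar EK/K$, have no relative degree one factor in $EK/K$, do not split in $G/K$, and do not divide $\disc R$; by the Chebotarev density theorem these form a positive-density set, so $\calN_K$, and hence $\calZ_K=\calT_K\cup\calN_K\cup\calS_K$, is infinite.

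With this data in hand, part~(1) follows: Theorem~\ref{thm:2} defines $O_{K_{\infty},\calZ_{K_{\infty}}}\cap\Q$ inside $O_{GK_{\infty},\calZ_{GK_{\infty}}}$ by a polynomial over $O_K$, Corollary~\ref{cor:downreal2} upgrades this to a Diophantine definition of $O_{GK_{\infty},\calZ_{GK_{\infty}}}\cap\Q$, and Corollary~\ref{cor:downreal2.1} then yields an existential definition of $\Z$ and the unsolvability of HTP over $O_{GK_{\infty},\hat\calZ_{K_{\infty}}\cup\calT_K\cup\calS_K}$; since $\hat\calZ_K$ is obtained from the infinite set $\calZ_K$ by deleting at most one prime from each finite conjugacy class it is again infinite, so this ring is a big subring of $GK_{\infty}$. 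For part~(2), given a small subring I would run the same construction with $\calV_K$ --- which served only to produce large prime sets --- taken finite and $\calZ_K$ taken to be the prescribed finite set (after relaxing the conditions on $\calS_K$ exactly as in Section~\ref{sec:real}); the ``bad'' primes $\calE_K\subseteq\calS_K$ do no harm, because $p$ has finite degree index, so Corollary~\ref{cor:deg} shows every finite set of primes of a number field inside $K_{\infty}$, and inside $GK_{\infty}$ as well (note $GK_{\infty}$ is normal over $K$ with $i_{GK_{\infty}}(p)=0$, since $\Gal(GK_{\infty}/K)\cong\Z/2\times\Gal(K_{\infty}/K)$), is boundable, and the integrality data of Proposition~\ref{prop:inffinmany} removes them. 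Then $O_{GK_{\infty},\calZ_{GK_{\infty}}}\cap\Q$ is a genuinely small subring of $\Q$, so $\Z$ is Diophantine over it by Proposition~\ref{prop:finmany}, hence over $O_{GK_{\infty},\calZ_{GK_{\infty}}}$, and HTP is unsolvable there by the elementary lemma of the introduction; a small subring of $GK_{\infty}$ not presented in this $K$-rational way is reduced to the above by enlarging its defining number field to contain $\sqrt{a_G}$ and using that integral closures of small rings are small (small subrings already contained in $K_{\infty}$ are also covered directly by Theorem~\ref{thm:smallring}).

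The routine part is the reference-chasing. The delicate point is the simultaneous Strong Approximation bookkeeping in the second step: producing $a_G$, $d_H$, $H$ and the auxiliary primes so that the sign conditions of~\ref{not:ext2integers}, the non-splitting and unramifiedness requirements of~\ref{not:degree2bigrings}, and the absence of new roots of unity in $EHGK_{\infty}/EGK_{\infty}$ all hold at once, and --- for part~(2) --- making the passage to an arbitrary finite $\calS_K$ go through when $\calS_K$ need not consist of primes without relative degree one factor in $E$. A secondary care is arranging the number field $K$ in the first step to carry the curve data while still satisfying $p>[K^{\Gal}:\Q]$. All of these are carried out in \cite{Sh36} and \cite{Sh34}, so in practice the proof reduces to quoting those arguments over the present, more general, field $K_{\infty}$.
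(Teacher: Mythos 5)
Your proposal follows essentially the same route as the paper's proof: reduce to Theorem \ref{thm:2} and Corollaries \ref{cor:downreal2}--\ref{cor:downreal2.1} by building $E$, $\bar E$, $H$ and the prime sets from the degree-index hypotheses (with Chebotarev supplying an infinite $\calN_K$), absorb the finitely many ``bad'' primes of $\calT_K\cup\calS_K$ by boundability, and for small rings descend to $O_{K,\calS_K}$ and invoke Proposition \ref{prop:finmany}. The only notable difference is that the paper additionally chooses $E$ via Lemma A9 of \cite{Sh36} so that no prime of $\calT_K\cup\calS_K$ splits in $EK/K$, whereas you rely wholly on the boundability mechanism for those primes; since the paper itself falls back on that same mechanism for the remaining bad primes, the two arguments coincide in substance.
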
%
\begin{proof}%
\be%
\item Let $K \subset K_{\infty}$ and $p$ be as in the statement of the theorem. Next pick a totally
real cyclic extension $\bar E$ of $\Q$ so that $[\bar E:\Q]$ is a prime number greater than
$[K^{\Gal}:\Q]$ and is different from $p$. Let $\calM_K$ be a set of primes inert in the extension
$\bar EK/K$. Let $\calT_K$ be defined as in Notation and Assumptions \ref{not:more} so that $O_{K,
\calM_K \cup \calT_K} \cap \Q$ has a Diophantine definition over $O_{K, \calM_K \cup \calT_K}$, and
let $\calS_K$ be an arbitrary set of primes. Then by Lemma A9 of \cite{Sh36} there exists a totally
real cyclic extension $E$ of $\Q$ of degree $p$ such that no prime of $\calT_K \cup \calS_K$ splits
in the extension $EK/K$. Further $E$ and $\bar E$ will satisfy Notation and Assumptions
\ref{not:biggerrings} and \ref{not:more}. Let $\calN_K \subset \calM_K$ be the set of
$\calM_K$-primes inert in the extension $EGK/K$. Now let $\calZ_K=\calN_K \cup \calS_K \cup
\calT_K$. This set can be infinite since it can contain all the primes inert in the cyclic
extension $E\bar EGK/K$. It might also happen that some primes of $\calT_K$ or $\calS_K$ split in
$GK/K$ or divide the discriminant of $R(X)$.  In this case we will have to use the fact that we can
bound any finite set of primes in $GK_{\infty}$ as in the totally real case.  Now observe that minus
this complication,  $\calZ_K$ satisfies Notation and Assumptions \ref{not:more} and thus the first
assertion of the lemma follows.

\item This assertion of the lemma can be handled in almost the same way as the first assertion.
Indeed suppose we start in $O_{GK_{\infty}, \calS_{GK_{\infty}}}$ instead of $O_{GK_{\infty},
\calZ_{GK_{\infty}}}$. Then the only difference will be that when we ``descend" to $K$ we will find
ourselves in the ring $O_{K,\calS_K}$.  Thus, we can use Proposition \ref{prop:finmany}  to take
the ``integer'' route down to $\Z$ without worrying about primes in $\calT_K$.
\ee%
\end{proof}%

 If we assume additionally that the set of rational primes with 0 degree index with respect to
$K_{\infty}$ is infinite, then we have a simplified version of the statements above.
\begin{theorem}%
\label{thm:bigrings3}
 Let $K_{\infty}$ be a totally real possibly infinite algebraic extension of $\Q$, normal over some
number field. Let $U_{\infty}$ be a finite extension of $K_{\infty}$ such that there exists an
elliptic curve ${\tt E}$ defined over $U_{\infty}$ with ${\tt E}(U_{\infty})$ finitely generated.
Let $GK_{\infty}$ be an extension of degree 2 over $K_{\infty}$. Assume that the set of rational
primes with 0 degree index with respect to $K_{\infty}$ is infinite and includes 2.  Then for
every $\varepsilon >0$ there exists a number field $K \subset K_{\infty}$ and a set $\hat \calZ_K
\subset \calP(K)$ of density (natural or Dirichlet) bigger than $1/2 -\varepsilon$ such that $\Z$
is existentially definable and HTP is unsolvable in the integral closure of $O_{K,\hat \calZ_K}$ in
$GK_{\infty}$.
\end{theorem}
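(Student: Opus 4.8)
\emph{Proof sketch.}
The plan is to re-run the construction behind Theorem~\ref{thm:bigrings2} — Theorem~\ref{thm:2} followed by the descent of Corollaries~\ref{cor:downreal2} and~\ref{cor:downreal2.1} — while using the hypothesis that infinitely many rational primes have $0$ degree index with respect to $K_\infty$ to make the resulting prime set of density arbitrarily close to $1/2$. Here $1/2$ is the natural ceiling: Theorem~\ref{thm:2} applies only to sets $\calZ_K\subseteq\calV_K$ whose primes do not split in the quadratic extension $G/K$, so every admissible $\calZ_K$ has density at most $1/2$.

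First I would fix $\varepsilon>0$ and choose a number field $K\subset K_\infty$ with $K_\infty$ normal over $K$ (possible since $K_\infty$ is normal over some number field), of degree large enough that $[K^{\Gal}:\Q]^{-1}<\varepsilon/4$ (possible since $K_\infty$ is infinite), and containing the data of the quadratic layer: an $a_G$ with $GK_\infty=K_\infty(\sqrt{a_G})$, $[GK_\infty:K_\infty]=2$, a suitable $d_H$ and $H=K(\sqrt{d_H})$ as furnished by the proposition on the existence of $H$ preceding Proposition~\ref{prop:norm}, and two $K$-primes over distinct rational primes unramified in $K_\infty$. Then, using the hypothesis, I would pick two distinct odd rational primes $n_E,n_{\bar E}$ of $0$ degree index with respect to $K_\infty$, both exceeding $[K^{\Gal}:\Q]$ and large enough that $\tfrac12\bigl(1-\tfrac1{n_E}\bigr)\bigl(1-\tfrac1{n_{\bar E}}\bigr)>\tfrac12-\tfrac\varepsilon4$. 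Since $n_E,n_{\bar E}$ have $0$ degree index we get $([M:K],n_E)=([M:K],n_{\bar E})=1$ for every number field $M$ with $K\subseteq M\subset K_\infty$, and since $2$ has $0$ degree index, $[M:K]$ is odd for all such $M$; hence all the standing assumptions of Notation and Assumptions~\ref{not:biggerrings}, \ref{not:bigringtoreal}, \ref{not:ext2integers}, \ref{not:degree2bigrings}, \ref{not:more} can be arranged, with $E,\bar E$ taken to be totally real cyclic extensions of $\Q$ of degrees $n_E,n_{\bar E}$, chosen (by Lemma~A9 of \cite{Sh36}, as in the proof of Theorem~\ref{thm:bigrings2}) pairwise linearly disjoint from one another and from $K^{\Gal}$, ramified only at rational primes unramified in $K_\infty$, and with no prime of the relevant finite exceptional sets splitting in $EK/K$.

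Now let $\calN_K$ be the set of $K$-primes inert in the cyclic degree-$2n_En_{\bar E}$ extension $E\bar EGK/K$; by Chebotarev it has density $\tfrac12\bigl(1-\tfrac1{n_E}\bigr)\bigl(1-\tfrac1{n_{\bar E}}\bigr)>\tfrac12-\tfrac\varepsilon4$ in $\calP(K)$. Taking $\calS_K=\emptyset$, let $\calT_K$ be the auxiliary set furnished by Corollary~7.6.1 of \cite{Sh34} so that $O_{K,\calN_K\cup\calT_K}\cap\Q$ is Diophantine over $O_{K,\calN_K\cup\calT_K}$, and put $\calZ_K=\calN_K\cup\calT_K$. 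Then, exactly as in the proof of Theorem~\ref{thm:bigrings2} — bounding in $K_\infty$, if necessary, the finitely many primes of $\calT_K$ that split in $G/K$ or divide the discriminant of $R$, which is legitimate since $n_E$ is a prime of finite degree index and so Corollary~\ref{cor:deg} applies — Theorem~\ref{thm:2} gives a Diophantine definition of $O_{K_\infty,\calZ_{K_\infty}}$ over $O_{GK_\infty,\calZ_{GK_\infty}}$, and Corollaries~\ref{cor:downreal2} and~\ref{cor:downreal2.1} yield that $\Z$ is existentially definable and HTP unsolvable over the integral closure of $O_{K,\hat\calZ_K}$ in $GK_\infty$, where $\hat\calZ_K$ is the ``hatted'' prime set (the truncation needed to make the ring of $\Q$-constants small). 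Finally I would verify the density: by the density computation of Section~B.5 of \cite{Sh34} (compare the proof of Corollary~\ref{cor:downreal}), passing from $\calN_K$ to $\hat\calN_K\subseteq\hat\calZ_K$ and discarding the finite exceptional set decreases the density by a factor that tends to $1$ as $[K^{\Gal}:\Q]\to\infty$, so, having arranged $[K^{\Gal}:\Q]^{-1}<\varepsilon/4$, the set $\hat\calZ_K$ still has density $>1/2-\varepsilon$.

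The step I expect to be the main obstacle is this last density accounting: one must check that the unavoidable ``hat'' truncation — forced on us precisely so that the ring of $\Q$-constants is a small ring, allowing the final passage to $\Z$ via Proposition~\ref{prop:finmany} — together with bounding the finite exceptional sets, lowers the density by only $o(1)$ as $[K:\Q],n_E,n_{\bar E}$ grow. This is exactly where the hypothesis of infinitely many $0$-degree-index primes is essential: it lets $n_E$ and $n_{\bar E}$ be simultaneously large, distinct, and coprime to $[M:K]$ for every finite layer $M$ of $K_\infty$ — which is what both keeps the degree-$2$ big-ring machinery applicable over all of $K_\infty$ and drives $\tfrac12\bigl(1-\tfrac1{n_E}\bigr)\bigl(1-\tfrac1{n_{\bar E}}\bigr)$ above $\tfrac12-\varepsilon/2$.
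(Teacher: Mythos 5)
Your proposal is correct and matches the paper's intended argument: the paper itself disposes of Theorem~\ref{thm:bigrings3} with a one-line appeal to the analogous Theorem~11.8 of \cite{Sh36}, and what you reconstruct --- rerunning Theorem~\ref{thm:2} and Corollaries~\ref{cor:downreal2}--\ref{cor:downreal2.1} with auxiliary cyclic extensions $E,\bar E$ of large prime degrees of $0$ degree index, then doing the Chebotarev and ``hat''-truncation density bookkeeping --- is exactly that argument. The only nit is that the truncation loss is controlled by $1/[K:\Q]$ rather than $1/[K^{\Gal}:\Q]$, so you should impose $[K:\Q]>4/\varepsilon$ directly when choosing $K$ (a strictly weaker requirement than the one you wrote, and equally available since $K_{\infty}/\Q$ is infinite).
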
%
\begin{proof}%
 The proof of this part of the theorem is completely analogous to the proof of Theorem 11.8 of
\cite{Sh36}.

\end{proof}%

\section{Using Rank One Curves in Infinite Extensions.}
In this section we will see to what extent we can duplicate the results in \cite{Po2} and \cite{PS}
over an infinite algebraic extension of $\Q$.  Some of our  assumptions and notation in this
section will be different from the ones we used above.

\begin{notationassumption}%
The following assumptions and notation will be different or new in this section.%
\begin{itemize}%
\item Let $K$ be a number field. (Note that we no longer assume that $K$ is totally real.)
\item Let $K_{\infty}$ be an algebraic extension of $K$.%
\item Let ${\tt E}$ be an elliptic curve defined over $K$ such that $\rank({\tt E}(K))=1$ and
${\tt E}(K_{\infty})={\tt E}(K)$. (Note the new rank assumption.)
\item Let $Q$ be a generator of ${\tt E}(K)$ modulo the torsion group.
\item For a rational prime $t$ let $\F_t$ be a finite field of $t$ elements.
\item Let $p \not = q \in \calP(\Q) \setminus \{2\}$. Let $\pp, \qq$ be $K$-primes above $p$ and $q$ respectively
with $f(\pp/p)=f(\qq/q)=1$. Assume additionally that $y_1(P) \not \equiv 0 \mod \pp$ and $y_1(P) \not \equiv 0 \mod
\qq$ while integrality at $\pp$ and $\qq$ is definable in $K_{\infty}$. %
\item Let $M=\#{\tt E}(\F_p)\#{\tt E}(\F_q)pq$.%
\end{itemize}%
\end{notationassumption}%
As we will see below we will be able to transfer almost seamlessly facts from the finite case to the
infinite case. First we review some technical details of the original results in \cite{Po2} and
\cite{PS}.

\begin{theorem}%
There exists a sequence of rational primes $\{\ell_i\}$, and sets $\calA_K, \calB_K \subset \calP(K)$ satisfying
the following properties.
\be%
\item The natural (and Dirichlet) density of $\calA_K$ and $\calB_K$ is 0.%
\item $\calA_K \cap \calB_K = \emptyset$.%
\item For any $\calW_K \subset \calP(K)$ such that $\calA_K \subseteq \calW_K$ and such that $\calW_K \cap
\calB_K=\emptyset$ we have that ${\tt E}(O_{K,\calW_K})=\{\pm\ell_iP:  i\in \Z_{>0}\} \cup \{\mbox{ finite
set }\}$.%
\item The set $\{x_{\ell_i}(Q): i \in \Z_{>0}\}$ is discrete in every $\pp$-adic and archimedean topology of $K$.
\ee%
\end{theorem}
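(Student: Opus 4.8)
The plan is to obtain the statement by transporting, essentially unchanged, the constructions of Poonen in \cite{Po2} and of Pheidas and the author in \cite{PS} down to the field $K$. The only feature special to the present situation is the hypothesis ${\tt E}(K_{\infty})={\tt E}(K)$: it guarantees that every point of ${\tt E}$ that is integral over any ring $O_{K_{\infty},\calW_{K_{\infty}}}$ occurring below already lies over the number field $K$, so the whole argument takes place at the level of $K$ and the infinite extension plays no role. Accordingly the section is mostly a verification that nothing in \cite{Po2} or \cite{PS} is disturbed by this passage.

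The mechanism is the elliptic divisibility sequence formed by the denominator divisors $\mte{d}_K(x(nQ))$ of the multiples of $Q$. I would use three standard facts about it: (i) if $m\mid n$ then $\mte{d}_K(x(mQ))$ divides $\mte{d}_K(x(nQ))$ in the semigroup of integral divisors of $K$ (the divisibility property of elliptic divisibility sequences; cf.\ Lemma \ref{le:Po3}, applied to a suitable multiple of $Q$); (ii) by standard canonical-height estimates, $\log|{\mathbf N}_{K/\Q}\mte{d}_K(x(nQ))|\sim c\,n^2$ for a positive constant $c$, so these denominators grow very quickly along a sequence of primes; (iii) the primitive-divisor theorem for elliptic divisibility sequences used in \cite{Po2}: for all but finitely many $n$, $\mte{d}_K(x(nQ))$ has a prime factor dividing none of the $\mte{d}_K(x(mQ))$ with $0<m<n$.

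With these, one builds $\ell_1<\ell_2<\cdots$ recursively, enlarging $\calA_K$ and $\calB_K$ at each stage. At stage $i$: choose a prime $\ell_i$ exceeding all earlier $\ell_j$ and, by the equidistribution of primes against the local ``angles'' of $Q$ exactly as in \cite{Po2}, chosen so that $\ell_iQ$ is forced toward the origin in each of the finitely many archimedean completions and in the finitely many $\pp$-adic completions named so far; adjoin to $\calA_K$ the primes in $\mte{d}_K(x(\ell_iQ))$; and for every integer $n$ with $|n|\le i$ and $n\notin\{\pm\ell_1,\dots,\pm\ell_i\}$ adjoin to $\calB_K$ one prime dividing $\mte{d}_K(x(nQ))$ but none of the $\mte{d}_K(x(\ell_jQ))$, which exists for all but finitely many such $n$ by (iii), the exceptional small $n$ making up the finite set of part (3). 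The choices are kept disjoint by construction, giving (2); by the density estimate of \cite{Po2}, in which (ii) and the fast growth of $\{\ell_i\}$ enter, both $\calA_K$ and $\calB_K$ can be arranged to have natural and Dirichlet density $0$, giving (1); for (3), $\calA_K\subseteq\calW_K$ puts every $\ell_iQ$ in ${\tt E}(O_{K,\calW_K})$, while a point $nQ+(\mathrm{torsion})\in{\tt E}(O_{K,\calW_K})$ with $|n|$ large and $n\ne\pm\ell_j$ would make $\mte{d}_K(x(nQ))$ supported on $\calW_K$, contradicting the $\calB_K$-prime it carries; and for (4), the archimedean approximation imposed on the $\ell_i$ sends $|x(\ell_iQ)|_v\to\infty$ at each archimedean place, while at a non-archimedean place $\pp$ one argues, as in \cite{PS}, that $\ord_\pp(x(\ell_iQ)-x(\ell_jQ))$ stays bounded as $j$ varies with $i$ fixed (by applying (i)--(iii) to $\ell_iQ\pm\ell_jQ$), so that every element of the set is isolated.

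The step I expect to be the main obstacle is this recursion itself: producing one sequence $\{\ell_i\}$ that simultaneously drives the denominators of $x(\ell_iQ)$ into the sparse set $\calA_K$, leaves room for fresh primitive divisors to fill $\calB_K$ for every excluded multiple, and satisfies the local approximation conditions that force discreteness at all completions at once. This is exactly the content of \cite{Po2} and \cite{PS}; the role of the present section is the routine but necessary remark that, since ${\tt E}(K_{\infty})={\tt E}(K)$, the same sequence and the same prime sets continue to work over the infinite extension.
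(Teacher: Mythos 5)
Your proposal is correct and matches the paper's approach: the paper offers no independent proof of this theorem, stating only that it ``follows Lemma 3.10 and Proposition 3.15 of \cite{PS},'' and your sketch is a faithful reconstruction of the construction in \cite{Po2} and \cite{PS} that those citations invoke (elliptic divisibility sequence, primitive divisors, density-zero sets $\calA_K$, $\calB_K$ built recursively, local approximation for discreteness). Note only that the theorem as stated lives entirely over the number field $K$, so the hypothesis ${\tt E}(K_{\infty})={\tt E}(K)$ is not needed here but only for the corollary that transfers the statement to $K_{\infty}$.
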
%
This theorem follows Lemma 3.10 and Proposition 3.15 of \cite{PS}.  Now, given our assumptions on the behavior of
${\tt E}$ over $K_{\infty}$ and the fact that any non-archimedean or archimedean topology of $K_{\infty}$ must be an
extension of the corresponding topology on $K$, we immediately obtain the following corollary.
\begin{corollary}%
There exists a sequence of rational primes $\{\ell_i\}$, and sets $\calA_K, \calB_K \subset \calP(K)$ satisfying
the following properties.
\be%
\item The natural (and Dirichlet) density of $\calA_K$ and $\calB_K$ is 0.%
\item $\calA_K \cap \calB_K = \emptyset$.%
\item For any $\calW_K \subset \calP(K)$ such that $\calA_K \subseteq \calW_K$ and such that $\calW_K \cap
\calB_K=\emptyset$ we have that ${\tt E}(O_{K_{\infty},\calW_{K_{\infty}}})=\{\pm\ell_iP:  i\in \Z_{>0}\} \cup \{\mbox{
finite set }\}$.%
\item The set $\{x_{\ell_i}(Q): i \in \Z_{>0}\}$ is discrete in every $\pp$-adic and archimedean topology of
$K_{\infty}$.
\ee%
\end{corollary}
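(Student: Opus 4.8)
The plan is to carry over the sequence $\{\ell_i\}$ and the prime sets $\calA_K, \calB_K$ produced by the preceding Theorem without any change and to verify that each of the four assertions survives passage from $K$ to $K_{\infty}$. Assertions (1) and (2) concern only sets of primes of the number field $K$, so they hold for the very same $\calA_K, \calB_K$ by the Theorem; there is nothing to add. Thus the whole content lies in (3) and (4), and both reduce to a single structural input, the hypothesis ${\tt E}(K_{\infty}) = {\tt E}(K)$, together with the compatibility of integrality conditions and of the place topologies between $K$ and $K_{\infty}$.

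For (3), fix $\calW_K$ with $\calA_K \subseteq \calW_K$ and $\calW_K \cap \calB_K = \emptyset$. Since $O_{K_{\infty},\calW_{K_{\infty}}} \subseteq K_{\infty}$, every point of ${\tt E}(O_{K_{\infty},\calW_{K_{\infty}}})$ lies in ${\tt E}(K_{\infty})$, which equals ${\tt E}(K)$ by hypothesis, so all such points have affine coordinates in $K$. Conversely a point of ${\tt E}(K)$ lies in ${\tt E}(O_{K_{\infty},\calW_{K_{\infty}}})$ precisely when its affine coordinates lie in $O_{K_{\infty},\calW_{K_{\infty}}} \cap K$. By definition $O_{K_{\infty},\calW_{K_{\infty}}}$ is the integral closure of $O_{K,\calW_K}$ in $K_{\infty}$, and $O_{K,\calW_K}$ is already integrally closed in $K$ (it is a localization of a Dedekind domain), so this intersection is just $O_{K,\calW_K}$. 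Hence ${\tt E}(O_{K_{\infty},\calW_{K_{\infty}}}) = {\tt E}(O_{K,\calW_K})$, and the right-hand side is $\{\pm\ell_iP : i \in \Z_{>0}\} \cup \{\mbox{finite set}\}$ by the third assertion of the Theorem.

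For (4), the set $\{x_{\ell_i}(Q) : i \in \Z_{>0}\}$ consists of $x$-coordinates of points of ${\tt E}(K)$, hence is a subset of $K$. Every non-archimedean (respectively archimedean) place of $K_{\infty}$ restricts to a place $\pp$ (respectively an archimedean place) of $K$, and the valuation topology it induces on $K$ is exactly the $\pp$-adic (respectively archimedean) topology of $K$; equivalently, the topology on $K$ is the subspace topology inherited from $K_{\infty}$. Given a point $s$ of the set, pick by the fourth assertion of the Theorem a $K$-neighborhood $U$ of $s$, for the place below, with $U \cap \{x_{\ell_i}(Q)\} = \{s\}$; writing $U = V \cap K$ for a $K_{\infty}$-open $V$ and using that the whole set lies in $K$ gives $V \cap \{x_{\ell_i}(Q)\} = \{s\}$. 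So the set is discrete in every $\pp$-adic and archimedean topology of $K_{\infty}$.

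There is no genuine obstacle here; the one point that deserves care is (3), where one should invoke the precise definition of $O_{K_{\infty},\calW_{K_{\infty}}}$ as the integral closure of $O_{K,\calW_K}$ in $K_{\infty}$ (or, equivalently, the identity $O_{K_{\infty},\calW_{K_{\infty}}} \cap K = O_{K,\calW_K}$) in order to identify ${\tt E}(O_{K_{\infty},\calW_{K_{\infty}}})$ with ${\tt E}(O_{K,\calW_K})$. Everything else is immediate once the rank hypothesis is read as the equality of Mordell--Weil groups ${\tt E}(K_{\infty}) = {\tt E}(K)$.
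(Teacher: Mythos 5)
Your proposal is correct and follows essentially the same route as the paper, which derives the corollary immediately from the preceding theorem using the hypothesis ${\tt E}(K_{\infty})={\tt E}(K)$ and the fact that every archimedean or non-archimedean topology of $K_{\infty}$ restricts to the corresponding topology on $K$. You merely spell out the two details the paper leaves implicit, namely the identity $O_{K_{\infty},\calW_{K_{\infty}}}\cap K=O_{K,\calW_K}$ for part (3) and the subspace-topology argument for part (4), both of which are handled correctly.
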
%
We now proceed to another result in \cite{PS} which constructs (indirectly) a Diophantine model of
$(\Z, +, \times)$ over $O_{K,\calW_K}$, where $\calW_K$ will satisfy somewhat different conditions
as will be described below. We start with a sequence of propositions from \cite{PS} (Lemma 3.16,
Corollary 3.17, Lemma 3.20):

\begin{lemma}%
Let $B=\{\,2^n+n^2 : n \in \Z_{\ge 1}\,\}$. Then multiplication admits a positive existential
definition in the structure $\calZ:=(\Z_{\ge 1},1,+,B)$. (Here $B$ is considered as an unary
predicate.)
 \end{lemma}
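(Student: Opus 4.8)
The plan is to reduce the lemma to defining the graph of the squaring function. Concretely, I would first produce a positive-existential formula $S(m,s)$ in $\calZ$ defining the set $\{(m,m^2):m\in\Z_{\geq 1}\}$, and then define multiplication by $z=xy\iff\exists a\,\exists b\,\exists c\,\bigl(S(x,a)\wedge S(y,b)\wedge S(x+y,c)\wedge c=a+b+z+z\bigr)$, which is correct because $a=x^2$, $b=y^2$, $c=(x+y)^2$ are forced and $(x+y)^2=x^2+y^2+2xy$. Everything else is setup for the construction of $S$; all the arithmetic relations used --- fixed constants, $<$, $\leq$, $\geq$, and linear equations --- are positive-existential over $(\Z_{\geq 1},1,+)$ (concrete constants via repeated $+1$; $u<v\iff\exists t\,(u+t=v)$, and similarly for $\leq,\geq$).

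Write $b_m=2^m+m^2$, so that $B=\{b_1<b_2<b_3<\cdots\}$. The first real step is a positive-existential successor relation $\mathrm{Succ}(x,y)$ on $B$. I would use the two gap estimates: for $m\geq 3$ one has $b_{m+1}-b_m=2^m+2m+1<2^m+m^2\leq b_m$, so $b_{m+1}\leq 2b_m$; while for every $\ell\geq m+2$ one has $b_\ell\geq b_{m+2}=4\cdot 2^m+(m+2)^2>2\cdot 2^m+2m^2=2b_m$. Hence for $m\geq 3$ the element $b_{m+1}$ is the \emph{unique} member of $B$ in the interval $(b_m,\,2b_m]$, so ``immediate successor in $B$'' is captured without a universal quantifier: let $\mathrm{Succ}(x,y)$ be the disjunction of $(x=3\wedge y=8)$, $(x=8\wedge y=17)$, and $(x\geq 17\wedge x\in B\wedge y\in B\wedge x<y\wedge y\leq 2x)$, using $3=b_1$, $8=b_2$, $17=b_3$ and using the predicate $B$ only positively. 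The three small indices are exactly where $b_{m+1}\leq 2b_m$ fails, so they must be listed explicitly.

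Next, from a $\mathrm{Succ}$-consecutive triple $u=b_m$, $v=b_{m+1}$, $w=b_{m+2}$ one reads off $m$, then $2^m$, then $m^2$ by three successive linear identities: $2m+2u+w=3v+1$; then $v=u+2^m+2m+1$; then $u=2^m+m^2$. Since $\mathrm{Succ}(u,v)\wedge\mathrm{Succ}(v,w)$ forces $(u,v,w)=(b_m,b_{m+1},b_{m+2})$ for some $m$, and the first identity then forces the free variable to equal $m$, the set $S$ is defined by $\exists u\,\exists v\,\exists w\,\exists e\,\bigl(\mathrm{Succ}(u,v)\wedge\mathrm{Succ}(v,w)\wedge m+m+u+u+w=v+v+v+1\wedge u+e+m+m+1=v\wedge u=e+s\bigr)$ with output pair $(m,s)$. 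One then checks both inclusions: for each $m\geq 1$ the tuple $(b_m,b_{m+1},b_{m+2},2^m)$ is a witness, yielding $s=m^2$; and conversely these equations pin down $e=2^m$ and then $s=m^2$ uniquely. Combined with the multiplication formula from the first paragraph, this proves the lemma.

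The step I expect to be the crux is the construction of $\mathrm{Succ}$: ``$y$ is the next element of $B$ after $x$'' is a priori $\Pi_1$, and the whole trick is that the geometric growth of the gaps in $B$ (they roughly double, while $b_m$ itself exceeds $2^m$ only by the lower-order term $m^2$) lets one replace ``nothing of $B$ lies strictly between'' by the positive-existential bound $y\leq 2x$. The residual nuisance is that this clean bound fails for the first few terms, forcing the explicit enumeration of $\mathrm{Succ}(3,8)$ and $\mathrm{Succ}(8,17)$; one should verify that no larger pair falls into those special cases (it does not, as the general clause requires $x\geq 17$) and that the displayed estimates hold for all $m\geq 3$.
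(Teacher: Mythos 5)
Your proof is correct. Note that the paper itself gives no proof of this lemma --- it is quoted verbatim from \cite{PS} (Lemma 3.16) --- and your argument is exactly the standard one behind that result: the chain $b_m < b_{m+1} \le 2b_m < b_{m+2}$ (valid for $m \ge 3$, with the two exceptional pairs $(3,8)$ and $(8,17)$ handled by explicit disjuncts) turns the a priori universal ``successor in $B$'' relation into a positive-existential one, after which $m$, $2^m$ and $m^2$ fall out of linear identities on a consecutive triple, and multiplication reduces to squaring via $(x+y)^2=x^2+y^2+2xy$. All the displayed identities and inequalities check out; the only nit is that the doubling bound $b_{m+1}\le 2b_m$ fails at exactly two indices ($m=1,2$), not ``three'' as you say in passing, which is consistent with the two special pairs you actually list.
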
%

\begin{corollary}%
\label{C:positive existential model}%
The structure $(\Z,0,1,+,\cdot)$ admits a positive existential model in the structure $\calZ$.
\end{corollary}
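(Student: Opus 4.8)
The plan is to read off the corollary from the preceding Lemma via the classical presentation of $\Z$ as the group of formal differences of positive integers. Write $\operatorname{Mult}(u,v,w)$ for the positive existential formula over $\calZ$, furnished by the Lemma, that defines $\{(u,v,w)\in\Z_{\ge 1}^3 : w=uv\}$. I take the interpreting domain to be $D=\Z_{\ge 1}^2$, thinking of a pair $(a,b)$ as a name for the integer $a-b$, with the equivalence relation $(a,b)\sim(c,d)$ given by $a+d=b+c$ — a formula in $+$ alone, hence positive existential. The distinguished constants are interpreted by the definable classes $0\mapsto\{(a,b):a=b\}$ and $1\mapsto\{(a,b):a=b+1\}$, the latter using the constant $1$ and $+$ of $\calZ$; both are positive existential.

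Next I would write down the operation graphs. Addition is represented by the ternary relation on $D$ defined by $a+c+f=b+d+e$, which expresses $(a-b)+(c-d)=e-f$ and is quantifier-free in $+$. Multiplication is represented by the relation expressing $(a-b)(c-d)=e-f$, i.e.\ $ac+bd+f=ad+bc+e$; introducing existentially quantified witnesses for the four products this becomes $\exists p_1,p_2,p_3,p_4\,\big(\operatorname{Mult}(a,c,p_1)\wedge\operatorname{Mult}(b,d,p_2)\wedge\operatorname{Mult}(a,d,p_3)\wedge\operatorname{Mult}(b,c,p_4)\wedge p_1+p_2+f=p_3+p_4+e\big)$, which is again positive existential because the class of positive existential formulas is closed under conjunction and existential quantification and $\operatorname{Mult}$ occurs positively. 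The remaining verifications are routine and purely formal: $\sim$ is an equivalence relation and a congruence for the two operations just defined, and the map $(a,b)\mapsto a-b$ induces a ring isomorphism from $D/\!\!\sim$, equipped with the induced operations and the interpreted constants, onto $(\Z,0,1,+,\cdot)$.

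There is no genuine obstacle here: the entire substance of the statement is the Lemma, which guarantees that \emph{honest} multiplication on $\Z_{\ge 1}$ — not merely some ad hoc coding — is positive existentially available in $\calZ$. Once that is in hand, the construction above is the standard Grothendieck-group presentation of $\Z$, and the only thing to keep an eye on is that each defining formula is built solely from atomic formulas of $\calZ$, the predicate $B$ occurring positively, conjunctions, and existential quantifiers — which is exactly what the display above exhibits. I would therefore state the corollary as an immediate consequence of the Lemma, writing out only the equivalence relation $\sim$ together with the three defining formulas for the constants, for addition, and for multiplication.
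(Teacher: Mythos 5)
Your proof is correct and is exactly the standard argument (the Grothendieck/difference-of-positives interpretation of $\Z$ in $\Z_{\ge 1}$, with the Lemma supplying the positive existential graph of multiplication on $\Z_{\ge 1}$); the paper itself gives no proof, quoting this as Corollary 3.17 of \cite{PS}, and your construction is the one that source uses. The only point worth a sentence in a write-up is that if the definition of ``positive existential model'' demands a bijection rather than an interpretation modulo a definable equivalence relation, you should restrict to the positive existentially definable set of representatives with $a=1$ or $b=1$.
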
   %
\begin{lemma}
\label{L:xdifference}%
Let $\ttt, t$ stand for $\pp, p$ or $\qq, q$ respectively.  Then if $m \in \Z_{\ge 1}$, we have that
\[%
\ord_\ttt(x_{mM+1}-x_1) = \ord_\ttt(x_{M+1}-x_1) + \ord_t m.
\]%
\end{lemma}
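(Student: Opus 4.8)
The plan is to reduce the whole identity to a computation in the formal group of ${\tt E}$ at $\ttt$, in the spirit of the finite-field version this lemma generalizes. Fix $\ttt\in\{\pp,\qq\}$ lying over the rational prime $t\in\{p,q\}$, write $v=\ord_\ttt$ (and recall $v(t)=1$ since $\ttt$ is a degree one prime), and set $x_n:=x(nQ)$. We use the standing facts that ${\tt E}$ has good reduction at $\ttt$ (excluding the finitely many bad primes is harmless, since only density-one prime sets are at stake), that both $\#{\tt E}(\F_t)$ and $t$ divide $M=\#{\tt E}(\F_p)\#{\tt E}(\F_q)pq$, and that $\ttt\nmid y(Q)$ (the hypothesis $y_1(P)\not\equiv 0$), so $Q$ reduces to a smooth point of order $>2$ and $x(Q),y(Q)\in\calO_\ttt$. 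Let ${\tt E}_1={\tt E}_1(K_\ttt)$ be the kernel of reduction mod $\ttt$, with its formal group $\hat E$ over $\calO_\ttt$ and parameter $z(R)=-x(R)/y(R)$, which identifies ${\tt E}_1$ with $\hat E(\mathfrak m_\ttt)$ (see \cite{Sil1}, VII.2). Since $\#{\tt E}(\F_t)\mid M$, the reduction of $MQ$ is $O$, so $R_m:=mMQ=[m](MQ)\in{\tt E}_1$ for all $m\ge1$; in particular $(mM+1)Q$ and $Q$ have the same smooth nonzero reduction, which already explains why $x_{mM+1}-x_1$ has positive $\ttt$-adic valuation.

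The argument rests on two estimates. First, a translation estimate: writing $x((mM+1)Q)-x(Q)=x(Q+R_m)-x(Q)$ and expanding the $x$-coordinate along the one-parameter formal subgroup through $Q$ via the addition formula, one obtains a power series identity $x(Q+R)-x(Q)=c\,z(R)+(\text{terms of higher order in }z(R))$ with all coefficients in $\calO_\ttt$, whose linear coefficient $c$ is (up to a unit) $2y(Q)+a_1x(Q)+a_3$ and hence a $\ttt$-adic unit precisely because $\ttt\nmid y(Q)$ and $t\neq2$; the normalization comes from invariance of $\omega=dx/(2y+a_1x+a_3)$ under translation. Consequently, for every $R\in{\tt E}_1$,
\[
\ord_\ttt\bigl(x(Q+R)-x(Q)\bigr)=\ord_\ttt\bigl(z(R)\bigr),
\]
so applying this with $R=R_m$ and with $R=R_1$ gives $\ord_\ttt(x_{mM+1}-x_1)=\ord_\ttt(z(R_m))$ and $\ord_\ttt(x_{M+1}-x_1)=\ord_\ttt(z(R_1))$. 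Second, a multiplication estimate in $\hat E$: since $R_m=[m](R_1)$ and $[m](z)=mz+(\text{higher order})$ over $\calO_\ttt$, writing $m=t^km'$ with $t\nmid m'$ we have $\ord_\ttt([m']z)=\ord_\ttt(z)$ (unit linear coefficient), while each application of $[t]$ raises the valuation by exactly $v(t)=1$. Here one uses $t\ge3$ together with the observation that $t\mid M$ forces $R_1=[t]\bigl((M/t)Q\bigr)$ with $(M/t)Q\in{\tt E}_1$, hence $\ord_\ttt(z(R_1))\ge2$, so the inequality $(t-1)\ord_\ttt(z)>v(t)$ that rules out cancellation between the two lowest-order terms of $[t]$ holds throughout the iteration (cf. \cite{Sil1}, IV.2, VII.2). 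Therefore $\ord_\ttt(z(R_m))=\ord_\ttt(z(R_1))+k=\ord_\ttt(z(R_1))+\ord_t m$.

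Combining the displays finishes the proof: $\ord_\ttt(x_{mM+1}-x_1)=\ord_\ttt(z(R_m))=\ord_\ttt(z(R_1))+\ord_t m=\ord_\ttt(x_{M+1}-x_1)+\ord_t m$, and since $\ttt$ is a degree one prime, $\ord_\ttt$ agrees with $\ord_t$ on $\Z$. I expect the main obstacle to be getting the two formal-group estimates exactly right under the hypotheses at hand: checking that $Q$ really reduces to a smooth point of order $>2$ so that the translation series has a $\ttt$-unit linear coefficient (this is where $\ttt\nmid y(Q)$ enters), and keeping $\ord_\ttt(z(R_1))$ large enough (the role of the factor $pq$ in $M$, and of $p,q\neq2$) that multiplication by the $t$-part of $m$ is governed by the linear term of $[t]$ rather than by its $z^{t}$-term. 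Both are standard manipulations with formal groups of elliptic curves over local fields; the bookkeeping is the only delicate point.
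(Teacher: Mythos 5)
Your proof is correct, and it is essentially the argument this lemma rests on in the literature: the paper itself does not reprove the statement but imports it verbatim as Lemma 3.20 of \cite{PS}, whose proof is exactly this formal-group computation at $\ttt$. The two estimates you isolate --- the translation series $x(Q+R)-x(Q)$ in $z(R)$ having integral coefficients and a $\ttt$-unit linear term because $\ttt\nmid y(Q)$ and $t$ is odd, and $[t]$ raising $\ord_\ttt(z)$ by exactly $1$ because $t\ge 3$ and $\ord_\ttt(z(R_1))\ge 1$ --- are precisely the points the cited proof turns on, so there is nothing to add beyond the (harmless) standing conventions that $\ttt$ is unramified and a prime of good reduction.
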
%
\begin{proposition}%
\label{prop:overK}
There exists a computable sequence of rational primes $\{\ell_i\}$, and sets $\calA_K, \calB_K \subset \calP(K)$
satisfying the following properties.
\be%
\item The natural (and Dirichlet) density of $\calA_K$ and $\calB_K$ is 0.%
\item $\calA_K \cap \calB_K = \emptyset$.%
\item For any $\calW_K \subset \calP(K)$ such that $\calA_K \subseteq \calW_K$ and such that $\calW_K \cap
\calB_K=\emptyset$ we have that ${\tt E}(O_{K_{\infty},\calW_{K_{\infty}}})={\tt E}(O_{K,\calW_K})=\{\pm\ell_iP:  i\in \Z_{>0}\}
\cup \{\mbox{ finite set }\}$.%
 \item   \label{it:order} The highest power of $p$ dividing $(\ell_i-1)/M$ is $p^i$, and  $ i \in B$
   iff and only if $q$ divides $(\ell_i-1)/M$%
\ee%
\end{proposition}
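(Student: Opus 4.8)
The plan is to build the sequence $\{\ell_i\}$ recursively, following Lemma 3.20 of \cite{PS}, taking $\calA_K$ and $\calB_K$ to be exactly the density zero sets furnished by the preceding corollary (the $K_{\infty}$-analogue of the theorem drawn from Lemma 3.10 and Proposition 3.15 of \cite{PS}), and imposing on each $\ell_i$ one additional congruence condition that produces item (\ref{it:order}). First I would note that, since ${\tt E}(K_{\infty})={\tt E}(K)$ by hypothesis and every non-archimedean prime of $K_{\infty}$ restricts to a prime of $K$, a point of ${\tt E}(K)$ lies in ${\tt E}(O_{K_{\infty},\calW_{K_{\infty}}})$ precisely when it lies in ${\tt E}(O_{K,\calW_K})$; hence the statement ${\tt E}(O_{K_{\infty},\calW_{K_{\infty}}})={\tt E}(O_{K,\calW_K})=\{\pm\ell_iP\}\cup(\mbox{finite set})$ reduces to its analogue over $K$, which is what \cite{PS} delivers. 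So only item (\ref{it:order}) needs new work.

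Write $p^{\alpha}\,\|\,M$ and $q^{\beta}\,\|\,M$. Assuming $\ell_1,\dots,\ell_{i-1}$ have been chosen, I would look for a rational prime $\ell_i$, larger than $\ell_{i-1}$ and than all the finitely many ``bad'' primes appearing in \cite{PS}, satisfying: (a) the splitting and Frobenius conditions in the relevant division fields of ${\tt E}$ and in $K$ that, exactly as in \cite{PS}, force ${\tt E}(O_{K,\calW_K})$ to have the claimed shape for every admissible $\calW_K$ and put the intended primes into $\calA_K$ and $\calB_K$ --- equivalently, that $\Frob_{\ell_i}$ lie in a prescribed conjugacy class of $\Gal(L/\Q)$ for a fixed number field $L$ unramified outside a finite set not containing $p$ or $q$; (b) $\ell_i\equiv 1\pmod M$; (c) $\ord_p(\ell_i-1)=\alpha+i$; (d) $\ord_q(\ell_i-1)\geq\beta+1$ if $i\in B$ and $\ord_q(\ell_i-1)=\beta$ if $i\notin B$. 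Conditions (b)--(d) amount to a single congruence on $\ell_i$ modulo $N_i:=p^{\alpha+i+1}q^{\beta+2}$, i.e.\ a condition on $\Frob_{\ell_i}$ in $\Gal(\Q(\zeta_{N_i})/\Q)$; they are internally consistent, and under them $(\ell_i-1)/M$ is an integer whose $p$-adic valuation is $i$ and whose $q$-adic valuation is positive exactly when $i\in B$, which is item (\ref{it:order}).

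The hard part will be showing that conditions (a) and (b)--(d) can be met simultaneously by infinitely many primes. I would argue, as in \cite{PS}, that the prescribed Frobenius class in $\Gal(L/\Q)$ is non-empty and that the intersection $L\cap\Q(\zeta_{N_i})$ is harmless, so that the combined condition on $\Frob_{\ell_i}$ in $\Gal(L\,\Q(\zeta_{N_i})/\Q)$ cuts out a non-empty union of conjugacy classes; here the hypotheses imposed on $p$ and $q$ (odd, with a relative degree one factor in $K$ at which $y_1(P)$ is nonzero, and with $p,q$ avoiding the ramification of $L$) are precisely what make $p$ and $q$ interact well with $L$. Chebotarev's density theorem then yields infinitely many admissible $\ell_i$, and I would take the least one exceeding $\ell_{i-1}$, the bad primes, and (say) $2^{\ell_{i-1}}$, so that the sequence grows at least doubly exponentially.

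To conclude I would check: items (1) and (2) and the second equality in (3) are the preceding corollary applied to the very same $\calA_K$ and $\calB_K$; the density is $0$ because of the rapid growth of $\{\ell_i\}$; the first equality in (3) is the reduction from $K_{\infty}$ to $K$ noted above; and item (\ref{it:order}) is conditions (c)--(d), which, via Lemma \ref{L:xdifference} taken with $m=(\ell_i-1)/M$ (so that $\ell_i=mM+1$), is exactly the arithmetic input needed later to make the predicate $B$ Diophantine over $x_{\ell_i}(Q)$. Finally the sequence is computable since primality of a candidate, its splitting type in $L$, its residue modulo $N_i$, and the membership $i\in B$ are all effectively decidable, so the recursion terminates at each stage.
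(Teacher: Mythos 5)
Your proposal is correct and follows the same route the paper takes: the paper offers no proof of Proposition \ref{prop:overK} beyond citing the corresponding construction in \cite{PS} and the observation (already used for the earlier corollary) that ${\tt E}(K_{\infty})={\tt E}(K)$ makes ${\tt E}(O_{K_{\infty},\calW_{K_{\infty}}})={\tt E}(O_{K,\calW_K})$, and your outline is a faithful reconstruction of that cited construction --- recursive choice of $\ell_i$ by Chebotarev/Dirichlet subject to congruences modulo $p^{\alpha+i+1}$ and $q^{\beta+1}$ together with rapid growth to force density $0$. The only nit is that your modulus $N_i$ should also absorb the prime-to-$pq$ part of $M$ so that $(\ell_i-1)/M$ is actually an integer, a trivial adjustment.
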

\begin{proposition}
\label{prop:modelB}
Let $A:=\{x_{\ell_1},x_{\ell_2},\dots\}$. Then $A$ is a Diophantine model of $\calZ$ over
$\OO_{K_{\infty},\calW_{K_{\infty}}}$, via the bijection $\phi\colon \Z_{\ge 1} \to A$ taking $i$ to $x_{\ell_i}$.
\end{proposition}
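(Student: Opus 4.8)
The plan is to verify the three ingredients of a Diophantine model over $\OO_{K_\infty,\calW_{K_\infty}}$: that $A$ is a Diophantine subset, that the $\phi$-pullback of the graph of $+$ on $\Z_{\ge1}$ is Diophantine, and that the $\phi$-pullback of the unary predicate $B$ is Diophantine (the distinguished constant $1$ corresponds to the fixed element $x_{\ell_1}\in K$ and needs no argument). The idea throughout is to reduce each of these to an \emph{integrality condition at the single prime} $\pp$ or $\qq$ --- which is Diophantine over $\OO_{K_\infty,\calW_{K_\infty}}$ by hypothesis --- after turning the prime-power data of Proposition \ref{prop:overK}(\ref{it:order}) into valuations by means of Lemma \ref{L:xdifference}.

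To see that $A$ is Diophantine, recall that by Proposition \ref{prop:overK} the affine $x$-coordinates of the infinite-order points of ${\tt E}(\OO_{K_\infty,\calW_{K_\infty}})$ are precisely $\{x_{\ell_i}:i\ge1\}$ together with one fixed finite set (coming from ${\tt E}(K_\infty)_{\tors}={\tt E}(K)_{\tors}$ and the finite exceptional set). The set of $x\in\OO_{K_\infty,\calW_{K_\infty}}$ with $(x,y)\in{\tt E}$ for some $y\in\OO_{K_\infty,\calW_{K_\infty}}$ is cut out by the Weierstrass equation, hence Diophantine; deleting the finitely many exceptional $x$-values (each an inequality $x\ne a$, handled as usual over these rings) yields $A$. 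That $\phi$ is a bijection is immediate: the $\ell_i$ are distinct rational primes, so for $i\ne j$ we cannot have $[\ell_i]Q=\pm[\ell_j]Q$ (this would make $[\ell_i\mp\ell_j]Q$ a nonzero torsion multiple of the infinite-order point $Q$), whence $x_{\ell_i}\ne x_{\ell_j}$; likewise $x_1\notin A$.

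For the operations, put $c_\pp=\ord_\pp(x_{M+1}-x_1)$ and $c_\qq=\ord_\qq(x_{M+1}-x_1)$, fixed integers. Applying Lemma \ref{L:xdifference} with $m=(\ell_i-1)/M$ and invoking Proposition \ref{prop:overK}(\ref{it:order}) (which gives $\ord_p\!\big((\ell_i-1)/M\big)=i$ and $q\mid(\ell_i-1)/M\Leftrightarrow i\in B$) we get, for every $i$,
\[
\ord_\pp(x_{\ell_i}-x_1)=c_\pp+i,\qquad \ord_\qq(x_{\ell_i}-x_1)\ge c_\qq+1\ \Longleftrightarrow\ i\in B .
\]
Since $x_1\notin A$, for $u,v,w\in A$ the elements $u-x_1,v-x_1,w-x_1$ are nonzero, and from the first identity
\[
\phi^{-1}(u)+\phi^{-1}(v)=\phi^{-1}(w)\ \Longleftrightarrow\ \ord_\pp\!\big((u-x_1)(v-x_1)(x_{M+1}-x_1)\big)=\ord_\pp\!\big((w-x_1)(x_{M+1}-x_1)^2\big),
\]
both sides being $3c_\pp$ plus $\phi^{-1}(u)+\phi^{-1}(v)$ resp. $\phi^{-1}(w)$; equivalently, the ratio of these two nonzero $K$-elements and its inverse are both integral at $\pp$. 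Similarly, taking $m=q$ in Lemma \ref{L:xdifference} gives $\ord_\qq(x_{qM+1}-x_1)=c_\qq+1$, so for $u\in A$
\[
\phi^{-1}(u)\in B\ \Longleftrightarrow\ \ord_\qq(u-x_1)\ge\ord_\qq(x_{qM+1}-x_1)\ \Longleftrightarrow\ \frac{u-x_1}{x_{qM+1}-x_1}\ \text{is integral at }\qq .
\]
Each of these is a (conjunction of) integrality statement(s) at $\pp$ or at $\qq$ about $K_\infty$-valued expressions built from ring variables, hence Diophantine over $\OO_{K_\infty,\calW_{K_\infty}}$ by the hypothesis that integrality at $\pp$ and at $\qq$ is definable over $K_\infty$ (feed the relevant ratios into the defining systems $I_\pp,I_\qq$ of Notation \ref{not:infversion}, using the standard device of allowing auxiliary $K_\infty$-variables written as ratios of ring variables with nonzero denominator, as in \cite{PS}). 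Conjoining with the Diophantine definition of $A$ for $u,v,w$ gives Diophantine definitions of the transported graph of $+$ and of the transported predicate $B$, which is what was claimed.

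The main obstacle is the bookkeeping in the third paragraph: every relation must be driven down to \emph{positive} integrality statements at $\pp$ and $\qq$ --- never to a negation such as ``$\ord_\pp>n$'' realised as ``not integral'' --- so that the hypothesis, which only supplies a Diophantine definition of the condition ``$\ord\ge0$'', actually applies; and one must check that the required witnesses exist over the \emph{big} ring $\OO_{K_\infty,\calW_{K_\infty}}$, not merely over $\OO_{K,\calW_K}$. Here it is essential both that integrality at $\pp,\qq$ is \emph{definable} (so $u_\pp=u_\qq=1$ in Notation \ref{not:infversion}) and that Proposition \ref{prop:overK} already performs the transfer ${\tt E}(\OO_{K_\infty,\calW_{K_\infty}})={\tt E}(\OO_{K,\calW_K})$; granted these, the witnesses over $K$ produced by the finite-field arguments of \cite{PS} serve verbatim over $K_\infty$, since all the points and coordinates involved are already $K$-rational. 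The remaining points --- Diophantine excision of the finitely many torsion and exceptional $x$-coordinates, and bijectivity of $\phi$ --- are routine.
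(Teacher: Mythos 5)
Your proof is correct and follows essentially the same route as the paper's: both rest on Lemma \ref{L:xdifference} together with Proposition \ref{prop:overK}(\ref{it:order}) to obtain $\ord_\pp(x_{\ell_i}-x_1)=c+i$ and the criterion $i\in B\Leftrightarrow \ord_\qq(x_{\ell_i}-x_1)>\ord_\qq(x_{M+1}-x_1)$, and then invoke the definability of integrality at $\pp$ and $\qq$. Your extra step of recasting the order comparisons as positive integrality statements (via ratios and the auxiliary constant $x_{qM+1}$) is a careful elaboration of what the paper leaves implicit, not a different argument.
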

\begin{proof}
The set $A$ is Diophantine over $\OO_{K_{\infty},\calW_{K_{\infty}}}$ by Proposition \ref{prop:overK}. Further we
have
\begin{align*}%
i \in B \quad&\iff\quad \text{$q$ divides $(\ell_i-1)/M$} &&\text{(by Proposition \ref{prop:overK} again)}\\
\quad&\iff\quad \ord_\qq(x_{\ell_i}-x_1) > \ord_\qq(x_{M+1}-x_1),%
\end{align*}%
by Lemma~\ref{L:xdifference}. The latter inequality is a Diophantine condition on
$x_{\ell_i}$ over $K_{\infty}$ by our assumption that integrality is definable at $\qq$ over $K_{\infty}$. Thus
the subset $\phi(B)$ of $A$ is Diophantine over $\OO_{K_{\infty},\calW_{K_{\infty}}}$.

Finally, for $i \in \Z_{\ge 1}$, Lemma \ref{L:xdifference} and assertion \ref{it:order} of Proposition
\ref{prop:overK}  imply $\ord_\pp(x_{\ell_i}-x_1) = c + i$, where the integer $c=\ord_\pp(x_{M+1}-x_1)$ is
independent of $i$. Therefore, for $i,j,k \in \Z_{\ge 1}$, we have
\[%
i+j=k \quad\iff\quad \ord_\pp(x_{\ell_i}-x_1) + \ord_\pp(x_{\ell_j}-x_1) = \ord_\pp(x_{\ell_k}-x_1) + c.
\]%
Since integrality at $\pp$ is also definable over $K_{\infty}$ by our assumptions, it follows that the graph of $+$
corresponds under $\phi$ to a subset of $A^3$ that is Diophantine over $\OO_{K_{\infty},\calW_{K_{\infty}}}$.  Thus
$A$ is a Diophantine model of $\calZ$ over $\OO_{K_{\infty},\calW_{K_{\infty}}}$.
\end{proof}%
We can now combine Proposition \ref{prop:modelB} and Corollary \ref{C:positive existential model} to obtain the
main result of this section.

\begin{theorem}
$\OO_{K_{\infty},\calW_{K_{\infty}}}$ has Diophantine model of $\Z$ and therefore HTP is undecidable over
$\OO_{K_{\infty},\calW_{K_{\infty}}}$.
\end{theorem}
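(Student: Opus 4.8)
The plan is simply to compose the two models supplied by the preceding results. By Proposition \ref{prop:modelB}, the set $A=\{x_{\ell_1},x_{\ell_2},\dots\}$, which is Diophantine over $\OO_{K_{\infty},\calW_{K_{\infty}}}$, is a Diophantine model of $\calZ=(\Z_{\ge 1},1,+,B)$ via the bijection $\phi\colon i\mapsto x_{\ell_i}$; that is, the images under $\phi$ of the graph of $+$ and of the predicate $B$ are cut out inside the appropriate powers of $A$ by Diophantine (existential) conditions over the ring. On the other hand, Corollary \ref{C:positive existential model} furnishes a \emph{positive existential} interpretation of the full ring structure $(\Z,0,1,+,\cdot)$ inside $\calZ$: there are positive existential formulas in the language $\{1,+,B\}$ defining a copy of $\Z$ together with its addition and multiplication.

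First I would spell out the composition. Pulling the positive existential interpretation back along $\phi$, each atomic relation of $(\Z,0,1,+,\cdot)$ is translated into a positive existential combination of the relations ``$=$'', ``$u+v=w$'' and ``$u\in B$'' among elements of $\Z_{\ge 1}$, and each of these becomes, under $\phi$, a Diophantine condition on tuples from $A$ over $\OO_{K_{\infty},\calW_{K_{\infty}}}$ by Proposition \ref{prop:modelB}. Because only existential quantifiers occur, the translation of a positive existential formula remains existential over the ring --- this is precisely why Corollary \ref{C:positive existential model} is phrased with ``positive existential'' rather than merely ``first order''. Hence one obtains a set $A'\subseteq A^{k}$ for a suitable $k$ that is Diophantine over $\OO_{K_{\infty},\calW_{K_{\infty}}}$, together with Diophantine subsets of powers of $A'$ realizing the constants $0,1$ and the graphs of $+$ and $\cdot$; this is exactly a Diophantine model of $(\Z,0,1,+,\cdot)$ over $\OO_{K_{\infty},\calW_{K_{\infty}}}$.

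Finally, the undecidability of HTP over $\OO_{K_{\infty},\calW_{K_{\infty}}}$ follows from the standard transfer principle: a Diophantine model of $\Z$ over a recursive ring whose fraction field is not algebraically closed allows one to translate any Diophantine problem over $\Z$ into one over the ring, so since HTP over $\Z$ is undecidable (Matijasevich, \cite{Mate}), HTP over the ring is undecidable as well. Here $\OO_{K_{\infty},\calW_{K_{\infty}}}$ is a ring of algebraic numbers, so its fraction field is an algebraic extension of $\Q$ and in particular not algebraically closed, and the model constructed above is given by recursive data (the sequence $\{\ell_i\}$ is computable by Proposition \ref{prop:overK}).

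I do not expect any genuine obstacle here: all the substantive work --- producing the sequence $\{\ell_i\}$ and the prime sets $\calA_K,\calB_K$, controlling the $\pp$- and $\qq$-adic behaviour of $x_{\ell_i}-x_1$, and establishing the two model statements --- has already been carried out in Propositions \ref{prop:overK} and \ref{prop:modelB} and Corollary \ref{C:positive existential model}. The one point that requires care is the bookkeeping of composing a Diophantine model with a positive existential interpretation, and in particular keeping track that positive existential definability is what each step preserves, so that the final relations are honestly existential, i.e. Diophantine, over $\OO_{K_{\infty},\calW_{K_{\infty}}}$.
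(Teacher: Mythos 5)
Your proposal is correct and is exactly the paper's argument: the paper derives this theorem by combining Proposition \ref{prop:modelB} with Corollary \ref{C:positive existential model}, precisely the composition of the Diophantine model of $\calZ$ with the positive existential interpretation of $(\Z,0,1,+,\cdot)$ in $\calZ$ that you spell out. Your write-up merely makes the bookkeeping of that composition (and the standard transfer of undecidability) explicit, which the paper leaves implicit.
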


We summarize the discussion above in the following theorem.
\begin{theorem}
\label{thm:rank1}
Let $K_{\infty}$ be an algebraic extension of $\Q$ such that there exists an elliptic curve ${\tt E}$ defined over
$K_{\infty}$ with ${\tt E}(K_{\infty})$ of rank 1 and finitely generated.  Fix a Weirstrass equation for ${\tt E}$ and a
number field $K$ containing all the coefficients of  the Weierstrass equation and the coordinates of all the
generators of ${\tt E}(K_{\infty})$.  Assume that $K$ has two odd relative degree one primes $\pp$ and $\qq$ such that
integrality is definable at $\pp$ and $\qq$ over $K_{\infty}$.
\be%
\item There exist a set $\calW_K$ of $K$-primes of natural density 1 such that over
$O_{K_{\infty},\calW_{K_{\infty}}}$  there exists an infinite Diophantine set simultaneously
discrete in all archimidean and non-archimedean topologies of $K_{\infty}$.

\item There exist a set $\calW_K$ of $K$-primes of natural density 1 such that over
$O_{K_{\infty},\calW_{K_{\infty}}}$ in $K_{\infty}$ there exists a Diophantine model of $\Z$ and therefore HTP is
not solvable over $O_{K_{\infty},\calW_{K_{\infty}}}$.
\ee%
\end{theorem}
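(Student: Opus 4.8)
The plan is to read off both statements from results already established in this section, the only genuinely new ingredient being a density bookkeeping observation. We are exactly in the setting fixed at the start of the section: since ${\tt E}(K_{\infty})$ is finitely generated of rank one and $K$ contains the coordinates of a full set of generators, we have ${\tt E}(K_{\infty})={\tt E}(K)$ and $\rank({\tt E}(K))=1$; let $Q$ be a generator modulo torsion. The two odd relative degree one primes $\pp,\qq$ supplied by hypothesis play the role of the primes of that name — after discarding the finitely many candidates at which the relevant $y$-coordinate vanishes — and integrality is definable at $\pp$ and $\qq$ over $K_{\infty}$ by assumption; set $M=\#{\tt E}(\F_p)\#{\tt E}(\F_q)pq$ as before.

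For (1), I would apply the first theorem of this section (equivalently its corollary, which transfers the conclusion from $K$ to $K_{\infty}$, using that every archimedean or non-archimedean place of $K_{\infty}$ lies over one of $K$): it produces a sequence $\{\ell_i\}$ and sets $\calA_K,\calB_K\subset\calP(K)$ of natural and Dirichlet density $0$, disjoint, such that for \emph{any} $\calW_K$ with $\calA_K\subseteq\calW_K$ and $\calW_K\cap\calB_K=\emptyset$ one has ${\tt E}(O_{K_{\infty},\calW_{K_{\infty}}})=\{\pm\ell_iP:i\in\Z_{>0}\}\cup\{\text{finite set}\}$ and $\{x_{\ell_i}(Q):i\in\Z_{>0}\}$ is discrete in every $\pp$-adic and archimedean topology of $K_{\infty}$. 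Now take $\calW_K=\calP(K)\setminus\calB_K$: it contains $\calA_K$, is disjoint from $\calB_K$, and has density $1$. The set $\{x(R):R\in{\tt E}(O_{K_{\infty},\calW_{K_{\infty}}})\setminus\{O\}\}$ is Diophantine over $O_{K_{\infty},\calW_{K_{\infty}}}$ (cut out by the fixed Weierstrass equation and the requirement that the point have coordinates in the ring), agrees with $\{x_{\ell_i}(Q):i\in\Z_{>0}\}$ up to a finite set, and is infinite and discrete in all the relevant topologies; this is the desired set.

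For (2), I would instead invoke Proposition~\ref{prop:overK}: it gives a computable sequence $\{\ell_i\}$ and density-zero disjoint sets $\calA_K,\calB_K$ with the same control on ${\tt E}(O_{K_{\infty},\calW_{K_{\infty}}})$ plus the arithmetic property that the highest power of $p$ dividing $(\ell_i-1)/M$ is $p^i$ and that $q\mid(\ell_i-1)/M$ exactly when $i\in B=\{2^n+n^2:n\ge1\}$. Taking again $\calW_K=\calP(K)\setminus\calB_K$, a density-one set containing $\calA_K$ and missing $\calB_K$, Proposition~\ref{prop:modelB} shows that $A=\{x_{\ell_1},x_{\ell_2},\dots\}$ is a Diophantine model of $\calZ=(\Z_{\ge1},1,+,B)$ over $O_{K_{\infty},\calW_{K_{\infty}}}$ via $i\mapsto x_{\ell_i}$; its inputs are Lemma~\ref{L:xdifference}, the arithmetic property just recalled, and the definability of integrality at $\pp,\qq$ over $K_{\infty}$, all available. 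Composing with the positive-existential model of $(\Z,0,1,+,\cdot)$ inside $\calZ$ from Corollary~\ref{C:positive existential model} yields a Diophantine model of $\Z$ over $O_{K_{\infty},\calW_{K_{\infty}}}$, whence HTP is unsolvable over that ring.

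The main point to be careful about is not the density bookkeeping but the legitimacy of the passage from the number field $K$ to the infinite extension $K_{\infty}$ at each step. This rests on three facts: ${\tt E}(K_{\infty})={\tt E}(K)$, so the Mordell--Weil group does not grow; every place of $K_{\infty}$ restricts to a place of $K$, so discreteness and the valuation identities of Lemma~\ref{L:xdifference} survive; and integrality at $\pp,\qq$ is definable over $K_{\infty}$, so the order inequalities used in Proposition~\ref{prop:modelB} remain Diophantine over the larger ring. Each of these is either a hypothesis of the theorem or was already isolated in the preceding corollaries, so no further work is required.
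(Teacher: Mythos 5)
Your proposal is correct and follows essentially the same route as the paper: the paper states Theorem \ref{thm:rank1} explicitly as a summary of the preceding discussion in that section, namely the corollary transferring the discreteness of $\{x_{\ell_i}(Q)\}$ to $K_{\infty}$ for part (1), and Proposition \ref{prop:overK}, Proposition \ref{prop:modelB}, and Corollary \ref{C:positive existential model} for part (2), with the density-one set obtained exactly as you do by taking $\calW_K=\calP(K)\setminus\calB_K$. Your explicit remarks on why the passage from $K$ to $K_{\infty}$ is legitimate (the Mordell--Weil group does not grow, places of $K_{\infty}$ restrict to places of $K$, and integrality at $\pp,\qq$ is definable over $K_{\infty}$) match the justifications the paper isolates in its corollaries.
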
%

\section{Examples}
In this section we discuss some examples of elliptic curves and fields to which our results are applicable.  Our
primary source is \cite{M4}.   Using Notation and Assumptions \ref{not:integers}, let $K=\Q$ and let $F=\Q(\sqrt{-7})$.
Let ${\tt E}$ be the elliptic curve defined by the equation $y^2+y=x^3-x$.  A direct calculation shows that this
elliptic curve does not have complex multiplication.  Let $K_{\infty}$ be the unique cyclotomic $\Z_5$ extension of
$\Q$.  Then from the example in Section 1 of \cite{M4} we have that $\rank({\tt E}(FK_{\infty}))=1$, and by Proposition
\ref{prop:fingen} the Mordell-Weil group of ${\tt E}(FK_{\infty})$ is finitely generated.  Further $K_{\infty}$ has a totally
real extension of degree 2.  Thus by Theorem \ref{thm:mainint} and Theorem \ref{thm:mainintdeg2}, $\Z$ is existentially
definable and HTP is unsolvable in the ring of integers of $K_{\infty}$ and any extension of degree 2 of $K_{\infty}$.

We next consider the big ring situation. Observe that only one rational prime ramifies in $K_{\infty}$, $K_{\infty}$ is
Galois over $K$, and degrees of all the number fields contained in $K_{\infty}$ are powers of $5$. Thus, integrality is
definable over $K_{\infty}$ at all but finitely many primes by Corollary \ref{cor:deg}. Further, it then follows by
Theorems \ref{thm:mainnonint0}, \ref{thm:smallring}, \ref{thm:bigrings2}, and  \ref{thm:bigrings3}
that for all small and some big subrings $R$ of $K_{\infty}$ or its arbitrary extension of degree
2, $\Z$ is existentially definable and HTP is unsolvable over $R$.  We must point out here that
these conclusions concerning small and big rings (but not rings of integers) also follow from
\cite{Sh36} since $K_{\infty}$ is an abelian extension of $\Q$ with finitely many ramified rational
primes.  Note also that the results concerning the ring of integers of $K_{\infty}$ cannot be
obtained directly from the theorem of Denef concerning infinite extensions because it requires the
elliptic curve  of positive rank over $\Q$ keeping the same rank in an infinite totally real
extension.

Finally we can exploit the fact that the elliptic curve is of rank one in $FK_{\infty}$ to conclude that $FK_{\infty}$
has a ``very large'' ring $R$ (i.e. a ring which is an integral closure of a big subring of a number field with the
natural density of inverted primes equal to 1) which has a Diophantine model of $\Z$ and unsolvable HTP.

\section{Appendix}
\begin{lemma}%
\label{le:denom} Let $M$ be a number field.  Let $z_1, z_2 \in M$ be such that $\mathfrak d_M(z_1)$ and $\mathfrak
d_M(z_2)$ have no common factors.  Then we can write $\displaystyle z_1=\frac{a_1}{b_1}, z_2=\frac{a_2}{b_2}$,
where $a_1, a_2, b_1, b_2 \in O_M$, $(b_1,b_2) =1$ in $O_M$ and $(\mte{n}_M(a_i),\mte{d}_M(z_i))=1$.
\end{lemma}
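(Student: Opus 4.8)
The plan is to pass to the fractional ideal generated by each $z_i$ and exploit the finiteness of the class group to choose ``clearing denominators'' that are automatically coprime in the required way. Since the trivial case $z_1z_2=0$ is handled by taking the corresponding $a_i=0$, $b_i=1$, assume $z_1,z_2\neq 0$. For $i=1,2$ write the fractional divisor of $z_i$ as $(z_i)=\mte{a}_i\mte{b}_i^{-1}$, where $\mte{a}_i=\mte{n}_M(z_i)$ and $\mte{b}_i=\mte{d}_M(z_i)$; by the very definition of the numerator and denominator parts these are coprime integral divisors of $M$, and by hypothesis $\mte{b}_1$ and $\mte{b}_2$ have no common prime factor.

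Next I would invoke the classical fact that every ideal class of $O_M$ contains an integral ideal coprime to any prescribed nonzero integral ideal. Applying this in the class of $\mte{b}_1^{-1}$, choose an integral divisor $\mte{c}_1$ of $M$ coprime to $\mte{b}_1\mte{b}_2$ with $\mte{b}_1\mte{c}_1$ principal, say $\mte{b}_1\mte{c}_1=(b_1)$ for some $b_1\in O_M$. Applying it again in the class of $\mte{b}_2^{-1}$, choose an integral divisor $\mte{c}_2$ coprime to $\mte{b}_1\mte{b}_2\mte{c}_1$ with $\mte{b}_2\mte{c}_2=(b_2)$ for some $b_2\in O_M$. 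Finally set $a_i=z_ib_i$ for $i=1,2$.

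It then remains to check the three assertions. First, $(a_i)=(z_i)(b_i)=\mte{a}_i\mte{b}_i^{-1}\cdot\mte{b}_i\mte{c}_i=\mte{a}_i\mte{c}_i$ is an integral divisor, so $a_i\in O_M$, and since $a_i$ is an integer $\mte{n}_M(a_i)=(a_i)=\mte{a}_i\mte{c}_i$; as $\mte{a}_i$ is coprime to $\mte{b}_i=\mte{d}_M(z_i)$ by construction and $\mte{c}_i$ is coprime to $\mte{b}_i$ by our choice, we get $(\mte{n}_M(a_i),\mte{d}_M(z_i))=1$. Second, $(b_1)=\mte{b}_1\mte{c}_1$ and $(b_2)=\mte{b}_2\mte{c}_2$, and the four intersections $(\mte{b}_1,\mte{b}_2)$, $(\mte{b}_1,\mte{c}_2)$, $(\mte{c}_1,\mte{b}_2)$, $(\mte{c}_1,\mte{c}_2)$ are all the unit divisor — the first by hypothesis, the remaining three by the way $\mte{c}_1$ and $\mte{c}_2$ were selected — so $(b_1,b_2)=1$ in $O_M$. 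There is no genuine obstacle in this argument; beyond the bookkeeping it uses only the representability of ideal classes by integral ideals avoiding a given finite set of primes, which is standard, so this is the one external ingredient to cite.
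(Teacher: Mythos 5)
Your proof is correct, and it reaches the conclusion by a genuinely different (though closely related) mechanism than the paper's. The paper applies the Strong Approximation Theorem directly at the level of elements: it produces $b_1\in O_M$ with $\ord_{\pp}b_1=\ord_{\pp}\mte{d}_M(z_1)$ at every prime of $\mte{d}_M(z_1)$ and $\ord_{\pp}b_1=0$ at every prime of $\mte{d}_M(z_2)$, then produces $b_2$ similarly while additionally forcing $\ord_{\pp}b_2=0$ wherever $\ord_{\pp}b_1>0$; the three required properties then read off immediately from the prescribed valuations. You instead work at the level of ideals, invoking the fact that each ideal class contains an integral representative coprime to a prescribed ideal, and only then pass to a generator $b_i$ of the principal ideal $\mte{d}_M(z_i)\mte{c}_i$. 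Your bookkeeping is right: the auxiliary divisors $\mte{c}_1,\mte{c}_2$ are chosen away from $\mte{d}_M(z_1)\mte{d}_M(z_2)$ and from each other, which is exactly what makes $(b_1,b_2)=1$ and $(\mte{n}_M(a_i),\mte{d}_M(z_i))=1$ go through. The trade-off is mostly aesthetic: the paper's route controls the valuations of the elements $b_i$ directly and needs no mention of the class group, whereas yours isolates the arithmetic input into one cleanly quotable lemma (coprime representatives of ideal classes) at the cost of introducing the extra divisors $\mte{c}_i$ and tracking where they sit. One small point of care: your reduction of the degenerate case should be stated per index, i.e.\ if only $z_1=0$ you set $a_1=0$, $b_1=1$ and still run the construction for $z_2$; as written, ``assume $z_1z_2\neq 0$'' after disposing of ``the trivial case'' slightly blurs this, but it is cosmetic.
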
%
\begin{proof}%
By the Strong Approximation Theorem there exists $b_1 \in O_M$ such that for any prime $\pp \in
\calP(M)$ occurring in $\mathfrak d_M(z_1)$  we have that $\ord_{\pp}b_1 = \ord_{\pp}\mathfrak
d_M(z_1)$ and for any prime $\pp \in \calP(M)$ occurring in    $\dd_M(z_2)$, we have that
$\ord_{\pp}b_1 =0$.     Further, also by the Strong Approximation Theorem, there exists $b_2 \in
O_M$ such that for any prime $\pp \in \calP(M)$ occurring in $\dd_M(z_2)$ we have that
$\ord_{\pp}b_2 = \ord_{\pp}\dd_M(z_2)$ and for any prime $\pp \in \calP(M)$ such that
$\ord_{\pp}b_1 >0$ we have that $\ord_{\pp}b_2 = 0$.  Now we have that $b_iz_i \in O_M$, $(b_1,b_2)
=1$ and $(\nn_M(a_i),\dd_M(z_i))=1$.
\end{proof}%
\begin{lemma}%
\label{le:nodegonefact} %
Let $K$ be a number field. Let $E, G$ be two non-trivial Galois extensions of $K$ such that $([E:K],[G:K])=1$. Let
$\pp_K$ be a prime of $K$ such that $\pp_K$ does not have a degree one factor in $E$. Let $\pp_G$ be the $G$-prime
above $\pp_K$. Then $\pp_G$ does not have a relative degree one factor in $GE$.
\end{lemma}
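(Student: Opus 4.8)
The plan is to argue purely in terms of residue degrees at $\pp_K$, using that all the residue fields involved are finite, so that containments among them are controlled by divisibility of their degrees over the base residue field.

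First I would pick an arbitrary prime $\Pp$ of $GE$ lying above the given prime $\pp_G$; since every prime of $GE$ above $\pp_G$ arises this way, it suffices to show $f(\Pp/\pp_G)\ge 2$. Put $\Pp_E=\Pp\cap E$, and note that $\pp_G=\Pp\cap G$ and $\pp_K=\Pp\cap K$ lie below $\Pp$. Write $k,k_E,k_G,k_{GE}$ for the (finite) residue fields of $\pp_K,\Pp_E,\pp_G,\Pp$, and set $f_E=[k_E:k]=f(\Pp_E/\pp_K)$ and $f_G=[k_G:k]=f(\pp_G/\pp_K)$. The hypothesis that $\pp_K$ has no degree one factor in $E$ gives $f_E\ge 2$. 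Since $E/K$ is Galois, $f_E$ divides $[E:K]$ — indeed $[E:K]=e_E f_E g_E$, where $e_E$ is the ramification index of $\Pp_E/\pp_K$ and $g_E$ the number of primes of $E$ above $\pp_K$ — and likewise $f_G\mid[G:K]$; combined with $([E:K],[G:K])=1$ this forces $\gcd(f_E,f_G)=1$.

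Next, the ring inclusions $E\hookrightarrow GE$ and $G\hookrightarrow GE$ carry $\Pp_E$ and $\pp_G$ into $\Pp$, hence induce field embeddings $k_E\hookrightarrow k_{GE}$ and $k_G\hookrightarrow k_{GE}$ over $k$. In the finite field $k_{GE}$ the images of $k_E$ and $k_G$ have degrees $f_E$ and $f_G$ over $k$, so $[k_{GE}:k]$ is divisible by $\operatorname{lcm}(f_E,f_G)=f_E f_G$, the last equality since $f_E$ and $f_G$ are coprime. Therefore $f(\Pp/\pp_K)=[k_{GE}:k]\ge f_E f_G$, and dividing by $f(\pp_G/\pp_K)=f_G$ yields $f(\Pp/\pp_G)\ge f_E\ge 2$. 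Since $\Pp$ was an arbitrary prime of $GE$ over $\pp_G$, no factor of $\pp_G$ in $GE$ has relative degree one, as claimed.

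This is mostly bookkeeping; the only slightly delicate point is the coprimality $\gcd(f_E,f_G)=1$, which is exactly where the Galois hypotheses on $E/K$ and $G/K$ and the coprimality of the global degrees are used. I do not anticipate any real obstacle.
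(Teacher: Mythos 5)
Your proof is correct, and it takes a somewhat different route from the paper's. The paper first invokes linear disjointness of $E$ and $G$ over $K$ (from coprimality of the degrees and the Galois hypotheses) to get $[EG:G]=[E:K]$ and $[EG:E]=[G:K]$ and the fact that $EG/K$ is Galois; it then compares the two tower factorizations $f(\pp_{EG}/\pp_E)f(\pp_E/\pp_K)=f(\pp_{EG}/\pp_G)f(\pp_G/\pp_K)$, where the first and last factors divide $[G:K]$ and the middle two divide $[E:K]$, and concludes by unique factorization that $f(\pp_E/\pp_K)=f(\pp_{EG}/\pp_G)$ exactly. You instead use only the divisibilities $f_E\mid[E:K]$ and $f_G\mid[G:K]$ coming from the two base extensions being individually Galois, plus the elementary observation that the residue field of $\Pp$ contains subfields of the coprime degrees $f_E$ and $f_G$ over $k$, so that $f_Ef_G$ divides $f(\Pp/\pp_K)=f(\Pp/\pp_G)f_G$ and hence $f_E$ divides $f(\Pp/\pp_G)$. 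What your version buys is economy of hypotheses at the compositum level: you never need $EG/K$ to be Galois, nor the degree computations $[EG:G]=[E:K]$, $[EG:E]=[G:K]$; the trade-off is that you obtain only the divisibility $f_E\mid f(\Pp/\pp_G)$ rather than the paper's equality $f(\pp_E/\pp_K)=f(\pp_{EG}/\pp_G)$ — but the divisibility, together with $f_E\ge 2$, is all the lemma requires. Both arguments hinge on the same coprimality mechanism, and yours has no gaps.
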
%
\begin{proof}%
Given our assumption on the degrees of the extensions involved, we have that $E \cap G=K$.  Consequently, since
both extensions are Galois, we conclude that $E$ and $G$ are linearly disjoint over $K$ and therefore
$[EG:G]=[E:K]$ and $[EG:G]=[G:K]$.   Further by Lemma B.3.7 of \cite{Sh34} we have that $EG/K$ is Galois and hence
every factor $\pp_G$ of $\pp_K$ in $G$ has the same number of factors, relative  and ramification degrees in $EG$
with all the three numbers dividing $[EG:G]=[G:K]$.  Similarly, every factor $\pp_E$ of $\pp_K$ in $E$ has the
same number of factors, relative and  ramification degrees in $EG$ with all the numbers dividing $[EG:E]=[E:K]$.
Let $\pp_{EG}$ be an $EG$-factor of $\pp_K$.  Let $\pp_E$ and  $\pp_K$ lie below $\pp_{EG}$ in
$E$ and $K$ respectively.  Then $f(\pp_{EG}/\pp_E)f(\pp_E/\pp_K) = f(\pp_{EG}/\pp_G)f(\pp_G/\pp_K)$.
 Further, $f(\pp_{EG}/\pp_E), f(\pp_G/\pp_K)$ are divisors of $[G:K]$ and thus are pairwise
relatively prime to $f(\pp_{EG}/\pp_G), f(\pp_E/\pp_K)$ which are divisors of $[E:K]$.  Therefore
by the Fundamental Theorem of Arithmetic we have $f(\pp_E/\pp_K) = f(\pp_{EG}/\pp_G)$.  By
assumption we have that $f(\pp_E/\pp_K) >1$, and therefore we also have that $f(\pp_{EG}/\pp_G)
>1$.
\end{proof}%
The following lemma is an expanded version of an argument from \cite{schmidwa} and \cite{shasp}.
\begin{lemma}%
\label{le:rootofunity}%
Let $G$ be a number field.  Let $x, y \in O_G$ be such that for some $d \in O_G$ we have that $x^2-dy^2=1$.  Let
$\delta$ be an element of the algebraic closure of $\Q$ be such that $\delta^2=d$ and assume that $\xi=x-\delta y$
is a root of unity.    Then $\xi^4=1$.
\end{lemma}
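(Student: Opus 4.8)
The plan is to reduce everything to the single identity $\xi+\xi^{-1}=2x$ and then pin down $x$ using archimedean absolute values. First I would note that, since $\delta^{2}=d$, the factorization
\[
(x-\delta y)(x+\delta y)=x^{2}-\delta^{2}y^{2}=x^{2}-dy^{2}=1
\]
shows that $\xi=x-\delta y$ is a unit of the number field $L=G(\delta)$ with $\xi^{-1}=x+\delta y$, and hence $\xi+\xi^{-1}=2x$ and $\xi-\xi^{-1}=-2\delta y$. All of $\xi,\xi^{-1},x,\delta y$ then lie in $L$.

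Next I would exploit that $\xi$ is a root of unity. For every field embedding $\sigma\colon L\to\C$, the image $\sigma(\xi)$ is again a complex root of unity, so $|\sigma(\xi)|=1$ and $\sigma(\xi^{-1})=\sigma(\xi)^{-1}=\overline{\sigma(\xi)}$. Therefore $\sigma(2x)=\sigma(\xi)+\overline{\sigma(\xi)}=2\,\re\sigma(\xi)$ is real with $|\sigma(2x)|\le 2$; in particular every conjugate of $x$ is real and of absolute value at most $1$. Since $x\in O_{G}$ is an algebraic integer with this property, Kronecker's theorem forces $x=0$ or $x$ a root of unity, and a real root of unity is $\pm 1$; so $x\in\{0,1,-1\}$.

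Finally I would dispose of the three cases. If $x=\pm 1$, then $dy^{2}=x^{2}-1=0$, so $\delta y=0$ and $\xi=x=\pm 1$, whence $\xi^{4}=1$. If $x=0$, then $dy^{2}=-1$ and $\xi=-\delta y$, so $\xi^{2}=\delta^{2}y^{2}=dy^{2}=-1$, and again $\xi^{4}=1$. That completes the argument.

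Since the statement is short there is no serious obstacle; the only steps worth flagging are the equality $\sigma(\xi^{-1})=\overline{\sigma(\xi)}$ for conjugates of a root of unity (which gives $\sigma(x)\in[-1,1]$) and the elementary fact that the only real roots of unity are $\pm 1$. If one wishes to avoid invoking Kronecker's theorem, an alternative is to observe that $\xi+\xi^{-1}$ generates the maximal real subfield of $\Q(\zeta_{m})$ with $m=\ord(\xi)$, whose ring of integers is $\Z[\xi+\xi^{-1}]$; integrality of $x=(\xi+\xi^{-1})/2$ then forces $[\Q(\zeta_{m})^{+}:\Q]\le 1$, after which only $m=3$ and $m=6$ must be excluded directly (they give $x=\mp\tfrac12\notin O_{G}$).
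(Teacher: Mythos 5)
Your proof is correct, and it reaches the conclusion by a genuinely more self-contained route than the paper. Both arguments pivot on the same identity $\xi+\xi^{-1}=2x$ and on the integrality of $x$, but at that point the paper simply cites Proposition 2.16 of Washington's book on cyclotomic fields (the description of the ring of integers of $\Q(\zeta_m)^{+}$) to conclude that $\xi$ is rational or $\xi+\xi^{-1}=0$; this is essentially the ``alternative'' you sketch in your closing remark. Your main argument instead bounds the conjugates: $\sigma(2x)=\sigma(\xi)+\overline{\sigma(\xi)}$ is real with $|\sigma(2x)|\le 2$, so $x$ is a totally real algebraic integer with all conjugates in $[-1,1]$, whence $x\in\{0,\pm1\}$, and the three cases are dispatched directly. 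This buys a completely elementary, citation-free proof. One small point of hygiene: Kronecker's theorem as usually stated requires all conjugates to have absolute value \emph{exactly} $1$; to pass from ``at most $1$'' you should insert the one-line norm argument (for $x\ne 0$, $|{\mathbf N}(x)|=\prod_\sigma|\sigma(x)|$ is a nonzero rational integer that is $\le 1$, hence equals $1$, forcing every $|\sigma(x)|=1$). With that sentence added the argument is airtight.
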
%
\begin{proof}%
Observe that $\xi, \xi^{-1}$ and $x=\frac{\xi+\xi^{-1}}{2}$ are
algebraic integers. This is however impossible unless $\xi$ is rational or  $\xi +\xi^{-1}=0$, by
Proposition 2.16 of \cite{Wash}. Thus, $\xi^4=1$.
\end{proof}%
\bibliography{mybib}%

\end{document}